\numberwithin{equation}{section}
\newtheorem{theorem}{Theorem}[section]
\newtheorem{lemma}[theorem]{Lemma}
\theoremstyle{definition}
\newtheorem{definition}[theorem]{Definition}
\theoremstyle{remark}
\newtheorem{remark}[theorem]{Remark}
\numberwithin{equation}{section}
\newcommand{\ds}{\displaystyle}
\begin{document}

\title{VARIABLE ORDER MIXED $\boldsymbol{h}$-FINITE ELEMENT METHOD
FOR LINEAR ELASTICITY WITH WEAKLY IMPOSED SYMMETRY.
II. AFFINE AND CURVILINEAR ELEMENTS IN 2D}

\author{Weifeng Qiu}
\address{The Institute for Computational Engineering and Sciences, 
the University of Texas at Austin, Austin, Texas 78712}
\email{qiuw@ices.utexas.edu}

\author{Leszek Demkowicz}
\address{The Institute for Computational Engineering and Sciences, 
the University of Texas at Austin, Austin, Texas 78712}
\email{leszek@ices.utexas.edu}

\begin{abstract}
We continue our study on variable order Arnold-Falk-Winther elements for 2D elasticity
in context of both affine and parametric curvilinear elements. We present an $h$-stability 
result for affine elements, and an asymptotic stability result for curvilinear
elements. Both theoretical results are confirmed with numerical experiments.
\end{abstract}

\subjclass[2000]{65N30, 65L12}

\keywords{plane elasticity, mixed formulation, $hp$ elements, curvilinear meshes}

\maketitle

\pagestyle{myheadings}
\thispagestyle{plain}
\markboth{WEIFENG QIU AND LESZEK DEMKOWICZ}{MIXED FEM FOR 2D LINEAR ELASTICITY}


\section{\bigskip Introduction}

\subsection{Elasticity problem}
Linear elasticity is a classical subject, and it has been studied
for a long time. The presented research  is motivated with a class
of time-harmonic problems formulated as follows.
Let $\Omega \subset \mathbb{R}^n,\: n=2,3$,
denote a bounded domain occupied by the elastic body. 
Assume that the boundary of $\Omega$, $\Gamma = \partial \Omega$ has been
split into two disjoint subsets $\Gamma_1$ and $\Gamma_2$,
\begin{equation*}
\Gamma = \overline{\Gamma_1} \cup \overline{\Gamma_2},\quad
\Gamma_1 \cap \Gamma_2 = \emptyset
\end{equation*}
Subsets $\Gamma_1,\Gamma_2$ are assumed to be (relatively) open
in $\Gamma$.
We seek:
\begin{itemize}
  \item displacement vector $u_i(x),\:  x \in \Omega$,
  \item linearized strain tensor $\epsilon_{ij}(x),\:  x \in \Omega$, and
  \item stress tensor $\sigma_{ij}(x),\: x \in \Omega$
\end{itemize}
that satisfy the following system of equations and boundary conditions.
\begin{itemize}
  \item Cauchy's geometrical relation between the displacement and strain,
\begin{equation}
\epsilon_{ij} = \dfrac{1}{2} (u_{i,j} + u_{j,i}),\quad x \in \Omega
\label{eq:geometrical_relations}
\end{equation}
  \item Equations of motion resulting from the principle of linear
momentum,
\begin{equation}
 - \sigma_{ij,j} - \rho(x) \omega^2 u_i = f_i(x),\quad x \in \Omega
\label{eq:momentum_equations}
\end{equation}
  \item Symmetry of the stress tensor being a consequence of the
principle of angular momentum,
\begin{equation*}
\sigma_{ij} = \sigma_{ji},\quad x \in \Omega
\end{equation*}
  \item Constitutive equations for linear elasticity,
\begin{equation*}
\sigma_{ij} = E_{ijkl}(x) \epsilon_{kl},\quad x \in \Omega
\end{equation*}
  \item Kinematic boundary conditions,
\begin{equation*}
u_i = u^0_i,\quad x \in \Gamma_1
\end{equation*}
  \item Traction boundary conditions,
\begin{equation*}
\sigma_{ij} n_j = g_i(x), \quad x \in \Gamma_2
\end{equation*}
\end{itemize}
Here:
\begin{itemize}
  \item $\rho$ is the density of the body,
  \item $f_i$ are volume forces prescribed within the body,
  \item $g_i$ are tractions prescribed on $\Gamma_2$ part of the
boundary,
  \item $u^0_i$ are displacements prescribed on $\Gamma_1$ part of the
boundary,
  \item $n_j$ is the unit outward normal vector for boundary $\Gamma$, and
  \item $E_{ijkl}$ is the tensor of elasticities.
\end{itemize}
As usual, commas denote partial derivatives, and we use the Einstein's
summation convention. Vector $t_i := \sigma_{ij}n_j$ is known
as the traction or stress vector.

The geometrical relations imply the symmetry of the strain tensor.
The symmetry of strain and stress tensors imply then the usual
(minor) symmetry conditions for the elasticities,
\begin{equation}
E_{ijkl} = E_{jikl} = E_{ijlk}
\label{eq:minor_symmetries}
\end{equation}
The laws of thermodynamics imply an additional (major) symmetry condition,
\begin{equation*}
E_{ijkl} = E_{klij}
\end{equation*}
and positive definiteness condition,
\begin{equation*}
E_{ijkl} \xi_{ij} \xi_{kl} > 0,\quad \forall \xi_{ij} = \xi_{ij}
\end{equation*}
The last condition implies that the constitutive equation can be
inverted to represent strains in terms of stresses,
\begin{equation}
\epsilon_{kl} = C_{klij} \sigma_{ij}
\label{eq:constitutive_equation_inverted}
\end{equation}
where $C_{klij} = E_{ijkl}^{-1}$ is known as the compliance tensor
and satisfies analogous symmetry and positive-definiteness properties.

For isotropic materials, the elasticities tensor is expressed in terms
of two independent Lame's constants $\mu$ and $\lambda$,
\begin{equation*}
E_{ijkl} = \mu (\delta_{ik}\delta_{jl} + \delta_{il}\delta_{jk})
+ \lambda \delta_{ij} \delta_{kl}
\end{equation*}
and the constitutive equation reduces to,
\begin{equation*}
\sigma_{ij} = 2 \mu \epsilon_{ij} + \lambda \epsilon_{kk} \delta_{ij}
\label{Hooke_law}
\end{equation*}
where $\delta_{ij}$ denotes the Kronecker's delta. 
Inverting, we obtain
\begin{equation*}
\epsilon_{ij} = A \sigma_{ij},\quad \epsilon_{ij}  = \frac{1}{2\mu} \sigma_{ij} 
- \frac{\lambda}{2\mu (2\mu + n \lambda)} \sigma_{kk} \delta_{ij}
\end{equation*}
Of particular interest is the case of nearly incompressible material
corresponding to $\lambda \to \infty$. Notice that the norm of the
elasticities blows up then to infinity but the norm of the
compliance tensor remains uniformly bounded. This suggests that
formulations based on the compliance relation have a chance to remain
uniformly stable for nearly incompressible materials. Another reason 
of using formulations based on compliance relation is for (visco)elastic vibration 
problems for structures with large material contrast. For a more detailed discussion on
these problems, we refer to the introduction in \cite{QD:2009:MMEW}.

\vspace*{1cm}
\subsection{Variational formulations for elasticity}

\subsubsection*{Dual-Mixed Formulation}

We eliminate the strain tensor and combine 
geometrical relations~(\ref{eq:geometrical_relations})
and constitutive equation~(\ref{eq:constitutive_equation_inverted})
in a single equation,
\begin{equation*}
C_{klij}\sigma_{ij} = \frac{1}{2}(u_{k,l} + u_{l,k})
\end{equation*}
Classical formulation for elasticity is based on satisfying the
equation above in a strong sense while relaxing the momentum equations.
The idea behind the dual-mixed formulation for elasticity is exactly opposite,
the equation above is relaxed whereas momentum equations~(\ref{eq:momentum_equations})
are satisfied in a strong sense.

The final formulation includes building in kinematic boundary conditions
and it reads as follows.
\begin{equation}
\left\{
\begin{array}{lllll}
\ds \sigma \in H(\text{div},\Omega,\mathbb{S}) \: : \: \sigma_{ij}n_j & =
  g_i  \text{ on } 
\Gamma_2,\: 
 u \in L^2(\Omega,\mathbb{V})\\[8pt]
\ds  \int_{\Omega} C_{ijkl} \sigma_{kl} \tau_{ij}
 + \int_{\Omega} u_i \tau_{ij,j} & \ds = \int_{\Gamma_1} u^0_i \tau_{ij} n_j \quad
\forall \tau \in H(\text{div},\Omega,\mathbb{S}) 
\: : \: \tau_{ij}n_j = 0 \text{ on } \Gamma_2 \\[8pt]
\ds  - \int_{\Omega} \sigma_{ij,j} v_i 
- \omega^2 \int_{\Omega} \rho u_i v_i
&   \ds = \int_{\Omega} f_i v_i \quad
\forall v \in L^2(\Omega,\mathbb{V}) \\[8pt]
\end{array}
\right.
\label{complete_mixed_formulation_strong_symmetry}
\end{equation}
Above, $\mathbb{V} = \mathbb{R}^3$ and $\mathbb{S}$ denotes the space of symmetric
tensor. $L^2(\Omega,\mathbb{V})$ denotes the space of square integrable functions
with values in $\mathbb{V}$, and $H(\text{div},\Omega,\mathbb{S})$ denotes
the space of square-integrable functions with values in $\mathbb{S}$, whose
row-wise divergence lives in $L^2(\Omega,\mathbb{V})$.
The traction conditions 
are satisfied in the sense of traces for functions
from $H({\rm div},\Omega)$ (they live in $H^{-1/2}(\Gamma)$).

The corresponding static case can be derived formally by considering the so-called
Hellinger-Reissner functional, and the variational formulation is frequently
identified as the Hellinger-Reissner variational principle.

\subsubsection*{Dual-Mixed Formulation with Weakly Imposed Symmetry}

The symmetry condition is difficult to enforce on the discrete level.
This has led to the idea of relaxing the symmetric function and satisfying
it in a weaker, integral form. This is obtained by introducing
tensor-valued test functions $q$ with values in the space of {\em antisymmetric} 
tensors $\mathbb{K} := \{ q_{ij} \: : \: q_{ij} = - q_{ji}\}$, and replacing
the symmetry condition with an integral condition,
\begin{equation*}
\int_{\Omega} \sigma_{ij} q_{ij} = 0,\quad \forall q \in L^2(\Omega,\mathbb{K})
\end{equation*}
On the continuous level, the integral condition implies the pointwise
condition (understood in the $L^2$ sense), but on the discrete level,
with an appropriate choice of spaces, the integral condition does not necessary
imply the symmetry condition pointwise. 

The extra condition leads to an extra unknown. The derivation of the weak form of the
constitutive equation has to be revisited. We start by introducing the
{\em tensor of infinitesimal rotations},
\begin{equation*}
p_{ij} = \dfrac{1}{2}(u_{i,j} - u_{j,i})
\end{equation*}
Upon eliminating the strain tensor, the constitutive equation in the 
compliance form is now rewritten as,
\begin{equation*}
C_{ijkl} \sigma_{kl} = \dfrac{1}{2} (u_{i,j} + u_{j,i})
= u_{i,j} - p_{ij}
\end{equation*}
Multiplication with a test function $\tau$ (now, not necessarily symmetric),
integration over $\Omega$, and integration by parts, leads to a new
relaxed version of the equation,
\begin{equation*}
\int_{\Omega} C_{ijkl} \sigma_{kl} \tau_{ij}
 = - \int_{\Omega} u_i \tau_{ij,j} + \int_{\Gamma} u_i \tau_{ij} n_j
- \int_{\Omega} p_{ij} \tau_{ij}
\end{equation*}
We obtain a new variational
formulation in the form:
\begin{equation}
\left\{
\begin{array}{lllll}
\ds \sigma \in H({\rm div},\Omega,\mathbb{M}) \: : \: \sigma_{ij}n_j =
  g_i  \mbox{ on } 
\Gamma_2,\: 
 u \in L^2(\Omega,\mathbb{V}),\:
p \in L^2(\Omega,\mathbb{K})
\\[10pt]
\ds \int_{\Omega} C_{ijkl} \sigma_{kl} \tau_{ij}+u_i \tau_{ij,j}+p_{ij} \tau_{ij}
= \int_{\Gamma_1} u^0_i \tau_{ij} n_j 
\quad\forall \tau \in H({\rm div},\Omega,\mathbb{M}) 
: \tau_{ij}n_j = 0 \mbox{ on } \Gamma_2 \\[10pt]
\ds - \int_{\Omega} \sigma_{ij,j} v_i 
- \omega^2 \int_{\Omega} \rho u_i v_i
 = \int_{\Omega} f_i v_i \quad 
\forall v \in L^2(\Omega,\mathbb{V}) \\[10pt]
\ds  \int_{\Omega} \sigma_{ij} q_{ij} = 0\quad \forall q \in L^2(\Omega,\mathbb{K})
\end{array}
\right.
\label{complete_mixed_formulation_weak_symmetry}
\end{equation}
In the above, $H({\rm div},\Omega,\mathbb{M})$ denotes the space
of tensor-valued square integrable fields with a square integrable
divergence.
The formulation for the static case can again be derived formally by looking 
for a stationary point
of the so-called {\em Generalized Hellinger-Reissner} functional,
and it is frequently identified as the 
{\em Generalized Hellinger-Reissner Variational Principle}.

In the analysis presented in this paper, 
we consider the static case only and, for the sake of simplicity 
we assume that the body is fixed on the entire boundary $\partial\Omega$,
i.e. $\Gamma_2 = \emptyset$.

To connect with the notations used in \cite{AFW:2007:MMEW},\cite{AAW:2008:MMES}, 
we denote by $A$ the compliance operator.
\begin{equation*}
A:\mathbb{M}\ni\varepsilon_{ij}\longrightarrow\sigma_{ij}=C_{ijkl}\varepsilon_{kl}\in\mathbb{M}.
\end{equation*}

The operator $A$ maps tensors to tensors, is bounded, symmetric and,
for piecewise constant material properties, uniformly 
positive definite. Symmetry properties~(\ref{eq:minor_symmetries})
imply that $A$ maps $\mathbb{S}$
into itself.
In the isotropic case, $A$ has the form
\[
A\sigma=\frac{1}{2\mu}\left(  \sigma-\frac{\lambda}{2\mu+n\lambda}Tr\left(
\sigma\right)  I\right)  ,
\]
where $\lambda\left(  x\right)  ,\mu\left(  x\right)  $ are the Lam\'{e} coefficients.

We can rewrite then formulation (\ref{complete_mixed_formulation_strong_symmetry})
with a more compact, index free notation.

Find $\sigma\in H\left(  \text{div},\Omega;\mathbb{S}\right)  $,
and $u\in L^{2}\left(  \Omega;\mathbb{V}\right)  $, satisfying
\begin{gather}
\int_{\Omega}\left(  A\sigma:\tau+\text{div}\tau\cdot u\right)  \: dx=0,\text{
\ }\tau\in H\left(  \text{div},\Omega;\mathbb{S}\right)
,\label{strong_symm_formula_continuous}\\
\int_{\Omega}\text{div}\sigma\cdot v \: dx=\int_{\Omega}f\cdot v\: dx,\text{ \ }v\in
L^{2}\left(  \Omega;\mathbb{V}\right)  .\nonumber
\end{gather}

Similarly, the formulation (\ref{complete_mixed_formulation_weak_symmetry}) 
becomes to seek $\sigma\in
H\left(  \text{div},\Omega;\mathbb{M}\right)  $, $u\in L^{2}\left(
\Omega;\mathbb{V}\right)  $, and $p\in L^{2}\left(  \Omega;\mathbb{K}\right)
$ satisfying
\begin{gather}
\int_{\Omega}\left(  A\sigma:\tau+\text{div}\tau\cdot u+\tau:p\right)\: 
dx=0,\text{ \ }\tau\in H\left(  \text{div},\Omega;\mathbb{M}\right)
,\label{weak_symm_formula_continuous}\\
\int_{\Omega}\text{div}\sigma\cdot v\: dx=\int_{\Omega}f\cdot v\: dx,\text{ \ }v\in
L^{2}\left(  \Omega;\mathbb{V}\right)  ,\nonumber\\
\int_{\Omega}\sigma:q\: dx=0,\text{\ \ }q\in L^{2}\left(  \Omega;\mathbb{K}
\right)  .\nonumber
\end{gather}

\subsubsection*{Dual-Mixed Formulation with Weakly Imposed Symmetry for Plane Elasticity}

In two space dimensions, the skew-symmetric tensors involve a single non-zero
component $p$,
\begin{equation*}
\left(
\begin{array}{cc}
0 & p \\
-p & 0
\end{array}
\right)
\end{equation*}
The formulation (\ref{weak_symm_formula_continuous})  
reduces to seek $\sigma\in
H(\text{div},\Omega;\mathbb{M})$, $u\in L^{2}(\Omega;\mathbb{V})$, 
and $p\in L^{2}(\Omega)$ satisfying
\begin{gather}
\int_{\Omega}(A\sigma:\tau+\text{div}\tau\cdot u- S_{1}\tau \: p)\:
dx=0,\text{ \ }\tau\in H(\text{div},\Omega;\mathbb{M})
,\label{weak_symm_formula_continuous_plane}\\
\int_{\Omega}\text{div}\sigma\cdot v\: dx=\int_{\Omega}f\cdot v\: dx,\text{ \ }v\in
L^{2}(\Omega;\mathbb{V})  ,\nonumber\\
\int_{\Omega}S_{1}\sigma\:  q\: dx=0,\text{\ \ }q\in L^{2}(\Omega)  .\nonumber
\end{gather}
where operator $S_{1}$ maps a real $2\times 2$ matrix to a real number.
For any $\sigma\in\mathbb{R}^{2\times 2}$, 
\begin{equation}
\label{operator_S1}
S_{1}\sigma = \sigma_{12} - \sigma_{21}.
\end{equation}

\subsection{Review of the existing work}
Besides the Hellinger-Reissner and generalized Hellinger-Reissner principles discussed 
in the previous sections, there are other variational formulations for elasticity,
see \cite{ODENREDDY:1976:VMTM}. 

Stable finite element discretizations based on
(\ref{strong_symm_formula_continuous}) are difficult to construct. A
detailed description of related challenges can be founded in \cite{Falk:2008:FME}. 
A stable discretization based on the strong enforcement of symmetry condition
was developed in \cite{AAW:2008:MMES}. For the lowest order element, the number 
of degrees of freedom for stress tensor in 3D is $162$.

Meanwhile, a number of authors have developed approximation schemes based on 
(\ref{weak_symm_formula_continuous}). See \cite{AFW:2006:ECH,AFW:2007:MMEW,
Falk:2008:FME,AT:1979:EFEE,ABD:1984:MFEPE,AF:1988:NMFEE,FarhloulFortin:1997:DHES,
FdV:1975:SFA,Morley:1989:MFEE,SteinRolfes:1990:SOMFEPE,
Stenberg:1986:COMME,Stenberg:1988:FMME,Stenberg:1988:TLOMME}.
For a brief description of these methods, we refer to the 
introduction in \cite{AFW:2007:MMEW}. Recently, Cockburn, Gopalakrishnan 
and Guzman have developed a new mixed method for linear elasticity using a hybridized 
formulation based on (\ref{weak_symm_formula_continuous}) in \cite{CGG:2009:NEEMW}. 


The work presented in this paper is based on mixed finite element methods
developed by Arnold, Falk and Winther for the formulation
(\ref{weak_symm_formula_continuous}) in \cite{AFW:2006:ECH,Falk:2008:FME,AFW:2007:MMEW}. 
We restrict our analysis to plane linear elasticity problem only.

At the first glance, a generalization to elements of variable order seems to be 
straightforward. And this is indeed the case from the implementation point
of view. The stability analysis, however, is much more difficult. 
The canonical projection operators defined in \cite{AFW:2006:ECH} do not commute with
exterior derivative operators for variable order elements, a property essential in
the proof of discrete stability. In \cite{QD:2009:MMEW}, we resolved the problem 
by invoking projection based (PB) interpolation operators. Unfortunately, the PB operators 
do not commute with the operator $S_{n-2}$, invalidating construction
of operator $\tilde{W}_h$ introduced in the stability analysis presented 
in \cite{AFW:2006:ECH}
We resolved the problem in \cite{QD:2009:MMEW} by designing a new operator $\tilde{W}_{h}$. 
However, in our first contribution 
we were only able to prove the well-definedness 
of $\tilde{W}_{h}$ for $n=2$ with orders varying from $0$ to $3$.
In this paper, we propose PB operators of new type and a new operator 
$\tilde{W}_{h}$ that enable the $h$-stability analysis for 2D elements of variable
order\footnote{With an upper bound on the polynomial order.}.
Additionally, we analyze curvilinear elements and establish conditions for their
asymptotic $h$-stability.

\subsection{Scope of the paper}
An outline of the paper is as follows. Section 2 introduces notations.
Section 3 reviews definitions of finite element spaces on affine meshes.
In Section 4, we return to the mixed formulation for plane elasticity 
with weakly imposed symmetry, and recall Brezzi's conditions for stability.
In Section 5, we establish necessary results for proving the stability for 
affine meshes. We design new PB operators and a new operator $\tilde{W}_{h}$. 
In Section 6 we prove that the Brezzi conditions are satisfied for affine meshes.
The main result for affine meshes is Theorem~\ref{thm:main_theorem_affine}.
In Section 7, we discuss curvilinear meshes and geometry assumptions.
Section 8 reviews definitions of finite element spaces on a reference triangle, 
on a (physical) curved triangle, and on curved meshes.
In Section 9, we establish necessary results for proving the asymptotic stability for 
curvilinear meshes. This part of work is much more technical than that of Section 5 
because of the curvilinear meshes. We explain briefly the substantial difference 
between the analysis for curvilinear meshes and that for affine meshes at the beginning of 
Section~\ref{preliminary_curve}. In Section 10 we prove that the Brezzi conditions are 
satisfied asymptotically for curvilinear meshes,
i.e. for meshes that are sufficiently fine.
We conclude the paper with numerical experiments that illustrate and verify the 
presented theoretical results.

\section{Notations}

We denote by $T$ an arbitrary triangle in $\mathbb{R}^{2}$. And let $\hat{T}$ 
be the reference triangle.
We denote the set of subsimplexes of dimension $k$ of $T$ by $\triangle_{k}(T),
\: k=0,1,2$, 
and the set of all subsimplexes of $T$ by $\triangle (T)$. 
$\triangle_{0}(T)$ consists of all vertices of $T$, $\triangle_{1}(T)$ 
consists of all edges of $T$, and $\triangle_{2}(T)=\{T\}$. 

For $U$, a bounded open subset in $\mathbb{R}^{n}$, we define:
$$
C^{k}(\overline{U})= \{u\in C^{k}(U):D^{\alpha}\text{ is uniformly 
continuous on }U,\forall\vert\alpha\vert\leq k\}.
$$
We also define 
$$
C^{k,1}(\overline{U})=\{u\in C^{k}(\overline{U}):D^{\alpha}\text{ is Lipschitz on }
U,\forall\vert\alpha\vert =k\}.
$$
$\mathbb{M}$ will denote the space of $2\times 2$ real matrices. For any 
vector space $\mathbf{X}$, we denote by $L^{2}(\Omega;\mathbf{X})$
the space of square-integrable vector fields on $\Omega$ with values in $\mathbf{X}$.  
In the paper, $\mathbf{X}$ will be $\mathbb{R}$, $\mathbb{R}^{2}$, or $\mathbb{M}$.
When $\mathbf{X}=\mathbb{R}$, we will write $L^{2}(\Omega)$. The corresponding
norms will be denoted with the same symbol $L^{2}(\Omega; \mathbb{M})$.
The corresponding Sobolev space of order 
$m$, denoted $H^{m}(\Omega;\mathbf{X})$,
is a subspace of $L^{2}(\Omega;\mathbf{X})$ consisting of functions with 
all partial derivatives of order less than or equal to $m$ in $L^{2}(\Omega;\mathbf{X})$.
The corresponding norm will be denoted by $\Vert\cdot\Vert_{H^{m}(\Omega)}$.
The space $H(\text{div},\Omega;\mathbb{M})$ 
is defined by
\begin{equation*}
H(\text{div},\Omega;\mathbb{M})=\{\sigma\in L^{2}(\Omega;\mathbb{M}):\text{div}
\sigma\in L^{2}(\Omega;\mathbb{R}^{2})\},
\end{equation*}
where divergence of a matrix field is the vector field obtained by applying operator 
$\text{div}$ row-wise, i.e., 
\begin{equation*}
\text{div}\sigma=(\dfrac{\partial\sigma_{11}}{\partial x_{1}}
+\dfrac{\partial\sigma_{12}}{\partial x_{2}},\quad \dfrac{\partial\sigma_{21}}{\partial x_{1}}
+\dfrac{\partial\sigma_{22}}{\partial x_{2}})^{\top}.
\end{equation*}
We introduce also a special space 
$$
H(\Omega)=\{\omega\in H(\text{div},\Omega):\omega
|_{\partial\Omega}\in L^{2}(\partial\Omega;\mathbb{R}^{2})\}
$$ 
with the norm,
$$
\Vert\omega\Vert_{H(\Omega)}=\Vert\omega\Vert_{H(\text{div},\Omega)}
+\Vert\omega\Vert_{L^{2}(\partial\Omega)},\forall\omega\in H(\Omega).
$$
For any scalar function $u$ and any vector function $\omega$ with values 
in $\mathbb{R}^{2}$, we denote 
$$
\text{curl }u = (\dfrac{\partial u}{\partial x_{2}},
-\dfrac{\partial u}{\partial x_{1}})^{\top},\quad
\text{curl }\omega =\left[\begin{array}{cc}\dfrac{\partial\omega_{1}}{\partial x_{2}}, & 
-\dfrac{\partial\omega_{1}}{\partial x_{1}}\\ \dfrac{\partial\omega_{2}}{\partial x_{2}}, & 
-\dfrac{\partial\omega_{2}}{\partial x_{1}}\end{array}\right].
$$
Finally, by $\Vert\cdot\Vert$ we denote the standard $2$-norm for vectors and matrices.

\section{Finite element spaces}

We begin by introducing the relevant finite element spaces on a single triangle $T$. 
Then, we define the finite element spaces on a whole affine triangular mesh $\mathcal{T}_{h}$ 
by ``gluing" the finite element spaces on triangles.

\subsection{Finite element spaces on a single triangle}

For any $r\in\mathbb{Z}_{+}:=\{n\in\mathbb{Z}:n\geq 0\}$, we introduce
\begin{align}
\label{FEM_spaces_uniform_order_triangle}
&\mathcal{P}_{r}(T):=\{\text{space of polynomials of order } r \text{ on }T\},\\
& \mathcal{P}_{r}\Lambda^{0}(T) = \mathcal{P}_{r}\Lambda^{2}(T)
:=\mathcal{P}_{r}(T),
\mathcal{P}_{r}\Lambda^{1}(T) := [\mathcal{P}_{r}(T)]^2, \nonumber \\
& \mathcal{\mathring{P}}_{r}\Lambda^{0}(T) = \mathcal{\mathring{P}}_{r}(T) :=\{w\in\mathcal{P}_{r}
\Lambda^{0}(T):w|_{e}=0, \forall e\in\triangle_{1}(T)\},\nonumber \\
& \mathcal{P}_{r}^{-}\Lambda^{1}(T):=[\mathcal{P}_{r-1}(T)]^{2}
+(x_{1},x_{2})^{\top}\mathcal{P}_{r-1}(T),\nonumber \\
& \mathcal{P}_{r}\Lambda^{0}(T;\mathbb{R}^{2})=\mathcal{P}_{r}\Lambda^{2}(T;\mathbb{R}^{2})
:=[\mathcal{P}_{r}(T)]^{2},\nonumber\\
& \mathcal{P}_{r}\Lambda^{1}(T;\mathbb{R}^{2}) := \left\{\left[ \begin{array}{cc}\sigma_{11} & \sigma_{12}\\
\sigma_{21} & \sigma_{22} \end{array}\right]:(\sigma_{11},\sigma_{12})^{\top}, (\sigma_{21},\sigma_{22})^{\top}
\in \mathcal{P}_{r}\Lambda^{1}(T)\right\}.\nonumber
\end{align}

In \cite{AFW:2006:ECH,Falk:2008:FME}, spaces in (\ref{FEM_spaces_uniform_order_triangle}) 
are defined in the language of exterior calculus. Here we just rewrite them 
using the language of calculus. 
We refer to \cite{AFW:2006:ECH} and \cite{Falk:2008:FME} for
a detailed correspondence before the exterior and classical calculus notations.

\bigskip
We denote by $\tilde{r}$ a mapping from $\triangle (T)$ 
to $\mathbb{Z}_{+}$ such that if $e,f\in\Delta(T)$ and $e\subset f$ then
$\tilde{r}(e)  \leq\tilde{r}(f)$.
We introduce now formally the FE spaces of variable order.

\begin{definition}
\label{FEM_spaces_single_triangle_variable_order}
\[
\mathcal{P}_{\tilde{r}}\Lambda^{0}(T):=
\{u\in\mathcal{P}_{\tilde{r}(T)}\Lambda^{0}(T):
\forall e\in\triangle_{1}(T) \text{, } u|_{e}
\in\mathcal{P}_{\tilde{r}(e)}(e)\},
\]
\[
\mathcal{P}_{\tilde{r}}\Lambda^{2}(T):=
\mathcal{P}_{\tilde{r}(T)}\Lambda^{2}(T)
=\mathcal{P}_{\tilde{r}(T)}(T),
\]
\[
\mathcal{P}_{\tilde{r}}\Lambda^{1}(T):=
\{\omega\in\mathcal{P}_{\tilde{r}(T)}\Lambda^{1}(T):
\forall e\in\triangle_{1}(T) \text{, } \omega\cdot \mathbf{n}|_{e}
\in\mathcal{P}_{\tilde{r}(e)}(e)\},
\]
\[
\mathcal{P}_{\tilde{r}}^{-}\Lambda^{1}(T):=
\{\omega\in\mathcal{P}_{\tilde{r}(T)}^{-}\Lambda^{1}(T):
\forall e\in\triangle_{1}(T) \text{, } \omega\cdot \mathbf{n}|_{e}
\in\mathcal{P}_{\tilde{r}(e)-1}(e)\},
\]
\[
\mathcal{P}_{\tilde{r}}\Lambda^{0}(T;\mathbb{R}^{2}):=
[\mathcal{P}_{\tilde{r}}\Lambda^{0}(T)]^{2},\quad
\mathcal{P}_{\tilde{r}}\Lambda^{2}
(T;\mathbb{R}^{2}):=[\mathcal{P}_{\tilde{r}}\Lambda^{2}(T)]^{2},
\]
\[
\mathcal{P}_{\tilde{r}}\Lambda^{1}(T;\mathbb{R}^{2}):=\left\{\left[ \begin{array}{cc}\sigma_{11} & \sigma_{12}\\
\sigma_{21} & \sigma_{22} \end{array}\right]:(\sigma_{11},\sigma_{12})^{\top}, (\sigma_{21},\sigma_{22})^{\top}\in
\mathcal{P}_{\tilde{r}}\Lambda^{1}(T)\right\}.
\]
Here $\mathbf{n}$ is the outward normal unit vector along $\partial T$.
\end{definition}

\begin{remark}
According to \cite{AFW:2006:ECH}, for any $e\in\triangle_{1}(T)$, we have 
\begin{equation*}
\mathcal{P}_{\tilde{r}}\Lambda^{0}(T)|_{e} =\mathcal{P}_{\tilde{r}(e)}(e), 
\mathcal{P}_{\tilde{r}}\Lambda^{1}(T)|_{e}\cdot\mathbf{n} =
\mathcal{P}_{\tilde{r}(e)}(e), 
\mathcal{P}_{\tilde{r}}^{-}\Lambda^{1}(T)|_{e}\cdot\mathbf{n} =
\mathcal{P}_{\tilde{r}(e)-1}(e).
\end{equation*}

Actually, spaces in Definition~\ref{FEM_spaces_single_triangle_variable_order} 
have been given in \cite{QD:2009:MMEW} with the language of exterior calculus.
\end{remark}



\subsection{Finite element spaces on an affine triangular mesh}

Let $\mathcal{T}_{h}$ be an affine triangular mesh. We extend the $\tilde{r}$ 
mapping to a global map defined on  $\triangle (\mathcal{T}_{h})$ with values in 
$\mathbb{Z}_{+}$ such that if $e\subset f$, then $\tilde{r}(e)\leq \tilde{r}(f)$. 

\begin{definition}
\label{FE_space_affine_triangulation}
We put $\Omega_{h}:=\bigcup_{T\in\mathcal{T}_{h}}T$.
\begin{align*}
C\Lambda^{0}(\mathcal{T}_{h}) & :=\{u\in H^{1}(\Omega_{h}): u 
\text{ is piece-wise smooth with respect to }\mathcal{T}_{h}\},\\
C\Lambda^{1}(\mathcal{T}_{h}) & :=\{\omega\in H(\text{div},\Omega_{h}): \omega 
\text{ is piece-wise smooth with respect to }\mathcal{T}_{h}\},\\
C\Lambda^{2}(\mathcal{T}_{h}) & :=\{u\in L^{2}(\Omega_{h}): u 
\text{ is piece-wise smooth with respect to }\mathcal{T}_{h}\}.
\end{align*}

We define
\begin{align*}
\mathcal{P}_{\tilde{r}}\Lambda^{0}(\mathcal{T}_{h}) & :=\{u\in C\Lambda^{0}(\mathcal{T}_{h}):
u|_{T}\in\mathcal{P}_{\tilde{r}}\Lambda^{0}(T), \forall T\in\mathcal{T}_{h}\},\\
\mathcal{P}_{\tilde{r}}\Lambda^{1}(\mathcal{T}_{h}) & :=\{\omega\in C\Lambda^{1}(\mathcal{T}_{h}):
\omega |_{T}\in\mathcal{P}_{\tilde{r}}\Lambda^{1}(T), \forall T\in\mathcal{T}_{h}\},\\
\mathcal{P}_{\tilde{r}}^{-}\Lambda^{1}(\mathcal{T}_{h}) & :=\{\omega\in C\Lambda^{1}(\mathcal{T}_{h}):
\omega |_{T}\in\mathcal{P}_{\tilde{r}}^{-}\Lambda^{1}(T), \forall T\in\mathcal{T}_{h}\},\\
\mathcal{P}_{\tilde{r}}\Lambda^{2}(\mathcal{T}_{h}) & :=\{u\in C\Lambda^{2}(\mathcal{T}_{h}):
u|_{T}\in\mathcal{P}_{\tilde{r}}\Lambda^{2}(T), \forall T\in\mathcal{T}_{h}\},\\
\mathcal{P}_{\tilde{r}}\Lambda^{0}(\mathcal{T}_{h};\mathbb{R}^{2}) & :=[\mathcal{P}_{\tilde{r}}
\Lambda^{0}(\mathcal{T}_{h})]^{2}, \mathcal{P}_{\tilde{r}}\Lambda^{2}(\mathcal{T}_{h};\mathbb{R}^{2})
:=[\mathcal{P}_{\tilde{r}}\Lambda^{2}(\mathcal{T}_{h})]^{2},\\
\mathcal{P}_{\tilde{r}}\Lambda^{1}(\mathcal{T}_{h};\mathbb{R}^{2}) & := \left\{\left[ \begin{array}{cc}
\sigma_{11} & \sigma_{12}\\\sigma_{21} & \sigma_{22} \end{array}\right]:(\sigma_{11},\sigma_{12})^{\top}, 
(\sigma_{21},\sigma_{22})^{\top}\in\mathcal{P}_{\tilde{r}}\Lambda^{1}(\mathcal{T}_{h},\mathbb{R}^{2})\right\}.
\end{align*}
\end{definition}

\begin{remark}
Spaces $\mathcal{P}_{\tilde{r}}\Lambda^{0}(\mathcal{T}_{h}),\mathcal{P}_{\tilde{r}}\Lambda^{1}(\mathcal{T}_{h}),
\mathcal{P}_{\tilde{r}}^{-}\Lambda^{1}(\mathcal{T}_{h}),\mathcal{P}_{\tilde{r}}\Lambda^{2}(\mathcal{T}_{h})$ 
coincide with those analyzed in \cite{QD:2009:MMEW}. 
\end{remark}

\begin{lemma}
\label{embedding_affine_triangulation}
\begin{align*}
& \mathcal{P}_{\tilde{r}}\Lambda^{1}(\mathcal{T}_{h})\subset\mathcal{P}_{\tilde{r}+1}^{-}\Lambda^{1}
(\mathcal{T}_{h})\subset\mathcal{P}_{\tilde{r}+1}\Lambda^{1}(\mathcal{T}_{h}), \\
& \text{div}\mathcal{P}_{\tilde{r}+1}\Lambda^{1}(\mathcal{T}_{h})\subset
\mathcal{P}_{\tilde{r}}\Lambda^{2}(\mathcal{T}_{h}),
\text{curl}\mathcal{P}_{\tilde{r}+1}\Lambda^{0}(\mathcal{T}_{h})\subset
\mathcal{P}_{\tilde{r}}\Lambda^{1}(\mathcal{T}_{h}).
\end{align*} 
\end{lemma}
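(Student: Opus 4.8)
The plan is to reduce each of the four inclusions to a purely local statement on a single triangle together with a check that the relevant global conformity is preserved. Every space in the lemma is carved out of the corresponding conforming space $C\Lambda^{k}(\mathcal{T}_{h})$ (which demands $H^{1}$ for $k=0$, $H(\text{div})$ for $k=1$, and merely $L^{2}$ for $k=2$) by imposing the element-wise membership and edge constraints of Definition~\ref{FEM_spaces_single_triangle_variable_order}. So first I would establish the triangle-wise inclusions, and then argue that the continuity or normal-continuity requirement carries over to the image.

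For the two chain inclusions I would simply compare definitions triangle by triangle, since both sides already lie in $C\Lambda^{1}(\mathcal{T}_{h})=H(\text{div},\Omega_{h})$ and the conformity is inherited automatically. For $\mathcal{P}_{\tilde{r}}\Lambda^{1}(T)\subset\mathcal{P}_{\tilde{r}+1}^{-}\Lambda^{1}(T)$ one has $\mathcal{P}_{\tilde{r}(T)}\Lambda^{1}(T)=[\mathcal{P}_{\tilde{r}(T)}(T)]^{2}\subset[\mathcal{P}_{\tilde{r}(T)}(T)]^{2}+(x_{1},x_{2})^{\top}\mathcal{P}_{\tilde{r}(T)}(T)=\mathcal{P}_{\tilde{r}(T)+1}^{-}\Lambda^{1}(T)$, while the edge constraints coincide because $(\tilde{r}+1)(e)-1=\tilde{r}(e)$. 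For $\mathcal{P}_{\tilde{r}+1}^{-}\Lambda^{1}(T)\subset\mathcal{P}_{\tilde{r}+1}\Lambda^{1}(T)$ the radial term $(x_{1},x_{2})^{\top}q$ raises the polynomial degree by one, landing in $[\mathcal{P}_{\tilde{r}(T)+1}(T)]^{2}$, and the edge constraint merely relaxes from $\mathcal{P}_{\tilde{r}(e)}(e)$ to $\mathcal{P}_{\tilde{r}(e)+1}(e)$.

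For the divergence inclusion I would note that $\omega\in\mathcal{P}_{\tilde{r}+1}\Lambda^{1}(\mathcal{T}_{h})$ lies in $H(\text{div},\Omega_{h})$ by definition, so its distributional divergence is an $L^{2}$ function coinciding element-wise with the classical divergence. On each $T$ we have $\omega|_{T}\in[\mathcal{P}_{\tilde{r}(T)+1}(T)]^{2}$, hence $\text{div}(\omega|_{T})\in\mathcal{P}_{\tilde{r}(T)}(T)=\mathcal{P}_{\tilde{r}}\Lambda^{2}(T)$; since $\Lambda^{2}$ carries no edge constraint and $C\Lambda^{2}$ only asks for piecewise smoothness in $L^{2}$, this yields $\text{div}\,\omega\in\mathcal{P}_{\tilde{r}}\Lambda^{2}(\mathcal{T}_{h})$.

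The curl inclusion is where the argument has genuine content, and this is the step I expect to be the main obstacle. The key observation is the identity $(\text{curl}\,u)\cdot\mathbf{n}=\partial_{t}u$ on each edge $e$, where $\partial_{t}$ is the tangential derivative; this follows from $\mathbf{t}=(-n_{2},n_{1})^{\top}$ and the definition of $\text{curl}$. Locally, $u|_{T}\in\mathcal{P}_{\tilde{r}(T)+1}(T)$ gives $\text{curl}\,u|_{T}\in[\mathcal{P}_{\tilde{r}(T)}(T)]^{2}$, and since $u|_{e}\in\mathcal{P}_{\tilde{r}(e)+1}(e)$ the tangential derivative $(\text{curl}\,u)\cdot\mathbf{n}|_{e}=\partial_{t}(u|_{e})\in\mathcal{P}_{\tilde{r}(e)}(e)$, so $\text{curl}\,u|_{T}\in\mathcal{P}_{\tilde{r}}\Lambda^{1}(T)$. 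The delicate point is the global $H(\text{div})$ conformity: because $u\in C\Lambda^{0}(\mathcal{T}_{h})=H^{1}(\Omega_{h})$ is continuous across interior edges, its trace, and hence its tangential derivative, is single-valued along each edge; therefore the normal component of $\text{curl}\,u$ matches from both sides, no distributional jump appears in $\text{div}(\text{curl}\,u)$ (which is classically zero on each element), and $\text{curl}\,u\in C\Lambda^{1}(\mathcal{T}_{h})$. Combined with the local membership this gives $\text{curl}\,u\in\mathcal{P}_{\tilde{r}}\Lambda^{1}(\mathcal{T}_{h})$. The crux, then, is recognizing that conformity of the image is inherited from the $H^{1}$-continuity of the argument via the tangential-derivative identity, rather than being a mere degree count.
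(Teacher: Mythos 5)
Your proof is correct and complete; the paper dismisses this lemma with ``This is straightforward'' and gives no details, and your element-by-element degree counts together with the identity $\mathrm{curl}\,u\cdot\mathbf{n}=\partial_{t}u$ for the $H(\mathrm{div})$-conformity of the curl are exactly the routine verification the authors intend.
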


\begin{proof}
This is straightforward.
\end{proof}

\section{Mixed formulation for elasticity with weakly imposed
symmetry}

We assume that there is $r_{\max}\in\mathbb{N}$ such that,
 for any $h>0$ and $f\in\Delta (\mathcal{T}_{h})  $,
$\tilde{r}(f)  \leq r_{\max}$.

We recall the mixed formulation~(\ref{weak_symm_formula_continuous}): Find $(\sigma,u,p)  \in
H(\text{div},\Omega;\mathbb{M}) \times L^{2}(\Omega;\mathbb{R}^{2})  
\times L^{2}(\Omega)$ such that
\begin{align}
\left\langle A\sigma,\tau\right\rangle +\left\langle \text{div}\tau,u\right\rangle
-\left\langle S_{1}\tau,p\right\rangle  &  =0,\text{ \ }\tau\in H(\text{div},\Omega;\mathbb{M})
,\label{weak_symmetry_differential_form}\\
\left\langle \text{div}\sigma,v\right\rangle  &  =\left\langle f,v\right\rangle ,\text{
\ }v\in L^{2}(\Omega;\mathbb{R}^{2})  ,\nonumber\\
\left\langle S_{1}\sigma,q\right\rangle  &  =0,\text{ \ }q\in L^{2}(\Omega)  .\nonumber
\end{align}
Here $\left\langle \cdot,\cdot\right\rangle$ is the standard $L^{2}$ inner product on $\Omega$.
This problem is well-posed.
See \cite{AFW:2006:ECH} and \cite{Falk:2008:FME} for the proof.

We consider now a finite element discretization of
(\ref{weak_symmetry_differential_form}). For this, we choose families of
finite-dimensional subspaces
\[
\Lambda_{h}^{1}(\mathbb{M})  \subset H(\text{div},\Omega;\mathbb{M}), 
\Lambda_{h}^{2}(\mathbb{R}^{2})  \subset
L^{2}(\Omega;\mathbb{R}^{2})  ,\Lambda_{h}^{2}
\subset L^{2}(\Omega),
\]
indexed by $h$, and seek a discrete solution $(\sigma_{h},u_{h}
,p_{h})  \in\Lambda_{h}^{1}(\mathbb{M})  \times
\Lambda_{h}^{2}(\mathbb{R}^{2})  \times\Lambda_{h}^{2}$ such that
\begin{align}
\left\langle A\sigma_{h},\tau\right\rangle +\left\langle \text{div}\tau,u_{h}
\right\rangle -\left\langle S_{1}\tau,p_{h}\right\rangle  &  =0,\text{ \ }\tau
\in\Lambda_{h}^{1}(\mathbb{M})
,\label{weak_symmetry_differential_form_discrete}\\
\left\langle \text{div}\sigma_{h},v\right\rangle  &  =\left\langle f,v\right\rangle
,\text{ \ }v\in\Lambda_{h}^{2}(\mathbb{R}^{2})  ,\nonumber\\
\left\langle S_{1}\sigma_{h},q\right\rangle  &  =0,\text{ \ }q\in\Lambda_{h}^{2}  .\nonumber
\end{align}
The stability of (\ref{weak_symmetry_differential_form_discrete}) will be
ensured by the Brezzi stability conditions:
\begin{gather}
\text{(S1) }\left\Vert \tau\right\Vert _{H(\text{div},\Omega;\mathbb{M})}^{2}\leq c_{1}
(A\tau,\tau)  \text{ whenever }\tau\in\Lambda_{h}^{1}
(\mathbb{R}^{2})  \text{ satisfies }\left\langle \text{div}\tau,v\right\rangle
=0\label{S1_condition}\\
\forall v\in\Lambda_{h}^{2}(\mathbb{R}^{2})  \text{ and }\left\langle
S_{1}\tau,q\right\rangle =0\text{ }\forall q\in\Lambda_{h}^{2}  ,\nonumber
\end{gather}
\begin{gather}
\text{(S2) for all nonzero }\left(  v,q\right)  \in\Lambda_{h}^{2}
(\mathbb{R}^{2})  \times\Lambda_{h}^{2}  \text{,
there exists nonzero}\label{S2_condition}\\
\tau\in\Lambda_{h}^{1}(\mathbb{R}^{2})  \text{ with }\left\langle
\text{div}\tau,v\right\rangle -\left\langle S_{1}\tau,q\right\rangle \geq 
c_{2}\left\Vert\tau\right\Vert_{H(\text{div},\Omega;\mathbb{M})}
(\Vert v\Vert_{L^{2}(\Omega;\mathbb{R}^{2})} +\Vert q\Vert_{L^{2}(\Omega)} )  ,\nonumber
\end{gather}
where constants $c_{1}$ and $c_{2}$ are independent of $h$.

For meshes of arbitrary but uniform order, conditions (\ref{S1_condition})
and (\ref{S2_condition}) have been proved in \cite{AFW:2006:ECH} and
\cite{Falk:2008:FME}. In what follows, we will demonstrate that they are also
satisfied for (2D) meshes with elements of variable (but limited) order.

Before presenting our proof, we would like to comment shortly on 
difficulties encountered in proving stability for generalizing 
AFW elements with variable order. 
As we have shown in Section 3, it is rather straightforward to generalize 
AFW elements to the variable order case.
The following 
commuting diagrams are essential in the stability proof from 
\cite{AFW:2007:MMEW,AFW:2006:ECH,Falk:2008:FME},

{\scriptsize
\begin{align}
\label{three_diagrams}
\begin{tabular}
[c]{lllllllll}
$H^{1}(\Omega;\mathbb{R}^{2})$ & $\overset{\text{div}}
{\longrightarrow}$ & $L^{2}(\Omega)$ & $H^{1}(\Omega;\mathbb{R}^{2})$
& $\overset{\text{div}}{\longrightarrow}$ & $L^{2}(\Omega)$ 
& $H^{1}(\Omega;\mathbb{R}^{2})$ & $\overset{Id}{\longrightarrow}$ 
& $H^{1}(\Omega;\mathbb{R}^{2})$\\
 & & & & & & & &\\
$\Pi_{h}^{1,-}\downarrow$ & & $\Pi_{h}^{2}\downarrow$ & 
$\Pi_{h}^{1}\downarrow$ & & $\Pi_{h}^{2}\downarrow$ & 
$\Pi_{h}^{0}\downarrow$ & & $\Pi_{h}^{1,-}\downarrow$\\
 & & & & & & & &\\
 $\mathcal{P}_{r+1}^{-}\Lambda^{1}(\mathcal{T}_{h})$ & 
$\overset{\text{div}}{\longrightarrow}$ & 
$\mathcal{P}_{r}\Lambda^{2}(\mathcal{T}_{h})$ & 
$\mathcal{P}_{r+1}\Lambda^{1}(\mathcal{T}_{h})$ & 
$\overset{\text{div}}{\longrightarrow}$ & 
$\mathcal{P}_{r}\Lambda^{2}(\mathcal{T}_{h})$ &
$\mathcal{P}_{r+2}\Lambda^{0}(\mathcal{T}_{h};\mathbb{R}^{2})$ & 
$\overset{\Pi_{h}^{1,-}\circ Id}{\longrightarrow}$ &
$\mathcal{P}_{r+1}^{-}\Lambda^{1}(\mathcal{T}_{h})$
\end{tabular}
\end{align}
}

When uniform order $r$ is replaced by variable order $\tilde{r}$, then the left and middle 
diagrams do not commute if $\Pi_{h}^{1,-}$ and $\Pi_{h}^{1}$ are natural generalizations of 
canonical projection operators introduced in \cite{AFW:2007:MMEW,AFW:2006:ECH,Falk:2008:FME}. 
A counterexample is given in the appendix of \cite{QD:2009:MMEW}. 
To our best knowledge, the only operators 
which make these two diagrams commute for meshes with variable order,
are projection based interpolation 
operators introduced in \cite{QD:2009:MMEW}. 
But then we need to construct an operator $\Pi_{h}^{0}$ that makes the right diagram commute 
if $\Pi_{h}^{1,-}$ is the projection based interpolation operator. 
We replaced the operator $\Pi_{h}^{0}$ with a new operator
$W_{h}$, constructed in \cite{QD:2009:MMEW}, but we were able to prove its well-definedness 
only for $0\leq \tilde{r}(T) \leq 3$. 
In the following sections we pursue a different strategy, defining {\em new}
projection 
based interpolation $\Pi_{h}^{1,-}, \Pi_{h}^{1}$ operators, and a new operator $W_{h}$ 
in such a way 
that we can make all three 
diagrams 
commute.


\section{Preliminaries for the proof of stability\label{preliminary_affine}}

We begin by recalling our assumptions on the domain and meshes: $\Omega$ is a
polygon and it is meshed with a family $(\mathcal{T}_{h})_{h}$ of 
affine triangular meshes satisfying assumption of regularity. For any mesh $\mathcal{T}_{h}$, 
mapping $\tilde{r} \: : \: \triangle (\mathcal{T}_{h}) \rightarrow \mathbb{Z}_{+}$ 
defines a locally variable order of discretization that satisfies the minimum rule,
$\tilde{r}(e) \leq\tilde{r}(f)$ for $e\subset f,\: e,f\in\triangle_{T}$. 
The maximum order is limited, i.e. $\sup_{h}\sup_{T\in\mathcal{T}_{h}}\tilde{r}(T)<\infty$. 

\begin{definition}
For any $T\in\mathcal{T}_{h}$, we define a linear operator 
$\Pi_{\tilde{r},T}^{2}$ as the standard orthogonal projection operator 
from $L^{2}(T)$ onto $\mathcal{P}_{\tilde{r}}\Lambda^{2}(T)$.
\end{definition}

\subsection{Projection Based Interpolation onto $\mathcal{P}_{\tilde{r}+1}\Lambda^{1}(T)$}

\begin{definition}
For any $T\in\mathcal{T}_{h}$, we define a linear operator 
$\Pi_{\tilde{r}+1,T}^{1}:H^{1}(T;\mathbb{R}^{2})
\longrightarrow \mathcal{P}_{\tilde{r}+1}\Lambda^{1}(T)$
by the relations
\begin{equation}
\int_{T}\text{div}(\Pi_{\tilde{r}+1,T}^{1}\omega-\omega)(\mathbf{x})
\psi (\mathbf{x}) 
d\mathbf{x}=0\quad
\forall \psi
\in\mathcal{P}_{\tilde{r}(T)}(T)/\mathbb{R}
\label{interior_div}
\end{equation}
\begin{equation}
\int_{T}(\Pi_{\tilde{r}+1,T}^{1}\omega(\mathbf{x})-\omega(\mathbf{x}))^{\top}
\text{curl}
\varphi (\mathbf{x}) d\mathbf{x}=0 \quad
\forall \varphi
\in\mathcal{\mathring{P}}_{\tilde{r}(T)+2}(T) 
\label{interior_aux}
\end{equation}
\begin{equation}
\int_{e}[(\Pi_{\tilde{r}+1,T}^{1}\omega-\omega)
\cdot\mathbf{n}]\eta(s) ds = 0 \quad
\forall \eta\in\mathcal{P}_{\tilde{r}(e)+1}(e)\: , \forall e\in\triangle_{1}(T)
\label{H_div_edge}
\end{equation}
Here $\mathbf{n}$ is a unit outward normal vector along $e$.
\end{definition}

The operator $\Pi_{\tilde{r}+1,T}^{1}$ is the Projection Based Interpolation operator onto 
$\mathcal{P}_{\tilde{r}+1}\Lambda^{1}(T)$ defined in \cite{QD:2009:MMEW}. We immediately 
have the following two lemmas.

\begin{lemma}
\label{commuting_diagram1}
For any $T\in\mathcal{T}_{h}$, and any $\omega\in [H^{1}(T)]^2$, we have
\begin{equation*}
\Pi_{\tilde{r},T}^{2}\text{div}\omega = \text{div}\Pi_{\tilde{r}+1,T}^{1}\omega.
\end{equation*}
\end{lemma}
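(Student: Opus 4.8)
The plan is to verify the identity by testing both sides against the polynomial space $\mathcal{P}_{\tilde{r}(T)}(T)$ and invoking the defining relations of $\Pi_{\tilde{r}+1,T}^{1}$. First I would observe that both sides live in the same space: by the second line of Lemma~\ref{embedding_affine_triangulation}, $\text{div}\,\mathcal{P}_{\tilde{r}+1}\Lambda^{1}(T)\subset\mathcal{P}_{\tilde{r}}\Lambda^{2}(T)=\mathcal{P}_{\tilde{r}(T)}(T)$, so $\text{div}\,\Pi_{\tilde{r}+1,T}^{1}\omega\in\mathcal{P}_{\tilde{r}(T)}(T)$; and $\Pi_{\tilde{r},T}^{2}\text{div}\,\omega$ lies there by definition. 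Two elements of $\mathcal{P}_{\tilde{r}(T)}(T)$ coincide iff they have equal $L^{2}(T)$ inner products against every $\psi\in\mathcal{P}_{\tilde{r}(T)}(T)$, so the task reduces to establishing
\[
\int_{T}\text{div}\big(\Pi_{\tilde{r}+1,T}^{1}\omega\big)\,\psi\,d\mathbf{x}
=\int_{T}\big(\Pi_{\tilde{r},T}^{2}\text{div}\,\omega\big)\,\psi\,d\mathbf{x}
\qquad\forall\psi\in\mathcal{P}_{\tilde{r}(T)}(T).
\]

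Next I would use that $\Pi_{\tilde{r},T}^{2}$ is the $L^{2}(T)$-orthogonal projection onto $\mathcal{P}_{\tilde{r}(T)}(T)$, so the right-hand side equals $\int_{T}\text{div}\,\omega\,\psi\,d\mathbf{x}$ for every such $\psi$. Thus it suffices to show $\int_{T}\text{div}\big(\Pi_{\tilde{r}+1,T}^{1}\omega-\omega\big)\psi\,d\mathbf{x}=0$ for all $\psi\in\mathcal{P}_{\tilde{r}(T)}(T)$. For test functions modulo constants this is exactly the interior moment condition~(\ref{interior_div}), which furnishes the vanishing for all $\psi\in\mathcal{P}_{\tilde{r}(T)}(T)/\mathbb{R}$.

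The remaining, and most delicate, point is the constant test function, which condition~(\ref{interior_div}) does not control. Here I would take $\psi\equiv1$ and apply the divergence theorem, writing
\[
\int_{T}\text{div}\big(\Pi_{\tilde{r}+1,T}^{1}\omega-\omega\big)\,d\mathbf{x}
=\int_{\partial T}\big(\Pi_{\tilde{r}+1,T}^{1}\omega-\omega\big)\cdot\mathbf{n}\,ds
=\sum_{e\in\triangle_{1}(T)}\int_{e}\big(\Pi_{\tilde{r}+1,T}^{1}\omega-\omega\big)\cdot\mathbf{n}\,ds.
\]
Each edge integral vanishes by the edge condition~(\ref{H_div_edge}) with the choice $\eta\equiv1\in\mathcal{P}_{\tilde{r}(e)+1}(e)$, since constants belong to $\mathcal{P}_{\tilde{r}(e)+1}(e)$. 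Combining the quotient case with this constant case gives the vanishing against all of $\mathcal{P}_{\tilde{r}(T)}(T)$, and the lemma follows. I expect the only real subtlety to be this separate treatment of the constant mode: the interior relation is deliberately posed on $\mathcal{P}_{\tilde{r}(T)}(T)/\mathbb{R}$, and it is precisely the compatibility built into the edge normal-trace condition that supplies the missing constant, so the two interior and boundary degrees of freedom must be used in tandem rather than in isolation.
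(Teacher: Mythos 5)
Your proof is correct and follows essentially the same route as the paper: the paper states this lemma without proof (citing the PB interpolation construction of the companion paper), but its argument for the analogous commutativity of $\Pi_{\tilde{r}+1,\hat{T},t}^{1,-}$ inside Lemma~\ref{PB_modifiled_well_defined} uses exactly your decomposition $\psi=\chi+c$, handling the quotient part via the interior divergence condition and the constant via integration by parts together with the edge normal-trace conditions. Your explicit treatment of the constant mode with $\eta\equiv 1$ in~(\ref{H_div_edge}) is precisely the one subtle point, and you have it right.
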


\begin{lemma}
\label{H_div_inequality1}
There exists $C>0$ such that
\begin{equation*}
\Vert \Pi_{\tilde{r},T}^{1}\omega\Vert_{L^{2}(T)}\leq C\Vert \omega\Vert_{H^{1}(T)} \quad
\forall T\in\mathcal{T}_{h},\omega\in H^{1}(T;\mathbb{R}^{2}).
\end{equation*}
\end{lemma}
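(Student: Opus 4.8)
The plan is to reduce the estimate to the reference triangle $\hat T$ by an affine map combined with the contravariant Piola transform, prove a uniform bound there, and transfer it back by scaling. Write $F_T(\hat x)=B_T\hat x+b_T$ for the affine map $\hat T\to T$ and let $\mathcal P_T$ denote the associated Piola push-forward, $(\mathcal P_T\hat\omega)(x)=\tfrac{1}{\det B_T}B_T\hat\omega(F_T^{-1}(x))$, with pullback $\hat\omega=\mathcal P_T^{-1}\omega$. Under shape regularity one has $\|B_T\|\sim h_T$, $\|B_T^{-1}\|\sim h_T^{-1}$, $|\det B_T|\sim h_T^{2}$, and the standard Piola scaling estimates give $\|\omega\|_{L^2(T)}\sim\|\hat\omega\|_{L^2(\hat T)}$ and $|\hat\omega|_{H^1(\hat T)}\le C\,h_T\,|\omega|_{H^1(T)}$, with $C$ depending only on the shape-regularity constant. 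Consequently, once we know $\|\hat\Pi\hat\omega\|_{L^2(\hat T)}\le\hat C\,\|\hat\omega\|_{H^1(\hat T)}$ for the pulled-back operator $\hat\Pi$, we obtain $\|\Pi_{\tilde r+1,T}^1\omega\|_{L^2(T)}\le C\big(\|\omega\|_{L^2(T)}+h_T|\omega|_{H^1(T)}\big)\le C\|\omega\|_{H^1(T)}$, using $h_T\le\operatorname{diam}\Omega$ to absorb the factor into the constant.

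Next I would identify the pulled-back operator. Conditions (\ref{interior_div}) and (\ref{H_div_edge}) are invariant under the Piola transform: $\operatorname{div}$ transforms by the factor $1/\det B_T$, the contravariant Piola transform preserves edge normal traces, and affine maps preserve polynomial degrees together with the constraint spaces $\mathcal P_{\tilde r(T)}(T)/\mathbb R$ and $\mathcal P_{\tilde r(e)+1}(e)$. The interior condition (\ref{interior_aux}) is the delicate one. Integrating by parts and using $\varphi|_{\partial T}=0$, it reads $\operatorname{rot}(\Pi\omega-\omega)\perp_{L^2(T)}\mathring{\mathcal P}_{\tilde r(T)+2}(T)$; and because the $90^\circ$-rotation of a contravariant-Piola field equals the covariant-Piola transform of the rotated reference field (via the identity $B_T^\top R B_T=(\det B_T)R$ with $R=\bigl[\begin{smallmatrix}0&1\\-1&0\end{smallmatrix}\bigr]$), this pulls back to an orthogonality on $\hat T$ taken in the inner product with Gram matrix $B_T^{-1}B_T^{-\top}$ rather than the Euclidean one. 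Thus $\hat\Pi$ is \emph{not} a single fixed operator: it depends on $B_T$ only through the metric $G_T:=B_T^{-1}B_T^{-\top}/|\det B_T^{-1}|$, normalized to unit determinant, which by shape regularity ranges over a compact set of symmetric positive-definite matrices of uniformly bounded condition number.

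Finally I would prove the uniform reference bound. For each of the finitely many admissible order configurations on $\hat T$ (finite because $\tilde r\le r_{\max}$) and each metric $G$ in that compact set, the conditions define $\hat\Pi=\hat\Pi_{G}$ through finitely many bounded linear functionals on $H^1(\hat T;\mathbb R^2)$ — the divergence, rotation, and edge-normal functionals are all continuous, the last by the trace theorem — with target the fixed finite-dimensional space $\mathcal P_{\tilde r+1}\Lambda^1(\hat T)$; well-definedness and unisolvence are guaranteed by the construction in \cite{QD:2009:MMEW}. Expressing $\hat\Pi_G$ through the corresponding dual basis, its operator norm depends continuously on $G$, so by compactness $\hat C=\sup_G\|\hat\Pi_G\|_{H^1\to L^2}<\infty$, and taking the maximum over the finitely many configurations keeps $\hat C$ finite. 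I expect the main obstacle to be precisely this point: since (\ref{interior_aux}) is not affine-invariant, one cannot simply invoke a single reference operator, and the uniform bound must instead be extracted by combining the bounded polynomial order with the compactness of the metric family $\{G_T\}$ supplied by shape regularity — in particular, checking that unisolvence, hence the dual basis and the operator norm, persists continuously across the entire metric family is the crux of the argument.
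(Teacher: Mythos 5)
Your proposal is correct, but it takes a more self-contained route than the paper, which states Lemma~\ref{H_div_inequality1} without proof and defers to the construction in \cite{QD:2009:MMEW}; the only in-paper analogue with a written proof is Lemma~\ref{H_div_inequality1_curved}, whose argument is structurally different from yours. There, the physical operator is \emph{defined} so that its Piola pullback is a single fixed reference operator: condition~(\ref{interior_aux_curved}) tests against $DG_T^{-\top}\mathrm{curl}_{\hat{\mathbf{x}}}\hat{\varphi}$, i.e.\ against test functions pushed forward from $\hat{T}$, so the pullback lands exactly on Definition~\ref{PB_master_element} and the proof reduces to boundedness of one fixed operator on $\{\hat\omega\in H(\mathrm{div},\hat T):\hat\omega|_{\partial\hat T}\in L^2\}$ plus the two scaling lemmas (Lemmas~\ref{H_div_transform_inequality1} and~\ref{H_div_transform_inequality2}). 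You instead take the affine definition~(\ref{interior_aux}) at face value — intrinsic physical curls of physical bubbles — and correctly observe that its pullback is \emph{not} affine-invariant: the orthogonality is taken in a $B_T$-dependent inner product whose normalized Gram matrix ranges, by shape regularity, over a compact set of SPD matrices. (Whether that matrix is $B_T^{\top}B_T/|\det B_T|$ or its inverse is a matter of which side of the pairing you attach it to; a direct computation gives $\int_T\omega^{\top}\mathrm{curl}\,\varphi\,d\mathbf{x}=\int_{\hat T}\hat\omega^{\top}\bigl(B_T^{\top}B_T/|\det B_T|\bigr)\mathrm{curl}_{\hat{\mathbf{x}}}\hat\varphi\,d\hat{\mathbf{x}}$, and either convention supports the argument.) Your compactness-plus-continuity argument over this metric family, combined with the bound $\tilde r\le r_{\max}$ giving finitely many order configurations and the square count of conditions versus degrees of freedom, is the right way to get a uniform constant for the operator as literally defined in Section~5; what it buys is a proof that does not require redefining the physical test functions, at the cost of the extra compactness step.

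The one point you flag as the crux but do not carry out — unisolvence for a general metric $G\neq I$ — does in fact go through by the same argument as for $G=I$: if $\hat\omega\in\mathcal P_{\tilde r+1}\Lambda^1(\hat T)$ annihilates all conditions, then the edge conditions give $\hat\omega\cdot\hat{\mathbf{n}}=0$ on $\partial\hat T$, the divergence condition then gives $\mathrm{div}_{\hat{\mathbf{x}}}\hat\omega=0$, hence $\hat\omega=\mathrm{curl}_{\hat{\mathbf{x}}}\hat\phi$ with $\hat\phi\in\mathring{\mathcal P}_{\tilde r+2}(\hat T)$, and testing the interior condition with $\hat\varphi=\hat\phi$ yields $\int_{\hat T}(\mathrm{curl}_{\hat{\mathbf{x}}}\hat\phi)^{\top}G\,\mathrm{curl}_{\hat{\mathbf{x}}}\hat\phi\,d\hat{\mathbf{x}}=0$, which forces $\hat\omega=0$ since $G$ is positive definite. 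With that supplied, the determinant of the square system is a continuous, nowhere-vanishing function on the compact metric set, so your uniform bound $\hat C<\infty$ follows and the proof is complete.
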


\subsection{Modified Projection Based Interpolation onto 
$\mathcal{P}_{\tilde{r}+1}^{-}\Lambda^{1}(T)$
and modified operator $W$ onto $\mathcal{P}_{\tilde{r}+2}\Lambda^{0}(T)$}

In order to prove the stability of the mixed FE method, we need to make the left and right 
diagrams in (\ref{three_diagrams}) commute. In \cite{QD:2009:MMEW}, 
the definition of Projection Based (PB) interpolation 
operator onto $\mathcal{P}_{\tilde{r}+1}^{-}\Lambda^{1}(\hat{T})$ was very similar to 
$\mathcal{P}_{\tilde{r}+1}\Lambda^{1}(\hat{T})$.
From the proof of Lemma~10 in \cite{QD:2009:MMEW}, 
we can see that only conditions~(\ref{interior_div})
and~(\ref{H_div_edge}) are used to prove the commutativity 
of the middle diagram in (\ref{three_diagrams}).
This implies that we may be able to change condition~(\ref{interior_aux}) 
for the PB interpolation 
operator onto $\mathcal{P}_{\tilde{r}+1}^{-}\Lambda^{1}(\hat{T})$ 
in such a way that we can
design a new operator $W_{h}$ which makes the right diagram in~(\ref{three_diagrams}) commute,
and can be proved to be well-defined, for an arbitrary order of discretization. 

\begin{definition}
Let $\tilde{r}\: : \: \triangle(\hat{T}) \rightarrow \mathbb{Z}_{+}$ 
be a mapping that prescribes the local order of discretization and satisfies
the minimum rule, i.e. if 
$\hat{e},\hat{f}\in\triangle(\hat{T})$ and $\hat{e}\subset\hat{f}$ then $\tilde{r}(\hat{e})
\leq\tilde{r}(\hat{f})$. We put $k_{\tilde{r}}=\dim\text{curl}_{\hat{\mathbf{x}}}
\mathcal{\mathring{P}}_{\tilde{r}(\hat{T})+1}(\hat{T})$.
Let $\{\hat{\mathbf{f}}_{\tilde{r},1}(\hat{\mathbf{x}}),\cdots, \hat{\mathbf{f}}_{\tilde{r},k_{\tilde{r}}}
(\hat{\mathbf{x}})\}$ be a basis of  $\text{curl}_{\hat{\mathbf{x}}}
\mathcal{\mathring{P}}_{\tilde{r}(\hat{T})+1}(\hat{T})$.
Let $\{\hat{\mathbf{g}}_{\tilde{r},1}(\hat{\mathbf{x}}),\cdots, \hat{\mathbf{g}}_{\tilde{r},k_{\tilde{r}}}
(\hat{\mathbf{x}})\}$ be a linearly independent subset of $\mathcal{P}_{\tilde{r}(\hat{T})-1}(\hat{T};
\mathbb{R}^{2})$ such that 
$$
\mbox{span} \{\hat{\mathbf{g}}_{\tilde{r},1}(\hat{\mathbf{x}}),\cdots, \hat{\mathbf{g}}_{\tilde{r},
k_{\tilde{r}}}(\hat{\mathbf{x}})\}\oplus \text{grad}_{\hat{\mathbf{x}}}\mathcal{P}_{\tilde{r}(\hat{T})}
(\hat{T})=[\mathcal{P}_{\tilde{r}(\hat{T})-1}(\hat{T})]^{2}
$$
For $t \in [0,1]$, we define $\hat{\mathbf{h}}_{\tilde{r},i}(\hat{\mathbf{x}},t)=(1-t)\hat{\mathbf{f}}_{\tilde{r},1}(\hat{\mathbf{x}})
+t\hat{\mathbf{g}}_{\tilde{r},1}(\hat{\mathbf{x}}), \: 1\leq i\leq k_{\tilde{r}}$.
\end{definition}

\begin{remark}
It is easy to check that $k_{\tilde{r}}=\dim\mathcal{P}_{\tilde{r}(\hat{T})-1}(\hat{T};\mathbb{R}^{2})-
\dim\text{grad}_{\hat{\mathbf{x}}}\mathcal{P}_{\tilde{r}(\hat{T})}(\hat{T})$.
\end{remark}

\begin{remark}
\label{jacobian_affine}
For any $T\in\mathcal{T}_{h}$, there exists an affine mapping from the reference triangle $\hat{T}$ 
onto $T$, which can be written as $\mathbf{x}=B_{T}\hat{\mathbf{x}}+\mathbf{b}$. Here $B_{T}$ is a constant 
nonsingular matrix in $\mathbb{R}^{2\times 2}$. We denote by $\tilde{r}$ a mapping from $\triangle(T)$
to $\mathbb{Z}_{+}$ such that if $e,f\in\triangle (T)$ and $e\subset f$ then $\tilde{r}(e)\leq \tilde{r}(f)$. 
With the same symbol $\tilde{r}$ we denote the corresponding mapping from $\triangle (\hat{T})$ to $\mathbb{Z}_{+}$, 
$\tilde{r}(\hat{f}):=\tilde{r}(f)$ for any $\hat{f}\in\triangle (\hat{T})$, where $f$ is the image of $\hat{f}$ 
under the affine mapping mentioned above.
\end{remark}

\begin{definition}
\label{PB_minus_reference}(One-parameter family of PB interpolation operators onto 
$\mathcal{P}_{\tilde{r}+1}^{-}\Lambda^{1}(\hat{T})$)
For any $t \in [0,1]$, we define a linear operator $\Pi_{\tilde{r}+1,\hat{T},t}^{1,-}
:H^{1}(\hat{T};\mathbb{R}^{2})\longrightarrow \mathcal{P}_{\tilde{r}+1}^{-}\Lambda^{1}(\hat{T})$
by the relations
\begin{equation}
\int_{\hat{T}}\text{div}_{\hat{\mathbf{x}}}
(\Pi_{\tilde{r}+1,\hat{T},t}^{1,-}\hat{\omega}-\hat{\omega})
(\hat{\mathbf{x}})\hat{\psi}(\hat{\mathbf{x}}) d\hat{\mathbf{x}}=0 \quad 
\forall \hat{\psi}
\in\mathcal{P}_{\tilde{r}(\hat{T})}(\hat{T})/\mathbb{R}
\label{interior_div_minus_reference}
\end{equation}
\begin{equation}
\int_{\hat{T}}(\Pi_{\tilde{r}+1,\hat{T},t}^{1,-}\hat{\omega}(\hat{\mathbf{x}})
-\hat{\omega}(\hat{\mathbf{x}}))^{\top}
\hat{\mathbf{h}}_{i}(\hat{\mathbf{x}},t) d\hat{\mathbf{x}}=0 \quad
1\leq i\leq k_{\tilde{r}}
\label{interior_aux_minus_reference}
\end{equation}
\begin{equation}
\int_{\hat{e}}[(\Pi_{\tilde{r}+1,\hat{T},t}^{1,-}\hat{\omega}-\hat{\omega})
\cdot\hat{\mathbf{n}}]\hat{\eta} d\hat{s} = 0 \quad 
\forall \hat{\eta}\in\mathcal{P}_{\tilde{r}(\hat{e})}(\hat{e}),
\forall \hat{e}\in\triangle_{1}(\hat{T})
\label{H_div_edge_minus_reference}
\end{equation}
\end{definition}

\begin{definition}
\label{PB_minus}(One-parameter family of PB interpolation operators onto 
$\mathcal{P}_{\tilde{r}+1}^{-}\Lambda^{1}(T)$)
For $t \in [0,1]$, and 
for any $T\in\mathcal{T}_{h}$, we define a linear operator $\Pi_{\tilde{r}+1,T,t}^{1,-}
:H^{1}(T;\mathbb{R}^{2})\longrightarrow \mathcal{P}_{\tilde{r}+1}^{-}\Lambda^{1}(T)$
by the relations
\begin{equation}
\int_{T}\text{div}(\Pi_{\tilde{r}+1,T,t}^{1,-}\omega-\omega)(\mathbf{x})
\hat{\psi}(\hat{\mathbf{x}}(\mathbf{x})) \: d\mathbf{x}=0 \quad 
\forall \hat{\psi}
\in\mathcal{P}_{\tilde{r}(T)}(\hat{T})/\mathbb{R}
\label{interior_div_minus}
\end{equation}
\begin{equation}
\int_{T}(\Pi_{\tilde{r}+1,T,t}^{1,-}\omega(\mathbf{x})-\omega(\mathbf{x}))^{\top}
B_{T}^{-\top}\hat{\mathbf{h}}_{i}(\hat{\mathbf{x}}(\mathbf{x}),t) d\mathbf{x}=0 \quad
1\leq i\leq k_{\tilde{r}}
\label{interior_aux_minus}
\end{equation}
\begin{equation}
\int_{e}[(\Pi_{\tilde{r}+1,T,t}^{1,-}\omega-\omega)
\cdot\mathbf{n}]\eta (s) ds = 0 \quad
\forall \eta\in\mathcal{P}_{\tilde{r}(e)}(e),
\forall e\in\triangle_{1}(T)
\label{H_div_edge_minus}
\end{equation}
In the above, $\hat{\mathbf{x}} = \hat{\mathbf{x}}(\mathbf{x})$ denotes the
inverse of the affine mapping introduced in Remark \ref{jacobian_affine}. 
The matrix $B_{T}$ is defined in Remark \ref{jacobian_affine} as well. 
And according 
to Remark~\ref{jacobian_affine}, $\tilde{r}(f)=\tilde{r}(\hat{f})$ for any $\hat{f}\in
\triangle (\hat{T})$, where $f$ is the image of $\hat{f}$ under the affine mapping 
from $\hat{T}$ to $T$. 
\end{definition}

\begin{definition}
In the sequel, the phrase ``for almost all'' (parameters) will mean 
``for all except for a finite number'' (of parameters). For example, a sequence
$x_n$ in a topological space converges to $x$ if, for every neighborhood of $x$,
almost all values of $x_n$ belong to the neighborhood.
\end{definition}
\begin{lemma}
\label{PB_modifiled_well_defined}
$\Pi_{\tilde{r}+1,\hat{T},t}^{1,-}:H^{1}(\hat{T};\mathbb{R}^{2})\longrightarrow \mathcal{P}_{\tilde{r}+1}^{-}
\Lambda^{1}(\hat{T})$ is a well-defined linear operator for almost all $t\in [0,1]$.
Moreover,
\begin{equation*}
\text{div}_{\hat{\mathbf{x}}}\Pi_{\tilde{r}+1,\hat{T},t}^{1,-}\hat{\omega}
=\Pi_{\tilde{r},\hat{T}}^{2}\text{div}_{\hat{\mathbf{x}}}\hat{\omega}
\label{com_div_minus_reference}
\end{equation*}
for any $\hat{\omega}\in H^{1}(\hat{T};\mathbb{R}^{2})$.
\end{lemma}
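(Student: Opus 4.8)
The plan is to prove well-definedness first and then deduce the commuting relation from it. Since $\Pi_{\tilde{r}+1,\hat{T},t}^{1,-}\hat{\omega}$ is sought in the finite-dimensional space $\mathcal{P}_{\tilde{r}+1}^{-}\Lambda^{1}(\hat{T})$ and all three defining relations (\ref{interior_div_minus_reference})--(\ref{H_div_edge_minus_reference}) are linear in the unknown, well-definedness (existence and uniqueness) is equivalent to the associated homogeneous square system admitting only the trivial solution. First I would check that the system is in fact square, i.e. that the total number of scalar conditions,
\[
\left(\dim\mathcal{P}_{\tilde{r}(\hat{T})}(\hat{T})-1\right)+k_{\tilde{r}}+\sum_{\hat{e}\in\triangle_{1}(\hat{T})}\dim\mathcal{P}_{\tilde{r}(\hat{e})}(\hat{e}),
\]
equals $\dim\mathcal{P}_{\tilde{r}+1}^{-}\Lambda^{1}(\hat{T})$. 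This is a routine dimension count using the definition of $k_{\tilde{r}}$ and the normal-trace constraint built into $\mathcal{P}_{\tilde{r}+1}^{-}\Lambda^{1}(\hat{T})$ through Definition~\ref{FEM_spaces_single_triangle_variable_order}.

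The key observation is that the matrix $M(t)$ of this homogeneous system depends on $t$ only through the test fields $\hat{\mathbf{h}}_{i}(\hat{\mathbf{x}},t)=(1-t)\hat{\mathbf{f}}_{\tilde{r},i}(\hat{\mathbf{x}})+t\,\hat{\mathbf{g}}_{\tilde{r},i}(\hat{\mathbf{x}})$ entering (\ref{interior_aux_minus_reference}); hence its entries are affine, in particular polynomial, in $t$, so $\det M(t)$ is a polynomial in $t$. It therefore suffices to exhibit a single value of $t$ at which $M(t)$ is nonsingular: then $\det M(t)$ is a nonzero polynomial, vanishes for at most finitely many $t$, and the operator is well-defined for all but finitely many $t\in[0,1]$, which is precisely the ``almost all $t$'' conclusion. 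I would take $t=0$. There $\hat{\mathbf{h}}_{i}(\cdot,0)=\hat{\mathbf{f}}_{\tilde{r},i}$ runs over a basis of $\text{curl}_{\hat{\mathbf{x}}}\mathcal{\mathring{P}}_{\tilde{r}(\hat{T})+1}(\hat{T})$, so (\ref{interior_aux_minus_reference}) becomes $\int_{\hat{T}}(\Pi_{\tilde{r}+1,\hat{T},0}^{1,-}\hat{\omega}-\hat{\omega})^{\top}\text{curl}_{\hat{\mathbf{x}}}\hat{\varphi}=0$ for all $\hat{\varphi}\in\mathcal{\mathring{P}}_{\tilde{r}(\hat{T})+1}(\hat{T})$, and the operator coincides with the Projection Based interpolation operator onto $\mathcal{P}_{\tilde{r}+1}^{-}\Lambda^{1}(\hat{T})$ of \cite{QD:2009:MMEW}, which is already known to be well-defined. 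Thus $\det M(0)\neq 0$.

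It remains to establish the commuting relation for every $t$ at which the operator is well-defined. First, $\mathcal{P}_{\tilde{r}+1}^{-}\Lambda^{1}(\hat{T})\subset\mathcal{P}_{\tilde{r}+1}\Lambda^{1}(\hat{T})$ together with Lemma~\ref{embedding_affine_triangulation} gives $\text{div}_{\hat{\mathbf{x}}}\Pi_{\tilde{r}+1,\hat{T},t}^{1,-}\hat{\omega}\in\mathcal{P}_{\tilde{r}(\hat{T})}(\hat{T})$, so both sides of the claimed identity lie in $\mathcal{P}_{\tilde{r}(\hat{T})}(\hat{T})$ and it suffices to show their difference $\hat{d}$ is $L^{2}(\hat{T})$-orthogonal to all of $\mathcal{P}_{\tilde{r}(\hat{T})}(\hat{T})$; since $\hat{d}$ itself lies in that space, this forces $\hat{d}=0$. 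For the zero-mean representatives of $\mathcal{P}_{\tilde{r}(\hat{T})}(\hat{T})/\mathbb{R}$ the orthogonality is immediate: relation (\ref{interior_div_minus_reference}) gives $\int_{\hat{T}}\text{div}_{\hat{\mathbf{x}}}\Pi_{\tilde{r}+1,\hat{T},t}^{1,-}\hat{\omega}\,\hat{\psi}=\int_{\hat{T}}\text{div}_{\hat{\mathbf{x}}}\hat{\omega}\,\hat{\psi}$, while the defining property of the $L^{2}$ projection $\Pi_{\tilde{r},\hat{T}}^{2}$ yields the same value for $\Pi_{\tilde{r},\hat{T}}^{2}\text{div}_{\hat{\mathbf{x}}}\hat{\omega}$. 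For the constant test function I would integrate by parts: $\int_{\hat{T}}\text{div}_{\hat{\mathbf{x}}}\Pi_{\tilde{r}+1,\hat{T},t}^{1,-}\hat{\omega}=\int_{\partial\hat{T}}\Pi_{\tilde{r}+1,\hat{T},t}^{1,-}\hat{\omega}\cdot\hat{\mathbf{n}}$, and (\ref{H_div_edge_minus_reference}) with $\hat{\eta}\equiv 1\in\mathcal{P}_{\tilde{r}(\hat{e})}(\hat{e})$ shows this equals $\int_{\partial\hat{T}}\hat{\omega}\cdot\hat{\mathbf{n}}=\int_{\hat{T}}\text{div}_{\hat{\mathbf{x}}}\hat{\omega}$, which is the mean preserved by $\Pi_{\tilde{r},\hat{T}}^{2}$. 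Combining the two cases gives $\hat{d}=0$, i.e. the desired commutativity.

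I expect the main obstacle to be the well-definedness step, specifically the manoeuvre that makes ``almost all $t$'' appear naturally: recognizing $\det M(t)$ as a nonzero polynomial through the single anchor value $t=0$, where the one-parameter construction degenerates to the established operator of \cite{QD:2009:MMEW}. By comparison, the dimension count and the commuting relation are routine once Definition~\ref{FEM_spaces_single_triangle_variable_order}, Lemma~\ref{embedding_affine_triangulation}, and the divergence theorem are invoked.
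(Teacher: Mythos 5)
Your proposal is correct and follows essentially the same route as the paper: well-definedness via the observation that the system matrix has determinant polynomial in $t$, anchored at $t=0$ where the operator degenerates to the known PB interpolation operator of \cite{QD:2009:MMEW}, so the determinant is a nonzero polynomial with finitely many roots. The commutativity argument is also the paper's (the paper first reduces to showing $\Pi_{\tilde{r},\hat{T}}^{2}\text{div}_{\hat{\mathbf{x}}}\hat{\omega}=0$ on the kernel of the interpolant, whereas you test the difference directly against zero-mean and constant polynomials, but both rest on the identical two ingredients: relation (\ref{interior_div_minus_reference}) and the divergence theorem combined with the edge condition (\ref{H_div_edge_minus_reference}) applied to $\hat{\eta}\equiv 1$).
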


\begin{proof}
The linearity of the operator is obvious, for any $t\in\mathbb{R}$. 
The point is to show that the operator is well-defined.
For $t=0$, the operator $\Pi_{\tilde{r}+1,\hat{T},0}^{1,-}$ reduces to the PB interpolation 
defined in \cite{QD:2009:MMEW} and, 
according to Lemma~$9$ and Lemma~$20$ from \cite{QD:2009:MMEW}, is well-defined.
Moreover,
\begin{itemize}
  \item $\int_{\hat{T}}\text{div}_{\hat{\mathbf{x}}}\hat{\omega}
(\hat{\mathbf{x}})\hat{\psi}(\hat{\mathbf{x}}) d\hat{x}_{1}d\hat{x}_{2}$ 
is a continuous functional 
of $\hat{\omega}$, for any 
$\hat{\psi}
\in\mathcal{P}_{\tilde{r}(\hat{T})}(\hat{T})/\mathbb{R}$,
\item
$\int_{\hat{T}}\hat{\omega}(\hat{\mathbf{x}})^{\top}
\hat{\mathbf{h}}_{i}(\hat{\mathbf{x}},t) d\hat{x}_{1}d\hat{x}_{2}$ 
is a continuous functional of $\hat{\omega}$, 
for any $1\leq i\leq k_{\tilde{r}}$ and any $t\in\mathbb{R}$, and 
\item
$\int_{\hat{e}}[\hat{\omega}\cdot\hat{\mathbf{n}}]\hat{\eta} d\hat{s}$ 
is a continuous functional of $\hat{\omega}$, 
for any $\hat{\eta}\in\mathcal{P}_{\tilde{r}(\hat{e})}(\hat{e})$,
and any $\hat{e}\in\triangle_{1}(\hat{T})$.
\end{itemize}
Therefore, in order to demonstrate that $\Pi_{\tilde{r}+1,\hat{T},t}^{1,-}$ 
is well-defined, 
it is sufficient to show that $\hat{\omega}=0$ 
if $\hat{\omega}\in \mathcal{P}_{\tilde{r}+1}^{-}\Lambda^{1}(\hat{T})$ 
and $\Pi_{\tilde{r}+1,\hat{T},t}^{1,-}\hat{\omega} = 0$.

We take an arbitrary $\hat{\omega}\in\mathcal{P}_{\tilde{r}+1}^{-}\Lambda^{1}(\hat{T})$ 
such that 
$\Pi_{\tilde{r}+1,\hat{T},t}^{1,-}\hat{\omega} = 0$. 
According to the definition of $\mathcal{P}_{\tilde{r}+1}^{-}\Lambda^{1}(\hat{T})$ 
and (\ref{H_div_edge_minus_reference}), 
we know that 
$\hat{\omega}\in\mathring{\mathcal{P}}_{\tilde{r}+1}^{-}\Lambda^{1}(\hat{T})$.
Set $r=\tilde{r}(\hat{T})$.
We denote by $C(t,r)$ the matrix associated with conditions (\ref{interior_div_minus_reference}) 
and (\ref{interior_aux_minus_reference}), corresponding to a particular basis for
 $\mathring{\mathcal{P}}_{\tilde{r}+1}^{-}\Lambda^{1}(\hat{T})$ 
(the solution space), and a particular basis for $\mathcal{P}_{\tilde{r}(\hat{T})}(\hat{T})/\mathbb{R}$. 
We argue that
matrix $C(t,r)$ is non-singular for almost all $t\in [0,1]$.
Notice that, for any $r$, $\det(C(t,r))$ is a polynomial in $t$. 
Since $\Pi_{\tilde{r}+1,\hat{T},0}^{1,-}$ is well-defined, $\det(C(0,r))\neq 0$.
So $\det(C(t,r))$ is a non-zero polynomial.
By the fundamental theorem of algebra, the polynomial has a finite number of real roots.
We can conclude thus that $\Pi_{\tilde{r}+1,\hat{T},t}^{1,-}:H^{1}(\hat{T};\mathbb{R}^{2})\longrightarrow 
\mathcal{P}_{\tilde{r}+1}^{-}\Lambda^{1}(\hat{T})$ 
is well-defined for any $t\in [0,1]$ 
except for the roots of $\det(C(t,r))$. The number of roots is independent of the
choice of the bases and depends 
only upon $r=\tilde{r}(\hat{T})$. 

Since $\text{div}_{\hat{\mathbf{x}}}\Pi_{\tilde{r}+1,\hat{T},t}^{1,-}\hat{\omega}\in\mathcal{P}_{\tilde{r}}
\Lambda^{2}(\hat{T})$,
 for any $\hat{\omega}\in H^{1}(\hat{T};\mathbb{R}^{2})$,\: 
then $\Pi_{\tilde{r},\hat{T}}^{2}\text{div}_{\hat{\mathbf{x}}}\Pi_{\tilde{r}+1,\hat{T},t}^{1,-}
\hat{\omega} =\text{div}_{\hat{\mathbf{x}}}\Pi_{\tilde{r}+1,\hat{T},t}^{1,-}
\hat{\omega}$.
In order to show that $\text{div}_{\hat{\mathbf{x}}}\Pi_{\tilde{r}+1,\hat{T},t}^{1,-}\hat{\omega}
=\Pi_{\tilde{r},\hat{T}}^{2}\text{div}_{\hat{\mathbf{x}}}\hat{\omega}$, it is sufficient to show that 
$\Pi_{\tilde{r},\hat{T}}^{2}\text{div}_{\hat{\mathbf{x}}}\hat{\omega}=0$,
 for any $\hat{\omega}$ 
such that $\Pi_{\tilde{r}+1,\hat{T},t}^{1,-}\hat{\omega}=0$. We have
\begin{align*}
\int_{\hat{T}}\text{div}_{\hat{\mathbf{x}}}\hat{\omega}\hat{\psi}d\hat{\mathbf{x}} = 
\int_{\hat{T}}\text{div}_{\hat{\mathbf{x}}}\hat{\omega}(\hat{\chi}+c)d\hat{\mathbf{x}}
= \int_{\hat{T}}\text{div}_{\hat{\mathbf{x}}}\hat{\omega}\hat{\chi}d\hat{\mathbf{x}}
+c\int_{\partial\hat{T}}\hat{\omega}\cdot\hat{\mathbf{n}}d\hat{s}=0.
\end{align*}
Here $\Pi_{\tilde{r}+1,\hat{T},t}^{1,-}\hat{\omega}=0$, $\hat{\psi}\in\mathcal{P}_{\tilde{r}(\hat{T})}(\hat{T})$, 
$c=\int_{\hat{T}}\hat{\psi}d\hat{\mathbf{x}}$. The last equality holds due to the definition of $\Pi_{\tilde{r}+1,\hat{T},t}^{1,-}$ 
and the fact that $\Pi_{\tilde{r}+1,\hat{T},t}^{1,-}\hat{\omega}=0$. 
We have thus $\text{div}_{\hat{\mathbf{x}}}\Pi_{\tilde{r}+1,\hat{T},t}^{1,-}\hat{\omega}=\Pi_{\tilde{r},
\hat{T}}^{2}\text{div}_{\hat{\mathbf{x}}}\hat{\omega}$ for any $\hat{\omega}\in H^{1}(\hat{T};\mathbb{R}^{2})$. 
\end{proof}

\begin{theorem}
\label{H_div_transform_minus}
Let $t\in [0,1]$ be any value 
for which  $\Pi_{\tilde{r}+1,\hat{T},t}^{1,-}$ is well-defined.
Then the operator 
$\Pi_{\tilde{r}+1,T,t}^{1,-}:H^{1}(T;\mathbb{R}^{2})
\longrightarrow \mathcal{P}_{\tilde{r}+1}^{-}\Lambda^{1}(T)$ 
is well-defined as well, and we have the following result.

There exist $C>0$ such that,for $T\in\mathcal{T}_{h}$, and 
$\omega\in H^{1}(T;\mathbb{R}^{2})$, the
corresponding function $\hat{\omega}$ defined by 
\begin{equation}
\omega(\mathbf{x}(\hat{\mathbf{x}}))=\dfrac{B_{T}}{\det(B_{T})}
\hat{\omega}(\hat{\mathbf{x}})
\label{trans1}
\end{equation}
belongs to $H^{1}(\hat{T};\mathbb{R}^{2})$, and 
\begin{equation}
\Pi_{\tilde{r}+1,T,t}^{1,-}\omega(\mathbf{x}(\hat{\mathbf{x}})) = \dfrac{B_{T}}
{\det(B_{T})}\Pi_{\tilde{r}+1,\hat{T},t}^{1,-}\hat{\omega}(\hat{\mathbf{x}})
\label{trans2}
\end{equation}
Additionally,
\begin{equation}
\text{div}\Pi_{\tilde{r}+1,T,t}^{1,-}\omega=\Pi_{\tilde{r},T}^{2}\text{div}\omega
\label{com_div_minus}
\end{equation}
and
\begin{equation}
\Vert\Pi_{\tilde{r}+1,T,t}^{1,-}\omega\Vert_{L^{2}(T)}\leq C\Vert\omega\Vert_{H^{1}(T)}
\label{bound_pi_one_minus}
\end{equation}
\end{theorem}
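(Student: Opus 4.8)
The plan is to reduce everything to the reference element through the contravariant Piola transform (\ref{trans1}) and to transport the already-established properties of $\Pi_{\tilde{r}+1,\hat{T},t}^{1,-}$ from Lemma~\ref{PB_modifiled_well_defined}. First I would record the three standard facts about the Piola map $\omega\mapsto\hat{\omega}$ defined by (\ref{trans1}): (i) it is a bijection of $\mathcal{P}_{\tilde{r}+1}^{-}\Lambda^{1}(\hat{T})$ onto $\mathcal{P}_{\tilde{r}+1}^{-}\Lambda^{1}(T)$ and of $H^{1}(\hat{T};\mathbb{R}^{2})$ onto $H^{1}(T;\mathbb{R}^{2})$, the normal-trace degree on each edge being preserved by affinity; (ii) the divergence relation $\text{div}\,\omega(\mathbf{x}(\hat{\mathbf{x}}))=\det(B_{T})^{-1}\text{div}_{\hat{\mathbf{x}}}\hat{\omega}(\hat{\mathbf{x}})$; and (iii) the normal-trace relation on corresponding edges $e\leftrightarrow\hat{e}$. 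With these in hand I would \emph{define} $\Pi_{\tilde{r}+1,T,t}^{1,-}\omega$ through (\ref{trans2}), i.e.\ as the Piola image of $\Pi_{\tilde{r}+1,\hat{T},t}^{1,-}\hat{\omega}$, and then verify that this candidate satisfies the three defining relations (\ref{interior_div_minus})--(\ref{H_div_edge_minus}) on $T$; equivalently, that each relation on $T$ pulls back under $\mathbf{x}=B_{T}\hat{\mathbf{x}}+\mathbf{b}$ onto its counterpart (\ref{interior_div_minus_reference})--(\ref{H_div_edge_minus_reference}) on $\hat{T}$.

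The heart of the matter is this change-of-variables bookkeeping. For (\ref{interior_div_minus}), inserting (ii) together with $d\mathbf{x}=\abs{\det(B_{T})}\,d\hat{\mathbf{x}}$ converts the integral into $\text{sgn}(\det B_{T})\int_{\hat{T}}\text{div}_{\hat{\mathbf{x}}}(\Pi_{\tilde{r}+1,\hat{T},t}^{1,-}\hat{\omega}-\hat{\omega})\hat{\psi}\,d\hat{\mathbf{x}}$, which vanishes by (\ref{interior_div_minus_reference}); the test space $\mathcal{P}_{\tilde{r}(T)}(\hat{T})/\mathbb{R}$ matches exactly because the definition already writes $\hat{\psi}$ in reference coordinates. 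For (\ref{H_div_edge_minus}), property (iii) and the affine correspondence of edge polynomials reduce it to (\ref{H_div_edge_minus_reference}). The one genuinely nonstandard relation is the auxiliary condition (\ref{interior_aux_minus}): here the weight $B_{T}^{-\top}$ is precisely what is needed, since from (\ref{trans1}) one has $(\Pi_{\tilde{r}+1,T,t}^{1,-}\omega-\omega)^{\top}=\det(B_{T})^{-1}(\Pi_{\tilde{r}+1,\hat{T},t}^{1,-}\hat{\omega}-\hat{\omega})^{\top}B_{T}^{\top}$, so the integrand contracts to $\det(B_{T})^{-1}(\Pi_{\tilde{r}+1,\hat{T},t}^{1,-}\hat{\omega}-\hat{\omega})^{\top}\hat{\mathbf{h}}_{i}$ through the cancellation $B_{T}^{\top}B_{T}^{-\top}=I$, matching (\ref{interior_aux_minus_reference}). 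Since $\Pi_{\tilde{r}+1,\hat{T},t}^{1,-}$ is well-defined by the hypothesis on $t$, the pulled-back system has a unique solution, whence $\Pi_{\tilde{r}+1,T,t}^{1,-}$ is well-defined and (\ref{trans2}) holds.

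For the divergence commutativity (\ref{com_div_minus}) I would combine (\ref{trans2}), relation (ii), and the reference identity $\text{div}_{\hat{\mathbf{x}}}\Pi_{\tilde{r}+1,\hat{T},t}^{1,-}\hat{\omega}=\Pi_{\tilde{r},\hat{T}}^{2}\text{div}_{\hat{\mathbf{x}}}\hat{\omega}$ of Lemma~\ref{PB_modifiled_well_defined}. The only additional ingredient is that the $L^{2}$ projection commutes with affine pullback, i.e.\ $\Pi_{\tilde{r},T}^{2}(\hat{g}\circ\hat{\mathbf{x}})=(\Pi_{\tilde{r},\hat{T}}^{2}\hat{g})\circ\hat{\mathbf{x}}$; this holds because pullback preserves $\mathcal{P}_{\tilde{r}(T)}$ and scales the $L^{2}$ inner product by the constant $\abs{\det B_{T}}$, so the defining orthogonality is unchanged after change of variables. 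Applying this with $\hat{g}=\text{div}_{\hat{\mathbf{x}}}\hat{\omega}$ and cancelling the common factor $\det(B_{T})^{-1}$ yields (\ref{com_div_minus}).

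The remaining estimate (\ref{bound_pi_one_minus}) is a routine scaling argument, which I expect to be the least delicate part. On the fixed reference triangle, $\Pi_{\tilde{r}+1,\hat{T},t}^{1,-}$ is a bounded linear map from $H^{1}(\hat{T};\mathbb{R}^{2})$ into the finite-dimensional space $\mathcal{P}_{\tilde{r}+1}^{-}\Lambda^{1}(\hat{T})$, so $\Vert\Pi_{\tilde{r}+1,\hat{T},t}^{1,-}\hat{\omega}\Vert_{L^{2}(\hat{T})}\leq\hat{C}\Vert\hat{\omega}\Vert_{H^{1}(\hat{T})}$ with $\hat{C}$ depending only on $r_{\max}$ and on the finitely many admissible values of $t$. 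Converting the $L^{2}$ and $H^{1}$ norms between $T$ and $\hat{T}$ through (\ref{trans1}) introduces only powers of $\Vert B_{T}\Vert$, $\Vert B_{T}^{-1}\Vert$ and $\abs{\det B_{T}}$, all controlled uniformly by the shape-regularity of the mesh; collecting these factors gives (\ref{bound_pi_one_minus}) with $C$ independent of $T$ and $h$. The main obstacle in the whole argument is not any single estimate but the precise verification in the second paragraph that the auxiliary condition and its $B_{T}^{-\top}$ weight transform exactly onto (\ref{interior_aux_minus_reference}); once that cancellation is confirmed, the rest follows by standard Piola and scaling techniques.
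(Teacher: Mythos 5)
Your proposal is correct and follows essentially the same route as the paper: introduce the affine Piola isomorphism $A_{T}$, pull the three defining relations on $T$ back to their reference counterparts (the $B_{T}^{-\top}$ weight in the auxiliary condition cancelling against $B_{T}^{\top}$ exactly as you compute), deduce well-definedness and (\ref{trans2}), and obtain (\ref{com_div_minus}) from the reference commutativity plus the affine invariance of the $L^{2}$ projection, with (\ref{bound_pi_one_minus}) by standard scaling. You merely spell out the change-of-variables bookkeeping that the paper compresses into ``by pulling back to $\hat{T}$ by $A_{T}$.''
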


\begin{proof}
It is easy to see that $\hat{\omega}\in H^{1}(\hat{T};\mathbb{R}^{2})$.
We define a linear isomorphism $A_{T}$ from $H^{1}(\hat{T};\mathbb{R}^{2})$ to $H^{1}(T;\mathbb{R}^{2})$ 
by $(A_{T}\hat{\omega})(\mathbf{x}(\hat{\mathbf{x}}))=\dfrac{B_{T}}{\det(B_{T})}
\hat{\omega}(\hat{\mathbf{x}})$. It is easy to see that $A_{T}$ is a linear isomorphism from 
$\mathcal{P}_{\tilde{r}+1}^{-}\Lambda^{1}(\hat{T})$ to $\mathcal{P}_{\tilde{r}+1}^{-}\Lambda^{1}(T)$.

By pulling back to $\hat{T}$ by $A_{T}$ (applied to both $\omega$ and $\Pi_{\tilde{r}+1,T,t}^{1,-}\omega)$, 
we can see that (\ref{interior_div_minus}) is the same as (\ref{interior_div_minus_reference}), 
(\ref{interior_aux_minus}) is the same as (\ref{interior_aux_minus_reference}), and 
(\ref{H_div_edge_minus}) is the same as (\ref{H_div_edge_minus_reference}). So we have (\ref{trans2}). 
As a consequence of (\ref{com_div_minus_reference}), (\ref{trans1}) and Lemma~\ref{trans2}, we have (\ref{com_div_minus}).
Inequality (\ref{bound_pi_one_minus}) is proved by using standard scaling techniques.
\end{proof}

\begin{definition}
\label{C_W_operator_reference}
We define a linear operator $C_{t}:[H^{1}(\hat{T})]^2\longrightarrow 
\mathcal{P}_{\tilde{r}+2}\Lambda^{0}(\hat{T};\mathbb{R}^{2})$
by the following relations
\begin{equation}
\int_{\hat{T}}\text{div}_{\hat{\mathbf{x}}}C_{t}\hat{\omega}
(\hat{\mathbf{x}})\hat{\psi}(\hat{\mathbf{x}}) d\hat{\mathbf{x}}=
\int_{\hat{T}}\text{div}_{\hat{\mathbf{x}}}\hat{\omega}
(\hat{\mathbf{x}})\hat{\psi}(\hat{\mathbf{x}}) d\hat{\mathbf{x}}\quad 
\forall \hat{\psi}
\in\mathcal{P}_{\tilde{r}(\hat{T})}(\hat{T})/\mathbb{R}
\label{C_interior_div}
\end{equation}
\begin{equation}
\int_{\hat{T}}(C_{t}\hat{\omega}(\hat{\mathbf{x}}))^{\top}
\hat{\mathbf{h}}_{i}(\hat{\mathbf{x}},t) d\hat{\mathbf{x}}=
\int_{\hat{T}}(\hat{\omega}(\hat{\mathbf{x}}))^{\top}
\hat{\mathbf{h}}_{i}(\hat{\mathbf{x}},t) d\hat{\mathbf{x}}\quad
1\leq i\leq k_{\tilde{r}}
\label{C_interior_aux}
\end{equation}
\begin{align}
\label{C_edge_normal}
\int_{\hat{e}}[(C_{t}\hat{\omega})\cdot 
\hat{\mathbf{n}}]\hat{\eta}d\hat{s} = 
\int_{\hat{e}}[\hat{\omega}\cdot\hat{\mathbf{n}}]\hat{\eta}d\hat{s}\quad
\forall \hat{\eta}\in\mathcal{P}_{\tilde{r}(\hat{e})}(\hat{e}), \forall \hat{e}\in\triangle_{1}(\hat{T})
\end{align}
\begin{align}
\label{C_edge_tangent}
 \int_{\hat{e}}[(C_{t}\hat{\omega})\cdot\hat{\mathbf{t}}]\hat{\eta}d\hat{s}= 
\int_{\hat{e}}[\hat{\omega}\cdot\hat{\mathbf{t}}]\hat{\eta}d\hat{s}\quad
\forall \hat{\eta}\in\mathcal{P}_{\tilde{r}(\hat{e})}(\hat{e}), \forall \hat{e}\in\triangle_{1}(\hat{T})
\end{align}
\begin{equation}
C_{t}\hat{\omega} = 0 \quad \text{ at all vertices of }\hat{T}
\label{C_vertex}
\end{equation}
Here $\hat{\mathbf{n}},\hat{\mathbf{t}}$ denote the normal and tangent unit vectors along $\partial\hat{T}$.
\end{definition}

\begin{definition}
For any $T\in\mathcal{T}_{h}$, we define a linear operator $W_{T,t}
:H^{1}(T;\mathbb{R}^{2})\longrightarrow \mathcal{P}_{\tilde{r}+2}\Lambda^{0}(T;\mathbb{R}^{2})$
by the following relations
\begin{equation}
\int_{T}\text{div}(W_{T,t}\omega-\omega)(\mathbf{x})\hat{\psi}(\hat{\mathbf{x}}(\mathbf{x}))d\mathbf{x}=0
\quad 
\forall \hat{\psi}
\in\mathcal{P}_{\tilde{r}(T)}(\hat{T})/\mathbb{R}
\label{W_interior_div}
\end{equation}
\begin{equation}
\int_{T}(W_{T,t}\omega(\mathbf{x})-\omega(\mathbf{x}))^{\top}B_{T}^{-\top}
\hat{\mathbf{h}}_{i}(\hat{\mathbf{x}}(\mathbf{x}),t) d\mathbf{x}=0\quad
1\leq i\leq k_{\tilde{r}}
\label{W_interior_aux}
\end{equation}
\begin{equation}
\int_{e}[(W_{T,t}\omega-\omega)
\cdot\mathbf{n}]\eta ds = 0\quad 
\forall \eta\in\mathcal{P}_{\tilde{r}(e)}(e),\: 
\forall e\in\triangle_{1}(T)
\label{W_edge_normal}
\end{equation}
\begin{equation}
\int_{e}[(W_{T,t}\omega-\omega)
\cdot\mathbf{t}]\eta ds = 0\quad
\forall \eta\in\mathcal{P}_{\tilde{r}(e)}(e),\: \forall e\in\triangle_{1}(T)
\label{W_edge_tangent}
\end{equation}
\begin{equation}
W_{T,t}\omega = 0 \text{ at all vertices of }T
\label{W_vertex}
\end{equation}
Here $\mathbf{n},\mathbf{t}$ denote the normal and tangent unit vectors along $\partial T$.
As before, $\hat{\mathbf{x}} = \hat{\mathbf{x}}(\mathbf{x})$ stands for the
inverse of the affine mapping from the master to the physical element, defined along
with matrix $B_{T}$ in Remark \ref{jacobian_affine}. Also, according 
to Remark~\ref{jacobian_affine}, $\tilde{r}(f)=\tilde{r}(\hat{f})$ for any $\hat{f}\in
\triangle (\hat{T})$, where $f$ is the image of $\hat{f}$ under the affine mapping 
from $\hat{T}$ to $T$. 
\end{definition}

\begin{lemma}
\label{C_well_defined}
Operator $C_{t}:H^{1}(\hat{T};\mathbb{R}^{2})\longrightarrow 
\mathcal{P}_{\tilde{r}+2}\Lambda^{0}(\hat{T};
\mathbb{R}^{2})$ is well-defined for almost all $t\in [0,1]$.
\end{lemma}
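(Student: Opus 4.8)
The plan is to prove well-definedness by showing that the (square) linear system defining $C_t$ has nonsingular coefficient matrix for all but finitely many $t\in[0,1]$, using the same determinant-in-$t$ device as in the proof of Lemma~\ref{PB_modifiled_well_defined}. The essential difference is that here the base point of the argument cannot be $t=0$, because at $t=0$ the operator reduces to the operator $W_h$ of \cite{QD:2009:MMEW}, whose well-definedness is only known for $\tilde{r}(\hat{T})\leq 3$; instead the argument must be anchored at $t=1$, which is precisely what the auxiliary functions $\hat{\mathbf{g}}_i$ were introduced to make possible. Since $C_t$ is manifestly linear and maps into the finite-dimensional space $\mathcal{P}_{\tilde{r}+2}\Lambda^0(\hat{T};\mathbb{R}^2)$, it suffices to show that the only $\hat{v}\in\mathcal{P}_{\tilde{r}+2}\Lambda^0(\hat{T};\mathbb{R}^2)$ for which the left-hand sides of (\ref{C_interior_div})--(\ref{C_vertex}) all vanish is $\hat{v}=0$. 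Throughout write $r=\tilde{r}(\hat{T})$.

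First I would dispose of the boundary degrees of freedom, a step that does not involve $t$. On each edge $\hat{e}$ the traces $\hat{v}\cdot\hat{\mathbf{n}}$ and $\hat{v}\cdot\hat{\mathbf{t}}$ lie in $\mathcal{P}_{\tilde{r}(\hat{e})+2}(\hat{e})$ (the normal and tangent vectors being constant along a straight edge); by (\ref{C_vertex}) each vanishes at the two endpoints, hence is divisible by the edge bubble $s(1-s)$ and may be written as $s(1-s)q$ with $q\in\mathcal{P}_{\tilde{r}(\hat{e})}(\hat{e})$. The homogeneous forms of (\ref{C_edge_normal}) and (\ref{C_edge_tangent}) then give $\int_{\hat{e}}s(1-s)q\,\hat{\eta}\,d\hat{s}=0$ for all $\hat{\eta}\in\mathcal{P}_{\tilde{r}(\hat{e})}(\hat{e})$; taking $\hat{\eta}=q$ and using $s(1-s)>0$ on the open edge forces $q=0$. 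Thus $\hat{v}$ vanishes on $\partial\hat{T}$, i.e. $\hat{v}\in[\mathring{\mathcal{P}}_{r+2}(\hat{T})]^2=\lambda_1\lambda_2\lambda_3\,[\mathcal{P}_{r-1}(\hat{T})]^2$.

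Restricted to such vector bubbles, the remaining conditions (\ref{C_interior_div}) and (\ref{C_interior_aux}) constitute a square subsystem: one checks $\dim[\mathring{\mathcal{P}}_{r+2}(\hat{T})]^2=r(r+1)$, while $\dim(\mathcal{P}_r(\hat{T})/\mathbb{R})=r(r+3)/2$ and, by the Remark following Definition~\ref{C_W_operator_reference}, $k_{\tilde{r}}=r(r-1)/2$, and these sum to $r(r+1)$. (Together with the fact that the edge/vertex block is $t$-independent and annihilates bubbles, this makes the full matrix block lower triangular, so its determinant equals that of the interior block up to a nonzero $t$-independent factor.) Let $M(t,r)$ denote this interior matrix. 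Because $\hat{\mathbf{h}}_i(\hat{\mathbf{x}},t)=(1-t)\hat{\mathbf{f}}_i+t\hat{\mathbf{g}}_i$ is affine in $t$ and only (\ref{C_interior_aux}) involves $t$, the entries of $M(t,r)$ are affine in $t$, so $\det M(t,r)$ is a polynomial in $t$. Exactly as in Lemma~\ref{PB_modifiled_well_defined}, once this polynomial is shown to be nonzero, the fundamental theorem of algebra gives it finitely many roots, and $C_t$ is well-defined for all $t\in[0,1]$ outside that finite set, i.e. for almost all $t$.

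The crux, and the step I expect to carry the real content, is exhibiting one value of $t$ at which $M(t,r)$ is nonsingular; I would take $t=1$. For a bubble $\hat{v}$, integrating (\ref{C_interior_div}) by parts removes the boundary term and yields $\int_{\hat{T}}\hat{v}\cdot\text{grad}_{\hat{\mathbf{x}}}\hat{\psi}=0$ for all $\hat{\psi}\in\mathcal{P}_r(\hat{T})$, i.e. $\hat{v}\perp\text{grad}_{\hat{\mathbf{x}}}\mathcal{P}_r(\hat{T})$, while (\ref{C_interior_aux}) at $t=1$ gives $\hat{v}\perp\text{span}\{\hat{\mathbf{g}}_i\}$. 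By the defining splitting $\text{span}\{\hat{\mathbf{g}}_i\}\oplus\text{grad}_{\hat{\mathbf{x}}}\mathcal{P}_r(\hat{T})=[\mathcal{P}_{r-1}(\hat{T})]^2$, these combine to $\int_{\hat{T}}\hat{v}\cdot\hat{w}=0$ for every $\hat{w}\in[\mathcal{P}_{r-1}(\hat{T})]^2$. Writing $\hat{v}=\lambda_1\lambda_2\lambda_3\,\hat{q}$ with $\hat{q}\in[\mathcal{P}_{r-1}(\hat{T})]^2$ and choosing $\hat{w}=\hat{q}$ gives $\int_{\hat{T}}\lambda_1\lambda_2\lambda_3\,|\hat{q}|^2=0$, whence $\hat{q}=0$ and $\hat{v}=0$ because $\lambda_1\lambda_2\lambda_3>0$ in the interior. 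Hence $M(1,r)$ is nonsingular, $\det M(\cdot,r)$ is a nonzero polynomial, and the lemma follows. The whole argument thus hinges on the $L^2$-nondegeneracy produced by the positive bubble weight $\lambda_1\lambda_2\lambda_3$ together with the complementarity of $\{\hat{\mathbf{g}}_i\}$ to $\text{grad}\,\mathcal{P}_r$, which is exactly the feature that the unavailable point $t=0$ lacks.
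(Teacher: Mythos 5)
Your proof is correct and follows essentially the same route as the paper's: reduce to the interior bubble space via the edge and vertex conditions, establish nonsingularity at $t=1$ by combining the integration-by-parts form of (\ref{C_interior_div}) with the complementarity $\mbox{span}\{\hat{\mathbf{g}}_{\tilde{r},i}\}\oplus\text{grad}_{\hat{\mathbf{x}}}\mathcal{P}_{r}(\hat{T})=[\mathcal{P}_{r-1}(\hat{T})]^{2}$ and the positivity of the bubble $\lambda_{1}\lambda_{2}\lambda_{3}$, and then invoke the polynomial-in-$t$ determinant and the fundamental theorem of algebra. Your explicit edge-bubble argument and dimension count merely fill in details the paper leaves implicit.
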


\begin{proof}
Take an arbitrary $t\in [0,1]$. Since $\hat{\omega}\in H^{1}(\hat{T};\mathbb{R}^{2})$, then 
all right hand sides of (\ref{C_interior_div}),(\ref{C_interior_aux}),(\ref{C_edge_normal}),
(\ref{C_edge_tangent}) 
are continuous functionals with respect to $\hat{\omega}$.
In order to show the well-definedness of $C_{t}$, 
it is sufficient to show that $\hat{\omega}=0$ if 
$\hat{\omega}\in\mathcal{P}_{\tilde{r}+2}\Lambda^{0}(\hat{T};\mathbb{R}^{2})$,
$\hat{\omega}=0$ at all vertices of $\hat{T}$, and $C_{t}\hat{\omega}=0$.
According to conditions (\ref{C_edge_normal}) and~(\ref{C_edge_tangent}), 
$\hat{\omega}\in\mathring{\mathcal{P}}_{\tilde{r}+2}\Lambda^{0}(\hat{T};\mathbb{R}^{2})$.
Define now $\mathring{C}_{t}:H^{1}(\hat{T};\mathbb{R}^{2})\longrightarrow \mathring{\mathcal{P}}_{\tilde{r}+2}
\Lambda^{0}(\hat{T};\mathbb{R}^{2})$ by the relations
\begin{equation}
\int_{\hat{T}}\text{div}_{\hat{\mathbf{x}}}\mathring{C}_{t}\hat{\omega}
(\hat{\mathbf{x}})\hat{\psi}(\hat{\mathbf{x}}) d\hat{\mathbf{x}}=
\int_{\hat{T}}\text{div}_{\hat{\mathbf{x}}}\hat{\omega}
(\hat{\mathbf{x}})\hat{\psi}(\hat{\mathbf{x}}) d\hat{\mathbf{x}} \quad
\forall \hat{\psi}(\hat{\mathbf{x}})\in\mathcal{P}_{\tilde{r}(\hat{T})}(\hat{T})/\mathbb{R}
\label{C0_interior_div}
\end{equation}
\begin{equation}
\int_{\hat{T}}(\mathring{C}_{t}\hat{\omega}(\hat{\mathbf{x}}))^{\top}
\hat{\mathbf{h}}_{i}(\hat{\mathbf{x}},t) d\hat{\mathbf{x}}=
\int_{\hat{T}}(\hat{\omega}(\hat{\mathbf{x}}))^{\top}
\hat{\mathbf{h}}_{i}(\hat{\mathbf{x}},t) d\hat{\mathbf{x}}\quad
1\leq i\leq k_{\tilde{r}}
\label{C0_interior_aux}
\end{equation}
It is sufficient to show that operator 
$\mathring{C}_{T,t}:H^{1}(\hat{T};\mathbb{R}^{2})\longrightarrow 
\mathring{\mathcal{P}}_{\tilde{r}+2}\Lambda^{0}(\hat{T};\mathbb{R}^{2})$ is well-defined.

Obviously, the right-hand side of conditions~(\ref{C0_interior_div}) 
and~(\ref{C0_interior_aux}) are continuous functionals with respect to $\hat{\omega}$. 
Set $r=\tilde{r}(\hat{T})$ and
denote by $\mathring{C}(t,r)$ the matrix associated with the left-hand side of 
conditions~(\ref{C0_interior_div}) 
and~(\ref{C0_interior_aux}) 
corresponding to some basis of 
$\mathring{\mathcal{P}}_{\tilde{r}+2}^{-}\Lambda^{0}(\hat{T};\mathbb{R}^{2})$ 
(the solution space), and some basis of $\mathcal{P}_{\tilde{r}(\hat{T})}(\hat{T})/\mathbb{R}$. 
We need to show that the
$\det(\mathring{C}(t,r))\neq 0$ for almost all $t\in [0,1]$.

Since $\hat{\omega}$ vanishes on the boundary,
we can integrate by parts (\ref{C0_interior_div}) without getting any boundary terms.
So (\ref{C0_interior_div}) is the same as
 \begin{equation}
\int_{\hat{T}}(\mathring{C}_{t}(\hat{\omega})(\hat{\mathbf{x}}))^{\top}\cdot 
\text{grad}_{\hat{\mathbf{x}}}\hat{\psi}(\hat{\mathbf{x}}) d\hat{\mathbf{x}}=
\int_{\hat{T}}(\hat{\omega}(\hat{\mathbf{x}}))^{\top}\cdot \text{grad}_{\hat{\mathbf{x}}}
\hat{\psi}(\hat{\mathbf{x}}) d\hat{\mathbf{x}} \quad 
\forall \hat{\psi}
\in\mathcal{P}_{\tilde{r}(\hat{T})}(\hat{T})/\mathbb{R}.
\label{C0_interior_div_by_part}
\end{equation}

Notice that $\mathring{\mathcal{P}}_{\tilde{r}+2}\Lambda^{0}(\hat{T};\mathbb{R}^{2})=\hat{\chi}(\hat{\mathbf{x}})
[\mathcal{P}_{\tilde{r}(\hat{T})-1}(\hat{T})]^{2}$ where 
$\hat{\chi}$ is a third order polynomial vanishing along
boundary $\partial\hat{T}$ and positive in the interior of $\hat{T}$ (a ``bubble''). 
And notice that  
\begin{align*}
& \{\mathbf{h}_{\tilde{r},1}(\hat{\mathbf{x}},1),\cdots, \mathbf{h}_{\tilde{r},k_{\tilde{r}}}(\hat{\mathbf{x}},1)\}\oplus \text{grad}_{\hat{\mathbf{x}}}\mathcal{P}_{\tilde{r}(\hat{T})}(\hat{T})/\mathbb{R}\\
=& \{\mathbf{g}_{\tilde{r},1}(\hat{\mathbf{x}}),\cdots, \mathbf{g}_{\tilde{r},k_{\tilde{r}}}(\hat{\mathbf{x}})\}\oplus \text{grad}_{\hat{\mathbf{x}}}\mathcal{P}_{\tilde{r}(\hat{T})}(\hat{T})=[\mathcal{P}_{\tilde{r}(\hat{T})-1}(\hat{T})]^{2}.
\end{align*}
Consequently, $\det(\mathring{C}(1,r))\neq 0$. Since 
$\det(\mathring{C}(t,r))$ is a polynomial in $t$, 
by the fundamental theorem of algebra argument again,
$\det(\mathring{C}(1,r))$ vanishes at a finite number of roots only.

We can conclude thus that operator
$C_{t}:H^{1}(\hat{T};\mathbb{R}^{2})\longrightarrow 
\mathcal{P}_{\tilde{r}+2}\Lambda^{0}(\hat{T};
\mathbb{R}^{2})$ is well-defined for almost all $t\in [0,1]$.
Notice that the number of roots is independent of the choice of the bases, and depends
upon $\tilde{r}(\hat{T})$ only. 
\end{proof}

\begin{theorem}
\label{W_inequality}
For any $t\in [0,1]$ such that $C_{t}:H^{1}(\hat{T};\mathbb{R}^{2})\longrightarrow \mathcal{P}_{\tilde{r}+2}
\Lambda^{0}(\hat{T};\mathbb{R}^{2})$ is well-defined,
there exists $C>0$ such that, for any $T\in\mathcal{T}_{h}$, 
operator $W_{T,t}:H^{1}(T;\mathbb{R}^{2})\longrightarrow 
\mathcal{P}_{\tilde{r}+2}\Lambda^{0}(T;\mathbb{R}^{2})$
is well-defined as well and,
\begin{equation}
\label{bound_WT}
\Vert \text{curl}W_{T,t}\omega\Vert_{L^{2}(T)} \leq C (h_{T}^{-1}\Vert\omega\Vert_{L^{2}(T)} 
+ \Vert \omega \Vert_{H^{1}(T)})\quad
\forall \omega\in H^{1}(T;\mathbb{R}^{2}).
\end{equation}
Here $h_{T}$ is the diameter of $T$.
\end{theorem}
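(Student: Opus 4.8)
The plan is to mimic the proof of Theorem~\ref{H_div_transform_minus}: transfer everything to the reference triangle through the Piola-type isomorphism $A_T$ and read off both the well-definedness and the estimate from the already-established properties of $C_t$ on $\hat{T}$ (Lemma~\ref{C_well_defined}). First I would record that $A_T$, defined by $(A_T\hat{\omega})(\mathbf{x}(\hat{\mathbf{x}})) = \frac{B_T}{\det(B_T)}\hat{\omega}(\hat{\mathbf{x}})$ as in~(\ref{trans1}), is a linear isomorphism of $H^{1}(\hat{T};\mathbb{R}^{2})$ onto $H^{1}(T;\mathbb{R}^{2})$ that also maps $\mathcal{P}_{\tilde{r}+2}\Lambda^{0}(\hat{T};\mathbb{R}^{2})$ onto $\mathcal{P}_{\tilde{r}+2}\Lambda^{0}(T;\mathbb{R}^{2})$: multiplication by the constant matrix $B_T/\det(B_T)$ preserves polynomial degree and, since $\tilde{r}(\hat{f})=\tilde{r}(f)$, it preserves the variable-order edge restrictions as well. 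I would then show that pulling the five defining relations~(\ref{W_interior_div})--(\ref{W_vertex}) back to $\hat{T}$ by $A_T$ reproduces exactly the relations~(\ref{C_interior_div})--(\ref{C_vertex}) for $C_t$. The interior relations match because $\text{div}_{\mathbf{x}}(A_T\hat{v}) = (\det B_T)^{-1}(\text{div}_{\hat{\mathbf{x}}}\hat{v})\circ\hat{\mathbf{x}}$ and because $(A_T\hat{v})^{\top}B_T^{-\top} = (\det B_T)^{-1}\hat{v}^{\top}$ cancels the $B_T^{-\top}$ inserted in~(\ref{W_interior_aux}); after the change of variables $d\mathbf{x}=|\det B_T|\,d\hat{\mathbf{x}}$ the Jacobians collapse to a harmless sign. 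The vertex relation is immediate, since $A_T$ acts pointwise through an invertible matrix. This yields the operator identity $W_{T,t}=A_T\circ C_t\circ A_T^{-1}$, so $W_{T,t}$ is well-defined precisely when $C_t$ is.

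The one point that needs care is the pair of edge relations~(\ref{W_edge_normal})--(\ref{W_edge_tangent}), and this is the main obstacle in the well-definedness step. Individually, neither the normal nor the tangential moment transforms to its reference counterpart under the contravariant Piola map, because the physical tangent picks up a factor $B_T^{\top}B_T$. However, imposing both conditions for all $\eta\in\mathcal{P}_{\tilde{r}(e)}(e)$ is equivalent to requiring that the \emph{full} vector edge moment $\int_{e}(W_{T,t}\omega-\omega)\,\eta\,ds$ vanish, and this basis-free statement does transform correctly: under $A_T$ it becomes $\frac{B_T}{\det(B_T)}$ (times the constant edge-length ratio) applied to $\int_{\hat{e}}(C_t\hat{\omega}-\hat{\omega})\,\hat{\eta}\,d\hat{s}$, so its vanishing on $e$ is equivalent to its vanishing on $\hat{e}$. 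Splitting back into normal and tangential parts then recovers~(\ref{C_edge_normal})--(\ref{C_edge_tangent}).

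Finally I would prove the estimate~(\ref{bound_WT}) by standard scaling. Writing $\hat{\omega}=A_T^{-1}\omega$ and $\Phi=C_t\hat{\omega}$, the chain rule for $\text{curl}$ under the affine map (gradients scale by $B_T^{-1}$, the Piola prefactor contributes $\Vert B_T\Vert/|\det B_T|$, and $d\mathbf{x}=|\det B_T|\,d\hat{\mathbf{x}}$) gives
\begin{equation*}
\Vert\text{curl}\,W_{T,t}\omega\Vert_{L^{2}(T)}
\le C\,\frac{\Vert B_T\Vert\,\Vert B_T^{-1}\Vert}{|\det B_T|^{1/2}}\,
\vert\Phi\vert_{H^{1}(\hat{T})}
\le C\,h_T^{-1}\,\vert C_t\hat{\omega}\vert_{H^{1}(\hat{T})},
\end{equation*}
where shape regularity supplies $\Vert B_T\Vert\sim h_T$, $\Vert B_T^{-1}\Vert\sim h_T^{-1}$ and $|\det B_T|\sim h_T^{2}$. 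Since $t$ is fixed and $C_t$ is a bounded operator into a finite-dimensional space on the fixed triangle $\hat{T}$, one has $\vert C_t\hat{\omega}\vert_{H^{1}(\hat{T})}\le C\Vert\hat{\omega}\Vert_{H^{1}(\hat{T})}$; the defining functionals are the continuous interior and edge moments of $\hat{\omega}$, while the vertex conditions only constrain the output. Scaling back gives $\Vert\hat{\omega}\Vert_{H^{1}(\hat{T})}\le C(\Vert\omega\Vert_{L^{2}(T)}+h_T\vert\omega\vert_{H^{1}(T)})$, and combining the three inequalities produces exactly $\Vert\text{curl}\,W_{T,t}\omega\Vert_{L^{2}(T)}\le C(h_T^{-1}\Vert\omega\Vert_{L^{2}(T)}+\Vert\omega\Vert_{H^{1}(T)})$. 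Uniformity of $C$ over the whole family follows because the maximal order is bounded, so only finitely many reference order-configurations $\tilde{r}$ occur, and all remaining constants depend only on the shape regularity.
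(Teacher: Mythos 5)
Your proposal is correct and follows essentially the same route as the paper: define the Piola-type isomorphism $A_T$, show the defining relations of $W_{T,t}$ pull back to those of $C_t$ so that $W_{T,t}=A_T\circ C_t\circ A_T^{-1}$, and obtain the bound by affine scaling. The only (cosmetic) difference is in the edge conditions, where you combine the normal and tangential moments into a single vector-valued moment that transforms cleanly, while the paper keeps the normal moment (which in fact does pull back exactly to its reference counterpart) and argues that $\hat{\mathbf{n}}$ and $B_T^{\top}B_T\hat{\mathbf{t}}$ are linearly independent; the two observations are equivalent.
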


\begin{proof}
For any $T\in\mathcal{T}_{h}$,  we define a linear isomorphism $A_{T}$ from 
$H^{1}(\hat{T};\mathbb{R}^{2})$ to $H^{1}(T;\mathbb{R}^{2})$ by 
$(A_{T}\hat{\omega})(\mathbf{x}(\hat{\mathbf{x}}))=\dfrac{B_{T}\hat{\omega}(\hat{\mathbf{x}})}{\det(B_{T})}$.
It is easy to see that $A_{T}$ is a linear isomorphism from 
$\mathcal{P}_{\tilde{r}+2}\Lambda^{0}(\hat{T};\mathbb{R}^{2})$ to 
$\mathcal{P}_{\tilde{r}+2}\Lambda^{0}(T;\mathbb{R}^{2})$.

By pulling back to $\hat{T}$ by $A_{T}$ (applied to both $\omega$ and $W_{T,t}\omega$),  
we can see that (\ref{W_interior_div}) is the same as (\ref{C_interior_div}), (\ref{W_interior_aux}) 
is the same as (\ref{C_interior_aux}), (\ref{W_edge_normal}) is the same as (\ref{C_edge_normal}) 
and (\ref{W_vertex}) is the same as (\ref{C_vertex}). (\ref{W_edge_tangent}) becomes 
\begin{equation}
\label{W_edge_tangent_tranformed}
 \int_{\hat{e}}[(A_{T}^{-1}(W_{T,t}\omega))^{\top}B_{T}^{\top}B_{T}\hat{\mathbf{t}}]\hat{\eta}d\hat{s}= 
\int_{\hat{e}}[\hat{\omega}^{T}B_{T}^{\top}B_{T}\hat{\mathbf{t}}]\hat{\eta}d\hat{s}\quad
\forall \hat{\eta}\in\mathcal{P}_{\tilde{r}(\hat{e})}(\hat{e}), \forall \hat{e}\in\triangle_{1}(\hat{T}).
\end{equation}
Notice that $\hat{\mathbf{n}}$ and $B_{T}^{\top}B_{T}\hat{\mathbf{t}}$ are not parallel to each other. 
Since $\hat{\mathbf{t}}^{\top}B_{T}^{\top}B_{T}\hat{\mathbf{t}}\neq 0$, $\hat{\mathbf{t}}$ is not perpendicular
to $B_{T}^{\top}B_{T}\hat{\mathbf{t}}$. Since $\hat{\mathbf{n}}\bot\hat{\mathbf{t}}$, then $\hat{\mathbf{n}}$
is not parallel to $B_{T}^{\top}B_{T}\hat{\mathbf{t}}$.
This means that conditions (\ref{W_edge_normal}) and (\ref{W_edge_tangent}) imply conditions (\ref{C_edge_normal}) and (\ref{C_edge_tangent}) 
by the definition of $A_{T}$. So we can conclude that $W_{T,t}$ is well-defined when $C_{t}$ is well-defined, and
\begin{equation}
\label{transform_AT}
W_{T,t}\omega(\mathbf{x}(\hat{\mathbf{x}})) = A_{T}C_{t}\hat{\omega}(\hat{\mathbf{x}}). 
\end{equation}
Finally, inequality (\ref{bound_WT}) results from standard scaling techniques.
\end{proof}

\begin{definition}
\label{parameter_t_order}
Let $\tilde{r}:\triangle (\hat{T})\rightarrow\mathbb{Z}_{+}$ be a locally variable order of 
discretization that satisfies the minimum rule. 
According to Lemma~\ref{PB_modifiled_well_defined} and Lemma~\ref{C_well_defined}, 
there exists $t_{\tilde{r}(\hat{T})}\in [0,1]$ such that both $\Pi_{\tilde{r}+1,\hat{T},t_{\tilde{r}(\hat{T})}}^{1,-}$ 
and $C_{t_{\tilde{r}(\hat{T})}}$ are well-defined. And the value of $t_{\tilde{r}(\hat{T})}$ depends on $\tilde{r}(\hat{T})$
only.
\end{definition}

\subsection{Projection operators on the whole mesh}

According to Lemma~\ref{PB_modifiled_well_defined}, Theorem~\ref{H_div_transform_minus}, 
Lemma~\ref{C_well_defined}, and Theorem~\ref{W_inequality}, there exist 
$\{t_{1},t_{2},\cdots \}\subset\mathbb{Z}_{+}$ such that, for any $T\in\mathcal{T}_{h}$,
$\Pi_{\tilde{r}+1,T,t_{\tilde{r}(T)}}^{1,-}$ and $W_{T,t_{\tilde{r}(T)}}$ are well-defined linear operators. From now on, 
we rename $\Pi_{\tilde{r}+1,T,t_{\tilde{r}(T)}}^{1,-}$ by $\Pi_{\tilde{r}+1,T}^{1,-}$, 
and $W_{T,t_{\tilde{r}(T)}}$ 
by $W_{T}$.

\begin{definition}
\label{projections_whole}
We define the following global interpolation operators,
$$
\Pi_{\tilde{r},\mathcal{T}_{h}}^{2}:L^{2}(\Omega)\longrightarrow \mathcal{P}_{\tilde{r}}\Lambda^{2}(\mathcal{T}_{h}),\quad 
(\Pi_{\tilde{r},\mathcal{T}_{h}}^{2}u)|_{T} = \Pi_{\tilde{r},T}^{2}(u|_{T})
$$ 
$$
\tilde{\Pi}_{\tilde{r},\mathcal{T}_{h}}^{2}:L^{2}(\Omega;\mathbb{R}^{2})\longrightarrow \mathcal{P}_{\tilde{r}}
\Lambda^{2}(\mathcal{T}_{h};\mathbb{R}^{2}),\quad 
(\tilde{\Pi}_{\tilde{r},\mathcal{T}_{h}}^{2}(u_{1},u_{2})^{\top})|_{T} = (\Pi_{\tilde{r},T}^{2}(u_{1}|_{T}),
\Pi_{\tilde{r},T}^{2}(u_{2}|_{T}))^{\top}
$$ 
$$
\Pi_{\tilde{r}+1,\mathcal{T}_{h}}^{1,-}:H^{1}(\Omega;\mathbb{R}^{2})\longrightarrow \mathcal{P}_{\tilde{r}+1}^{-1}
\Lambda^{1}(\mathcal{T}_{h}),\quad
(\Pi_{\tilde{r}+1,\mathcal{T}_{h}}^{1,-}\omega)|_{T} = \Pi_{\tilde{r}+1,T}^{1,-}(\omega|_{T})
$$
$$
\tilde{\Pi}_{\tilde{r}+1,\mathcal{T}_{h}}^{1}:H^{1}(\Omega;\mathbb{M})\longrightarrow \mathcal{P}_{\tilde{r}+1}
\Lambda^{1}(\mathcal{T}_{h};\mathbb{R}^{2}),\quad
(\tilde{\Pi}_{\tilde{r}+1,\mathcal{T}_{h}}^{1}\sigma)|_{T} = \left[\begin{array}{cc}\tau_{11} & \tau_{12}\\ 
\tau_{21} & \tau_{22}\end{array} \right]
$$
where 
$$
\left[ \begin{array}{c} \tau_{11} \\ \tau_{12}\end{array} 
\right]=\Pi_{\tilde{r}+1,\mathcal{T}_{h}}^{1}\left[ \begin{array}{c} 
\sigma_{11}|_{T} \\ \sigma_{12}|_{T} \end{array}\right],\quad
\left[ \begin{array}{c} \tau_{21} \\ \tau_{22}\end{array} \right]=
\Pi_{\tilde{r}+1,\mathcal{T}_{h}}^{1}\left[ \begin{array}{c} \sigma_{21}|_{T} 
\\ \sigma_{22}|_{T} \end{array}\right], \quad
\sigma=\left[ \begin{array}{cc}\sigma_{11} & \sigma_{12}\\\sigma_{21} & \sigma_{22}
\end{array}\right]
$$
$$
W_{\mathcal{T}_{h}}:H^{1}(\Omega;\mathbb{R}^{2})\longrightarrow \mathcal{P}_{\tilde{r}+2}
\Lambda^{0}(\mathcal{T}_{h};\mathbb{R}^{2}),\quad
(W_{\mathcal{T}_{h}}\omega)|_{T} = W_{T}(\omega|_{T})
$$
for all $T\in\mathcal{T}_{h}$.
\end{definition}

\begin{theorem}
\label{H_div_whole}
There exists $C>0$ such that
\begin{align*}
\Vert\tilde{\Pi}_{\tilde{r}+1,\mathcal{T}_{h}}^{1}\sigma\Vert_{L^{2}(\Omega)}\leq C\Vert\sigma\Vert_{H^{1}(\Omega)},\quad 
\Vert\Pi_{\tilde{r}+1,\mathcal{T}_{h}}^{1,-}\omega\Vert_{L^{2}(\Omega)}\leq C\Vert\omega\Vert_{H^{1}(\Omega)},
\end{align*}
for any $\sigma\in H^{1}(\Omega;\mathbb{M})$ and $\omega\in H^{1}(\Omega;\mathbb{R}^{2})$.
Moreover,
$$
\text{div}\tilde{\Pi}_{\tilde{r}+1,\mathcal{T}_{h}}^{1}\sigma=\tilde{\Pi}_{\tilde{r},\mathcal{T}_{h}}^{2}\text{div}\sigma, \quad
\text{div}\Pi_{\tilde{r}+1,\mathcal{T}_{h}}^{1,-}\omega=\Pi_{\tilde{r},\mathcal{T}_{h}}^{2}\text{div}\omega
$$
\end{theorem}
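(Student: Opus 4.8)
The plan is to reduce everything to the single-element results already in hand, since every operator in Definition~\ref{projections_whole} is defined purely through its restrictions to the individual triangles. Two ingredients do all the work: the local $L^{2}$-bounds of Lemma~\ref{H_div_inequality1} and Theorem~\ref{H_div_transform_minus}, and the local commutation identities of Lemma~\ref{commuting_diagram1} and Theorem~\ref{H_div_transform_minus}. Before summing anything, however, I would first confirm that the element-wise interpolants actually glue into functions lying in the conforming spaces $\mathcal{P}_{\tilde{r}+1}\Lambda^{1}(\mathcal{T}_{h})$ and $\mathcal{P}_{\tilde{r}+1}^{-}\Lambda^{1}(\mathcal{T}_{h})$, i.e. that they are $H(\text{div})$-conforming; only then is the divergence of the global interpolant an $L^{2}$ function whose restriction to each $T$ equals the element-wise divergence, so that the commutation statement even makes sense globally.

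For the conformity step, note that on each edge $e$ the normal trace of a field in $\mathcal{P}_{\tilde{r}+1}^{-}\Lambda^{1}(T)$ lies in $\mathcal{P}_{\tilde{r}(e)}(e)$ (by the Remark following Definition~\ref{FEM_spaces_single_triangle_variable_order}), while the edge condition (\ref{H_div_edge_minus}) tests against all of $\mathcal{P}_{\tilde{r}(e)}(e)$. Hence $(\Pi_{\tilde{r}+1,T}^{1,-}\omega)\cdot\mathbf{n}|_{e}$ is exactly the $L^{2}(e)$-projection of $\omega\cdot\mathbf{n}|_{e}$ onto $\mathcal{P}_{\tilde{r}(e)}(e)$, and so depends only on the single-valued datum $\omega\cdot\mathbf{n}|_{e}$ (single-valued because $\omega\in H^{1}(\Omega)$). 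Two triangles sharing $e$ assign it the same order $\tilde{r}(e)$ by the global minimum rule, so their normal traces coincide and the glued field is $H(\text{div})$-conforming. The same argument applies to $\Pi_{\tilde{r}+1,\mathcal{T}_{h}}^{1}$ via (\ref{H_div_edge}), and row-wise to $\tilde{\Pi}_{\tilde{r}+1,\mathcal{T}_{h}}^{1}$.

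The bounds then follow by summation. Since $\omega$ and $\sigma$ are globally $H^{1}$, both $\Vert\cdot\Vert_{L^{2}(\Omega)}^{2}$ and $\Vert\cdot\Vert_{H^{1}(\Omega)}^{2}$ split as sums of the corresponding element quantities. Applying the local estimate (\ref{bound_pi_one_minus}) (respectively Lemma~\ref{H_div_inequality1}) on each $T$ and summing the squares yields $\Vert\Pi_{\tilde{r}+1,\mathcal{T}_{h}}^{1,-}\omega\Vert_{L^{2}(\Omega)}^{2}\le C^{2}\sum_{T}\Vert\omega\Vert_{H^{1}(T)}^{2}=C^{2}\Vert\omega\Vert_{H^{1}(\Omega)}^{2}$, and likewise row-wise for $\tilde{\Pi}_{\tilde{r}+1,\mathcal{T}_{h}}^{1}$. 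The crucial point is that the local constant is uniform across the family: it arises from a reference-element estimate composed with a scaling bound, so by shape-regularity together with the cap $\tilde{r}(T)\le r_{\max}$ there are only finitely many reference configurations, producing a single $C$ independent of $h$.

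Finally, the commutation identities localize immediately. On each $T$,
\begin{equation*}
(\text{div}\,\Pi_{\tilde{r}+1,\mathcal{T}_{h}}^{1,-}\omega)|_{T}
=\text{div}\,\Pi_{\tilde{r}+1,T}^{1,-}(\omega|_{T})
=\Pi_{\tilde{r},T}^{2}\,\text{div}(\omega|_{T})
=(\Pi_{\tilde{r},\mathcal{T}_{h}}^{2}\,\text{div}\,\omega)|_{T},
\end{equation*}
by (\ref{com_div_minus}); since this holds on every element, the global identity follows from the conformity established above, which guarantees that the divergence of the global interpolant carries no interelement contributions. The identity for $\tilde{\Pi}_{\tilde{r}+1,\mathcal{T}_{h}}^{1}$ is obtained in the same way, applying Lemma~\ref{commuting_diagram1} to each of the two rows. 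The only places demanding genuine care are the conformity step and the $h$-independence of the constant $C$; the summation and the localization are otherwise routine.
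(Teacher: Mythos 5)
Your proposal is correct and follows the same route as the paper, which simply declares the theorem an immediate consequence of Lemma~\ref{H_div_inequality1} and Theorem~\ref{H_div_transform_minus}; you have merely written out the standard details (interelement conformity of the normal traces via the edge conditions and the minimum rule, summation of the local bounds, and localization of the commutation identity) that the paper leaves implicit.
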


\begin{proof}
This is an immediate result of Lemma~\ref{H_div_inequality1} and Theorem~\ref{H_div_transform_minus}.
\end{proof}

\begin{definition}
Let $R_{h}$ denote the generalized Clement interpolant operator from Theorem 5.1 in
 \cite{CB:1989:OIC}, 
mapping $H^{1}(\Omega;\mathbb{R}^{2})$ into 
$\mathcal{P}_{1}\Lambda^{0}(\mathcal{T}_{h};\mathbb{R}^{2})$. 
We define 
$$
\tilde{W}_{h}=W_{h}(I-R_{h})+R_{h}
$$
\end{definition}

\begin{theorem}
\label{bounded_W}
There exists $C>0$ such that
$$
\Vert\text{curl}\tilde{W}_{h}\omega\Vert_{L^{2}(\Omega)}
\leq C\Vert\omega\Vert_{H^{1}(\Omega)}\quad
\forall \omega\in H^{1}(\Omega;\mathbb{R}^{2})
$$
Operator $\tilde{W}_{h}$ maps $H^{1}(\Omega;\mathbb{R}^{2})$ 
into $\mathcal{P}_{\tilde{r}+2}\Lambda^{0}(T;\mathbb{R}^{2})$ and satisfies the
condition
$$
\Pi_{\tilde{r}+1,\mathcal{T}_{h}}^{1,-}\omega 
= \Pi_{\tilde{r}+1,\mathcal{T}_{h}}^{1,-}\tilde{W}_{h}\omega\quad
\forall \omega\in H^{1}(\Omega;\mathbb{R}^{2})
$$
\end{theorem}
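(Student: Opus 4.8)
The plan is to establish the three assertions in the order (i) the commutation identity, which is built into the construction of $W_{T,t}$; (ii) that $\tilde W_h$ takes values in $\mathcal{P}_{\tilde r+2}\Lambda^0(\mathcal{T}_h;\mathbb{R}^2)$; and (iii) the $h$-uniform bound on the curl, which is the only genuinely quantitative step. Throughout, write $W_h=W_{\mathcal{T}_h}$ for the global operator of Definition~\ref{projections_whole}. For the commutation identity I would work elementwise: fixing $T$ and setting $v:=(I-R_h)\omega|_T$, one has $(\tilde W_h\omega)|_T-\omega|_T=W_T v+(R_h\omega)|_T-\omega|_T=W_T v-v$. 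Since $\Pi^{1,-}_{\tilde r+1,T}$ is linear, it suffices to prove $\Pi^{1,-}_{\tilde r+1,T}(W_T v-v)=0$. By the well-definedness (hence uniqueness) of $\Pi^{1,-}_{\tilde r+1,T}$, this holds as soon as $w=0$ satisfies the three defining relations (\ref{interior_div_minus}), (\ref{interior_aux_minus}), (\ref{H_div_edge_minus}) relative to the datum $W_T v-v$; but those relations are \emph{literally} conditions (\ref{W_interior_div}), (\ref{W_interior_aux}), (\ref{W_edge_normal}) in the definition of $W_{T,t}$ evaluated at $\omega=v$, and so hold by construction. Thus the identity is immediate and uses only the divergence-moment, interior-$\hat{\mathbf h}_i$-moment, and normal-edge-moment conditions of $W_{T,t}$; the tangential condition (\ref{W_edge_tangent}) and vertex condition (\ref{W_vertex}) are irrelevant here.

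For the mapping property I would argue elementwise and then verify conformity. On each $T$, $W_T\big((I-R_h)\omega\big)\in\mathcal{P}_{\tilde r+2}\Lambda^0(T;\mathbb{R}^2)$ by definition of $W_T$, while $(R_h\omega)|_T\in\mathcal{P}_1\Lambda^0(T;\mathbb{R}^2)\subset\mathcal{P}_{\tilde r+2}\Lambda^0(T;\mathbb{R}^2)$ since $\tilde r+2\ge 2\ge 1$; hence $\tilde W_h\omega$ is piecewise in the correct local space. It remains to show $\tilde W_h\omega\in C\Lambda^0(\mathcal{T}_h;\mathbb{R}^2)\subset H^1$. As $R_h\omega$ is already continuous, I need only check that $W_h(I-R_h)\omega$ is continuous across an interior edge $e$ shared by $T_1,T_2$. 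Both traces lie in $[\mathcal{P}_{\tilde r(e)+2}(e)]^2$; because $(I-R_h)\omega\in H^1(\Omega)$ has a single-valued edge trace, conditions (\ref{W_edge_normal}) and (\ref{W_edge_tangent}) fix the normal and tangential components of each trace through the \emph{same} $L^2$-moments against $\mathcal{P}_{\tilde r(e)}(e)$ (the opposite normal orientations on the two sides cancel), while (\ref{W_vertex}) forces both to vanish at the endpoints of $e$. A one-dimensional unisolvence fact — a degree-$(\tilde r(e)+2)$ polynomial that is $L^2$-orthogonal to $\mathcal{P}_{\tilde r(e)}(e)$ and vanishes at both endpoints must be zero, as one sees by writing it as the product of the edge bubble and a polynomial of degree $\tilde r(e)$ and testing against that polynomial — shows each scalar component is determined uniquely by this data. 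Hence the two traces coincide and $\tilde W_h\omega$ is continuous.

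Finally, for the curl bound, which I expect to be the main obstacle, I would split $\text{curl}\,\tilde W_h\omega=\text{curl}\,W_h(I-R_h)\omega+\text{curl}\,R_h\omega$. For the second term, the $H^1$-stability of the Cl\'ement operator $R_h$ (Theorem~5.1 of \cite{CB:1989:OIC}) gives $\Vert\text{curl}\,R_h\omega\Vert_{L^2(\Omega)}\le\Vert R_h\omega\Vert_{H^1(\Omega)}\le C\Vert\omega\Vert_{H^1(\Omega)}$. For the first term I would apply Theorem~\ref{W_inequality} elementwise to $(I-R_h)\omega$, obtaining $\Vert\text{curl}\,W_T(I-R_h)\omega\Vert_{L^2(T)}\le C\big(h_T^{-1}\Vert(I-R_h)\omega\Vert_{L^2(T)}+\Vert(I-R_h)\omega\Vert_{H^1(T)}\big)$. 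The crux is the factor $h_T^{-1}$: by itself the estimate of Theorem~\ref{W_inequality} is \emph{not} $h$-uniform, and killing this factor is precisely the reason for inserting $I-R_h$. Using the standard Cl\'ement error bounds $\Vert(I-R_h)\omega\Vert_{L^2(T)}\le C\,h_T\,\Vert\omega\Vert_{H^1(S_T)}$ and $\Vert(I-R_h)\omega\Vert_{H^1(T)}\le C\,\Vert\omega\Vert_{H^1(S_T)}$, where $S_T$ denotes the patch of elements meeting $T$, the $h_T^{-1}$ is absorbed and each local contribution is bounded by $C\Vert\omega\Vert_{H^1(S_T)}$ uniformly in $h$. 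Squaring, summing over $T\in\mathcal{T}_h$, and invoking the finite-overlap property of a shape-regular family (each element lies in boundedly many patches $S_T$) yields $\Vert\text{curl}\,W_h(I-R_h)\omega\Vert_{L^2(\Omega)}\le C\Vert\omega\Vert_{H^1(\Omega)}$; combining the two terms completes the estimate. The only delicate points are the $h$-independence of the constants in the Cl\'ement estimates and in the finite-overlap count, both guaranteed by the regularity assumption on $(\mathcal{T}_h)_h$.
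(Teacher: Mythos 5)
Your proof is correct and follows essentially the same route as the paper's: the commutation identity is read off from the matching of the defining moment conditions of $W_{T,t}$ with those of $\Pi_{\tilde{r}+1,T}^{1,-}$ applied to $W_h(I-R_h)\omega-(I-R_h)\omega$, and the curl bound comes from splitting off $R_h\omega$, applying Theorem~\ref{W_inequality} elementwise, and absorbing the $h_T^{-1}$ via the Cl\'ement estimate~(\ref{clement_interpolant_inequality}) and finite patch overlap. The only addition is your explicit verification of inter-element continuity of $W_h(I-R_h)\omega$ via the edge-bubble unisolvence argument, a detail the paper leaves implicit; it is correct and worth having.
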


\begin{proof}
We utilize Example~1 from \cite{CB:1989:OIC} (in this case, $R_{h}$ is the same as standard Clement operator) 
with uniform order equal $1$ to construct operator $R_{h}$. Operator $R_{h}$ maps $H^{1}(\Omega;\mathbb{R}^{2})$ 
into $\mathcal{P}_{1}\Lambda^{0}(T;\mathbb{R}^{2})
\subset\mathcal{P}_{\tilde{r}+2}\Lambda^{0}(T;\mathbb{R}^{2})$. 
According to Theorem~5.1 from \cite{CB:1989:OIC}, there exists a constant $c>0$ such that, 
for any $T\in\mathcal{T}_{h}$,
\begin{equation}
\label{clement_interpolant_inequality}
\Vert\omega-R_{h}\omega\Vert_{L^{2}(T)}
+h_{T}\Vert\omega-R_{h}\omega\Vert_{H^{1}(T)}\leq 
c h_{T}\Vert\omega\Vert_{H^{1}(K_{T})}\quad
\forall \omega\in H^{1}(\Omega;\mathbb{R}^{2})
\end{equation}
where $K_{T}$ stands for the patch of elements adjacent to $T$,
$K_{T}=\bigcup_{T^{\prime}\in\mathcal{T}_{h},T^{\prime}\cap T\neq\emptyset}T^{\prime}$.

As $(\mathcal{T}_{h})_{h}$ is regular, 
$\sup_{h}\sup_{T\in\mathcal{T}_{h}}\# \{T^{\prime}\in\mathcal{T}_{h}:
T^{\prime}\subset K_{T}\}<\infty$.
We have
\begin{align*}
\Vert\text{curl}\tilde{W}_{h}\omega\Vert_{L^{2}(\Omega)}  \leq &
\Vert\text{curl}W_{h}(I-R_{h})\omega\Vert_{L^{2}(\Omega)}+\Vert\text{curl}R_{h}\omega\Vert_{L^{2}(\Omega)} \\
\leq & c(h_{T}^{-1}\Vert (I-R_{h})\omega\Vert_{L^{2}(\Omega)}+\Vert (I-R_{h})\omega\Vert_{H^{1}(\Omega)}
+\Vert\text{curl}R_{h}\omega\Vert_{L^{2}(\Omega)}) \\
\leq & C\Vert\omega\Vert_{H^{1}(\Omega)}\quad \forall \omega\in H^{1}(\Omega;\mathbb{R}^{2}).
\end{align*}
The second inequality above holds by Theorem~\ref{W_inequality} and the third one
by~(\ref{clement_interpolant_inequality}).

According to the definition of $\Pi_{\tilde{r}+1,\mathcal{T}_{h}}^{1,-}$,
 and the definition of $W_{h}$, $\Pi_{\tilde{r}+1,\mathcal{T}_{h}}^{1,-}\omega=\Pi_{\tilde{r}+1,\mathcal{T}_{h}}^{1,-}W_{h}\omega$,
 for any $\omega\in H^{1}(\Omega;\mathbb{R}^{2})$. Notice that $(I-R_{h})\omega\in \mathcal{P}_{1}\Lambda^{0}(\mathcal{T}_{h})\subset H^{1}(\Omega)$.
This implies
$$
\Pi_{\tilde{r}+1,\mathcal{T}_{h}}^{1,-}(I-\tilde{W}_{h})\omega
=\Pi_{\tilde{r}+1,\mathcal{T}_{h}}^{1,-}
(I-W_{h})(I-R_{h})\omega=0
$$
So $\Pi_{\tilde{r}+1,\mathcal{T}_{h}}^{1,-}\omega = \Pi_{\tilde{r}+1,\mathcal{T}_{h}}^{1,-}\tilde{W}_{h}\omega$.
\end{proof}

\section{Stability of the finite element discretization}

We need the following well-known result from partial differential
equations, see e.g. \cite{GR:1986:FENS}.

\begin{lemma}
\label{PDE_lemma}Let $\Omega$ be a bounded domain in $\mathbb{R}^{2}$ with a Lipschitz boundary. 
Then, for all $\mu\in L^{2}(\Omega)$, there exists $\eta\in H^{1}(\Omega;\mathbb{R}^{2})$ satisfying 
$\text{div}\eta=\mu$. If, in addition, $\int_{\Omega}\mu=0$, then we can choose $\eta\in\mathring{H}^{1}(\Omega)$.
\end{lemma}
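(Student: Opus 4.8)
The plan is to treat the two assertions separately: the unconstrained case via a Poisson solve, and the constrained case via a duality argument based on the surjectivity of the divergence operator.

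For the first assertion, given an arbitrary $\mu\in L^{2}(\Omega)$, I would extend $\mu$ by zero to a function $\tilde{\mu}\in L^{2}(B)$ on an open ball $B$ with $\overline{\Omega}\subset B$, and solve the Dirichlet problem $\Delta\phi=\tilde{\mu}$ in $B$, $\phi=0$ on $\partial B$. Since $B$ is smooth, $H^{2}$ elliptic regularity gives $\phi\in H^{2}(B)$, so that $\eta:=\nabla\phi|_{\Omega}\in H^{1}(\Omega;\mathbb{R}^{2})$ satisfies $\text{div}\,\eta=\Delta\phi=\mu$ on $\Omega$. No compatibility condition is needed here.

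For the second assertion, write $L^{2}_{0}(\Omega):=\{q\in L^{2}(\Omega):\int_{\Omega}q=0\}$. The claim is exactly that the bounded linear map $\text{div}:\mathring{H}^{1}(\Omega)\to L^{2}_{0}(\Omega)$ is onto, the image lying in $L^{2}_{0}$ by the divergence theorem together with the vanishing trace. Integration by parts shows that, under the identifications $L^{2}_{0}(\Omega)'\cong L^{2}_{0}(\Omega)$ and $\mathring{H}^{1}(\Omega)'\cong H^{-1}(\Omega;\mathbb{R}^{2})$, the adjoint of $\text{div}$ is, up to sign, the gradient $\nabla:L^{2}_{0}(\Omega)\to H^{-1}(\Omega;\mathbb{R}^{2})$; indeed for $v\in\mathring{H}^{1}(\Omega)$ and $q\in L^{2}_{0}(\Omega)$ one has $\int_{\Omega}q\,\text{div}\,v=-\langle\nabla q,v\rangle$ with no boundary term. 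By the closed range theorem, $\text{div}$ is surjective if and only if $\nabla$ is bounded below, i.e. there is $C>0$ with
\[
\|q\|_{L^{2}(\Omega)}\le C\,\|\nabla q\|_{H^{-1}(\Omega)}\qquad\forall\,q\in L^{2}_{0}(\Omega).
\]
Granting this, surjectivity produces $\eta\in\mathring{H}^{1}(\Omega)$ with $\text{div}\,\eta=\mu$, together with the a priori bound $\|\eta\|_{H^{1}(\Omega)}\le C\|\mu\|_{L^{2}(\Omega)}$.

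The main obstacle is the displayed estimate, which is the \emph{Ne\v{c}as inequality} on a bounded Lipschitz domain. I would establish it either by a Peetre--Tartar compactness argument, exploiting the compact embedding $L^{2}(\Omega)\hookrightarrow H^{-1}(\Omega)$ together with the injectivity of $q\mapsto\nabla q$ on $L^{2}_{0}(\Omega)$, or, more constructively, via the Bogovskii operator: on a domain star-shaped with respect to a ball one writes an explicit integral operator $B:L^{2}_{0}(\Omega)\to\mathring{H}^{1}(\Omega)$ with $\text{div}(B\mu)=\mu$ and controlled $H^{1}$ norm, and a general Lipschitz domain, here a polygon, is reduced to a finite union of star-shaped pieces by a partition of unity, the star-shaped decomposition being elementary in the polygonal case. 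Since this is the classical Girault--Raviart result, I would simply invoke \cite{GR:1986:FENS} for the inequality and assemble the two parts as above.
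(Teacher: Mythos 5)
Your proposal is correct; note that the paper itself offers no proof of this lemma at all, only the pointer to \cite{GR:1986:FENS}, so your argument is a legitimate expansion of that citation rather than a departure from anything in the text. Both halves are sound: the unconstrained case via a zero extension to a ball $B\supset\overline{\Omega}$ and an $H^{2}$ Poisson solve is the standard device (and, usefully, it also yields the bound $\Vert\eta\Vert_{H^{1}(\Omega)}\leq C\Vert\mu\Vert_{L^{2}(\Omega)}$, which the lemma does not state but which the paper silently uses in Step (1) of the proof of Lemma~\ref{affine_S2}); the constrained case via the closed range theorem and the surjectivity of $\text{div}:\mathring{H}^{1}(\Omega;\mathbb{R}^{2})\to L^{2}_{0}(\Omega)$ is exactly the Girault--Raviart result. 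One small caveat on your sketch of the key estimate: the Peetre--Tartar compactness argument is not self-contained here, since the inequality it takes as input, $\Vert q\Vert_{L^{2}(\Omega)}\leq C(\Vert\nabla q\Vert_{H^{-1}(\Omega)}+\Vert q\Vert_{H^{-1}(\Omega)})$ for $q\in L^{2}_{0}(\Omega)$, is itself the nontrivial Ne\v{c}as lemma on a Lipschitz domain; the compactness step only removes the lower-order term. The Bogovskii route (star-shaped decomposition plus the explicit integral operator) is genuinely constructive and, for a polygon, entirely elementary, so if you want a self-contained proof that is the one to carry out. Since you ultimately invoke \cite{GR:1986:FENS} for the inequality, your proof and the paper's citation rest on the same foundation.
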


\bigskip The main result of this paper for affine meshes is Theorem~\ref{thm:main_theorem_affine} below. 
Its proof follows the lines of Theorem~9.1 from \cite{Falk:2008:FME}, Theorem~7.1 from
\cite{AFW:2007:MMEW} and Theorem~11.4 from \cite{AFW:2006:ECH}. The main
difference is the use of operator~$\tilde{W}_{h}$ in place of the operator $\tilde{\Pi}
_{h}^{n-2}$ in \cite{Falk:2008:FME}.

\begin{lemma}
\label{affine_S2}
There exists $c>0$ such that, for any $(\omega,\mu)\in 
\mathcal{P}_{\tilde{r}}\Lambda^{2}(\mathcal{T}_{h})\times 
\mathcal{P}_{\tilde{r}}\Lambda^{2}(\mathcal{T}_{h};\mathbb{R}^{2})$, there exists 
$\sigma\in\mathcal{P}_{\tilde{r}+1}\Lambda^{1}(\mathcal{T}_{h};\mathbb{R}^{2})$ 
such that 
$$
\text{div}\sigma =\mu, \quad -\Pi_{\tilde{r},\mathcal{T}_{h}}^{2}S_{1}\sigma
=\omega
$$
and 
$$
\Vert\sigma\Vert_{H(\text{div},\Omega)}\leq c(\Vert\mu\Vert_{L^{2}(\Omega)}
+\Vert\omega\Vert_{L^{2}(\Omega)})
$$
Here, the constant $c$ depends on 
$\sup_{h}\sup_{T\in\mathcal{T}_{h}}\tilde{r}(T)$. 
\end{lemma}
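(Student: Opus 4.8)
The plan is to construct $\sigma$ in two stages: first produce a field $\sigma^{(1)}\in\mathcal{P}_{\tilde{r}+1}\Lambda^{1}(\mathcal{T}_{h};\mathbb{R}^{2})$ carrying the prescribed divergence $\mu$, and then add a \emph{divergence-free} correction $\sigma^{(2)}$ that adjusts the projected skew part $-\Pi_{\tilde{r},\mathcal{T}_{h}}^{2}S_{1}\sigma$ to the target $\omega$ without disturbing the divergence.

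For the first stage, since each component of $\mu$ lies in $L^{2}(\Omega)$, Lemma~\ref{PDE_lemma} (in the quantitative form of \cite{GR:1986:FENS}, which furnishes a bounded right inverse of the divergence) gives a matrix field $\eta\in H^{1}(\Omega;\mathbb{M})$ with $\text{div}\,\eta=\mu$ and $\Vert\eta\Vert_{H^{1}(\Omega)}\leq C\Vert\mu\Vert_{L^{2}(\Omega)}$. Setting $\sigma^{(1)}=\tilde{\Pi}_{\tilde{r}+1,\mathcal{T}_{h}}^{1}\eta$ and invoking Theorem~\ref{H_div_whole}, I obtain $\text{div}\,\sigma^{(1)}=\tilde{\Pi}_{\tilde{r},\mathcal{T}_{h}}^{2}\text{div}\,\eta=\tilde{\Pi}_{\tilde{r},\mathcal{T}_{h}}^{2}\mu=\mu$, the last equality holding because $\mu$ already belongs to the discrete space, together with $\Vert\sigma^{(1)}\Vert_{L^{2}(\Omega)}\leq C\Vert\eta\Vert_{H^{1}(\Omega)}\leq C\Vert\mu\Vert_{L^{2}(\Omega)}$.

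For the second stage I exploit the pointwise identities $S_{1}(\text{curl}\,\psi)=-\text{div}\,\psi$ and $\text{div}\,\text{curl}\,\psi=0$, valid for any vector field $\psi$ and immediate from the definitions of $\text{curl}$ and $S_{1}$. I set $\rho:=\omega+\Pi_{\tilde{r},\mathcal{T}_{h}}^{2}S_{1}\sigma^{(1)}\in\mathcal{P}_{\tilde{r}}\Lambda^{2}(\mathcal{T}_{h})$, use Lemma~\ref{PDE_lemma} again to choose $\phi\in H^{1}(\Omega;\mathbb{R}^{2})$ with $\text{div}\,\phi=\rho$ and $\Vert\phi\Vert_{H^{1}(\Omega)}\leq C\Vert\rho\Vert_{L^{2}(\Omega)}$, and define $\sigma^{(2)}:=\text{curl}\,\tilde{W}_{h}\phi$. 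By Theorem~\ref{bounded_W}, $\tilde{W}_{h}\phi\in\mathcal{P}_{\tilde{r}+2}\Lambda^{0}(\mathcal{T}_{h};\mathbb{R}^{2})$, so Lemma~\ref{embedding_affine_triangulation} (applied row-wise) places $\sigma^{(2)}$ in $\mathcal{P}_{\tilde{r}+1}\Lambda^{1}(\mathcal{T}_{h};\mathbb{R}^{2})$ with $\text{div}\,\sigma^{(2)}=0$. The crucial computation is $-\Pi_{\tilde{r},\mathcal{T}_{h}}^{2}S_{1}\sigma^{(2)}=\Pi_{\tilde{r},\mathcal{T}_{h}}^{2}\text{div}\,\tilde{W}_{h}\phi$, and the defining property $\Pi_{\tilde{r}+1,\mathcal{T}_{h}}^{1,-}\phi=\Pi_{\tilde{r}+1,\mathcal{T}_{h}}^{1,-}\tilde{W}_{h}\phi$ from Theorem~\ref{bounded_W}, together with the commuting relation $\text{div}\,\Pi_{\tilde{r}+1,\mathcal{T}_{h}}^{1,-}=\Pi_{\tilde{r},\mathcal{T}_{h}}^{2}\text{div}$ of Theorem~\ref{H_div_whole}, gives $\Pi_{\tilde{r},\mathcal{T}_{h}}^{2}\text{div}\,\tilde{W}_{h}\phi=\Pi_{\tilde{r},\mathcal{T}_{h}}^{2}\text{div}\,\phi=\Pi_{\tilde{r},\mathcal{T}_{h}}^{2}\rho=\rho$, so that $-\Pi_{\tilde{r},\mathcal{T}_{h}}^{2}S_{1}\sigma^{(2)}=\rho$.

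I then take $\sigma=\sigma^{(1)}+\sigma^{(2)}$, which satisfies $\text{div}\,\sigma=\mu$ and $-\Pi_{\tilde{r},\mathcal{T}_{h}}^{2}S_{1}\sigma=-\Pi_{\tilde{r},\mathcal{T}_{h}}^{2}S_{1}\sigma^{(1)}+\rho=\omega$. For the bound I estimate $\Vert\rho\Vert_{L^{2}(\Omega)}\leq\Vert\omega\Vert_{L^{2}(\Omega)}+C\Vert\sigma^{(1)}\Vert_{L^{2}(\Omega)}$ (boundedness of $S_{1}$ and of the $L^{2}$-projection), then $\Vert\sigma^{(2)}\Vert_{L^{2}(\Omega)}=\Vert\text{curl}\,\tilde{W}_{h}\phi\Vert_{L^{2}(\Omega)}\leq C\Vert\phi\Vert_{H^{1}(\Omega)}\leq C\Vert\rho\Vert_{L^{2}(\Omega)}$ by Theorem~\ref{bounded_W}; combining these with the first-stage estimate and $\text{div}\,\sigma=\mu$ yields $\Vert\sigma\Vert_{H(\text{div},\Omega)}\leq c(\Vert\mu\Vert_{L^{2}(\Omega)}+\Vert\omega\Vert_{L^{2}(\Omega)})$, with the dependence of $c$ on $\sup_{h}\sup_{T\in\mathcal{T}_{h}}\tilde{r}(T)$ entering only through the order-dependent constants of Theorems~\ref{H_div_whole} and~\ref{bounded_W}. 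The one delicate point, and the reason the new operator $\tilde{W}_{h}$ was constructed, is the \emph{exactness} $-\Pi_{\tilde{r},\mathcal{T}_{h}}^{2}S_{1}\sigma^{(2)}=\rho$: it holds precisely because $\tilde{W}_{h}$ and the identity share the same image under $\Pi_{\tilde{r}+1,\mathcal{T}_{h}}^{1,-}$, so the divergence-free curl correction reproduces the residual skew part exactly rather than only up to a projection error.
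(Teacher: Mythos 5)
Your proposal is correct and follows essentially the same route as the paper's proof: lift $\mu$ via the bounded right inverse of $\text{div}$ and the interpolant $\tilde{\Pi}_{\tilde{r}+1,\mathcal{T}_{h}}^{1}$, then correct the projected skew part with $\text{curl}\,\tilde{W}_{h}\phi$ using the identity $S_{1}\text{curl}=-\text{div}$, the commuting property of $\Pi_{\tilde{r}+1,\mathcal{T}_{h}}^{1,-}$, and the relation $\Pi_{\tilde{r}+1,\mathcal{T}_{h}}^{1,-}\tilde{W}_{h}=\Pi_{\tilde{r}+1,\mathcal{T}_{h}}^{1,-}$. Your two-stage decomposition $\sigma=\sigma^{(1)}+\sigma^{(2)}$ with residual $\rho$ is just a cleaner bookkeeping of the paper's Steps (1)--(7); the estimates and the identification of $\tilde{W}_{h}$ as the delicate ingredient match the paper exactly.
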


\begin{proof}
(1) By Lemma~\ref{PDE_lemma}, we can find $\eta\in H^{1}(\Omega;\mathbb{M})$ with 
$\text{div}\eta=\mu$ and $\Vert\eta\Vert_{H^{1}(\Omega)}\leq c\Vert\mu\Vert_{L^{2}(\Omega)}$.

(2) Since $\omega+\Pi_{\tilde{r},h}^{2}S_{1}\tilde{\Pi}_{\tilde{r}+1,h}^{1}\eta\in L^{2}(\Omega)$, we can 
apply Lemma~\ref{PDE_lemma} again to find $\tau\in H^{1}(\Omega;\mathbb{R}^{2})$
with $\text{div}\tau=\omega+\Pi_{\tilde{r},h}^{2}S_{1}\tilde{\Pi}_{\tilde{r}+1,h}^{1}\eta$ 
such that
$$
\Vert \tau\Vert _{H^{1}(\Omega)}\leq c(\Vert \omega\Vert_{L^{2}(\Omega)}
+\Vert \Pi_{\tilde{r},h}^{2}S_{1}\tilde{\Pi}_{\tilde{r}+1,h}^{1}\eta\Vert_{L^{2}(\Omega)})
$$

(3) Define $\sigma=\text{curl}\tilde{W}_{h}\tau+\tilde{\Pi}_{\tilde{r}+1,h}^{1}\eta\in
\mathcal{P}_{\tilde{r}+1}\Lambda^{1}(\mathcal{T}_{h};\mathbb{R}^{2})$.
$\sigma\in \mathcal{P}_{\tilde{r}+1}\Lambda^{1}(\mathcal{T}_{h};\mathbb{R}^{2})$ by
Lemma~\ref{embedding_affine_triangulation}.

(4) From Step (3), Theorem~\ref{H_div_whole}, Step (1), 
and the fact that $\tilde{\Pi}_{\tilde{r},h}^{2}
\tilde{\Pi}_{\tilde{r},h}^{2}=\tilde{\Pi}_{\tilde{r},h}^{2}$, we obtain
\[
\text{div}\sigma=\text{div}\tilde{\Pi}_{\tilde{r}+1,h}^{1}\eta=\tilde{\Pi}_{\tilde{r},h}^{2}\text{div}\eta=
\tilde{\Pi}_{\tilde{r},h}^{2}\mu=\mu.
\]

(5) Notice that $\text{div}\omega=-S_{1}\text{curl}\omega$,
for any $\omega\in H^{1}(\Omega;\mathbb{R}^{2})$. 
Here operator $S_{1}$ is defined in (\ref{operator_S1}).
Then $\Pi_{\tilde{r},h}^{2}\text{div}\Pi_{\tilde{r}+1,h}^{1,-}\omega=-\Pi_{\tilde{r},h}^{2}S_{1}\text{curl}\omega$, 
for any $\omega\in H^{1}(\Omega;\mathbb{R}^{2})$.

(6) Also from Step (3), 
$-\Pi_{\tilde{r},h}^{2}S_{1}\sigma=-\Pi_{\tilde{r},h}^{2}S_{1}\text{curl}\tilde{W}_{h}\tau-
\Pi_{\tilde{r},h}^{2}S_{1}\tilde{\Pi}_{\tilde{r}+1,h}^{1}\eta$.
Applying, in order, Step (5), Theorem~\ref{bounded_W}, Theorem~\ref{H_div_whole}, Step (2), 
and the fact 
that $\Pi_{\tilde{r},h}^{2}\Pi_{\tilde{r},h}^{2}
=\Pi_{\tilde{r},h}^{2}$, we have
\begin{align*}
-\Pi_{\tilde{r},h}^{2}S_{1}\text{curl}\tilde{W}_{h}\tau &  =\Pi_{\tilde{r},h}^{2}\text{div}
\Pi_{\tilde{r}+1,h}^{1,-}\tilde{W}_{h}\tau
=\Pi_{\tilde{r},h}^{2}\text{div}\Pi_{\tilde{r}+1,h}^{1,-}\tau=\Pi_{\tilde{r},h}^{2}\text{div}\tau\\
&  =\Pi_{\tilde{r},h}^{2}(\omega+\Pi_{\tilde{r},h}^{2}S_{1}\tilde{\Pi}_{\tilde{r}+1,h}^{1}\eta)
  =\omega+\Pi_{\tilde{r},h}^{2}S_{1}\tilde{\Pi}_{\tilde{r}+1,h}^{1}\eta.
\end{align*}
Combining, we have $-\Pi_{\tilde{r},h}^{2}S_{1}\sigma=\omega$.

(7) Finally, we prove the norm bound. From the boundedness of $S_{1}$ in
$L^{2}$, Theorem~\ref{H_div_whole}, and Step (1), we obtain
\begin{align*}
\Vert \Pi_{\tilde{r},h}^{2}S_{1}\tilde{\Pi}_{\tilde{r}+1,h}^{1}\eta\Vert_{L^{2}(\Omega)} & \leq
c\Vert S_{1}\tilde{\Pi}_{\tilde{r}+1,h}^{1}\eta\Vert_{L^{2}(\Omega)} \leq c\Vert
\tilde{\Pi}_{\tilde{r}+1,h}^{1}\eta\Vert_{L^{2}(\Omega)}\\ & \leq c\left\Vert \eta\right\Vert _{H^{1}(\Omega)}\leq
c\Vert\mu\Vert_{L^{2}(\Omega)} .
\end{align*}
Combining the result with the bound in Step (2), we get $\Vert
\tau\Vert _{H^{1}(\Omega)}\leq c(\Vert\omega\Vert_{L^{2}(\Omega)} +\Vert\mu\Vert_{L^{2}(\Omega)})$. 
Theorem~\ref{bounded_W} then yields
$$
\Vert\text{curl}\tilde{W}_{h}\tau\Vert_{L^{2}(\Omega)} \leq c\Vert\tau\Vert _{H^{1}(\Omega)}\leq c(\Vert\omega
\Vert_{L^{2}(\Omega)}+\Vert\mu\Vert_{L^{2}(\Omega)}).
$$
From Theorem~\ref{H_div_whole} 
and the bound in Step (1), 
$$
\Vert\tilde{\Pi}_{,\tilde{r}+1,h}^{1}\eta\Vert_{L^{2}(\Omega)} \leq c\Vert
\eta\Vert _{H^{1}(\Omega)}\leq c\Vert\mu\Vert_{L^{2}(\Omega)}.
$$ 
In view of the
definition of $\sigma$, these two last bounds imply that $\Vert
\sigma\Vert_{L^{2}(\Omega)} \leq c(\Vert\omega\Vert_{L^{2}(\Omega)}+\Vert
\mu\Vert_{L^{2}(\Omega)})$, while $\Vert\text{div}\sigma\Vert_{L^{2}(\Omega)}=\Vert
\mu\Vert_{L^{2}(\Omega)}$ by Step (4).
This finishes the proof.
\end{proof}

\begin{theorem}
\label{thm:main_theorem_affine}
There exists $c>0$ such that, 
 for solution $(\sigma,u,p)$ of elasticity system~(\ref{weak_symm_formula_continuous}),
 and corresponding solution $(\sigma_{h},u_{h},p_{h})$ of discrete system~(\ref{weak_symmetry_differential_form}), we have
\begin{align*}
& \Vert\sigma-\sigma_{h}\Vert _{H(\text{div},\Omega)}+\Vert u-u_{h}
\Vert_{L^{2}(\Omega)}+\Vert p-p_{h}\Vert_{L^{2}(\Omega)}\\ \leq & c\inf [\Vert
\sigma-\tau\Vert _{H(\text{div},\Omega)}+\Vert u-v\Vert_{L^{2}(\Omega)}+\Vert
p-q\Vert_{L^{2}(\Omega)}],
\end{align*}
where the infimum is taken over all $\tau\in\mathcal{P}_{\tilde{r}+1}\Lambda^{1}(\mathcal{T}_{h},\mathbb{R}^{2}), 
v\in\mathcal{P}_{\tilde{r}}\Lambda^{2}(\mathcal{T}_{h},\mathbb{R}^{2}),$ and
$q\in\mathcal{P}_{\tilde{r}}\Lambda^{2}(\mathcal{T}_{h})$.
\end{theorem}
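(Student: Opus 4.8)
The plan is to reduce the quasi-optimal error estimate to a verification of Brezzi's two stability conditions (S1) and (S2) with constants independent of $h$, and then to invoke the standard abstract convergence theory for mixed methods, exactly as in Theorem~9.1 of \cite{Falk:2008:FME}, Theorem~7.1 of \cite{AFW:2007:MMEW}, and Theorem~11.4 of \cite{AFW:2006:ECH}. The relevant bilinear forms are $a(\sigma,\tau)=\langle A\sigma,\tau\rangle$ and $b(\tau,(v,q))=\langle\text{div}\tau,v\rangle-\langle S_{1}\tau,q\rangle$, posed on the discrete spaces $\Lambda_{h}^{1}(\mathbb{M})=\mathcal{P}_{\tilde{r}+1}\Lambda^{1}(\mathcal{T}_{h};\mathbb{R}^{2})$, $\Lambda_{h}^{2}(\mathbb{R}^{2})=\mathcal{P}_{\tilde{r}}\Lambda^{2}(\mathcal{T}_{h};\mathbb{R}^{2})$, and $\Lambda_{h}^{2}=\mathcal{P}_{\tilde{r}}\Lambda^{2}(\mathcal{T}_{h})$. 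Once (S1) and (S2) hold uniformly in $h$, the abstract theory delivers uniform well-posedness of~(\ref{weak_symmetry_differential_form_discrete}) together with the stated bound, the infimum running over the discrete subspaces.

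First I would verify (S1). If $\tau\in\Lambda_{h}^{1}(\mathbb{M})$ lies in the discrete kernel, then in particular $\langle\text{div}\tau,v\rangle=0$ for all $v\in\Lambda_{h}^{2}(\mathbb{R}^{2})$. By Lemma~\ref{embedding_affine_triangulation} we have $\text{div}\tau\in\mathcal{P}_{\tilde{r}}\Lambda^{2}(\mathcal{T}_{h};\mathbb{R}^{2})=\Lambda_{h}^{2}(\mathbb{R}^{2})$, so the admissible choice $v=\text{div}\tau$ forces $\text{div}\tau=0$. Hence $\Vert\tau\Vert_{H(\text{div},\Omega)}=\Vert\tau\Vert_{L^{2}(\Omega)}$, and the uniform positive-definiteness of $A$ (valid for piecewise-constant material data) immediately yields $\Vert\tau\Vert_{H(\text{div},\Omega)}^{2}\leq c_{1}(A\tau,\tau)$, which is (S1).

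Next I would verify (S2), and this is where Lemma~\ref{affine_S2} does all the work. Given a nonzero pair $(v,q)\in\Lambda_{h}^{2}(\mathbb{R}^{2})\times\Lambda_{h}^{2}$, I apply the lemma with $\mu=v$ and $\omega=q$ to obtain $\sigma\in\Lambda_{h}^{1}(\mathbb{M})$ satisfying $\text{div}\sigma=v$, $-\Pi_{\tilde{r},h}^{2}S_{1}\sigma=q$, and $\Vert\sigma\Vert_{H(\text{div},\Omega)}\leq c(\Vert v\Vert_{L^{2}(\Omega)}+\Vert q\Vert_{L^{2}(\Omega)})$. Since $q\in\Lambda_{h}^{2}$ and $\Pi_{\tilde{r},h}^{2}$ is the self-adjoint $L^{2}$-projection onto $\Lambda_{h}^{2}$, one computes $\langle S_{1}\sigma,q\rangle=\langle\Pi_{\tilde{r},h}^{2}S_{1}\sigma,q\rangle=-\Vert q\Vert_{L^{2}(\Omega)}^{2}$, while $\langle\text{div}\sigma,v\rangle=\Vert v\Vert_{L^{2}(\Omega)}^{2}$. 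Therefore $b(\sigma,(v,q))=\Vert v\Vert_{L^{2}(\Omega)}^{2}+\Vert q\Vert_{L^{2}(\Omega)}^{2}\geq\tfrac12(\Vert v\Vert_{L^{2}(\Omega)}+\Vert q\Vert_{L^{2}(\Omega)})^{2}$, and combining with the norm bound on $\sigma$ gives $b(\sigma,(v,q))\geq c_{2}\Vert\sigma\Vert_{H(\text{div},\Omega)}(\Vert v\Vert_{L^{2}(\Omega)}+\Vert q\Vert_{L^{2}(\Omega)})$, which is precisely (S2). Crucially, the constant $c$ of Lemma~\ref{affine_S2} depends only on $\sup_{h}\sup_{T}\tilde{r}(T)\leq r_{\max}$, hence is independent of $h$.

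With (S1) and (S2) established uniformly in $h$, Brezzi's theory yields the quasi-optimal estimate, the final constant depending only on $c_{1}$, $c_{2}$, the continuity constants of $a$ and $b$, and the upper bound on the local order. Since essentially all of the genuine difficulty has already been absorbed into the construction of the commuting operators $\Pi_{\tilde{r}+1,h}^{1,-}$ and $\tilde{W}_{h}$ and into the inf-sup Lemma~\ref{affine_S2}, the only substantive step remaining is the verification of (S2) via that lemma; the coercivity (S1) and the abstract conclusion are routine. I expect the sole point requiring care at this stage to be bookkeeping: confirming that every constant entering the final bound is traceable to the $h$-independent estimates of Theorem~\ref{H_div_whole}, Theorem~\ref{bounded_W}, and Lemma~\ref{affine_S2}, so that no hidden dependence on $h$ creeps in.
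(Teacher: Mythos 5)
Your proposal is correct and follows essentially the same route as the paper: condition (S1) from the inclusion $\text{div}\,\mathcal{P}_{\tilde{r}+1}\Lambda^{1}(\mathcal{T}_{h};\mathbb{R}^{2})\subset\mathcal{P}_{\tilde{r}}\Lambda^{2}(\mathcal{T}_{h};\mathbb{R}^{2})$ together with the coercivity of $A$, condition (S2) from Lemma~\ref{affine_S2} (using that for affine meshes $\Pi_{\tilde{r},h}^{2}$ is the standard self-adjoint $L^{2}$-projection, so the sign computation goes through cleanly), and then Brezzi's abstract theory. The paper states this in three lines; you have merely filled in the routine details correctly.
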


\begin{proof}
We need to show that conditions~(\ref{S1_condition}) and~(\ref{S2_condition}) are satisfied.
Condition~(\ref{S1_condition}) follows from the fact that, by construction, 
$\text{div}\mathcal{P}_{\tilde{r}+1}\Lambda^{1}(\mathcal{T}_{h},\mathbb{R}^{2})\subset\mathcal{P}_{\tilde{r}}
\Lambda^{2}(\mathcal{T}_{h};\mathbb{R}^{2})$, and the fact that $A$ is coercive.
Condition~(\ref{S2_condition}) comes from Lemma~\ref{affine_S2}. This finishes the proof.
\end{proof}

\section{\bigskip Curvilinear Meshes}

In practice, meshes generated by CAD software are usually curvilinear. 
In the following sections, we generalize our result for affine meshes -
Theorem~\ref{thm:main_theorem_affine} to curvilinear meshes in the sense of 
asymptotic $h$-stability.

\subsection{Mesh regularity assumptions} 

\begin{definition}
(Curved triangle)
\label{curved_triangle}
A closed set $T\subset\mathbb{R}^2$ is a curved triangle if there exists 
a $C^{1}$-diffeomorphism $G_{T}$ from reference triangle 
$\hat{T}$ onto $T$. This means that $G_{T}$ is a bijection from $\hat{T}$ to $T$ 
such that $G_{T}\in C^{1}(\hat{T})$ and $G_{T}^{-1}\in C^{1}(T)$. We assume additionally that 
$\det(DG_{T}(\hat{\mathbf{x}}))>0$ for any $\hat{\mathbf{x}}\in\hat{T}$.
\end{definition}
We represent $G_{T}$ in the form
\begin{equation}
G_{T} = \tilde{G}_{T} + \Phi_{T},
\end{equation}
where $\tilde{G}_{T}:\hat{\boldsymbol{x}}\rightarrow B_{T}\hat{\boldsymbol{x}}+b_{T}$,
$B_{T}=DG_{T}(\hat{\boldsymbol{p}})$ with $\hat{\boldsymbol{p}}$ being 
the centroid of $\hat{T}$, and $\Phi_{T}$ a $C^{1}$-mapping from $\hat{T}$ 
into $\mathbb{R}^2$. 
The images of edges and vertices of $\hat{T}$ by $G_{T}$ are  
edges and vertices of $T$ respectively. We denote 
$\triangle_{i}(T)=G_{T}(\triangle_{i}(\hat{T})), i=0,1,2$, 
and $\triangle (T)=G_{T}(\triangle (\hat{T}))$.

Definition \ref{curved_triangle} is practically identical with the definition
of curved finite elements 
introduced in \cite{CB:1989:OIC}.

\begin{definition}
A curved triangle $T$ is of class $C^{k}, k\geq 1$, if the mapping $G_{T}\in C^{k}(\hat{T})$.
Similarly, a curved triangle $T$ is of class $C^{k,1}, k\geq 1$, if the mapping $G_{T}\in C^{k,1}(\hat{T})$.
\end{definition}

We define $\mathcal{T}_{h}$ to be a finite set of curved triangles $T$, 
where $h$ denotes the maximal distance between two vertices of $T\in\mathcal{T}_{h}$. 
We define vertices of $\mathcal{T}_{h}$ to be vertices of $T\in\mathcal{T}_{h}$, 
and we define curves of $\mathcal{T}_{h}$ to be edges of $T\in\mathcal{T}_{h}$.
We assume that any edge of $T\in\mathcal{T}_{h}$ is either an edge of another curved
triangle in $\mathcal{T}_{h}$, or part of the boundary of $\mathcal{T}_{h}$.

Each curve of $\mathcal{T}_{h}$ is parametrized with a map from the reference unit
interval into $\mathbb{R}^2$,
\begin{equation*}
[0,1]\ni s\rightarrow \boldsymbol{x}_{e}(s)\in\mathbb{R}^2
\end{equation*}
The parametrization determines the orientation of the curve.

Let $\mathbf{\zeta}(s)$ be the local parametrization for a particular edge of 
a curved triangle $T\in\mathcal{T}_{h}$, occupied by a curve $e$ of $\mathcal{T}_{h}$.
This means that $\mathbf{\zeta}(s)$ is an affine mapping from the reference interval onto 
an edge of the reference triangle $\hat{T}$, whose image under the mapping $G_{T}$ is 
exactly the particular edge of $T$. We can choose $\mathbf{\zeta}(s)$ so that 
$G_{T}(\mathbf{\zeta}(s))$ has the same orientation as $\mathbf{x}_{e}(s)$.

\begin{definition}
($C^{0}$-compatible mesh)
\label{C0_compatible_mesh}
$\mathcal{T}_{h}$ is called a $C^{0}$-compatible mesh if, 
for any curve $e$ and any curved triangle $T$ which contains $e$ as an edge, 
there is a local parametrization $\mathbf{\zeta}(s)$ of $e$ satisfying
\begin{equation*}
G_{T}(\mathbf{\zeta}(s)) = \mathbf{x}_{e}(s).
\end{equation*}
\end{definition}
The concept is illustrated in Fig~\ref{compatibility}.

\begin{figure}[!ht]
\begin{center}
\includegraphics[scale=0.35]{./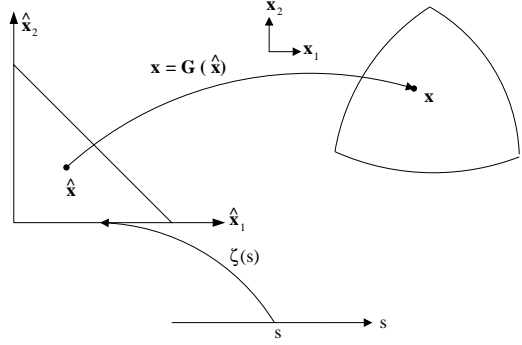}
\end{center}
\caption{Compatibility of edge and triangle parametrizations.}
\label{compatibility}
\end{figure}

\noindent We denote 
\begin{equation}
c_{h}:=\sup_{T\in\mathcal{T}_{h}}((\sup_{\hat{\mathbf{x}}\in\hat{T}}\Vert 
D\Phi_{T}(\hat{\mathbf{x}})\Vert)\Vert B_{T}^{-1}\Vert)
\label{eq:c_h}
\end{equation}
For each $T\in\mathcal{T}_{h}$, we define $\tilde{T}=\tilde{G}_{T}(\hat{T})$. 
We denote by $\tilde{h}_{T}$ the diameter of $\tilde{T}$ and by $\tilde{\rho}_{T}$ 
the diameter of the sphere inscribed in $\tilde{T}$.

We define $\triangle_{i}(\mathcal{T}_{h})=\bigcup_{T\in\mathcal{T}_{h}}\triangle_{i} T$, 
and $\triangle (\mathcal{T}_{h})=\bigcup_{T\in\mathcal{T}_{h}}\triangle T$. 

\begin{remark}
In order to simplify analysis,
compared with \cite{CB:1989:OIC}, our definition of~(\ref{eq:c_h}) replaces
$\Vert D\Phi_{T}\cdot B_{T}^{-1}\Vert$ with the upper bound
$\Vert D\Phi_{T}\Vert \cdot\Vert B_{T}^{-1}\Vert$. 
\end{remark}

\begin{definition}
The family $(\mathcal{T}_{h})_{h}$ of $C^{0}$-compatible meshes is said to be regular if
\begin{equation*} 
\sup_{h}\sup_{T\in\mathcal{T}_{h}}\tilde{h}_{T}/\tilde{\rho}_{T}=\sigma<\infty 
\text{, and } \lim_{h\rightarrow 0}c_{h}=0
\end{equation*}
\end{definition}

We show the construction of $(\mathcal{T}_{h})_{h}$ of $C^{0}$-compatible meshes 
in Appendix~\ref{app_mesh_generation}.

\begin{lemma}
\label{regular_equivalent}
There exist $c_{1},c_{2}>0$ such that, for any triangle $T$,
\begin{equation*}
c_{1}\Vert B_{T}\Vert\cdot\Vert B_{T}^{-1}\Vert\leq \tilde{h}_{T}/\tilde{\rho}_{T} \leq 
c_{2}\Vert B_{T}\Vert\cdot\Vert B_{T}^{-1}\Vert,
\end{equation*}
where $\mathbf{x}=B_{T}\hat{\mathbf{x}}+b_{T}$ is the affine homeomorphism from $\hat{T}$ to $T$.
\end{lemma}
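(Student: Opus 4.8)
\emph{Plan of proof.} This is the classical Ciarlet estimate comparing the shape-regularity ratio of an affine simplex with the condition number of the linear part of its generating map; the whole statement is elementary geometry. Throughout, let $\hat{h}$ and $\hat{\rho}$ denote the diameter and the inscribed-ball diameter of the fixed reference triangle $\hat{T}$, so both are positive absolute constants. Recall that $\hat{\mathbf{x}}\mapsto B_{T}\hat{\mathbf{x}}+b_{T}$ carries $\hat{T}$ onto $\tilde{T}$, and that $\tilde{h}_{T},\tilde{\rho}_{T}$ are by definition the diameter and the inscribed-ball diameter of $\tilde{T}$. The plan is to prove four one-sided estimates tying $\|B_{T}\|$, $\|B_{T}^{-1}\|$, $\tilde{h}_{T}$ and $\tilde{\rho}_{T}$ to $\hat{h},\hat{\rho}$, and then to assemble the two claimed inequalities by multiplication, obtaining $c_{1}=\hat{\rho}/\hat{h}$ and $c_{2}=\hat{h}/\hat{\rho}$.

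For the lower inequality I would first bound operator norms by diameters. Writing $\|B_{T}\|=\hat{\rho}^{-1}\sup_{\|\xi\|=\hat{\rho}}\|B_{T}\xi\|$ and realizing any such $\xi$ as the difference $\hat{\mathbf{y}}-\hat{\mathbf{z}}$ of the two endpoints of a diameter of the ball inscribed in $\hat{T}$ (so $\hat{\mathbf{y}},\hat{\mathbf{z}}\in\hat{T}$), the image $B_{T}\xi=(B_{T}\hat{\mathbf{y}}+b_{T})-(B_{T}\hat{\mathbf{z}}+b_{T})$ is a chord of $\tilde{T}$, whence $\|B_{T}\xi\|\le\tilde{h}_{T}$ and $\|B_{T}\|\le\tilde{h}_{T}/\hat{\rho}$. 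Applying the identical argument to the inverse affine map $\mathbf{x}\mapsto B_{T}^{-1}(\mathbf{x}-b_{T})$, which carries $\tilde{T}$ onto $\hat{T}$, gives $\|B_{T}^{-1}\|\le\hat{h}/\tilde{\rho}_{T}$. Multiplying the two bounds yields $\|B_{T}\|\,\|B_{T}^{-1}\|\le(\hat{h}/\hat{\rho})(\tilde{h}_{T}/\tilde{\rho}_{T})$, which is exactly the lower inequality with $c_{1}=\hat{\rho}/\hat{h}$.

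For the upper inequality I would argue in the reverse direction. The diameter bound is immediate: $\tilde{h}_{T}=\sup_{\hat{\mathbf{y}},\hat{\mathbf{z}}\in\hat{T}}\|B_{T}(\hat{\mathbf{y}}-\hat{\mathbf{z}})\|\le\|B_{T}\|\,\hat{h}$. The inscribed radius is the one step needing a moment's thought: $B_{T}$ sends the inscribed ball of $\hat{T}$, of radius $\hat{\rho}/2$, to an ellipse contained in $\tilde{T}$ whose smallest semi-axis equals $(\hat{\rho}/2)\,\sigma_{\min}(B_{T})=(\hat{\rho}/2)\,\|B_{T}^{-1}\|^{-1}$, so $\tilde{T}$ contains a ball of that radius and therefore $\tilde{\rho}_{T}\ge\hat{\rho}\,\|B_{T}^{-1}\|^{-1}$. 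Dividing the two bounds gives $\tilde{h}_{T}/\tilde{\rho}_{T}\le(\hat{h}/\hat{\rho})\,\|B_{T}\|\,\|B_{T}^{-1}\|$, as required, with $c_{2}=\hat{h}/\hat{\rho}$.

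No step presents a genuine obstacle. The only place that is not a one-line chord estimate is the lower bound on $\tilde{\rho}_{T}$, where I pass from the operator norm $\|B_{T}^{-1}\|$ to the smallest singular value $\sigma_{\min}(B_{T})=\|B_{T}^{-1}\|^{-1}$ and invoke that a linear map stretches the inscribed ball into an ellipse whose least semi-axis is governed by that smallest singular value. Every constant produced depends only on the fixed reference triangle $\hat{T}$, hence is uniform over $T$ and over $h$, exactly as the lemma demands.
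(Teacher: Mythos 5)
Your proof is correct and follows essentially the same route as the paper, which simply cites the geometric meaning of the singular values of $B_{T}$ for the upper bound and Theorem~3.1.3 of Ciarlet for the lower bound; your chord argument for $\Vert B_{T}\Vert\le\tilde{h}_{T}/\hat{\rho}$, $\Vert B_{T}^{-1}\Vert\le\hat{h}/\tilde{\rho}_{T}$ and your ellipse argument for $\tilde{h}_{T}\le\Vert B_{T}\Vert\hat{h}$, $\tilde{\rho}_{T}\ge\hat{\rho}/\Vert B_{T}^{-1}\Vert$ are precisely the standard proofs behind those two citations. The explicit constants $c_{1}=\hat{\rho}/\hat{h}$ and $c_{2}=\hat{h}/\hat{\rho}$ depend only on the reference triangle, as required.
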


\begin{proof}
$\tilde{h}_{T}/\tilde{\rho}_{T} \leq c_{2}\Vert B_{T}\Vert\cdot\Vert B_{T}^{-1}\Vert$ 
comes from the geometric meaning of singular values of matrix $B_{T}$. 
$c_{1}\Vert B_{T}\Vert\cdot\Vert B_{T}^{-1}\Vert\leq \tilde{h}_{T}/\tilde{\rho}_{T}$ is 
a consequence of Theorem $3.1.3$ in \cite{Ciarlet:2002:FEMelliptic}.
\end{proof}

\begin{lemma}
\label{geometry_property1}
Let family $(\mathcal{T}_{h})_{h}$ be regular. Then, for any indices $i,j,k,l \in \{1,2,3\}$,
 we have 
\begin{equation*}
\lim_{h\rightarrow 0}\sup_{T\in\mathcal{T}_{h}}\sup_{\hat{\mathbf{x}}\in\hat{T}}
\vert \dfrac{(B_{T})_{ij}(D\Phi_{T}(\hat{\mathbf{x}}))_{kl}}{\det(B_{T})} \vert=0
\end{equation*}
\end{lemma}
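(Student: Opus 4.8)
The plan is to reduce the supremum in the statement directly to the quantity $c_h$ defined in~(\ref{eq:c_h}) and then invoke the regularity assumption $\lim_{h\rightarrow 0} c_h = 0$. The essential observation is that every factor in the displayed fraction is controlled by the matrix norms appearing in the definition of $c_h$.

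First I would bound the matrix entries by the spectral norm. Since for any real matrix $M$ one has $|M_{kl}|\leq \|M\|$ (indeed $|M_{kl}| = |e_k^{\top} M e_l| \leq \|M\|$), it follows that $|(B_{T})_{ij}|\leq \|B_{T}\|$ and $|(D\Phi_{T}(\hat{\mathbf{x}}))_{kl}|\leq \|D\Phi_{T}(\hat{\mathbf{x}})\|$ for all admissible indices and all $\hat{\mathbf{x}}\in\hat{T}$.

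The key algebraic step is the identity valid for any nonsingular $2\times 2$ matrix, $\dfrac{\|B_{T}\|}{|\det(B_{T})|} = \|B_{T}^{-1}\|$. In terms of the singular values $s_{1}\geq s_{2}>0$ of $B_{T}$ this reads $\dfrac{s_{1}}{s_{1}s_{2}} = \dfrac{1}{s_{2}}$; equivalently, it follows from $B_{T}^{-1} = (\det B_{T})^{-1}\operatorname{adj}(B_{T})$ together with the fact that in two dimensions the adjugate has the same singular values as $B_{T}$. Combining this identity with the entrywise bounds gives, for each $T$ and each $\hat{\mathbf{x}}\in\hat{T}$,
$$
\left|\frac{(B_{T})_{ij}(D\Phi_{T}(\hat{\mathbf{x}}))_{kl}}{\det(B_{T})}\right|
\leq \frac{\|B_{T}\|\,\|D\Phi_{T}(\hat{\mathbf{x}})\|}{|\det(B_{T})|}
= \|D\Phi_{T}(\hat{\mathbf{x}})\|\,\|B_{T}^{-1}\|.
$$

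Finally I would take the supremum over $\hat{\mathbf{x}}\in\hat{T}$ and then over $T\in\mathcal{T}_{h}$, obtaining
$$
\sup_{T\in\mathcal{T}_{h}}\sup_{\hat{\mathbf{x}}\in\hat{T}}\left|\frac{(B_{T})_{ij}(D\Phi_{T}(\hat{\mathbf{x}}))_{kl}}{\det(B_{T})}\right|
\leq \sup_{T\in\mathcal{T}_{h}}\Big[\big(\sup_{\hat{\mathbf{x}}\in\hat{T}}\|D\Phi_{T}(\hat{\mathbf{x}})\|\big)\|B_{T}^{-1}\|\Big]
= c_{h},
$$
which is exactly~(\ref{eq:c_h}). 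Since the family $(\mathcal{T}_{h})_{h}$ is regular, $c_{h}\rightarrow 0$ as $h\rightarrow 0$, and the claimed limit follows. There is essentially no serious obstacle here; the only point requiring care is recognizing that the normalizing factor $\|B_{T}\|/|\det(B_{T})|$ collapses to $\|B_{T}^{-1}\|$ in two dimensions, which is precisely what makes the bound match the definition of $c_{h}$. Note that Lemma~\ref{regular_equivalent}, which furnishes the uniform bound on $\|B_{T}\|\,\|B_{T}^{-1}\|$, is not even needed for this particular estimate.
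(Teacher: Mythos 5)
Your proof is correct, and it reaches the conclusion by a slightly cleaner route than the paper. Both arguments start identically, bounding the matrix entries by the spectral norm so that the quantity is controlled by $\Vert B_{T}\Vert\,\Vert D\Phi_{T}(\hat{\mathbf{x}})\Vert/\vert\det(B_{T})\vert$. The paper then inserts the factor $\Vert B_{T}^{-1}\Vert/\Vert B_{T}^{-1}\Vert$, uses $1/\Vert B_{T}^{-1}\Vert\leq\Vert B_{T}\Vert$, and is left with the extra factor $\Vert B_{T}\Vert^{2}/\vert\det(B_{T})\vert$, which it must bound by $c\sigma^{2}$ using the shape-regularity hypothesis $\sup_{h}\sup_{T}\tilde{h}_{T}/\tilde{\rho}_{T}=\sigma<\infty$ (via Lemma~\ref{regular_equivalent}). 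You instead invoke the exact two-dimensional identity $\Vert B_{T}\Vert/\vert\det(B_{T})\vert=\Vert B_{T}^{-1}\Vert$ (in singular values, $s_{1}/(s_{1}s_{2})=1/s_{2}$), which collapses the bound directly to $\Vert D\Phi_{T}(\hat{\mathbf{x}})\Vert\,\Vert B_{T}^{-1}\Vert\leq c_{h}$. The payoff, as you correctly observe, is that only the condition $\lim_{h\to 0}c_{h}=0$ from the regularity definition is used, and the shape-regularity bound on $\Vert B_{T}\Vert\,\Vert B_{T}^{-1}\Vert$ is not needed; the price is that the identity is specific to dimension two, whereas the paper's estimate would survive in higher dimensions with the shape-regularity constant adjusted. (The index range $i,j,k,l\in\{1,2,3\}$ in the statement is evidently a typo for $\{1,2\}$; it affects neither argument.)
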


\begin{proof}
For any $T\in\mathcal{T}_{h}$ and any $\hat{\mathbf{x}}\in\hat{T}$, we have
\begin{equation*}
\vert \dfrac{(B_{T})_{ij}(D\Phi_{T}(\hat{\mathbf{x}}))_{kl}}{\det(B_{T})} \vert 
= \vert (D\Phi_{T}(\hat{\mathbf{x}}))_{kl} \vert \Vert B_{T}^{-1}\Vert\cdot 
\vert (B_{T})_{ij}/\det(B_{T})\vert\dfrac{1}{\Vert B_{T}^{-1}\Vert}.
\end{equation*}

Since $\Vert B_{T}^{-1}\Vert\cdot\Vert B_{T}\Vert\geq 1$, 
$\dfrac{1}{\Vert B_{T}^{-1}\Vert}\leq \Vert B_{T}\Vert$. 
Consequently,
\begin{equation*}
\vert \dfrac{(B_{T})_{ij}(D\Phi_{T}(\hat{\mathbf{x}}))_{kl}}{\det(B_{T})} \vert 
\leq (\Vert D\Phi_{T}(\hat{\mathbf{x}}) \Vert \cdot \Vert B_{T}^{-1}\Vert)
\Vert B_{T}\Vert^{2}/\vert \det(B_{T})\vert.
\end{equation*}

Since $\sup_{h}\sup_{T\in\mathcal{T}_{h}}\tilde{h}_{T}/\tilde{\rho}_{T}=\sigma<\infty$, 
$\Vert B_{T}\Vert^{2}/\vert \det(B_{T})\vert\leq c\sigma^{2}$ with $c>0$.

Since $\lim_{h\rightarrow 0}c_{h}=0$, we have 
$
\lim_{h\rightarrow 0}\sup_{T\in\mathcal{T}_{h}}\sup_{\hat{\mathbf{x}}\in\hat{T}}
\vert \dfrac{(B_{T})_{ij}(D\Phi_{T}(\hat{\mathbf{x}}))_{kl}}{\det(B_{T})} \vert=0.
$
\end{proof}

\begin{lemma}
\label{geometry_property2}
If a family $(\mathcal{T}_{h})_{h}$ is regular, then we have 
\begin{equation*}
\lim_{h\rightarrow 0}\sup_{T\in\mathcal{T}_{h}}\sup_{\hat{\mathbf{x}}\in\hat{T}}
\Vert B_{T}(DG_{T}(\hat{\mathbf{x}}))^{-1} - I\Vert= 
\lim_{h\rightarrow 0}\sup_{T\in\mathcal{T}_{h}}\sup_{\hat{\mathbf{x}}\in\hat{T}}
\Vert (DG_{T}(\hat{\mathbf{x}}))^{-1}B_{T} - I\Vert=0.
\end{equation*}
\end{lemma}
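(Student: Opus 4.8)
The plan is to exploit the additive splitting $G_{T}=\tilde{G}_{T}+\Phi_{T}$ at the level of derivatives. Since $\tilde{G}_{T}$ is affine with constant Jacobian $B_{T}$, differentiation gives $DG_{T}(\hat{\mathbf{x}})=B_{T}+D\Phi_{T}(\hat{\mathbf{x}})$ for every $\hat{\mathbf{x}}\in\hat{T}$. The whole point is then to factor $B_{T}$ out of this sum, but on the correct side for each of the two quantities we must estimate.

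For the second limit I would factor $B_{T}$ on the left: write $DG_{T}=B_{T}(I+M_{T})$ with $M_{T}:=B_{T}^{-1}D\Phi_{T}(\hat{\mathbf{x}})$. The definition~(\ref{eq:c_h}) of $c_{h}$ gives directly $\Vert M_{T}\Vert\leq\Vert B_{T}^{-1}\Vert\,\Vert D\Phi_{T}(\hat{\mathbf{x}})\Vert\leq c_{h}$, uniformly in $T$ and $\hat{\mathbf{x}}$. Because the family is regular, $c_{h}\to 0$, so for all sufficiently small $h$ we have $\Vert M_{T}\Vert\leq c_{h}<1$; the Neumann series then shows that $I+M_{T}$ is invertible with $\Vert (I+M_{T})^{-1}\Vert\leq(1-c_{h})^{-1}$. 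Inverting the factorization yields $(DG_{T})^{-1}B_{T}=(I+M_{T})^{-1}$, hence $(DG_{T})^{-1}B_{T}-I=-(I+M_{T})^{-1}M_{T}$ and $\Vert (DG_{T})^{-1}B_{T}-I\Vert\leq c_{h}/(1-c_{h})\to 0$.

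For the first limit the same argument works after factoring $B_{T}$ on the right: $DG_{T}=(I+N_{T})B_{T}$ with $N_{T}:=D\Phi_{T}(\hat{\mathbf{x}})B_{T}^{-1}$, which again satisfies $\Vert N_{T}\Vert\leq c_{h}$. Then $B_{T}(DG_{T})^{-1}=(I+N_{T})^{-1}$ and the identical estimate gives $\Vert B_{T}(DG_{T})^{-1}-I\Vert\leq c_{h}/(1-c_{h})\to 0$. Taking the supremum over $T\in\mathcal{T}_{h}$ and $\hat{\mathbf{x}}\in\hat{T}$ and letting $h\to 0$ establishes both limits simultaneously.

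The one thing to be careful about --- and the reason the statement places $B_{T}$ on a specific side in each term --- is that one must avoid conjugating by $B_{T}$. A naive approach would produce an error of the form $B_{T}[(I+M_{T})^{-1}-I]B_{T}^{-1}$, whose norm is controlled only after paying the condition number $\Vert B_{T}\Vert\,\Vert B_{T}^{-1}\Vert$; by Lemma~\ref{regular_equivalent} that number is bounded by $c\sigma$ but does \emph{not} tend to zero, so it cannot be absorbed. Matching the factorization side to the placement of $B_{T}$ in each product cancels $B_{T}$ exactly and leaves the error governed solely by $c_{h}$, which is the only geometric quantity we know tends to $0$. Note that Lemma~\ref{geometry_property1} is not needed for this argument; the cruder submultiplicative bound $\Vert M_{T}\Vert\leq c_{h}$ already suffices.
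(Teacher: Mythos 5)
Your proof is correct and is essentially the paper's own argument: the paper likewise writes $B_{T}(DG_{T})^{-1}-I=(I+D\Phi_{T}B_{T}^{-1})^{-1}-I$, bounds $\Vert D\Phi_{T}B_{T}^{-1}\Vert\leq c_{h}\to 0$, and treats the second limit analogously with the factor on the other side. Your explicit Neumann-series estimate $c_{h}/(1-c_{h})$ and the remark about avoiding conjugation by $B_{T}$ only make explicit what the paper leaves implicit.
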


\begin{proof}
For any $T\in\mathcal{T}_{h}$ and any $\hat{\mathbf{x}}\in\hat{T}$, we have
\begin{equation*}
 \Vert B_{T}(DG_{T}(\hat{\mathbf{x}}))^{-1} - I\Vert 
= \Vert B_{T}(B_{T}+D\Phi_{T}(\hat{\mathbf{x}}))^{-1}-I\Vert 
= \Vert (I+D\Phi_{T}(\hat{\mathbf{x}})B_{T}^{-1})^{-1}-I\Vert.
\end{equation*}
Since $c_{h}\rightarrow 0$ as $h\rightarrow 0$, we have
$
\lim_{h\rightarrow 0}\sup_{T\in\mathcal{T}_{h}}\sup_{\hat{\mathbf{x}}\in\hat{T}}
\Vert B_{T}(DG_{T}(\hat{\mathbf{x}}))^{-1} - I\Vert=0.
$
Proof of the second property is fully analogous.
\end{proof} 

\begin{lemma}
\label{geometry_property3}
If a family $(\mathcal{T}_{h})_{h}$ is regular, then we have 
\begin{equation*}
\lim_{h\rightarrow 0}\sup_{T\in\mathcal{T}_{h}}\sup_{\hat{\mathbf{x}}\in\hat{T}}
\vert\det(DG_{T}(\hat{\mathbf{x}})B_{T}^{-1})-1\vert
=\lim_{h\rightarrow 0}\sup_{T\in\mathcal{T}_{h}}\sup_{\hat{\mathbf{x}}\in\hat{T}}
\vert \det(B_{T}(DG_{T}(\hat{\mathbf{x}}))^{-1})-1\vert=0.
\end{equation*}
\end{lemma}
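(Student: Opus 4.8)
The plan is to deduce this purely from Lemma~\ref{geometry_property2} together with the fact that the determinant of a $2\times2$ matrix is a polynomial in its entries, hence locally Lipschitz. Write $M_{T}(\hat{\mathbf{x}}) := B_{T}(DG_{T}(\hat{\mathbf{x}}))^{-1}$ and set $E_{T}(\hat{\mathbf{x}}) := M_{T}(\hat{\mathbf{x}}) - I$. Lemma~\ref{geometry_property2} gives precisely that $\sup_{T\in\mathcal{T}_{h}}\sup_{\hat{\mathbf{x}}\in\hat{T}}\Vert E_{T}(\hat{\mathbf{x}})\Vert\to 0$ as $h\to 0$.

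First I would exploit the elementary identity, valid for any $2\times2$ matrix $E$,
\[
\det(I+E)=1+\mathrm{tr}(E)+\det(E),
\]
so that $\det M_{T}(\hat{\mathbf{x}})-1=\mathrm{tr}(E_{T}(\hat{\mathbf{x}}))+\det(E_{T}(\hat{\mathbf{x}}))$. Since $\vert\mathrm{tr}(E)\vert\leq C\Vert E\Vert$ and $\vert\det(E)\vert\leq C\Vert E\Vert^{2}$ for an absolute constant $C$, this yields
\[
\Big\vert\det(B_{T}(DG_{T}(\hat{\mathbf{x}}))^{-1})-1\Big\vert\leq C\big(\Vert E_{T}(\hat{\mathbf{x}})\Vert+\Vert E_{T}(\hat{\mathbf{x}})\Vert^{2}\big).
\]
Taking the supremum over $\hat{\mathbf{x}}\in\hat{T}$ and $T\in\mathcal{T}_{h}$ and letting $h\to 0$, the right-hand side tends to $0$ by Lemma~\ref{geometry_property2}, which settles the first limit.

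For the second expression, note that $\det(DG_{T}(\hat{\mathbf{x}})B_{T}^{-1})=\big(\det(B_{T}(DG_{T}(\hat{\mathbf{x}}))^{-1})\big)^{-1}$. By the first step the quantity inside the reciprocal converges uniformly to $1$, hence it is eventually bounded below by $1/2$, uniformly in $T$ and $\hat{\mathbf{x}}$; its reciprocal therefore converges uniformly to $1$ as well. Equivalently, one may repeat the $\det(I+E)$ argument verbatim with $E_{T}$ replaced by $(DG_{T}(\hat{\mathbf{x}}))^{-1}B_{T}-I$, whose uniform norm also tends to $0$ by the second assertion of Lemma~\ref{geometry_property2}.

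There is no serious obstacle here: the entire analytic content is already packaged in Lemma~\ref{geometry_property2}, and the remaining work merely converts operator-norm convergence into convergence of determinants. The only point requiring care is that every estimate be uniform in both $T\in\mathcal{T}_{h}$ and $\hat{\mathbf{x}}\in\hat{T}$; this is automatic because Lemma~\ref{geometry_property2} is itself stated with the double supremum, and the constant $C$ above is absolute, depending only on the equivalence of matrix norms on $\mathbb{R}^{2\times2}$ and not on the mesh.
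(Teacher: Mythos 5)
Your proof is correct and follows essentially the same route as the paper: both arguments reduce the claim to the uniform convergence of $B_{T}(DG_{T})^{-1}$ (resp.\ $DG_{T}B_{T}^{-1}$) to the identity and then invoke the continuity of the determinant, which you simply make explicit via $\det(I+E)=1+\mathrm{tr}(E)+\det(E)$. The only cosmetic difference is that the paper obtains the first limit directly from $c_{h}\to 0$ (since $DG_{T}B_{T}^{-1}-I=D\Phi_{T}B_{T}^{-1}$ has norm at most $c_{h}$) whereas you obtain it from the second limit by the reciprocal relation $\det(DG_{T}B_{T}^{-1})=\bigl(\det(B_{T}(DG_{T})^{-1})\bigr)^{-1}$; both are valid and rest on the same hypotheses.
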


\begin{proof}
Since $\lim_{h\rightarrow 0}c_{h}=0$,  
$
\lim_{h\rightarrow 0}\sup_{T\in\mathcal{T}_{h}}\sup_{\hat{\mathbf{x}}\in\hat{T}}
\vert\det(DG_{T}(\hat{\mathbf{x}})B_{T}^{-1})-1\vert=0.
$
By Lemma~\ref{geometry_property2}, 
$
\lim_{h\rightarrow 0}\sup_{T\in\mathcal{T}_{h}}\sup_{\hat{\mathbf{x}}\in\hat{T}}
\vert \det(B_{T}(DG_{T}(\hat{\mathbf{x}}))^{-1})-1\vert=0.
$
\end{proof}

\section{Finite element spaces on curvilinear meshes}

We begin by introducing the relevant finite element spaces on any curved triangle $T$ 
by the
pull back mappings associated with the inverse of $G_{T}$, where $G_{T}$ maps from 
the reference triangle $\hat{T}$ to $T$, see Definition~\ref{curved_triangle}. Then, we  
define the finite element spaces on a whole mesh $\mathcal{T}_{h}$ by ``gluing" 
the finite element spaces on curved triangles.
\subsection{Finite element spaces on a curved triangle}

Let $T$ be a curved triangle from Definition~\ref{curved_triangle}
with $G_T$ denoting the corresponding 
$C^{1}$-diffeomorphism from $\hat{T}$ to $T$, $\mathbf{x}=G_{T}(\hat{\mathbf{x}})$.
We begin by introducing formally the mapping $\tilde{r}$ 
from $\triangle (T)$ to $\mathbb{Z}_{+}$ specifying the local order of discretization.

\begin{definition}
\label{variable_order_curved_triangle}
We denote by $\tilde{r}$ a mapping from $\triangle (T)$ 
to $\mathbb{Z}_{+}$ such that if $e,f\in\triangle (T)$ and $e\subset f$ then
$\tilde{r}(e)  \leq\tilde{r}(f)$. With the same symbol $\tilde{r}$ we denote
the corresponding mapping from $\triangle (\hat{T})$ to $\mathbb{Z}_{+}$,
$\tilde{r}(\hat{f}):=\tilde{r}(f)$ for 
any $\hat{f}\in\triangle (\hat{T})$, where $f=G_{T}(\hat{f})$. 
\end{definition}
We define now the following FE spaces on $T$:
\begin{definition}
\label{FE_space_curved_triangle}
\begin{align*}
\mathcal{P}_{\tilde{r}}\Lambda^{0}(T) & :=\{u(\mathbf{x}):\hat{u}(\hat{\mathbf{x}})\in
\mathcal{P}_{\tilde{r}}\Lambda^{0}(\hat{T}) \text{ where } u(\mathbf{x})=\hat{u}(\hat{\mathbf{x}})\},\\
\mathcal{P}_{\tilde{r}}\Lambda^{1}(T) & := \{\omega(\mathbf{x}):\hat{\omega}(\hat{\mathbf{x}})\in 
\mathcal{P}_{\tilde{r}}\Lambda^{1}(\hat{T}) \text{ where } \omega (\mathbf{x})=\dfrac{1}{\det(DG_{T}(\hat{\mathbf{x}}))} 
DG_{T}(\hat{\mathbf{x}})\hat{\omega}(\hat{\mathbf{x}})\},\\
\mathcal{P}_{\tilde{r}}^{-}\Lambda^{1}(T) & := \{\omega(\mathbf{x}):\hat{\omega}(\hat{\mathbf{x}})\in 
\mathcal{P}_{\tilde{r}}^{-}\Lambda^{1}(\hat{T}) \text{ where } \omega (\mathbf{x})=\dfrac{1}{\det(DG_{T}
(\hat{\mathbf{x}}))} DG_{T}(\hat{\mathbf{x}})\hat{\omega}(\hat{\mathbf{x}})\},\\
\mathcal{P}_{\tilde{r}}\Lambda^{2}(T) & := \{u(\mathbf{x}):\hat{u}(\hat{\mathbf{x}})\in 
\mathcal{P}_{\tilde{r}}\Lambda^{2}(\hat{T}) \text{ where } u(\mathbf{x})=\dfrac{1}{\det(DG_{T}(\hat{\mathbf{x}}))} 
\hat{u}(\hat{\mathbf{x}})\},\\
\mathcal{P}_{\tilde{r}}\Lambda^{0}(T;\mathbb{R}^{2}) & := \{(u_{1},u_{2}):u_{1},u_{2}\in
\mathcal{P}_{\tilde{r}}\Lambda^{0}(T)\},\\
\mathcal{P}_{\tilde{r}}\Lambda^{2}(T;\mathbb{R}^{2}) & := \{(u_{1},u_{2}):u_{1},u_{2}\in
\mathcal{P}_{\tilde{r}}\Lambda^{2}(T)\},\\
\mathcal{P}_{\tilde{r}}\Lambda^{1}(T;\mathbb{R}^{2}) & := \left\{\left[ \begin{array}{cc}\sigma_{11} & \sigma_{12}\\
\sigma_{21} & \sigma_{22} \end{array}\right]:(\sigma_{11},\sigma_{12})^{\top}, (\sigma_{21},\sigma_{22})^{\top}\in
\mathcal{P}_{\tilde{r}}\Lambda^{1}(T)\right\}.
\end{align*}
\end{definition}

\begin{remark}
Since $G_{T}:\hat{T}\rightarrow T$ is a $C^{1}$-diffeomorphism  
with $\det (DG_{T}(\hat{\mathbf{x}}))\neq 0$, for 
any $\hat{\mathbf{x}}\in\hat{T}$, the formulae in 
Definition~\ref{FE_space_curved_triangle} are well-defined. The
mappings used in the definition are the standard pull back mappings for differential 
forms $\Lambda^{0},\Lambda^{1},\Lambda^{2}$, see e.g. formulas $(2.24),(2.26),(2.27)$
in \cite{Demkowicz:2007:HPAFE2}.
\end{remark}

\begin{lemma}
\label{piola_transform}
For any edge $e\in\triangle_{1}(T)$, let $\zeta (s)$ be the 
local parametrization for $e$ discussed in Definition~\ref{C0_compatible_mesh}, i.e.
the affine mapping from the reference 
interval onto $\hat{e}\in\triangle_{1}(\hat{T})$. 
We have then,
\begin{align*}
\mathcal{P}_{\tilde{r}}\Lambda^{0}(T)|_{e} & = \{u(\mathbf{x}) \text{ where }\mathbf{x}\in e:
u(G_{T}(\zeta(s)))\in\mathcal{P}_{\tilde r(\hat{e})}(\hat{e})\}, \\
\mathcal{P}_{\tilde{r}}\Lambda^{1}(T)\cdot \mathbf{n}|_{e} & = 
\{ u(\mathbf{x})  \text{ where }\mathbf{x}\in e:  u(G_{T}(\zeta(s)))\Vert D(G_{T}\circ \zeta)(s)\Vert\
\in\mathcal{P}_{\tilde{r}(\hat{e})}(\hat{e})\},\\
\mathcal{P}_{\tilde{r}}^{-}\Lambda^{1}(T)\cdot \mathbf{n}|_{e} & = 
\{ u(\mathbf{x})  \text{ where }\mathbf{x}\in e:  u(G_{T}(\zeta(s)))\Vert D(G_{T}\circ \zeta)(s)\Vert\
\in\mathcal{P}_{\tilde{r}(\hat{e})-1}(\hat{e})\}.
\end{align*}
In addition, the above equalities do not depend on the choice of the orientation of the local 
parametrization $\zeta (s)$.
\end{lemma}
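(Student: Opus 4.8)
The plan is to reduce all three identities to the corresponding facts on the reference triangle $\hat{T}$ recorded in the Remark following Definition~\ref{FEM_spaces_single_triangle_variable_order}, namely $\mathcal{P}_{\tilde{r}}\Lambda^{0}(\hat{T})|_{\hat{e}}=\mathcal{P}_{\tilde{r}(\hat{e})}(\hat{e})$, $\mathcal{P}_{\tilde{r}}\Lambda^{1}(\hat{T})|_{\hat{e}}\cdot\hat{\mathbf{n}}=\mathcal{P}_{\tilde{r}(\hat{e})}(\hat{e})$, and $\mathcal{P}_{\tilde{r}}^{-}\Lambda^{1}(\hat{T})|_{\hat{e}}\cdot\hat{\mathbf{n}}=\mathcal{P}_{\tilde{r}(\hat{e})-1}(\hat{e})$, combined with the observation that $\zeta$ is affine, so that composition with $\zeta$ is a degree-preserving bijection between polynomials on $\hat{e}$ and polynomials in $s$. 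The $\Lambda^{0}$ case is immediate: by definition $u(\mathbf{x})=\hat{u}(\hat{\mathbf{x}})$, hence $u(G_{T}(\zeta(s)))=\hat{u}(\zeta(s))$, and $\hat{u}|_{\hat{e}}\in\mathcal{P}_{\tilde{r}(\hat{e})}(\hat{e})$ exactly when this is a polynomial of degree $\tilde{r}(\hat{e})$ in $s$.

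The crux is the normal-trace transformation law for the contravariant Piola map defining $\mathcal{P}_{\tilde{r}}\Lambda^{1}(T)$ and $\mathcal{P}_{\tilde{r}}^{-}\Lambda^{1}(T)$. Write $F=DG_{T}(\hat{\mathbf{x}})$ and $J=\det F>0$, so $\omega(\mathbf{x})=J^{-1}F\hat{\omega}(\hat{\mathbf{x}})$. Let $\hat{\boldsymbol{\tau}}=\zeta'(s)$ be the (constant, nonzero) tangent of $\hat{e}$; then the tangent of $e$ is $F\hat{\boldsymbol{\tau}}=D(G_{T}\circ\zeta)(s)$, and with $R=\bigl(\begin{smallmatrix}0&1\\-1&0\end{smallmatrix}\bigr)$ the unit normals are $\hat{\mathbf{n}}=R\hat{\boldsymbol{\tau}}/\|\hat{\boldsymbol{\tau}}\|$ and $\mathbf{n}=\pm RF\hat{\boldsymbol{\tau}}/\|F\hat{\boldsymbol{\tau}}\|$. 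The key algebraic fact in two dimensions is $F^{\top}RF=(\det F)R$, a direct $2\times 2$ computation (equivalently Nanson's formula), which yields $(F\hat{\omega})^{\top}R(F\hat{\boldsymbol{\tau}})=J\,\hat{\omega}^{\top}R\hat{\boldsymbol{\tau}}$. Substituting into $\omega\cdot\mathbf{n}$, the factor $J$ cancels and I obtain
\[
(\omega\cdot\mathbf{n})(G_{T}(\zeta(s)))\,\|D(G_{T}\circ\zeta)(s)\| = \pm\,\|\zeta'(s)\|\,(\hat{\omega}\cdot\hat{\mathbf{n}})(\zeta(s)).
\]
Since $\|\zeta'(s)\|$ is a nonzero constant, the left-hand side is a polynomial of degree $\le\tilde{r}(\hat{e})$ in $s$ if and only if $\hat{\omega}\cdot\hat{\mathbf{n}}|_{\hat{e}}\in\mathcal{P}_{\tilde{r}(\hat{e})}(\hat{e})$.

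With this identity, both inclusions for the $\Lambda^{1}$ case follow from the reference fact: as $\omega$ ranges over $\mathcal{P}_{\tilde{r}}\Lambda^{1}(T)$, equivalently $\hat{\omega}$ over $\mathcal{P}_{\tilde{r}}\Lambda^{1}(\hat{T})$, the trace $\hat{\omega}\cdot\hat{\mathbf{n}}|_{\hat{e}}$ ranges over exactly $\mathcal{P}_{\tilde{r}(\hat{e})}(\hat{e})$, so by the displayed identity the weighted normal trace ranges over exactly $\mathcal{P}_{\tilde{r}(\hat{e})}(\hat{e})$; the converse inclusion is obtained by prescribing $\hat{\omega}\cdot\hat{\mathbf{n}}|_{\hat{e}}$ and pushing it forward through the Piola map. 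The $\Lambda^{1,-}$ case is identical, using $\mathcal{P}_{\tilde{r}}^{-}\Lambda^{1}(\hat{T})|_{\hat{e}}\cdot\hat{\mathbf{n}}=\mathcal{P}_{\tilde{r}(\hat{e})-1}(\hat{e})$ in place of degree $\tilde{r}(\hat{e})$. For orientation independence I would observe that reversing $\zeta$ amounts to composing with the affine involution $s\mapsto 1-s$ and flipping the sign of $\hat{\mathbf{n}}$; this replaces $u(G_{T}(\zeta(s)))\|D(G_{T}\circ\zeta)(s)\|$ by its composition with $s\mapsto 1-s$, which changes neither membership in $\mathcal{P}_{\tilde{r}(\hat{e})}(\hat{e})$ nor in $\mathcal{P}_{\tilde{r}(\hat{e})-1}(\hat{e})$, these spaces being invariant under affine reparametrization and under negation. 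The main obstacle is solely the Piola identity of the second paragraph: checking that the $\det(DG_{T})$ weight built into the definition of $\mathcal{P}_{\tilde{r}}\Lambda^{1}(T)$ is exactly cancelled by the geometry of the normal, so that $\|D(G_{T}\circ\zeta)(s)\|$ is precisely the right edge weight. Once that sign and scaling bookkeeping is done correctly, the rest is a direct appeal to the reference-element characterization and the affine invariance of one-dimensional polynomial spaces.
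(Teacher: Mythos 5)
Your proposal is correct and follows the same route the paper intends: the paper's proof consists of the single sentence that these are ``trivial observations on pull back mappings and their restrictions to edges,'' and your argument is exactly that reduction, with the Piola identity $F^{\top}RF=(\det F)R$ and the reference-element trace facts filled in explicitly. The sign and scaling bookkeeping in your displayed identity checks out, so nothing is missing.
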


\begin{proof}
These are trivial observations on pull back mappings and their restrictions to edges. 
\end{proof}



\subsection{Finite element spaces on a $C^{0}$-compatible mesh}

Let $\mathcal{T}_{h}$ be a $C^{0}$-compatible mesh from Definition~\ref{C0_compatible_mesh}. 
We extend the $\tilde{r}$ mapping to a global map defined on  $\triangle (\mathcal{T}_{h})$ 
with values in 
$\mathbb{Z}_{+}$ such that if $e\subset f$, then $\tilde{r}(e)\leq \tilde{r}(f)$. 

\begin{definition}
\label{FE_space_curved_triangulation}
We put $\Omega_{h}:=\bigcup_{T\in\mathcal{T}_{h}}T$.
\begin{align*}
C\Lambda^{0}(\mathcal{T}_{h}) & :=\{u\in H^{1}(\Omega_{h}): u 
\text{ is piece-wise smooth with respect to }\mathcal{T}_{h}\},\\
C\Lambda^{1}(\mathcal{T}_{h}) & :=\{\omega\in H(\text{div},\Omega_{h}): \omega 
\text{ is piece-wise smooth with respect to }\mathcal{T}_{h}\},\\
C\Lambda^{2}(\mathcal{T}_{h}) & :=\{u\in L^{2}(\Omega_{h}): u 
\text{ is piece-wise smooth with respect to }\mathcal{T}_{h}\}.
\end{align*}

We define
\begin{align*}
\mathcal{P}_{\tilde{r}}\Lambda^{0}(\mathcal{T}_{h}) & :=\{u\in C\Lambda^{0}(\mathcal{T}_{h}):
u|_{T}\in\mathcal{P}_{\tilde{r}}\Lambda^{0}(T), \forall T\in\mathcal{T}_{h}\},\\
\mathcal{P}_{\tilde{r}}\Lambda^{1}(\mathcal{T}_{h}) & :=\{\omega\in C\Lambda^{1}(\mathcal{T}_{h}):
\omega |_{T}\in\mathcal{P}_{\tilde{r}}\Lambda^{1}(T), \forall T\in\mathcal{T}_{h}\},\\
\mathcal{P}_{\tilde{r}}^{-}\Lambda^{1}(\mathcal{T}_{h}) & :=\{\omega\in C\Lambda^{1}(\mathcal{T}_{h}):
\omega |_{T}\in\mathcal{P}_{\tilde{r}}^{-}\Lambda^{1}(T), \forall T\in\mathcal{T}_{h}\},\\
\mathcal{P}_{\tilde{r}}\Lambda^{2}(\mathcal{T}_{h}) & :=\{u\in C\Lambda^{2}(\mathcal{T}_{h}):
u|_{T}\in\mathcal{P}_{\tilde{r}}\Lambda^{2}(T), \forall T\in\mathcal{T}_{h}\},\\
\mathcal{P}_{\tilde{r}}\Lambda^{0}(\mathcal{T}_{h};\mathbb{R}^{2}) & :=[\mathcal{P}_{\tilde{r}}
\Lambda^{0}(\mathcal{T}_{h})]^{2}, \mathcal{P}_{\tilde{r}}\Lambda^{2}(\mathcal{T}_{h};\mathbb{R}^{2})
:=[\mathcal{P}_{\tilde{r}}\Lambda^{2}(\mathcal{T}_{h})]^{2},\\
\mathcal{P}_{\tilde{r}}\Lambda^{1}(\mathcal{T}_{h};\mathbb{R}^{2}) & := \left\{\left[ \begin{array}{cc}
\sigma_{11} & \sigma_{12}\\\sigma_{21} & \sigma_{22} \end{array}\right]:(\sigma_{11},\sigma_{12})^{\top}, 
(\sigma_{21},\sigma_{22})^{\top}\in\mathcal{P}_{\tilde{r}}\Lambda^{1}(\mathcal{T}_{h},\mathbb{R}^{2})\right\}.
\end{align*}
\end{definition}

\begin{remark}
According to Lemma~\ref{piola_transform} 
and the fact that $\mathcal{T}_{h}$ is $C^{0}$-compatible, we 
can conclude that 
$\mathcal{P}_{\tilde{r}}\Lambda^{0}(\mathcal{T}_{h})|_{T}=\mathcal{P}_{\tilde{r}}\Lambda^{0}(T),
\mathcal{P}_{\tilde{r}}\Lambda^{1}(\mathcal{T}_{h})|_{T}=\mathcal{P}_{\tilde{r}}\Lambda^{1}(T),
\mathcal{P}_{\tilde{r}}^{-}\Lambda^{1}(\mathcal{T}_{h})|_{T}=\mathcal{P}_{\tilde{r}}^{-}\Lambda^{1}(T),
\mathcal{P}_{\tilde{r}}\Lambda^{2}(\mathcal{T}_{h})|_{T}=\mathcal{P}_{\tilde{r}}\Lambda^{2}(T)$,
for any $T\in\mathcal{T}_{h}$. For standard (not curved) triangulations 
$\mathcal{T}_{h}$, spaces 
$\mathcal{P}_{\tilde{r}}\Lambda^{0}(\mathcal{T}_{h}),\mathcal{P}_{\tilde{r}}\Lambda^{1}(\mathcal{T}_{h}),
\mathcal{P}_{\tilde{r}}^{-}\Lambda^{1}(\mathcal{T}_{h}),\mathcal{P}_{\tilde{r}}\Lambda^{2}(\mathcal{T}_{h})$ 
coincide with those analyzed in \cite{QD:2009:MMEW}. 
\end{remark}

\begin{lemma}
\label{embedding_curved_triangulation}
\begin{align*}
& \mathcal{P}_{\tilde{r}}\Lambda^{1}(\mathcal{T}_{h})\subset\mathcal{P}_{\tilde{r}+1}^{-}\Lambda^{1}
(\mathcal{T}_{h})\subset\mathcal{P}_{\tilde{r}+1}\Lambda^{1}(\mathcal{T}_{h}), \\
& \text{div}\mathcal{P}_{\tilde{r}+1}\Lambda^{1}(\mathcal{T}_{h})\subset
\mathcal{P}_{\tilde{r}}\Lambda^{2}(\mathcal{T}_{h}),
\text{curl}\mathcal{P}_{\tilde{r}+1}\Lambda^{0}(\mathcal{T}_{h})\subset
\mathcal{P}_{\tilde{r}}\Lambda^{1}(\mathcal{T}_{h}).
\end{align*} 
\end{lemma}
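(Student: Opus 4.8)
The plan is to reduce every inclusion to the single (curved) triangle level, since each of the mesh spaces $\mathcal{P}_{\tilde{r}}\Lambda^{k}(\mathcal{T}_{h})$ is characterized by element-wise membership together with a global conformity requirement ($C\Lambda^{1}$ for the $H(\text{div})$ spaces, $C\Lambda^{2}$ for the $L^{2}$ space). Thus I would first prove the three assertions on a fixed $T\in\mathcal{T}_{h}$ and then restore the conformity condition globally. For the inclusion chain, note that all three mesh spaces carry the \emph{same} $C\Lambda^{1}$ requirement, so only element-wise inclusion needs to be checked.

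For $\mathcal{P}_{\tilde{r}}\Lambda^{1}(T)\subset\mathcal{P}_{\tilde{r}+1}^{-}\Lambda^{1}(T)\subset\mathcal{P}_{\tilde{r}+1}\Lambda^{1}(T)$, I would observe that all three spaces are, by Definition~\ref{FE_space_curved_triangle}, the image under one and the same contravariant Piola map $\mathcal{F}_{T}:\hat{\omega}\mapsto \bigl(\tfrac{1}{\det(DG_{T})}DG_{T}\,\hat{\omega}\bigr)\circ G_{T}^{-1}$ of the corresponding reference spaces. Since $\mathcal{F}_{T}$ is a linear isomorphism, the physical inclusions follow verbatim from the reference inclusions $\mathcal{P}_{\tilde{r}}\Lambda^{1}(\hat{T})\subset\mathcal{P}_{\tilde{r}+1}^{-}\Lambda^{1}(\hat{T})\subset\mathcal{P}_{\tilde{r}+1}\Lambda^{1}(\hat{T})$, which are the single-triangle (affine) instance already covered by Lemma~\ref{embedding_affine_triangulation}.

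The divergence inclusion rests on the commuting property of the contravariant Piola transform, $\text{div}_{\mathbf{x}}(\mathcal{F}_{T}\hat{\omega})=\bigl(\tfrac{1}{\det(DG_{T})}\,\text{div}_{\hat{\mathbf{x}}}\hat{\omega}\bigr)\circ G_{T}^{-1}$. For $\hat{\omega}\in\mathcal{P}_{\tilde{r}+1}\Lambda^{1}(\hat{T})$ one has $\text{div}_{\hat{\mathbf{x}}}\hat{\omega}\in\mathcal{P}_{\tilde{r}}\Lambda^{2}(\hat{T})$ on the reference element, and the right-hand side above is precisely the $\Lambda^{2}$ pull back of that quantity, hence it lies in $\mathcal{P}_{\tilde{r}}\Lambda^{2}(T)$ by Definition~\ref{FE_space_curved_triangle}. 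The curl inclusion is handled by the analogous identity $\text{curl}_{\mathbf{x}}(\hat{u}\circ G_{T}^{-1})=\mathcal{F}_{T}(\text{curl}_{\hat{\mathbf{x}}}\hat{u})$; the point is that the rotated gradient transforms \emph{contravariantly}. I would derive this from the chain rule $\nabla_{\mathbf{x}}u=(DG_{T})^{-\top}\nabla_{\hat{\mathbf{x}}}\hat{u}$ combined with the purely algebraic $2\times 2$ identity $R\,(DG_{T})^{-\top}R^{\top}=\tfrac{1}{\det(DG_{T})}DG_{T}$ for the rotation $R=\bigl(\begin{smallmatrix}0&1\\-1&0\end{smallmatrix}\bigr)$ fixed by $\text{curl}\,u=R\nabla u$. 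Then $\hat{u}\in\mathcal{P}_{\tilde{r}+1}\Lambda^{0}(\hat{T})$ yields $\text{curl}_{\hat{\mathbf{x}}}\hat{u}\in\mathcal{P}_{\tilde{r}}\Lambda^{1}(\hat{T})$, so $\text{curl}_{\mathbf{x}}u\in\mathcal{P}_{\tilde{r}}\Lambda^{1}(T)$.

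Finally I would restore global conformity. For the divergence statement, $\omega\in C\Lambda^{1}(\mathcal{T}_{h})\subset H(\text{div},\Omega_{h})$ gives $\text{div}\,\omega\in L^{2}(\Omega_{h})$, which together with piecewise smoothness places $\text{div}\,\omega$ in $C\Lambda^{2}(\mathcal{T}_{h})$, and the element-wise result then gives $\text{div}\,\omega\in\mathcal{P}_{\tilde{r}}\Lambda^{2}(\mathcal{T}_{h})$. For the curl statement, $u\in C\Lambda^{0}(\mathcal{T}_{h})\subset H^{1}(\Omega_{h})$ implies $\text{curl}\,u\in H(\text{div},\Omega_{h})$ with $\text{div}\,\text{curl}\,u=0$, so $\text{curl}\,u\in C\Lambda^{1}(\mathcal{T}_{h})$, and the element-wise result gives $\text{curl}\,u\in\mathcal{P}_{\tilde{r}}\Lambda^{1}(\mathcal{T}_{h})$. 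The main obstacle, and the only place where the curved case genuinely differs from the trivial affine Lemma~\ref{embedding_affine_triangulation}, is verifying the two commuting identities pointwise for the $\hat{\mathbf{x}}$-dependent Jacobian $DG_{T}$ — in particular confirming that $\text{curl}$ respects the contravariant rather than the covariant Piola transform; this is exactly the statement that the maps of Definition~\ref{FE_space_curved_triangle} are the standard differential-form pull backs, for which $\phi^{*}\,d=d\,\phi^{*}$.
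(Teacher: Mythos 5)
Your proposal is correct, and it is precisely the argument the paper leaves implicit: the paper's own proof of this lemma is the single line ``The proof is straightforward,'' and the intended route is exactly your reduction to the reference element via the Piola maps of Definition~\ref{FE_space_curved_triangle}, the Piola divergence rule (which the paper records separately in Lemma~\ref{mapping1}), and the identity $R\,(DG_{T})^{-\top}R^{\top}=\tfrac{1}{\det(DG_{T})}DG_{T}$ showing that the rotated gradient transforms contravariantly, followed by the observation that the global conformity constraints are preserved. Your verification of that $2\times 2$ identity and of the global $C\Lambda^{1}$/$C\Lambda^{2}$ bookkeeping is sound, so nothing is missing.
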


\begin{proof}
The proof is straightforward.
\end{proof}

\section{Preliminaries for the proof of stability for curvilinear meshes\label{preliminary_curve}}

We begin by recalling our assumptions on the domain and meshes: $\Omega$ is a (curvilinear)
polygon and it is meshed with a family $(\mathcal{T}_{h})_{h}$ of 
$C^{0}$-compatible meshes of class $C^{1,1}$. For any mesh $\mathcal{T}_{h}$, 
mapping $\tilde{r} \: : \: \triangle (\mathcal{T}_{h}) \rightarrow \mathbb{Z}_{+}$ 
defines a locally variable order of discretization that satisfies the minimum rule.
The maximum order is limited, i.e. $\sup_{h}\sup_{T\in\mathcal{T}_{h}}\tilde{r}(T)<\infty$. 

As in Section~\ref{preliminary_affine}, we shall design operators 
$\Pi_{\tilde{r},T}^{2}$, $\Pi_{\tilde{r}+1,T}^{1,-}$ and $W_{T}$ in order to 
make the left and the right diagrams in (\ref{three_diagrams}) commute. 
In the case of affine meshes (Section~\ref{preliminary_affine}), 
we define $\Pi_{\tilde{r}+1,T}^{1,-}$ and $W_{T}$ on physical triangles, 
then ``pull them back" to the reference triangle. We designed 
$\Pi_{\tilde{r}+1,T}^{1,-}$ and $W_{T}$ in such a way that their ``pull backs"
are $\Pi_{\tilde{r}+1,\hat{T},t_{\tilde{r}(\hat{T})}}^{1,-}$ and $C_{t_{\tilde{r}(\hat{T})}}$ 
respectively 
(
Definition~\ref{parameter_t_order}).
In the case of curvilinear meshes, the approach to $\Pi_{\tilde{r},T}^{2}$ and $\Pi_{\tilde{r}+1,T}^{1,-}$
is similar to that in Section~\ref{preliminary_affine}. But the treatment to $W_{T}$ is quite different. 
In order to make the right diagram in (\ref{three_diagrams}) commute (we need the commutativity on the physical meshes), 
we incorporate the construction of $\Pi_{\tilde{r}+1,T}^{1,-}$ into the definition 
of $W_{T}$. Then we use a special mapping such that the ``pull back" of $W_{T}$ is a projection 
operator to $\mathcal{P}_{\tilde{r}+2}\Lambda^{0}(\hat{T};\mathbb{R}^{2})$. In general, this kind of 
``pull back" of $W_{T}$ to the reference triangle does not coincide with $C_{t_{\tilde{r}(T)}}$ from 
Definition~\ref{C_W_operator_reference}. This means that, in general, $W_{T}$ is not necessarily well-defined. 
But we manage to prove that the ``pull back" of $W_{T}$  will converge to $C_{t_{\tilde{r}(T)}}$ as $h\rightarrow 0$. 
So $W_{T}$ becomes well-defined when $h$ is small enough. The work presented here illuminates 
the substantial difference between the stability analysis for curvilinear meshes and that for affine meshes. 
This is in particular reflected in the proof of Theorem~\ref{W_inequality_curved}, which 
uses heavily Lemmas~\ref{geometry_property1},\ref{geometry_property2},\ref{geometry_property3}.

In order to make the paper more readable, most proofs of results discussed in  
this section have been moved into Appendix~\ref{app_proofs}.

\begin{definition}
For any $T\in\mathcal{T}_{h}$, we define a linear operator 
$\Pi_{\tilde{r},T}^{2}:L^{2}(T)\longrightarrow \mathcal{P}_{\tilde{r}}\Lambda^{2}(T)$
by the relations
\begin{equation}
\int_{T}(\Pi_{\tilde{r},T}^{2}u(\mathbf{x})-u(\mathbf{x}))
\hat{\psi}(\hat{\mathbf{x}} (\mathbf{x}) )\: d\mathbf{x}=0 \quad  
\forall \hat{\psi} \in\mathcal{P}_{\tilde{r}(T)}(\hat{T})
\label{L2_non_orthogonal}
\end{equation}
\end{definition}
Above, $\hat{\mathbf{x}}(\mathbf{x})$ signifies the inverse
of the element map $\mathbf{x} = G_T(\hat{\mathbf{x}})$.

\begin{definition}
Operator 
$\Pi_{\tilde{r},\hat{T}}^{2}:L^{2}(\hat{T})\longrightarrow \mathcal{P}_{\tilde{r}}
\Lambda^{2}(\hat{T})$ will denote the $L^2$-projection in the reference space,
\begin{equation}
\int_{T}(\Pi_{\tilde{r},\hat{T}}^{2}\hat{u}(\hat{\mathbf{x}})-
\hat{u}(\hat{\mathbf{x}}))\hat{\psi}
(\hat{\mathbf{x}})
\: d\hat{\mathbf{x}}=0 \quad
\forall \hat{\psi}
\in\mathcal{P}_{\tilde{r}(T)}(\hat{T})
\label{L2_non_orthogonal_reference}
\end{equation}
\end{definition}

\begin{remark}
Operator $\Pi_{\tilde{r},T}^{2}$ is a weighted $L^2$-projection in the physical
space. For a regular triangle (affine element map), the jacobian is constant,
and $\Pi_{\tilde{r},T}^{2}$ reduces to the standard $L^2$-projection in the
physical space.
\end{remark}

\begin{lemma}
\label{L2_transform}
For any $T\in\mathcal{T}_{h}$, and arbitrary $u(\mathbf{x})\in L^{2}(T)$, we 
define  $\hat{u}(\hat{\mathbf{x}})$ by the relation:
$$
u(\mathbf{x}(\hat{\mathbf{x}}))=\dfrac{\hat{u}(\hat{\mathbf{x}})}{\det(DG_{T}(\hat{\mathbf{x}}))}
$$
Then $\hat{u}(\hat{\mathbf{x}})\in L^{2}(\hat{T})$, and
$$
\Pi_{\tilde{r},T}^{2}u(\mathbf{x}(\hat{\mathbf{x}}))=
\dfrac{\Pi_{\tilde{r},\hat{T}}^{2}\hat{u}(\hat{\mathbf{x}})}{\det(DG_{T}(\hat{\mathbf{x}})))}
$$
Above, $\mathbf{x}(\hat{\mathbf{x}})$ signifies the element map $\mathbf{x} = G_T(\hat{\mathbf{x}})$.
\end{lemma}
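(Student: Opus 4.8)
The plan is to reduce both sides to the reference element via the change of variables $\mathbf{x}=G_{T}(\hat{\mathbf{x}})$ and to exploit that the weighting built into $\Pi_{\tilde{r},T}^{2}$ is designed precisely to cancel the Jacobian of the $2$-form pull back. First I would verify $\hat{u}\in L^{2}(\hat{T})$: since $\hat{u}(\hat{\mathbf{x}})=\det(DG_{T}(\hat{\mathbf{x}}))\,u(G_{T}(\hat{\mathbf{x}}))$, the substitution $d\mathbf{x}=\det(DG_{T})\,d\hat{\mathbf{x}}$ gives $\int_{\hat{T}}|\hat{u}|^{2}\,d\hat{\mathbf{x}}=\int_{T}\det(DG_{T})\,|u|^{2}\,d\mathbf{x}$, which is finite because $G_{T}\in C^{1}(\hat{T})$ forces $\det(DG_{T})$ to be bounded on the compact set $\hat{T}$ (and positive by Definition~\ref{curved_triangle}). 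This simultaneously confirms that $u\mapsto\hat{u}$ is exactly the $2$-form pull back used to define $\mathcal{P}_{\tilde{r}}\Lambda^{2}(T)$ in Definition~\ref{FE_space_curved_triangle}.

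Next I would set $v:=\Pi_{\tilde{r},T}^{2}u\in\mathcal{P}_{\tilde{r}}\Lambda^{2}(T)$. By Definition~\ref{FE_space_curved_triangle} there is a unique $\hat{v}\in\mathcal{P}_{\tilde{r}}\Lambda^{2}(\hat{T})$ with $v(\mathbf{x})=\hat{v}(\hat{\mathbf{x}})/\det(DG_{T}(\hat{\mathbf{x}}))$, so the lemma reduces to showing $\hat{v}=\Pi_{\tilde{r},\hat{T}}^{2}\hat{u}$. The defining relation~(\ref{L2_non_orthogonal}) reads $\int_{T}(v-u)\,\hat{\psi}(\hat{\mathbf{x}}(\mathbf{x}))\,d\mathbf{x}=0$ for all $\hat{\psi}\in\mathcal{P}_{\tilde{r}(T)}(\hat{T})$. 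Applying $\mathbf{x}=G_{T}(\hat{\mathbf{x}})$, $d\mathbf{x}=\det(DG_{T})\,d\hat{\mathbf{x}}$, and using $v(G_{T})\det(DG_{T})=\hat{v}$ and $u(G_{T})\det(DG_{T})=\hat{u}$, the integrand collapses to $(\hat{v}-\hat{u})\hat{\psi}$ with the Jacobian absorbed exactly. Thus the condition becomes $\int_{\hat{T}}(\hat{v}-\hat{u})\hat{\psi}\,d\hat{\mathbf{x}}=0$ for all $\hat{\psi}\in\mathcal{P}_{\tilde{r}(T)}(\hat{T})$, which is precisely~(\ref{L2_non_orthogonal_reference}). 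Since $\hat{v}\in\mathcal{P}_{\tilde{r}}\Lambda^{2}(\hat{T})$, this identifies $\hat{v}=\Pi_{\tilde{r},\hat{T}}^{2}\hat{u}$, and translating back through $v=\hat{v}/\det(DG_{T})$ yields the stated formula.

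I do not expect a genuine obstacle here; the result is essentially a consistency check that the physical operator $\Pi_{\tilde{r},T}^{2}$ was weighted so as to be conjugate to the reference $L^{2}$-projection $\Pi_{\tilde{r},\hat{T}}^{2}$ under the $2$-form pull back. The only points requiring care are (i) tracking the two Jacobian factors, one from the $2$-form transformation of $u$ and $v$ and one from the change-of-variables measure, and confirming that they combine to leave the clean reference identity, and (ii) observing that the test space in~(\ref{L2_non_orthogonal}) is already the pulled-back reference space $\mathcal{P}_{\tilde{r}(T)}(\hat{T})$, so that $\hat{\psi}$ needs no further transformation. Well-definedness of $\Pi_{\tilde{r},\hat{T}}^{2}$ is standard since it is the orthogonal $L^{2}$-projection, and running the same computation backwards shows $\Pi_{\tilde{r},T}^{2}$ is well-defined as well, should that be needed.
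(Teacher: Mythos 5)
Your proposal is correct and follows the same route the paper intends: the paper's proof is the one-line remark that the result ``follows immediately from Lemma~\ref{mapping3} and the definitions of the two projections,'' and your change-of-variables computation (the $2$-form Jacobian of $u$ and $v$ cancelling against the measure Jacobian, leaving exactly condition~(\ref{L2_non_orthogonal_reference})) is precisely what that remark elides. Your explicit verification that $\hat{u}\in L^{2}(\hat{T})$ via boundedness and positivity of $\det(DG_{T})$ on the compact set $\hat{T}$ is, if anything, slightly more careful than the paper's citation of Lemma~\ref{mapping3}, which is stated for the unweighted composition.
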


\begin{proof}
Proof follows immediately from Lemma~\ref{mapping3} and the definitions of the two
projections.
\end{proof}

\begin{lemma}
\label{L2_non_standard_projection_inequality}
For any $\varepsilon>0$, there exists $\delta>0$ such that, for any $h<\delta$ and $T\in\mathcal{T}_{h}$, 
\begin{equation*}
\Vert\Pi_{\tilde{r},T}^{2}u-P_{\tilde{r},T}u\Vert_{L^{2}(T)} \leq\varepsilon\Vert u\Vert_{L^{2}(T)},\forall u\in L^{2}(T).
\end{equation*}
Here $P_{\tilde{r},T}$ is the standard $L^2$-projection onto $\mathcal{P}_{\tilde{r}}\Lambda^{2}(T)$.
\end{lemma}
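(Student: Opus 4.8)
The plan is to exploit the fact that both operators project onto the \emph{same} finite dimensional space $V:=\mathcal{P}_{\tilde{r}}\Lambda^{2}(T)$: the map $P_{\tilde{r},T}$ is the \emph{orthogonal} $L^{2}(T)$-projection onto $V$, while $\Pi_{\tilde{r},T}^{2}$ is the \emph{oblique} projection onto $V$ characterized by testing against the pull-backs $\hat{\psi}(\hat{\mathbf{x}}(\mathbf{x}))$ with $\hat{\psi}\in\mathcal{P}_{\tilde{r}(T)}(\hat{T})$, i.e. against the space $W:=\{\hat{\psi}(\hat{\mathbf{x}}(\mathbf{x})):\hat{\psi}\in\mathcal{P}_{\tilde{r}(T)}(\hat{T})\}$. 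Since every element of $V$ has the form $\hat{v}(\hat{\mathbf{x}}(\mathbf{x}))/\det(DG_{T})$ with $\hat{v}$ polynomial (Definition~\ref{FE_space_curved_triangle}), the trial space $V$ and the test space $W$ differ precisely by the Jacobian weight $\det(DG_{T})$, which the mesh regularity forces to be asymptotically constant. The quantitative control is furnished by Lemma~\ref{geometry_property3}; accordingly I set $\eta_{h}:=\sup_{T\in\mathcal{T}_{h}}\sup_{\hat{\mathbf{x}}\in\hat{T}}|\det(DG_{T}(\hat{\mathbf{x}})B_{T}^{-1})-1|$, which tends to $0$ as $h\to 0$.

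Fix $u\in L^{2}(T)$ and write $p:=P_{\tilde{r},T}u$, $q:=\Pi_{\tilde{r},T}^{2}u$, so $p-q\in V$. First I would expand
\[
\Vert p-q\Vert_{L^{2}(T)}^{2}=\int_{T}(p-u)(p-q)\,d\mathbf{x}+\int_{T}(u-q)(p-q)\,d\mathbf{x},
\]
and observe that the first integral vanishes, because $p-q\in V$ and $p$ is the orthogonal projection. For the second integral I subtract a carefully chosen test function: writing $p-q=\hat{v}(\hat{\mathbf{x}}(\mathbf{x}))/\det(DG_{T})$ with $\hat{v}\in\mathcal{P}_{\tilde{r}(T)}(\hat{T})$, I set $w:=\hat{v}(\hat{\mathbf{x}}(\mathbf{x}))/\det(B_{T})\in W$. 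By the defining relation~(\ref{L2_non_orthogonal}) of $\Pi_{\tilde{r},T}^{2}$ we have $\int_{T}(u-q)w\,d\mathbf{x}=0$, whence
\[
\Vert p-q\Vert_{L^{2}(T)}^{2}=\int_{T}(u-q)\bigl((p-q)-w\bigr)\,d\mathbf{x}\le \Vert u-q\Vert_{L^{2}(T)}\,\Vert (p-q)-w\Vert_{L^{2}(T)}.
\]

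The key pointwise identity is $(p-q)-w=(p-q)\bigl(1-\det(DG_{T})/\det(B_{T})\bigr)$, which yields $\Vert (p-q)-w\Vert_{L^{2}(T)}\le \eta_{h}\Vert p-q\Vert_{L^{2}(T)}$ directly from the definition of $\eta_{h}$. Dividing by $\Vert p-q\Vert_{L^{2}(T)}$ gives $\Vert p-q\Vert_{L^{2}(T)}\le \eta_{h}\Vert u-q\Vert_{L^{2}(T)}$. Finally I bound $\Vert u-q\Vert_{L^{2}(T)}\le \Vert u-p\Vert_{L^{2}(T)}+\Vert p-q\Vert_{L^{2}(T)}\le \Vert u\Vert_{L^{2}(T)}+\Vert p-q\Vert_{L^{2}(T)}$, using that the orthogonal projection is nonexpansive, and absorb the last term: for $h$ small enough that $\eta_{h}\le \tfrac12$,
\[
\Vert p-q\Vert_{L^{2}(T)}\le \frac{\eta_{h}}{1-\eta_{h}}\,\Vert u\Vert_{L^{2}(T)}.
\]
Given $\varepsilon>0$, Lemma~\ref{geometry_property3} lets me choose $\delta>0$ so that $\eta_{h}/(1-\eta_{h})\le\varepsilon$ for all $h<\delta$, which is the asserted estimate, uniformly in $T\in\mathcal{T}_{h}$ and $u\in L^{2}(T)$.

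The routine ingredients are the change-of-variables verification that $\Pi_{\tilde{r},T}^{2}$ reproduces $V$ (this is exactly Lemma~\ref{L2_transform}) and the elementary pointwise Jacobian identity above. The only genuine content is recognizing that the entire discrepancy between the two projections is governed by the oscillation of $\det(DG_{T})$ about its centroidal value $\det(B_{T})$; the main obstacle, already dispatched by Lemma~\ref{geometry_property3}, is that this oscillation is uniformly small across the whole family precisely because $c_{h}\to 0$ in the regularity hypothesis.
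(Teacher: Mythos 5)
Your proof is correct, and it takes a genuinely different route from the paper's. The paper expands both $\Pi_{\tilde{r},T}^{2}u$ and $P_{\tilde{r},T}u$ in a fixed basis $\{\hat{\xi}_{i}\}$ of $\mathcal{P}_{\tilde{r}}\Lambda^{2}(\hat{T})$, writes the coefficient vectors as solutions of two linear systems $A_{1}\alpha=\mathbf{b}_{1}$, $A_{2}\beta=\mathbf{b}_{2}$, and shows via Lemma~\ref{geometry_property3} that both the matrices and the right-hand sides converge to each other uniformly as $h\to 0$, whence the coefficient difference (and then the $L^{2}$ difference) is small. You instead treat $\Pi_{\tilde{r},T}^{2}$ as an oblique projection onto the same space $V$ as the orthogonal projection $P_{\tilde{r},T}$, kill the orthogonal part of $\Vert p-q\Vert^{2}$, and reduce everything to the single pointwise identity $(p-q)-w=(p-q)\bigl(1-\det(DG_{T})/\det(B_{T})\bigr)$, which is controlled by exactly the quantity $\eta_{h}$ that Lemma~\ref{geometry_property3} sends to zero. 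Both arguments ultimately rest on the same geometric input, but yours is basis-free, avoids the matrix-perturbation bookkeeping (and the paper's somewhat loosely stated estimate on $\Vert\mathbf{b}_{1}-\mathbf{b}_{2}\Vert$), and produces the explicit constant $\eta_{h}/(1-\eta_{h})$. The only points worth making fully explicit are (i) the trivial case $p=q$ before dividing by $\Vert p-q\Vert_{L^{2}(T)}$, and (ii) that $w=\hat{v}(\hat{\mathbf{x}}(\mathbf{x}))/\det(B_{T})$ is indeed an admissible test function in~(\ref{L2_non_orthogonal}) because $\det(B_{T})$ is a nonzero constant; both are immediate.
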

\begin{proof}
Please see Appendix~\ref{app_proofs}.
\end{proof}

\subsection{Projection Based Interpolation onto $\mathcal{P}_{\tilde{r}+1}\Lambda^{1}(\mathcal{T}_{h})$}

\begin{definition}
For any $T\in\mathcal{T}_{h}$, we define a linear operator 
$\Pi_{\tilde{r}+1,T}^{1}:H^{1}(T;\mathbb{R}^{2})
\longrightarrow \mathcal{P}_{\tilde{r}+1}\Lambda^{1}(T)$
by the relations
\begin{equation}
\int_{T}\text{div}(\Pi_{\tilde{r}+1,T}^{1}\omega-\omega)(\mathbf{x})
\hat{\psi}(\hat{\mathbf{x}} (\mathbf{x})) 
d\mathbf{x}=0\quad
\forall \hat{\psi}
\in\mathcal{P}_{\tilde{r}(T)}(\hat{T})/\mathbb{R}
\label{interior_div_curved}
\end{equation}
\begin{equation}
\int_{T}(\Pi_{\tilde{r}+1,T}^{1}\omega(\mathbf{x})-\omega(\mathbf{x}))^{\top}
DG_{T}(\hat{\mathbf{x}})^{-\top}
\text{curl}_{\hat{\mathbf{x}}}
\hat{\varphi}(\hat{\mathbf{x}}(\mathbf{x})) d\mathbf{x}=0 \quad
\forall \hat{\varphi}
\in\mathcal{\mathring{P}}_{\tilde{r}(T)+2}(\hat{T}) 
\label{interior_aux_curved}
\end{equation}
\begin{equation}
\int_{[0,1]}[(\Pi_{\tilde{r}+1,T}^{1}\omega-\omega)(\mathbf{x}_{e}(s))
\cdot\mathbf{n}(\mathbf{x}_{e}(s))]\hat{\eta}(s)\Vert \dot{\mathbf{x}_{e}}(s)\Vert ds = 0 \quad
\forall \hat{\eta}\in\mathcal{P}_{\tilde{r}(e)+1}([0,1])\: \forall e\in\triangle_{1}(T)
\label{H_div_edge_curved}
\end{equation}
Here $\mathbf{x}=G_{T}(\hat{\mathbf{x}})$ for any $\hat{\mathbf{x}}\in\hat{T}$, 
$\mathbf{x}_{e}(s):[0,1]\rightarrow e$ is the parametrization of $e$, and $\mathbf{n}$ is 
a unit normal vector along $e$ (the choice of its direction does not matter).
\end{definition}

\begin{definition}
\label{PB_master_element}(Projection Based Interpolation operator onto 
$\mathcal{P}_{\tilde{r}+1}\Lambda^{1}(\hat{T})$) 
We define a linear operator $\Pi_{\tilde{r}+1,\hat{T}}^{1}:H(\hat{T})
\longrightarrow \mathcal{P}_{\tilde{r}+1}\Lambda^{1}(\hat{T})$
by the relations
\begin{equation}
\int_{\hat{T}}\text{div}_{\hat{\mathbf{x}}}(\Pi_{\tilde{r}+1,\hat{T}}^{1}\hat{\omega}-\hat{\omega})
\hat{\psi} d\hat{\mathbf{x}}=0 \quad 
\forall \hat{\psi}\in\mathcal{P}_{\tilde{r}(\hat{T})}(\hat{T})/\mathbb{R}
\label{interior_div_reference}
\end{equation}
\begin{equation}
\int_{\hat{T}}(\Pi_{\tilde{r}+1,\hat{T}}^{1}\hat{\omega}-\hat{\omega})\cdot
\text{curl}_{\hat{\mathbf{x}}}\hat{\varphi}d\hat{\mathbf{x}}=0 \quad 
\forall \hat{\varphi}\in\mathcal{\mathring{P}}_{\tilde{r}(\hat{T})+2}(\hat{T})
\label{interior_aux_reference}
\end{equation}
\begin{equation}
\int_{\hat{e}}(\Pi_{\tilde{r}+1,\hat{T}}^{1}\hat{\omega}-\hat{\omega})\cdot\mathbf{\hat{n}}
\hat{\eta} d\hat{s} = 0 \quad \forall \hat{\eta}\in\mathcal{P}_{\tilde{r}(\hat{e})+1}(\hat{e}) 
\:
\forall \hat{e}\in\triangle_{1}(\hat{T})
\label{H_div_edge_reference}
\end{equation}
\end{definition}

\begin{remark}
The operator $\Pi_{\tilde{r}+1,\hat{T}}^{1}$ is the Projection Based Interpolation operator onto 
$\mathcal{P}_{\tilde{r}+1}\Lambda^{1}(\hat{T})$ defined in \cite{QD:2009:MMEW}. The 
operator $\Pi_{\tilde{r}+1,T}^{1}$ is defined by the pull back mapping from $\hat{T}$ to $T$.
\end{remark}

\begin{lemma}
\label{H_div_transform_curved}
For any $T\in\mathcal{T}_{h}$, any $\omega\in [H^{1}(T)]^2$, we define 
$\hat{\omega}$ by the relation
$$
\omega(\mathbf{x}(\hat{\mathbf{x}}))=\dfrac{DG_{T}(\hat{\mathbf{x}})}{\det(DG_{T}(\hat{\mathbf{x}}))}
\hat{\omega}(\hat{x})
$$
Then $\hat{\omega}(\hat{\mathbf{x}})\in H(\hat{T})$, and 
$$
\Pi_{\tilde{r}+1,T}^{1}\omega(\mathbf{x}(\hat{\mathbf{x}})) = 
\dfrac{DG_{T}(\hat{\mathbf{x}})}{\det(DG_{T}(\hat{\mathbf{x}}))}
\Pi_{\tilde{r}+1,\hat{T}}^{1}\hat{\omega}(\hat{\mathbf{x}})
$$
\end{lemma}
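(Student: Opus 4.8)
The plan is to follow exactly the strategy of the affine case (proof of Theorem~\ref{H_div_transform_minus}): transport the three defining relations of $\Pi_{\tilde{r}+1,T}^{1}$ on the physical curved element $T$ to the reference triangle $\hat{T}$ by the contravariant Piola pull back, and observe that they turn into precisely the three relations~(\ref{interior_div_reference})--(\ref{H_div_edge_reference}) defining $\Pi_{\tilde{r}+1,\hat{T}}^{1}$. First I would check that $\hat{\omega}\in H(\hat{T})$: since $\omega\in H^{1}(T;\mathbb{R}^{2})$ and $G_{T}\in C^{1,1}$ is a diffeomorphism with $\det DG_{T}$ bounded away from zero, the Piola pull back preserves $H(\text{div})$-regularity and keeps the normal trace in $L^{2}(\partial\hat{T})$. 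Next, because $\Pi_{\tilde{r}+1,T}^{1}\omega\in\mathcal{P}_{\tilde{r}+1}\Lambda^{1}(T)$, Definition~\ref{FE_space_curved_triangle} supplies a unique $\hat{\tau}\in\mathcal{P}_{\tilde{r}+1}\Lambda^{1}(\hat{T})$ with $\Pi_{\tilde{r}+1,T}^{1}\omega(\mathbf{x}(\hat{\mathbf{x}}))=\frac{DG_{T}(\hat{\mathbf{x}})}{\det DG_{T}(\hat{\mathbf{x}})}\hat{\tau}(\hat{\mathbf{x}})$. Since $\Pi_{\tilde{r}+1,\hat{T}}^{1}$ is well-defined (it is the operator from \cite{QD:2009:MMEW}), it then suffices to verify that $\hat{\tau}$ satisfies~(\ref{interior_div_reference})--(\ref{H_div_edge_reference}) with input $\hat{\omega}$; uniqueness then forces $\hat{\tau}=\Pi_{\tilde{r}+1,\hat{T}}^{1}\hat{\omega}$, which is the asserted identity.

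Write $\hat{\delta}:=\hat{\tau}-\hat{\omega}$, the Piola image of $\Pi_{\tilde{r}+1,T}^{1}\omega-\omega$. Two standard Piola identities do the interior bookkeeping. The divergence-preservation property $\left(\text{div}(\Pi_{\tilde{r}+1,T}^{1}\omega-\omega)\right)\circ G_{T}=\frac{1}{\det DG_{T}}\text{div}_{\hat{\mathbf{x}}}\hat{\delta}$ (the mapping lemma already invoked for Lemma~\ref{L2_transform}, i.e. Lemma~\ref{mapping3}), together with $d\mathbf{x}=\det DG_{T}\,d\hat{\mathbf{x}}$, turns~(\ref{interior_div_curved}) into $\int_{\hat{T}}\text{div}_{\hat{\mathbf{x}}}\hat{\delta}\,\hat{\psi}\,d\hat{\mathbf{x}}=0$, which is exactly~(\ref{interior_div_reference}); the test spaces match verbatim because $\tilde{r}(T)=\tilde{r}(\hat{T})$ (Definition~\ref{variable_order_curved_triangle}) and both conditions already use reference-triangle test functions. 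For the curl condition, the integrand of~(\ref{interior_aux_curved}) transforms as $\left(\frac{DG_{T}}{\det DG_{T}}\hat{\delta}\right)^{\top}DG_{T}^{-\top}\text{curl}_{\hat{\mathbf{x}}}\hat{\varphi}=\frac{1}{\det DG_{T}}\hat{\delta}^{\top}\text{curl}_{\hat{\mathbf{x}}}\hat{\varphi}$, using $DG_{T}^{\top}DG_{T}^{-\top}=I$; multiplying by $\det DG_{T}$ from the change of variables recovers~(\ref{interior_aux_reference}) exactly. Both of these steps are routine.

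The only delicate point is the edge condition~(\ref{H_div_edge_curved}). Here I use $C^{0}$-compatibility: the curve $e$ equals $G_{T}(\hat{e})$ and the parametrization satisfies $\mathbf{x}_{e}(s)=G_{T}(\zeta(s))$ with $\zeta$ the affine parametrization of $\hat{e}$. Since $\|\dot{\mathbf{x}}_{e}(s)\|\,ds$ is the physical arclength element, (\ref{H_div_edge_curved}) reads $\int_{e}[(\Pi_{\tilde{r}+1,T}^{1}\omega-\omega)\cdot\mathbf{n}]\,\eta\,ds_{\text{phys}}=0$, where $\eta$ is the function taking the value $\hat{\eta}(s)$ at $\mathbf{x}_{e}(s)$. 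The two-dimensional Nanson identity $\mathbf{n}\,ds_{\text{phys}}=\det(DG_{T})\,DG_{T}^{-\top}\hat{\mathbf{n}}\,d\hat{s}$, combined with the Piola form of $\Pi_{\tilde{r}+1,T}^{1}\omega-\omega$, yields the flux-preservation identity $[(\Pi_{\tilde{r}+1,T}^{1}\omega-\omega)\cdot\mathbf{n}]\,ds_{\text{phys}}=[\hat{\delta}\cdot\hat{\mathbf{n}}]\,d\hat{s}$ (this is the normal-trace content of Lemma~\ref{piola_transform}). Because $\zeta$ is affine and $\tilde{r}(e)=\tilde{r}(\hat{e})$, the pushed test function $\eta=\hat{\eta}\circ\zeta^{-1}$ ranges over all of $\mathcal{P}_{\tilde{r}(\hat{e})+1}(\hat{e})$ as $\hat{\eta}$ ranges over $\mathcal{P}_{\tilde{r}(e)+1}([0,1])$, so (\ref{H_div_edge_curved}) becomes precisely~(\ref{H_div_edge_reference}).

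I expect this last flux computation to be the main obstacle, since it requires keeping the Nanson cofactor factor, the arclength factor $\|\dot{\mathbf{x}}_{e}\|$, and the affine matching of the one-dimensional test spaces mutually consistent, whereas the interior divergence and curl conditions transform immediately. Once all three reference conditions are matched, well-definedness of $\Pi_{\tilde{r}+1,\hat{T}}^{1}$ gives $\hat{\tau}=\Pi_{\tilde{r}+1,\hat{T}}^{1}\hat{\omega}$, completing the proof.
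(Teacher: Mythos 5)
Your proposal is correct and follows essentially the same route as the paper: pull the three defining relations of $\Pi_{\tilde{r}+1,T}^{1}$ back to $\hat{T}$ via the Piola map, observe that the interior divergence and curl conditions transform trivially, and reduce the edge condition to the normal-flux preservation identity (which the paper derives by an explicit cofactor computation and you package as Nanson's formula — the same calculation). No gaps.
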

\begin{proof}
Please see Appendix~\ref{app_proofs}.
\end{proof}

\begin{lemma}
\label{commuting_diagram1_curved}
For any $T\in\mathcal{T}_{h}$, and any $\omega\in [H^{1}(T)]^2$, we have
$\Pi_{\tilde{r},T}^{2}\text{div}\omega = \text{div}\Pi_{\tilde{r}+1,T}^{1}\omega.$
\end{lemma}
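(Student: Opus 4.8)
The plan is to reduce the claimed identity on the physical curved element $T$ to the corresponding commuting property on the reference triangle $\hat{T}$, exploiting the two pullback relations already recorded (Lemma~\ref{L2_transform} for $\Pi_{\tilde{r},T}^{2}$ and Lemma~\ref{H_div_transform_curved} for $\Pi_{\tilde{r}+1,T}^{1}$) together with the fact that the contravariant (Piola) pullback intertwines the physical and reference divergence operators. Concretely, given $\omega \in [H^{1}(T)]^{2}$ I would introduce its reference counterpart $\hat{\omega} \in H(\hat{T})$ through $\omega(\mathbf{x}(\hat{\mathbf{x}})) = \frac{DG_{T}(\hat{\mathbf{x}})}{\det(DG_{T}(\hat{\mathbf{x}}))}\hat{\omega}(\hat{\mathbf{x}})$, which is exactly the transformation of Lemma~\ref{H_div_transform_curved}.

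The key step is the classical commuting identity for the Piola transform: if $\omega$ is the $\Lambda^{1}$-pullback of $\hat{\omega}$ as above, then $\text{div}\,\omega(\mathbf{x}(\hat{\mathbf{x}})) = \frac{1}{\det(DG_{T}(\hat{\mathbf{x}}))}\text{div}_{\hat{\mathbf{x}}}\hat{\omega}(\hat{\mathbf{x}})$, which is precisely the $\Lambda^{2}$-pullback appearing in Definition~\ref{FE_space_curved_triangle}. Setting $\hat{u} := \text{div}_{\hat{\mathbf{x}}}\hat{\omega}$, this says that $\text{div}\,\omega$ is the $\Lambda^{2}$-pullback of $\hat{u}$, so Lemma~\ref{L2_transform} gives $\Pi_{\tilde{r},T}^{2}\text{div}\,\omega(\mathbf{x}(\hat{\mathbf{x}})) = \frac{1}{\det(DG_{T}(\hat{\mathbf{x}}))}\Pi_{\tilde{r},\hat{T}}^{2}\text{div}_{\hat{\mathbf{x}}}\hat{\omega}(\hat{\mathbf{x}})$. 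On the other side, Lemma~\ref{H_div_transform_curved} identifies $\Pi_{\tilde{r}+1,T}^{1}\omega$ as the $\Lambda^{1}$-pullback of $\Pi_{\tilde{r}+1,\hat{T}}^{1}\hat{\omega}$, so the same Piola divergence identity yields $\text{div}\,\Pi_{\tilde{r}+1,T}^{1}\omega(\mathbf{x}(\hat{\mathbf{x}})) = \frac{1}{\det(DG_{T}(\hat{\mathbf{x}}))}\text{div}_{\hat{\mathbf{x}}}\Pi_{\tilde{r}+1,\hat{T}}^{1}\hat{\omega}(\hat{\mathbf{x}})$.

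It then remains to compare the two right-hand sides, which amounts to the reference-element commuting property $\Pi_{\tilde{r},\hat{T}}^{2}\text{div}_{\hat{\mathbf{x}}}\hat{\omega} = \text{div}_{\hat{\mathbf{x}}}\Pi_{\tilde{r}+1,\hat{T}}^{1}\hat{\omega}$. Since $\hat{T}$ is an affine (reference) element and $\Pi_{\tilde{r},\hat{T}}^{2}$ is the standard $L^{2}$-projection, this is the diagram already available from \cite{QD:2009:MMEW}, equivalently Lemma~\ref{commuting_diagram1} read on $\hat{T}$; one verifies it by testing the defining relation~(\ref{interior_div_reference}) of $\Pi_{\tilde{r}+1,\hat{T}}^{1}$ against $\mathcal{P}_{\tilde{r}(\hat{T})}(\hat{T})/\mathbb{R}$ and handling the constants by an integration by parts on $\partial\hat{T}$ matched with the edge condition~(\ref{H_div_edge_reference}), exactly as in the argument concluding the proof of Lemma~\ref{PB_modifiled_well_defined}. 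Combining the three displays and cancelling the common factor $1/\det(DG_{T})$ gives the asserted equality at every $\hat{\mathbf{x}}\in\hat{T}$, hence, since $G_{T}$ is a bijection onto $T$, at every $\mathbf{x}\in T$.

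The main obstacle is the Piola commuting identity for the divergence under a genuinely curved map: because $DG_{T}(\hat{\mathbf{x}})$ and $\det(DG_{T}(\hat{\mathbf{x}}))$ now vary over $\hat{T}$, one cannot pull Jacobian factors through the divergence as in the affine case, and the cancellation of the non-constant derivatives hidden in the identity $\text{div}\bigl(\frac{1}{\det DG_{T}}DG_{T}\,\hat{\omega}\bigr) = \frac{1}{\det DG_{T}}\text{div}_{\hat{\mathbf{x}}}\hat{\omega}$ is exactly what makes the whole argument close. I would establish this carefully, either by invoking the pullback formulas cited in \cite{Demkowicz:2007:HPAFE2} or by reproducing the short direct computation, since every other step is a routine consequence of the two transformation lemmas once this single structural fact is in hand.
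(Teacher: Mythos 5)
Your argument is correct and follows essentially the same route as the paper's own proof: pull everything back to $\hat{T}$ via the Piola map, use the divergence identity $\text{div}\,\omega = \det(DG_{T})^{-1}\text{div}_{\hat{\mathbf{x}}}\hat{\omega}$ (the paper's Lemma~\ref{mapping1}) together with Lemma~\ref{L2_transform} and Lemma~\ref{H_div_transform_curved}, and reduce to the reference-element commuting diagram from \cite{QD:2009:MMEW}. The ``obstacle'' you flag is handled in the paper exactly as you propose, by isolating the curved-map Piola divergence rule as a separate lemma.
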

\begin{proof}
Please see Appendix~\ref{app_proofs}.
\end{proof}

\begin{lemma}
\label{H_div_inequality1_curved}
There exists $\delta>0$ and $C>0$ such that, for any $h<\delta$, we have
\begin{equation*}
\Vert \Pi_{\tilde{r},T}^{1}\omega\Vert_{L^{2}(T)}\leq C\Vert \omega\Vert_{H^{1}(T)} \quad
\forall T\in\mathcal{T}_{h},\omega\in H^{1}(T;\mathbb{R}^{2})
\end{equation*}
For affine meshes, the inequality above holds for any $h>0$.
\end{lemma}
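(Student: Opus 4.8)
The plan is to mimic the affine estimate of Lemma~\ref{H_div_inequality1}: transport the inequality to the fixed reference triangle through the Piola pull-back, use a single uniform bound for the reference interpolant, and absorb the element-dependent scaling factors with the mesh-regularity and geometry estimates of Section~\ref{preliminary_curve}. Concretely, given $\omega\in H^{1}(T;\mathbb{R}^{2})$ I would introduce its Piola pull-back $\hat{\omega}$, defined by $\omega(G_{T}(\hat{\mathbf{x}}))=\det(DG_{T}(\hat{\mathbf{x}}))^{-1}DG_{T}(\hat{\mathbf{x}})\hat{\omega}(\hat{\mathbf{x}})$, and invoke Lemma~\ref{H_div_transform_curved}, which gives
\[
\Pi_{\tilde{r}+1,T}^{1}\omega(G_{T}(\hat{\mathbf{x}}))=\frac{DG_{T}(\hat{\mathbf{x}})}{\det(DG_{T}(\hat{\mathbf{x}}))}\,\Pi_{\tilde{r}+1,\hat{T}}^{1}\hat{\omega}(\hat{\mathbf{x}}).
\]
On the reference element the defining functionals of $\Pi_{\tilde{r}+1,\hat{T}}^{1}$ are continuous on $H^{1}(\hat{T};\mathbb{R}^{2})$ and the operator maps into a finite-dimensional space, so $\Vert\Pi_{\tilde{r}+1,\hat{T}}^{1}\hat{\omega}\Vert_{L^{2}(\hat{T})}\leq\hat{C}\Vert\hat{\omega}\Vert_{H^{1}(\hat{T})}$; since the local orders $\tilde{r}(\hat{T})$ range over a finite set, $\hat{C}$ is independent of $T$.

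It then remains to compare the physical and reference norms. Changing variables $\mathbf{x}=G_{T}(\hat{\mathbf{x}})$ in the displayed identity (note $\det(DG_{T})>0$ by Definition~\ref{curved_triangle}) yields
\[
\Vert\Pi_{\tilde{r}+1,T}^{1}\omega\Vert_{L^{2}(T)}^{2}=\int_{\hat{T}}\Big\Vert\frac{DG_{T}}{\det(DG_{T})}\Pi_{\tilde{r}+1,\hat{T}}^{1}\hat{\omega}\Big\Vert^{2}\det(DG_{T})\,d\hat{\mathbf{x}},
\]
so the physical norm is bounded by $\sup_{\hat{T}}\big(\Vert DG_{T}\Vert\,\det(DG_{T})^{-1/2}\big)$ times the reference norm. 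Writing $DG_{T}=B_{T}(I+B_{T}^{-1}D\Phi_{T})$ and $\det(DG_{T})=\det(B_{T})\det(I+B_{T}^{-1}D\Phi_{T})$, for $h<\delta$ Lemmas~\ref{geometry_property2} and~\ref{geometry_property3} guarantee that $DG_{T}$ is uniformly invertible with $\det(DG_{T})$ bounded away from $0$, and they reduce both correction factors to the identity and to $1$; the prefactor is then controlled by its affine value $\Vert B_{T}\Vert\,\Vert B_{T}^{-1}\Vert\leq c\,\tilde{h}_{T}/\tilde{\rho}_{T}\leq c\sigma$ through Lemma~\ref{regular_equivalent} and regularity.

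The main obstacle is the reverse estimate $\Vert\hat{\omega}\Vert_{H^{1}(\hat{T})}\leq C\Vert\omega\Vert_{H^{1}(T)}$ needed to close the chain. Its $L^{2}$ part is symmetric to the computation above. The genuinely new point is the gradient: from $\hat{\omega}=\det(DG_{T})\,DG_{T}^{-1}(\omega\circ G_{T})$, differentiating in $\hat{\mathbf{x}}$ produces, besides the affine-type term $\det(DG_{T})\,DG_{T}^{-1}(\nabla\omega\circ G_{T})DG_{T}$ which scales like the constant-Jacobian case, an additional term $\nabla_{\hat{\mathbf{x}}}\big(\det(DG_{T})DG_{T}^{-1}\big)(\omega\circ G_{T})$ involving the second derivatives $D^{2}G_{T}=D^{2}\Phi_{T}$ of the element map. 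Here the $C^{1,1}$ assumption is essential: it ensures $D^{2}G_{T}\in L^{\infty}(\hat{T})$, and together with the regular scaling of the maps $G_{T}$ and the geometry estimates of Lemmas~\ref{geometry_property1}--\ref{geometry_property3} it forces the element-scaled size of this curvature contribution to stay uniformly bounded for $h<\delta$. Verifying this scaling is the most technical step, and is the $H^{1}$-level counterpart of the difficulties flagged at the start of Section~\ref{preliminary_curve}. Combining the reference bound with the two norm comparisons gives the claimed inequality with $C$ depending only on $\sigma$ and the maximal order. Finally, for affine meshes $\Phi_{T}\equiv 0$, so every curvature term vanishes identically and all geometry corrections are trivial; the argument then collapses to the scaling proof of Lemma~\ref{H_div_inequality1} and holds for every $h>0$.
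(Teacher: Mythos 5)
Your first two steps (pulling back through the Piola map via Lemma~\ref{H_div_transform_curved} and controlling $\Vert\Pi_{\tilde{r}+1,T}^{1}\omega\Vert_{L^{2}(T)}$ by the reference-element norm of $\Pi_{\tilde{r}+1,\hat{T}}^{1}\hat{\omega}$) match the paper. The gap is in the step you yourself flag as ``the most technical'': the reverse estimate $\Vert\hat{\omega}\Vert_{H^{1}(\hat{T})}\leq C\Vert\omega\Vert_{H^{1}(T)}$ is not available under the paper's hypotheses. The curvature term $\nabla_{\hat{\mathbf{x}}}\bigl(\det(DG_{T})\,DG_{T}^{-1}\bigr)(\omega\circ G_{T})$ involves $D^{2}\Phi_{T}$, but the mesh-regularity definition controls only first-order quantities: shape regularity of the affine parts and $c_{h}=\sup_{T}\bigl(\sup_{\hat{T}}\Vert D\Phi_{T}\Vert\bigr)\Vert B_{T}^{-1}\Vert\to 0$. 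Lemmas~\ref{geometry_property1}--\ref{geometry_property3}, which you invoke to absorb this term, likewise involve only $D\Phi_{T}$ and $DG_{T}$, never second derivatives; the $C^{1,1}$ assumption gives $D^{2}G_{T}\in L^{\infty}$ elementwise but no uniform, correctly scaled bound over the family as $h\to 0$. So your chain does not close without importing an extra hypothesis on second derivatives.

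The paper avoids this entirely by never passing through $\Vert\hat{\omega}\Vert_{H^{1}(\hat{T})}$. The defining functionals of $\Pi_{\tilde{r}+1,\hat{T}}^{1}$ in Definition~\ref{PB_master_element} are continuous on the weaker space $H(\hat{T})$ introduced in the Notations section, i.e.\ they only require $\Vert\hat{\omega}\Vert_{H(\text{div},\hat{T})}$ and $\Vert\hat{\omega}\Vert_{L^{2}(\partial\hat{T})}$ (the interior conditions test against $\text{div}_{\hat{\mathbf{x}}}\hat{\omega}$ and against $\hat{\omega}$ in $L^{2}$, the edge conditions against the normal trace). Since the Piola map transforms the divergence and the edge normal trace using only $DG_{T}$, Lemma~\ref{H_div_transform_inequality2} gives $\Vert\hat{\omega}\Vert_{H(\text{div},\hat{T})}^{2}+\Vert\hat{\omega}\Vert_{L^{2}(\partial\hat{T})}^{2}\leq C\Vert\omega\Vert_{H^{1}(T)}^{2}$ from first-derivative information alone, and the reference-element bound $\Vert\Pi_{\tilde{r}+1,\hat{T}}^{1}\hat{\omega}\Vert_{L^{2}(\hat{T})}^{2}\leq C\bigl(\Vert\hat{\omega}\Vert_{H(\text{div},\hat{T})}^{2}+\Vert\hat{\omega}\Vert_{L^{2}(\partial\hat{T})}^{2}\bigr)$ closes the argument. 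To repair your proof, replace the $H^{1}(\hat{T})$ continuity of the reference interpolant by its continuity on $H(\hat{T})$ and bound only those two quantities; as written, the argument does not go through.
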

\begin{proof}
Please see Appendix~\ref{app_proofs}.
\end{proof}

\subsection{Modified Projection Based Interpolation onto 
$\mathcal{P}_{\tilde{r}+1}^{-}\Lambda^{1}(T)$
and modified operator $W$ onto $\mathcal{P}_{\tilde{r}+2}\Lambda^{0}(T)$}


\begin{definition}
\label{PB_minus_curved}(PB interpolation operator onto 
$\mathcal{P}_{\tilde{r}+1}^{-}\Lambda^{1}(T)$)
For $t_{\tilde{r}(\hat{T})}$ given in Definition~\ref{parameter_t_order}, and 
for any $T\in\mathcal{T}_{h}$, we define a linear operator $\Pi_{\tilde{r}+1,T}^{1,-}
:H^{1}(T;\mathbb{R}^{2})\longrightarrow \mathcal{P}_{\tilde{r}+1}^{-}\Lambda^{1}(T)$
by the relations
\begin{equation}
\int_{T}\text{div}(\Pi_{\tilde{r}+1,T}^{1,-}\omega-\omega)(\mathbf{x})
\hat{\psi}(\hat{\mathbf{x}}(\mathbf{x})) \: d\mathbf{x}=0 \quad 
\forall \hat{\psi}
\in\mathcal{P}_{\tilde{r}(T)}(\hat{T})/\mathbb{R}
\label{interior_div_minus_curved}
\end{equation}
\begin{equation}
\int_{T}(\Pi_{\tilde{r}+1,T}^{1,-}\omega(\mathbf{x})-\omega(\mathbf{x}))^{\top}
DG_{T}(\hat{\mathbf{x}}(\mathbf{x}))^{-\top}
\hat{\mathbf{h}}_{i}(\hat{\mathbf{x}}(\mathbf{x}),t_{\tilde{r}(\hat{T})}) d\mathbf{x}=0 \quad
1\leq i\leq k_{\tilde{r}}
\label{interior_aux_minus_curved}
\end{equation}
\begin{equation}
\int_{[0,1]}[(\Pi_{\tilde{r}+1,T}^{1,-}\omega-\omega)(\mathbf{x}_{e}(s))
\cdot\mathbf{n}(\mathbf{x}_{e}(s))]\hat{\eta}(s)\Vert \dot{\mathbf{x}_{e}}(s)\Vert ds = 0 \quad
\forall \hat{\eta}\in\mathcal{P}_{\tilde{r}(e)}([0,1]),
\forall e\in\triangle_{1}(T)
\label{H_div_edge_minus_curved}
\end{equation}
In the above, $\hat{\mathbf{x}} = \hat{\mathbf{x}}(\mathbf{x})$ signifies the
inverse of the element map.
\end{definition}

\begin{theorem}
\label{H_div_transform_minus_curved}
The operator 
$\Pi_{\tilde{r}+1,T}^{1,-}:H^{1}(T;\mathbb{R}^{2})
\longrightarrow \mathcal{P}_{\tilde{r}+1}^{-}\Lambda^{1}(T)$ 
is well-defined, and we have the following result.

$$
\text{div}\Pi_{\tilde{r}+1,T}^{1,-}\omega=\Pi_{\tilde{r},T}^{2}\text{div}\omega\quad
\forall \omega\in H^{1}(T;\mathbb{R}^{2}).
$$ 

There exist $\delta>0$ and $C>0$ such that,
for $h\leq\delta$, $T\in\mathcal{T}_{h}$, 
and $\omega\in H^{1}(T;\mathbb{R}^{2})$, 
$$
\Vert\Pi_{\tilde{r}+1,T}^{1,-}\omega\Vert_{L^{2}(T)}\leq C\Vert\omega\Vert_{H^{1}(T)} 
$$ 
\end{theorem}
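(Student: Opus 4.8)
The plan is to transport the entire construction to the reference triangle $\hat{T}$ by the contravariant Piola map and to reduce the three claims to properties already established for the reference operator $\Pi_{\tilde{r}+1,\hat{T},t_{\tilde{r}(\hat{T})}}^{1,-}$. Given $\omega\in H^{1}(T;\mathbb{R}^{2})$, I would define $\hat{\omega}$ by $\omega(G_{T}(\hat{\mathbf{x}}))=\det(DG_{T}(\hat{\mathbf{x}}))^{-1}DG_{T}(\hat{\mathbf{x}})\hat{\omega}(\hat{\mathbf{x}})$. By Definition~\ref{FE_space_curved_triangle} this is a linear isomorphism from $\mathcal{P}_{\tilde{r}+1}^{-}\Lambda^{1}(\hat{T})$ onto $\mathcal{P}_{\tilde{r}+1}^{-}\Lambda^{1}(T)$. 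Using the Piola identities $\text{div}\,\omega=\det(DG_{T})^{-1}\text{div}_{\hat{\mathbf{x}}}\hat{\omega}$, $\omega^{\top}DG_{T}^{-\top}=\det(DG_{T})^{-1}\hat{\omega}^{\top}$, and $d\mathbf{x}=\det(DG_{T})\,d\hat{\mathbf{x}}$, together with the edge-normal identity of Lemma~\ref{piola_transform}, I would pull back the three defining relations. Every Jacobian factor cancels exactly: (\ref{interior_div_minus_curved}) becomes (\ref{interior_div_minus_reference}), (\ref{interior_aux_minus_curved}) becomes (\ref{interior_aux_minus_reference}) (the weight $DG_{T}^{-\top}$ is precisely what produces the cancellation), and (\ref{H_div_edge_minus_curved}) becomes (\ref{H_div_edge_minus_reference}). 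Hence $\Pi_{\tilde{r}+1,T}^{1,-}\omega$ is the Piola push-forward of $\Pi_{\tilde{r}+1,\hat{T},t_{\tilde{r}(\hat{T})}}^{1,-}\hat{\omega}$, and since $t_{\tilde{r}(\hat{T})}$ was chosen in Definition~\ref{parameter_t_order} so that the reference operator is well-defined (Lemma~\ref{PB_modifiled_well_defined}), well-definedness of $\Pi_{\tilde{r}+1,T}^{1,-}$ follows with no smallness hypothesis on $h$.

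For the commuting identity I would observe that $\text{div}\,\Pi_{\tilde{r}+1,T}^{1,-}\omega\in\mathcal{P}_{\tilde{r}}\Lambda^{2}(T)$, so it is enough to show $\Pi_{\tilde{r},T}^{2}\text{div}(\Pi_{\tilde{r}+1,T}^{1,-}\omega-\omega)=0$, that is, $\int_{T}\text{div}(\Pi_{\tilde{r}+1,T}^{1,-}\omega-\omega)\,\hat{\psi}(\hat{\mathbf{x}}(\mathbf{x}))\,d\mathbf{x}=0$ for all $\hat{\psi}\in\mathcal{P}_{\tilde{r}(T)}(\hat{T})$. For $\hat{\psi}$ in $\mathcal{P}_{\tilde{r}(T)}(\hat{T})/\mathbb{R}$ this is exactly (\ref{interior_div_minus_curved}); for the constant $\hat{\psi}\equiv 1$ the divergence theorem turns the integral into $\int_{\partial T}(\Pi_{\tilde{r}+1,T}^{1,-}\omega-\omega)\cdot\mathbf{n}\,ds$, which vanishes by (\ref{H_div_edge_minus_curved}) applied edge by edge with the (admissible) constant test function. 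This mirrors the argument used for the reference operator in Lemma~\ref{PB_modifiled_well_defined} and again needs no restriction on $h$.

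The boundedness estimate is the main obstacle and the only place where $h\leq\delta$ enters. I would pull $\Vert\Pi_{\tilde{r}+1,T}^{1,-}\omega\Vert_{L^{2}(T)}^{2}$ back to $\hat{T}$, producing the integral of $\vert\det(DG_{T})^{-1}DG_{T}\,\Pi_{\tilde{r}+1,\hat{T},t}^{1,-}\hat{\omega}\vert^{2}\det(DG_{T})$ over $\hat{T}$. Since the polynomial order is uniformly bounded, $\Pi_{\tilde{r}+1,\hat{T},t}^{1,-}$ is one of finitely many fixed operators and is bounded, $\Vert\Pi_{\tilde{r}+1,\hat{T},t}^{1,-}\hat{\omega}\Vert_{L^{2}(\hat{T})}\leq C\Vert\hat{\omega}\Vert_{H^{1}(\hat{T})}$. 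The hard part will be to bound $\Vert\hat{\omega}\Vert_{H^{1}(\hat{T})}$ by $\Vert\omega\Vert_{H^{1}(T)}$ with a constant independent of $T$ and $h$: differentiating the pull-back $\hat{\omega}=\det(DG_{T})DG_{T}^{-1}(\omega\circ G_{T})$ generates derivatives of $DG_{T}$, hence second derivatives of $G_{T}$ controlled by the $C^{1,1}$ assumption, along with factors of $DG_{T}$, $DG_{T}^{-1}$, and $\det(DG_{T})$.

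To control these I would write $DG_{T}=B_{T}+D\Phi_{T}$ and factor out the affine scaling: the purely affine contribution is handled by the bounded aspect ratio $\tilde{h}_{T}/\tilde{\rho}_{T}\leq\sigma$ (Lemma~\ref{regular_equivalent}), exactly as in the affine Theorem~\ref{H_div_transform_minus}, while the curvature-generated terms are shown to be uniformly small once $c_{h}$ is small, using Lemmas~\ref{geometry_property1}, \ref{geometry_property2}, and \ref{geometry_property3} to dominate the quantities $DG_{T}B_{T}^{-1}$, $(DG_{T})^{-1}B_{T}$, and $\det(DG_{T}B_{T}^{-1})$ by their affine limits. Combining the controlled scaling of the affine norm factors with $c_{h}\to 0$ then yields the uniform bound for $h\leq\delta$. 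The crux is precisely this control of the curvature terms, which explains why boundedness, unlike well-definedness and the commuting identity, is only asymptotic in $h$.
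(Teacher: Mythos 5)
Your treatment of well-definedness and of the commuting identity is sound and is essentially the paper's route: the paper's proof consists of the remark that the argument is analogous to Lemmas~\ref{H_div_transform_curved} and~\ref{commuting_diagram1_curved}, and your pull-back computation (the exact cancellation of Jacobian factors reducing (\ref{interior_div_minus_curved})--(\ref{H_div_edge_minus_curved}) to (\ref{interior_div_minus_reference})--(\ref{H_div_edge_minus_reference}), hence well-definedness from Lemma~\ref{PB_modifiled_well_defined} with the $t_{\tilde{r}(\hat T)}$ of Definition~\ref{parameter_t_order}), together with the divergence-theorem argument handling the constant test function, is exactly what those proofs do.

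The gap is in the boundedness estimate. You reduce it to $\Vert\hat{\omega}\Vert_{H^{1}(\hat T)}\leq C\Vert\omega\Vert_{H^{1}(T)}$ and propose to control the curvature contributions with Lemmas~\ref{geometry_property1}, \ref{geometry_property2} and~\ref{geometry_property3}. But differentiating $\hat{\omega}=\det(DG_{T})\,(DG_{T})^{-1}(\omega\circ G_{T})$ produces $D^{2}G_{T}=D^{2}\Phi_{T}$, and none of the quantities controlled by the mesh-regularity hypotheses ($\tilde{h}_{T}/\tilde{\rho}_{T}\leq\sigma$ and $c_{h}=\sup(\sup\Vert D\Phi_{T}\Vert)\Vert B_{T}^{-1}\Vert\to 0$) or by those three lemmas involve second derivatives of the element maps; the elementwise $C^{1,1}$ assumption gives no uniform-in-$h$ bound on $D^{2}\Phi_{T}$ either. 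So the step ``the curvature-generated terms are uniformly small once $c_{h}$ is small'' cannot be completed from the cited results. The paper, in the proof of the analogous Lemma~\ref{H_div_inequality1_curved}, avoids the full $H^{1}(\hat T)$ norm altogether: the defining functionals of the reference operator (interior divergence moments, interior moments against the fixed fields $\hat{\mathbf{h}}_{i}$, and edge normal-trace moments) are continuous with respect to the weaker norm $\Vert\hat{\omega}\Vert_{H(\text{div},\hat T)}+\Vert\hat{\omega}\Vert_{L^{2}(\partial\hat T)}$, and Lemma~\ref{H_div_transform_inequality2} bounds precisely that norm of the Piola pull-back by $\Vert\omega\Vert_{H^{1}(T)}$ using only $DG_{T}$, i.e.\ only first-derivative geometry. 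Replacing your intermediate $H^{1}(\hat T)$ norm by this $H(\hat T)$-type norm, and invoking Lemmas~\ref{H_div_transform_inequality1} and~\ref{H_div_transform_inequality2} for the two changes of variables, closes the gap and produces the constant valid for $h\leq\delta$.
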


\begin{proof}
The proof is analogous to that of Lemma~\ref{H_div_transform_curved}, Lemma~\ref{commuting_diagram1_curved}, 
and Lemma~\ref{H_div_inequality1_curved}.
\end{proof}

\begin{definition}
For $t_{\tilde{r}(\hat{T})}$ given in Definition~\ref{parameter_t_order}, and for any $T\in\mathcal{T}_{h}$, 
we define a linear operator $W_{T}
:H^{1}(T;\mathbb{R}^{2})\longrightarrow \mathcal{P}_{\tilde{r}+2}\Lambda^{0}(T;\mathbb{R}^{2})$
by the following relations
\begin{equation}
\int_{T}\text{div}(W_{T}\omega-\omega)(\mathbf{x})\hat{\psi}(\hat{\mathbf{x}}(\mathbf{x}))d\mathbf{x}=0
\quad 
\forall \hat{\psi}
\in\mathcal{P}_{\tilde{r}(T)}(\hat{T})/\mathbb{R}
\label{W_interior_div_curved}
\end{equation}
\begin{equation}
\int_{T}(W_{T}\omega(\mathbf{x})-\omega(\mathbf{x}))^{\top}
DG_{T}(\hat{\mathbf{x}}(\mathbf{x}))^{-\top}
\hat{\mathbf{h}}_{i}(\hat{\mathbf{x}}(\mathbf{x}),t_{\tilde{r}(\hat{T})}) d\mathbf{x}=0\quad
1\leq i\leq k_{\tilde{r}}
\label{W_interior_aux_curved}
\end{equation}
\begin{equation}
\int_{[0,1]}[(W_{T,t}\omega-\omega)(\mathbf{x}_{e}(s))
\cdot\mathbf{n}(\mathbf{x}_{e}(s))]\hat{\eta}(s)\Vert \dot{\mathbf{x}_{e}}(s)\Vert ds = 0\quad 
\forall \hat{\eta}\in\mathcal{P}_{\tilde{r}(e)}([0,1]),\: 
\forall e\in\triangle_{1}(T)
\label{W_edge_normal_curved}
\end{equation}
\begin{equation}
\int_{[0,1]}[(W_{T,t}\omega-\omega)(\mathbf{x}_{e}(s))
\cdot\mathbf{t}(\mathbf{x}_{e}(s))]\hat{\eta}(s)\Vert \dot{\mathbf{x}_{e}}(s)\Vert ds = 0\quad
\forall \hat{\eta}\in\mathcal{P}_{\tilde{r}(e)}([0,1]),\: \forall e\in\triangle_{1}(T)
\label{W_edge_tangent_curved}
\end{equation}
\begin{equation}
W_{T,t}\omega = 0 \text{ at all vertices of }T
\label{W_vertex_curved}
\end{equation}
Here $\mathbf{n},\mathbf{t}$ denote the normal and tangent unit vectors along $\partial T$.
\end{definition}

\begin{theorem}
\label{W_inequality_curved}
There exist $\delta>0$ and $C>0$ such that, for any $h<\delta$, $T\in\mathcal{T}_{h}$, 
operator $W_{T}:H^{1}(T;\mathbb{R}^{2})\longrightarrow 
\mathcal{P}_{\tilde{r}+2}\Lambda^{0}(T;\mathbb{R}^{2})$
is well-defined and,
\begin{equation*}
\Vert \text{curl}W_{T}\omega\Vert_{L^{2}(T)} \leq C (\tilde{h}_{T}^{-1}\Vert\omega\Vert_{L^{2}(T)} 
+ \Vert \omega \Vert_{H^{1}(T)})\quad
\forall \omega\in H^{1}(T;\mathbb{R}^{2})
\end{equation*}
For affine meshes, $W_{T}$ is well-defined and the 
above inequality holds for any $h>0$.
\end{theorem}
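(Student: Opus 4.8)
The plan is to transport the entire construction back to the reference triangle $\hat{T}$ by the affine Piola transform $A_T$ used already in the proof of Theorem~\ref{W_inequality}, namely $(A_T\hat{v})(G_T(\hat{\mathbf{x}})) = \det(B_T)^{-1}B_T\hat{v}(\hat{\mathbf{x}})$, applied simultaneously to $\omega$ and to $W_T\omega$, and then to compare the resulting relations with those defining the reference operator $C_{t_{\tilde r(\hat T)}}$ of Definition~\ref{C_W_operator_reference}. Since $B_T$ is constant and the spaces $\mathcal{P}_{\tilde r+2}\Lambda^0$ are defined by composition, $\det(B_T)^{-1}B_T\hat v$ is a polynomial iff $\hat v$ is, so $A_T$ remains a linear isomorphism of $\mathcal{P}_{\tilde r+2}\Lambda^0(\hat T;\mathbb R^2)$ onto $\mathcal{P}_{\tilde r+2}\Lambda^0(T;\mathbb R^2)$, and writing $W_T\omega=A_T\hat W$, $\omega=A_T\hat\omega$ is legitimate. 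Changing variables $\mathbf{x}=G_T(\hat{\mathbf{x}})$ in each of (\ref{W_interior_div_curved})--(\ref{W_vertex_curved}) and using the Piola identities, I would exhibit every relation as \emph{the corresponding reference relation for} $C_{t_{\tilde r(\hat T)}}$ plus an error term whose coefficients are built from $DG_T^{-1}B_T-I$, from $\det(DG_TB_T^{-1})-1$, and from entries of the form $(B_T)_{ij}(D\Phi_T)_{kl}/\det B_T$. The affine Piola is the correct choice precisely because the factor $DG_T^{-\top}\hat{\mathbf h}_i$ in (\ref{W_interior_aux_curved}) combines with $B_T$ into $(DG_T^{-1}B_T)^\top\hat{\mathbf h}_i$, whose leading part is $\hat{\mathbf h}_i$, reproducing (\ref{C_interior_aux}).

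The central step is well-definedness. Mirroring Lemma~\ref{C_well_defined}, the vertex conditions (\ref{W_vertex_curved}) together with the normal and tangential edge conditions (\ref{W_edge_normal_curved}), (\ref{W_edge_tangent_curved}) should force the pulled-back homogeneous solution into the bubble space $\mathring{\mathcal{P}}_{\tilde r+2}\Lambda^0(\hat T;\mathbb R^2)$; here the affine argument that $\hat{\mathbf n}$ and $B_T^\top B_T\hat{\mathbf t}$ are linearly independent is replaced by its edge-dependent, curved version, which survives for $h$ small because the curved normal, tangent and arclength factors converge uniformly to their affine counterparts. Restricted to that bubble space, the interior conditions (\ref{W_interior_div_curved}), (\ref{W_interior_aux_curved}) give, after pull-back and (for the divergence relation) integration by parts exactly as in Lemma~\ref{C_well_defined}, a matrix $\mathring C_T(h)$ that differs from the nonsingular matrix $\mathring C(t_{\tilde r(\hat T)},\tilde r(\hat T))$ of Lemma~\ref{C_well_defined} only by the error terms above. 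Invoking Lemmas~\ref{geometry_property1}, \ref{geometry_property2} and \ref{geometry_property3}, these errors tend to $0$ uniformly over $T\in\mathcal T_h$ as $h\to0$. Because the order is bounded, only finitely many configurations $\tilde r$ occur, so $\inf\abs{\det\mathring C(t_{\tilde r(\hat T)},\tilde r(\hat T))}>0$; hence a single $\delta>0$ makes $\det\mathring C_T(h)\neq0$ for all $h<\delta$ and all $T$, which is exactly the asserted well-definedness.

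For the norm estimate I would rerun the perturbation argument quantitatively: for $h<\delta$ the matrices $\mathring C_T(h)$ have inverses bounded uniformly in $T$, so the reference solution satisfies $\Vert\hat W\Vert_{H^1(\hat T)}\le C\Vert\hat\omega\Vert_{H^1(\hat T)}$ with $C$ independent of $T$; pushing this forward by $A_T$, controlling $\Vert B_T\Vert$ and $\Vert B_T^{-1}\Vert$ through the shape-regularity constant and $\tilde h_T,\tilde\rho_T$ via Lemma~\ref{regular_equivalent}, standard scaling yields $\Vert\text{curl}\,W_T\omega\Vert_{L^2(T)}\le C(\tilde h_T^{-1}\Vert\omega\Vert_{L^2(T)}+\Vert\omega\Vert_{H^1(T)})$. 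In the affine case $DG_T\equiv B_T$ annihilates every error term, so $\mathring C_T(h)=\mathring C(t_{\tilde r(\hat T)},\tilde r(\hat T))$ identically and the conclusion holds for all $h$, recovering Theorem~\ref{W_inequality}. I expect the main obstacle to be \emph{uniformity}: the convergence of $\mathring C_T(h)$ and of the curved edge pairings must be controlled simultaneously across the whole family $(\mathcal T_h)_h$, and it is exactly here that Lemmas~\ref{geometry_property1}, \ref{geometry_property2}, \ref{geometry_property3} are indispensable.
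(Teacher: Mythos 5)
Your proposal is correct and follows essentially the same route as the paper's proof (given in Appendix C): pull $W_T$ back through the affine Piola map $A_T$ to an operator $E_T$ on $\hat{T}$, use Lemmas~\ref{geometry_property1}, \ref{geometry_property2} and \ref{geometry_property3} to show that the resulting linear system converges uniformly over $T\in\mathcal{T}_h$ (and over the finitely many admissible orders $\tilde r$) to the nonsingular reference system defining $C_{t_{\tilde r(\hat T)}}$, and then transfer the uniform coefficient bound back by scaling, with the affine case recovering Theorem~\ref{W_inequality} exactly. The one step you describe only loosely is the edge pairing: the pulled-back tangential condition is \emph{not} by itself a small perturbation of (\ref{C_edge_tangent}), and the paper must first form the explicit combination $b\,[(\ref{W_edge_tangent_reference})-a\,(\ref{W_edge_normal_reference})]$ so that the resulting pairing vector $F_T(s)$ converges uniformly to $\hat{\mathbf t}$ --- this is the quantitative, curved analogue of the ``$\hat{\mathbf n}$ and $B_T^{\top}B_T\hat{\mathbf t}$ are not parallel'' argument you allude to, and it is what makes the full matrix $E(T,\tilde r)$ a genuine perturbation of $C(\tilde r)$.
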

\begin{proof}
Please see Appendix~\ref{app_proofs}.
\end{proof}

\subsection{Projection operators on the whole curvilinear meshes}

\begin{definition}
\label{projections_whole_curved}
Similar to Definition~\ref{projections_whole}, We define the following global interpolation operators,
$$
\Pi_{\tilde{r},\mathcal{T}_{h}}^{2}:L^{2}(\Omega)\longrightarrow \mathcal{P}_{\tilde{r}}\Lambda^{2}(\mathcal{T}_{h}),\quad 
(\Pi_{\tilde{r},\mathcal{T}_{h}}^{2}u)|_{T} = \Pi_{\tilde{r},T}^{2}(u|_{T})
$$ 
$$
\tilde{\Pi}_{\tilde{r},\mathcal{T}_{h}}^{2}:L^{2}(\Omega;\mathbb{R}^{2})\longrightarrow \mathcal{P}_{\tilde{r}}
\Lambda^{2}(\mathcal{T}_{h};\mathbb{R}^{2}),\quad 
(\tilde{\Pi}_{\tilde{r},\mathcal{T}_{h}}^{2}(u_{1},u_{2})^{\top})|_{T} = (\Pi_{\tilde{r},T}^{2}(u_{1}|_{T}),
\Pi_{\tilde{r},T}^{2}(u_{2}|_{T}))^{\top}
$$ 
$$
\Pi_{\tilde{r}+1,\mathcal{T}_{h}}^{1,-}:H^{1}(\Omega;\mathbb{R}^{2})\longrightarrow \mathcal{P}_{\tilde{r}+1}^{-1}
\Lambda^{1}(\mathcal{T}_{h}),\quad
(\Pi_{\tilde{r}+1,\mathcal{T}_{h}}^{1,-}\omega)|_{T} = \Pi_{\tilde{r}+1,T}^{1,-}(\omega|_{T})
$$
$$
\tilde{\Pi}_{\tilde{r}+1,\mathcal{T}_{h}}^{1}:H^{1}(\Omega;\mathbb{M})\longrightarrow \mathcal{P}_{\tilde{r}+1}
\Lambda^{1}(\mathcal{T}_{h};\mathbb{R}^{2}),\quad
(\tilde{\Pi}_{\tilde{r}+1,\mathcal{T}_{h}}^{1}\sigma)|_{T} = \left[\begin{array}{cc}\tau_{11} & \tau_{12}\\ 
\tau_{21} & \tau_{22}\end{array} \right]
$$
where 
$$
\left[ \begin{array}{c} \tau_{11} \\ \tau_{12}\end{array} 
\right]=\Pi_{\tilde{r}+1,\mathcal{T}_{h}}^{1}\left[ \begin{array}{c} 
\sigma_{11}|_{T} \\ \sigma_{12}|_{T} \end{array}\right],\quad
\left[ \begin{array}{c} \tau_{21} \\ \tau_{22}\end{array} \right]=
\Pi_{\tilde{r}+1,\mathcal{T}_{h}}^{1}\left[ \begin{array}{c} \sigma_{21}|_{T} 
\\ \sigma_{22}|_{T} \end{array}\right], \quad
\sigma=\left[ \begin{array}{cc}\sigma_{11} & \sigma_{12}\\\sigma_{21} & \sigma_{22}
\end{array}\right]
$$
$$
W_{\mathcal{T}_{h}}:H^{1}(\Omega;\mathbb{R}^{2})\longrightarrow \mathcal{P}_{\tilde{r}+2}
\Lambda^{0}(\mathcal{T}_{h};\mathbb{R}^{2}),\quad
(W_{\mathcal{T}_{h}}\omega)|_{T} = W_{T}(\omega|_{T})
$$
for all $T\in\mathcal{T}_{h}$.
\end{definition}

\begin{remark}
Since $(\mathcal{T}_{h})_{h}$ is $C^{0}$-compatible, operators $\Pi_{\tilde{r}+1,\mathcal{T}_{h}}^{1,-}$, 
$\tilde{\Pi}_{\tilde{r}+1,\mathcal{T}_{h}}^{1}$ and $W_{\mathcal{T}_{h}}$ are well-defined.
\end{remark}

\begin{theorem}
\label{nonstandard_L2_norm}
For any $\varepsilon>0$, there exists $\delta>0$ such that, for any $h<\delta$, 
\begin{equation*}
\Vert\Pi_{\tilde{r},\mathcal{T}_{h}}^{2}u-P_{\tilde{r},\mathcal{T}_{h}}u\Vert_{L^{2}(\Omega)} \leq\varepsilon
\Vert u\Vert_{L^{2}(\Omega)}\quad
\forall u\in L^{2}(\Omega)
\end{equation*}
Here $P_{\tilde{r},\mathcal{T}_{h}}$ is the standard $L^2$-projection onto 
$\mathcal{P}_{\tilde{r}}\Lambda^{2}(\mathcal{T}_{h})$.
\end{theorem}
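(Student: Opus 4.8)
The plan is to reduce the global estimate to the already-proven local estimate of Lemma~\ref{L2_non_standard_projection_inequality} by exploiting that both operators act element by element. First I would observe that the space $\mathcal{P}_{\tilde{r}}\Lambda^{2}(\mathcal{T}_{h})$ carries no inter-element continuity constraint: by Definition~\ref{FE_space_curved_triangulation} it consists of all $u\in L^{2}(\Omega_{h})$ with $u|_{T}\in\mathcal{P}_{\tilde{r}}\Lambda^{2}(T)$ for every $T$. Consequently the standard $L^{2}$-projection $P_{\tilde{r},\mathcal{T}_{h}}$ onto this discontinuous space decouples across elements, i.e. $(P_{\tilde{r},\mathcal{T}_{h}}u)|_{T}=P_{\tilde{r},T}(u|_{T})$ for each $T\in\mathcal{T}_{h}$, where $P_{\tilde{r},T}$ is the local standard $L^{2}$-projection. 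By Definition~\ref{projections_whole_curved} the weighted projection is local as well, $(\Pi_{\tilde{r},\mathcal{T}_{h}}^{2}u)|_{T}=\Pi_{\tilde{r},T}^{2}(u|_{T})$.

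Given these two localizations, I would split the global norm as a sum of element contributions:
\begin{equation*}
\Vert\Pi_{\tilde{r},\mathcal{T}_{h}}^{2}u-P_{\tilde{r},\mathcal{T}_{h}}u\Vert_{L^{2}(\Omega)}^{2}
=\sum_{T\in\mathcal{T}_{h}}\Vert\Pi_{\tilde{r},T}^{2}(u|_{T})-P_{\tilde{r},T}(u|_{T})\Vert_{L^{2}(T)}^{2}.
\end{equation*}
Fix $\varepsilon>0$. Lemma~\ref{L2_non_standard_projection_inequality} supplies a single $\delta>0$, depending only on $\varepsilon$ (and on the uniform bounds $\sup_{h}\sup_{T}\tilde{r}(T)<\infty$ and the mesh regularity constant $\sigma$), such that for every $h<\delta$ and every $T\in\mathcal{T}_{h}$ the local bound $\Vert\Pi_{\tilde{r},T}^{2}(u|_{T})-P_{\tilde{r},T}(u|_{T})\Vert_{L^{2}(T)}\leq\varepsilon\Vert u|_{T}\Vert_{L^{2}(T)}$ holds. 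Inserting this into the sum and using $\sum_{T}\Vert u|_{T}\Vert_{L^{2}(T)}^{2}=\Vert u\Vert_{L^{2}(\Omega)}^{2}$ gives $\Vert\Pi_{\tilde{r},\mathcal{T}_{h}}^{2}u-P_{\tilde{r},\mathcal{T}_{h}}u\Vert_{L^{2}(\Omega)}^{2}\leq\varepsilon^{2}\Vert u\Vert_{L^{2}(\Omega)}^{2}$, and taking square roots yields the claim.

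The only point requiring care --- and the step I expect to carry the real content --- is the uniformity of the threshold $\delta$ over all elements of all meshes in the family. This is exactly what Lemma~\ref{L2_non_standard_projection_inequality} already guarantees: its $\delta$ is not element-dependent, since the local estimate is obtained after pulling back to the fixed reference triangle $\hat{T}$ and the deviation of $\Pi_{\tilde{r},T}^{2}$ from $P_{\tilde{r},T}$ is controlled uniformly through the geometric quantities $c_{h}$ and $\tilde{h}_{T}/\tilde{\rho}_{T}$ that stay bounded across the regular family (cf. Lemmas~\ref{geometry_property1}--\ref{geometry_property3}). Hence the same $\delta$ may be used simultaneously for every $T$ in every $\mathcal{T}_{h}$ with $h<\delta$, and no argument beyond the elementwise summation is needed. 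I would also note in passing that the identity $\Pi_{\tilde{r},T}^{2}\Pi_{\tilde{r},T}^{2}=\Pi_{\tilde{r},T}^{2}$ and the analogous property for $P$ confirm both are genuine projections onto $\mathcal{P}_{\tilde{r}}\Lambda^{2}(\mathcal{T}_{h})$, so their difference is meaningful and the comparison is legitimate.
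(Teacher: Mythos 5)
Your proposal is correct and is precisely the elementwise-summation argument that the paper leaves implicit: the published proof simply states the theorem is ``an immediate result of Lemma~\ref{L2_non_standard_projection_inequality},'' and your observation that both projections localize over the discontinuous space $\mathcal{P}_{\tilde{r}}\Lambda^{2}(\mathcal{T}_{h})$, together with the uniformity of $\delta$ over all elements, is exactly what makes it immediate.
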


\begin{proof}
This is an immediate result of Lemma~\ref{L2_non_standard_projection_inequality}.
\end{proof}

\begin{theorem}
\label{H_div_whole_curved}
There exist $\delta>0$ and $C>0$ such that, for any $h<\delta$, we have
\begin{align*}
\Vert\tilde{\Pi}_{\tilde{r}+1,\mathcal{T}_{h}}^{1}\sigma\Vert_{L^{2}(\Omega)}\leq C\Vert\sigma\Vert_{H^{1}(\Omega)},\quad 
\Vert\Pi_{\tilde{r}+1,\mathcal{T}_{h}}^{1,-}\omega\Vert_{L^{2}(\Omega)}\leq C\Vert\omega\Vert_{H^{1}(\Omega)},
\end{align*}
for any $\sigma\in H^{1}(\Omega;\mathbb{M})$ and $\omega\in H^{1}(\Omega;\mathbb{R}^{2})$.
For affine meshes,
the inequalities above hold for any $h>0$.
Moreover,
$$
\text{div}\tilde{\Pi}_{\tilde{r}+1,\mathcal{T}_{h}}^{1}\sigma=\tilde{\Pi}_{\tilde{r},\mathcal{T}_{h}}^{2}\text{div}\sigma, \quad
\text{div}\Pi_{\tilde{r}+1,\mathcal{T}_{h}}^{1,-}\omega=\Pi_{\tilde{r},\mathcal{T}_{h}}^{2}\text{div}\omega
$$
\end{theorem}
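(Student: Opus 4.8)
The plan is to reduce the global statement to its elementwise counterparts and then glue, exactly as in the affine case (Theorem~\ref{H_div_whole}). All four global operators in Definition~\ref{projections_whole_curved} are defined by restriction to each $T\in\mathcal{T}_{h}$, and both the divergence and the $L^{2}$/$H^{1}$ norms decompose over the mesh. Hence neither the norm bounds nor the commuting relations require a genuinely new argument beyond the local results already established; the only point deserving attention is the uniformity of the constants across the family $(\mathcal{T}_{h})_{h}$.

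For the norm bounds I would argue as follows. Lemma~\ref{H_div_inequality1_curved} and Theorem~\ref{H_div_transform_minus_curved} each furnish a threshold $\delta>0$ and a constant $C>0$, \emph{independent of $T$}, such that for $h<\delta$
\[
\Vert\Pi_{\tilde{r}+1,T}^{1}(\omega|_{T})\Vert_{L^{2}(T)}\leq C\Vert\omega\Vert_{H^{1}(T)},\qquad
\Vert\Pi_{\tilde{r}+1,T}^{1,-}(\omega|_{T})\Vert_{L^{2}(T)}\leq C\Vert\omega\Vert_{H^{1}(T)}.
\]
Squaring and summing over $T\in\mathcal{T}_{h}$, and using that for $\omega\in H^{1}(\Omega;\mathbb{R}^{2})$ the broken norm $\sum_{T}\Vert\omega\Vert_{H^{1}(T)}^{2}$ coincides with $\Vert\omega\Vert_{H^{1}(\Omega)}^{2}$, yields the bound on $\Pi_{\tilde{r}+1,\mathcal{T}_{h}}^{1,-}$. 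The bound on $\tilde{\Pi}_{\tilde{r}+1,\mathcal{T}_{h}}^{1}$ then follows by applying the first inequality to each of the two rows $(\sigma_{11},\sigma_{12})^{\top}$ and $(\sigma_{21},\sigma_{22})^{\top}$ of $\sigma\in H^{1}(\Omega;\mathbb{M})$ and summing.

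The commuting relations are purely elementwise. Since $\text{div}$ is a differential operator, $(\text{div}\,\Pi_{\tilde{r}+1,\mathcal{T}_{h}}^{1,-}\omega)|_{T}=\text{div}\,\Pi_{\tilde{r}+1,T}^{1,-}(\omega|_{T})$, which by the commutativity asserted in Theorem~\ref{H_div_transform_minus_curved} equals $\Pi_{\tilde{r},T}^{2}(\text{div}\,\omega|_{T})=(\Pi_{\tilde{r},\mathcal{T}_{h}}^{2}\text{div}\,\omega)|_{T}$; gluing over $T$ gives the desired identity. The identity for $\tilde{\Pi}_{\tilde{r}+1,\mathcal{T}_{h}}^{1}$ is obtained the same way, invoking Lemma~\ref{commuting_diagram1_curved} row-wise. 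The hypothesis that $\mathcal{T}_{h}$ is $C^{0}$-compatible guarantees that the glued fields are conforming, so the global operators are well defined and land in the spaces of Definition~\ref{FE_space_curved_triangulation} (cf.\ the Remark following Definition~\ref{projections_whole_curved}).

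Finally, for affine meshes Lemma~\ref{H_div_inequality1_curved} and Theorem~\ref{H_div_transform_minus_curved} hold for every $h>0$, so the summed bounds and the commuting relations likewise hold for every $h>0$. I do not expect any serious obstacle in this theorem: all the difficulty has already been absorbed into the local estimates — in particular into Theorem~\ref{W_inequality_curved} and the convergence-to-$C_{t}$ argument underlying the well-definedness of the curvilinear operators — so that what remains here is a routine assembly, the sole care being the $T$-uniformity of $C$ and $\delta$, which is supplied by the regularity of the mesh family.
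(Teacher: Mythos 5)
Your proposal is correct and follows essentially the same route as the paper, whose proof is simply the one-line observation that the theorem is an immediate consequence of Lemma~\ref{H_div_inequality1_curved} and Theorem~\ref{H_div_transform_minus_curved}; you have merely written out the elementwise-to-global gluing (squaring and summing the $T$-uniform local bounds, and localizing the commuting identities) that the paper leaves implicit.
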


\begin{proof}
This is an immediate result of Lemma~\ref{H_div_inequality1_curved} and Theorem~\ref{H_div_transform_minus_curved}.
\end{proof}

\begin{definition}
Let $R_{h}$ denote the generalized Clement interpolant operator from Theorem 5.1 in
 \cite{CB:1989:OIC}, 
mapping $H^{1}(\Omega;\mathbb{R}^{2})$ into 
$\mathcal{P}_{1}\Lambda^{0}(\mathcal{T}_{h};\mathbb{R}^{2})$. 
We define 
$$
\tilde{W}_{h}=W_{h}(I-R_{h})+R_{h}
$$
\end{definition}

\begin{theorem}
\label{bounded_W_curved}
There exist $\delta>0$ and $C>0$ such that, for any $h<\delta$,
$$
\Vert\text{curl}\tilde{W}_{h}\omega\Vert_{L^{2}(\Omega)}
\leq C\Vert\omega\Vert_{H^{1}(\Omega)}\quad
\forall \omega\in H^{1}(\Omega;\mathbb{R}^{2})
$$
For affine meshes, the inequality holds for any $h>0$.
Operator $\tilde{W}_{h}$ maps $H^{1}(\Omega;\mathbb{R}^{2})$ 
into $\mathcal{P}_{\tilde{r}+2}\Lambda^{0}(T;\mathbb{R}^{2})$ and satisfies the
condition
$$
\Pi_{\tilde{r}+1,\mathcal{T}_{h}}^{1,-}\omega 
= \Pi_{\tilde{r}+1,\mathcal{T}_{h}}^{1,-}\tilde{W}_{h}\omega\quad
\forall \omega\in H^{1}(\Omega;\mathbb{R}^{2})
$$
\end{theorem}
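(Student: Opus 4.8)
The plan is to mirror the affine argument of Theorem~\ref{bounded_W}, substituting the curvilinear curl--bound Theorem~\ref{W_inequality_curved} for Theorem~\ref{W_inequality} and tracking carefully the threshold $h<\delta$ below which $W_T$ (hence $W_{\mathcal{T}_h}$) is well defined. First I would fix $\delta$ small enough that, for every $h<\delta$, the operator $W_T$ is well defined on each $T\in\mathcal{T}_h$ and obeys the bound of Theorem~\ref{W_inequality_curved}; this is precisely the range in which $\tilde{W}_h$ makes sense. I would take $R_h$ from the generalized (curved) Clément operator of Theorem~5.1 in \cite{CB:1989:OIC}, so that $R_h$ maps $H^1(\Omega;\mathbb{R}^2)$ into $\mathcal{P}_{1}\Lambda^{0}(\mathcal{T}_h;\mathbb{R}^2)\subset\mathcal{P}_{\tilde{r}+2}\Lambda^{0}(\mathcal{T}_h;\mathbb{R}^2)$ (the inclusion survives on curved elements because the $\Lambda^0$ pull back is mere composition and $\mathcal{P}_{1}(\hat{T})\subset\mathcal{P}_{\tilde{r}+2}(\hat{T})$), and enjoys the local estimate
\begin{equation*}
\|\omega-R_h\omega\|_{L^{2}(T)}+h_T\|\omega-R_h\omega\|_{H^{1}(T)}\le c\,h_T\|\omega\|_{H^{1}(K_T)},
\end{equation*}
with $K_T$ the patch of elements meeting $T$. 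Since $R_h\omega\in\mathcal{P}_{1}\Lambda^{0}\subset H^1$, the field $(I-R_h)\omega$ stays in $H^1(\Omega;\mathbb{R}^2)$, so $W_{\mathcal{T}_h}$ may legitimately be applied to it and $\tilde{W}_h$ indeed maps into $\mathcal{P}_{\tilde{r}+2}\Lambda^{0}(\mathcal{T}_h;\mathbb{R}^2)$.

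For the norm bound I would split $\mathrm{curl}\,\tilde{W}_h\omega=\mathrm{curl}\,W_{\mathcal{T}_h}(I-R_h)\omega+\mathrm{curl}\,R_h\omega$. On each $T$, Theorem~\ref{W_inequality_curved} gives $\|\mathrm{curl}\,W_T\psi\|_{L^{2}(T)}\le C(\tilde{h}_T^{-1}\|\psi\|_{L^{2}(T)}+\|\psi\|_{H^{1}(T)})$; applying this with $\psi=(I-R_h)\omega$, using that $\tilde{h}_T$ and $h_T$ are uniformly comparable (a consequence of the regularity hypotheses, which with $c_h\to0$ force $T$ to be close to $\tilde{T}$ at the element scale), and then the Clément estimate, the factors $\tilde{h}_T^{-1}$ and $h_T$ cancel to leave $\|\omega\|_{H^{1}(K_T)}$. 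Summing over $T$ and invoking the finite--overlap bound $\sup_h\sup_{T}\#\{T':T'\subset K_T\}<\infty$ controls $\|\mathrm{curl}\,W_{\mathcal{T}_h}(I-R_h)\omega\|_{L^{2}(\Omega)}$ by $C\|\omega\|_{H^{1}(\Omega)}$; the term $\mathrm{curl}\,R_h\omega$ is handled directly by the $H^1$--stability of $R_h$. This yields the bound for $h<\delta$, and for affine meshes the absence of a threshold in Theorem~\ref{W_inequality_curved} gives it for all $h>0$.

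For the commutation identity I would argue element by element. The functionals that define $\Pi_{\tilde{r}+1,T}^{1,-}$, namely (\ref{interior_div_minus_curved}), (\ref{interior_aux_minus_curved}) and the normal--edge condition (\ref{H_div_edge_minus_curved}), occur verbatim among the defining relations (\ref{W_interior_div_curved})--(\ref{W_edge_normal_curved}) of $W_T$. Hence $W_T\omega-\omega$ annihilates every functional determining $\Pi_{\tilde{r}+1,T}^{1,-}$, and by the unisolvence underlying Theorem~\ref{H_div_transform_minus_curved} this forces $\Pi_{\tilde{r}+1,T}^{1,-}(W_T\omega-\omega)=0$; equivalently $\Pi_{\tilde{r}+1,\mathcal{T}_h}^{1,-}(I-W_{\mathcal{T}_h})\psi=0$ for every $\psi\in H^1(\Omega;\mathbb{R}^2)$. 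Writing $(I-\tilde{W}_h)\omega=(I-W_{\mathcal{T}_h})(I-R_h)\omega$ and taking $\psi=(I-R_h)\omega\in H^1$ gives $\Pi_{\tilde{r}+1,\mathcal{T}_h}^{1,-}\omega=\Pi_{\tilde{r}+1,\mathcal{T}_h}^{1,-}\tilde{W}_h\omega$.

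I expect the only genuinely new obstacle, relative to the affine Theorem~\ref{bounded_W}, to be the bookkeeping forced by the curvilinear geometry: establishing the uniform equivalence $\tilde{h}_T\sim h_T$ so that the scale factors from Theorem~\ref{W_inequality_curved} and from the Clément estimate match, and confirming that the threshold $\delta$ guaranteeing well--definedness of $W_T$ can be taken uniform over the family. Both rest on the regularity assumptions and on Lemmas~\ref{geometry_property1}--\ref{geometry_property3}; once they are in hand, the remaining structure is identical to the affine proof.
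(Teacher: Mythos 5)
Your proposal is correct and follows essentially the same route as the paper: the paper's proof simply constructs $R_{h}$ from the curved-element Clément operator of \cite{CB:1989:OIC}, notes $\mathcal{P}_{1}\Lambda^{0}(\mathcal{T}_{h};\mathbb{R}^{2})\subset\mathcal{P}_{\tilde{r}+2}\Lambda^{0}(\mathcal{T}_{h};\mathbb{R}^{2})$, and then repeats verbatim the affine argument of Theorem~\ref{bounded_W}, which is exactly what you carry out (splitting $\text{curl}\,\tilde{W}_{h}\omega$, invoking Theorem~\ref{W_inequality_curved}, and deducing the commutation from the fact that the functionals defining $\Pi_{\tilde{r}+1,T}^{1,-}$ are a subset of those defining $W_{T}$). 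You are in fact slightly more careful than the paper, which leaves implicit both the uniform comparability $\tilde{h}_{T}\sim h_{T}$ needed to cancel the scale factors and the identity $(I-\tilde{W}_{h})=(I-W_{h})(I-R_{h})$.
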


\begin{proof}
We utilize Example~2 from \cite{CB:1989:OIC} with uniform order equal $1$ to construct 
operator $R_{h}$.
Since $(\mathcal{T}_{h})_{h}$ is $C^{0}$-compatible,
 and $c_{h}\rightarrow 0$ as $h\rightarrow 0$, 
operator $R_{h}$ maps $H^{1}(\Omega;\mathbb{R}^{2})$ 
into $\mathcal{P}_{1}\Lambda^{0}(T;\mathbb{R}^{2})
\subset\mathcal{P}_{\tilde{r}+2}\Lambda^{0}(T;\mathbb{R}^{2})$. 
The the proof will be the same as that of Theorem~\ref{bounded_W}.
\end{proof}

\section{Asymptotic stability of the finite element discretization on curvilinear meshes}

\begin{lemma}
\label{asymptotic_S2}
There exist $\delta>0$ and $c>0$ such that, for any $h<\delta$ and any $(\omega,\mu)\in 
\mathcal{P}_{\tilde{r}}\Lambda^{2}(\mathcal{T}_{h})\times 
\mathcal{P}_{\tilde{r}}\Lambda^{2}(\mathcal{T}_{h};\mathbb{R}^{2})$, there exists 
$\sigma\in\mathcal{P}_{\tilde{r}+1}\Lambda^{1}(\mathcal{T}_{h};\mathbb{R}^{2})$ 
such that 
$$
\text{div}\sigma =\mu, \quad -\Pi_{\tilde{r},\mathcal{T}_{h}}^{2}S_{1}\sigma
=\omega
$$
and 
$$
\Vert\sigma\Vert_{H(\text{div},\Omega)}\leq c(\Vert\mu\Vert_{L^{2}(\Omega)}
+\Vert\omega\Vert_{L^{2}(\Omega)})
$$
Here, the constant $c$ depends on 
$\sup_{h}\sup_{T\in\mathcal{T}_{h}}\tilde{r}(T)$. 
For affine meshes, the inequality above holds for any $h>0$.
\end{lemma}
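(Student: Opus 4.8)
The plan is to mirror, step for step, the seven-part construction used in the proof of Lemma~\ref{affine_S2}, replacing each affine operator estimate by its asymptotic curvilinear counterpart. All of the operator-level ingredients we need — the commuting property and $L^2$-boundedness of $\tilde{\Pi}^1_{\tilde{r}+1,\mathcal{T}_h}$ and $\Pi^{1,-}_{\tilde{r}+1,\mathcal{T}_h}$ (Theorem~\ref{H_div_whole_curved}), the commuting relation $\text{div}\,\Pi^{1,-}_{\tilde{r}+1,\mathcal{T}_h}=\Pi^2_{\tilde{r},\mathcal{T}_h}\,\text{div}$ (Theorem~\ref{H_div_transform_minus_curved}), and the bound $\Vert\text{curl}\,\tilde{W}_h\omega\Vert_{L^2(\Omega)}\leq C\Vert\omega\Vert_{H^1(\Omega)}$ together with the identity $\Pi^{1,-}_{\tilde{r}+1,\mathcal{T}_h}\omega=\Pi^{1,-}_{\tilde{r}+1,\mathcal{T}_h}\tilde{W}_h\omega$ (Theorem~\ref{bounded_W_curved}) — hold for every $h$ below a threshold. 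First I would fix $\delta>0$ to be the minimum of the finitely many thresholds appearing in those theorems, so that throughout the remainder of the argument every cited curvilinear result is available with a single constant uniform in $h<\delta$ and in $T\in\mathcal{T}_h$.

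With $\delta$ fixed, the construction is verbatim the affine one. Applying Lemma~\ref{PDE_lemma} row by row, I produce $\eta\in H^1(\Omega;\mathbb{M})$ with $\text{div}\,\eta=\mu$ and $\Vert\eta\Vert_{H^1(\Omega)}\leq c\Vert\mu\Vert_{L^2(\Omega)}$; since $\omega+\Pi^2_{\tilde{r},\mathcal{T}_h}S_1\tilde{\Pi}^1_{\tilde{r}+1,\mathcal{T}_h}\eta\in L^2(\Omega)$ is scalar, a second use of Lemma~\ref{PDE_lemma} yields $\tau\in H^1(\Omega;\mathbb{R}^2)$ with $\text{div}\,\tau=\omega+\Pi^2_{\tilde{r},\mathcal{T}_h}S_1\tilde{\Pi}^1_{\tilde{r}+1,\mathcal{T}_h}\eta$ and a matching norm bound. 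I then set $\sigma=\text{curl}\,\tilde{W}_h\tau+\tilde{\Pi}^1_{\tilde{r}+1,\mathcal{T}_h}\eta$, which lies in $\mathcal{P}_{\tilde{r}+1}\Lambda^1(\mathcal{T}_h;\mathbb{R}^2)$ by Lemma~\ref{embedding_curved_triangulation}. Because $\text{curl}\,\tilde{W}_h\tau$ is row-wise divergence-free, the identity $\text{div}\,\sigma=\text{div}\,\tilde{\Pi}^1_{\tilde{r}+1,\mathcal{T}_h}\eta=\tilde{\Pi}^2_{\tilde{r},\mathcal{T}_h}\text{div}\,\eta=\tilde{\Pi}^2_{\tilde{r},\mathcal{T}_h}\mu=\mu$ follows from Theorem~\ref{H_div_whole_curved} together with the fact that $\mu$ already lies in the target space. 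Here the required idempotency of the curvilinear $\Pi^2_{\tilde{r},\mathcal{T}_h}$ is inherited from the standard reference-element $L^2$-projection through the area-scaling conjugacy of Lemma~\ref{L2_transform}.

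The verification $-\Pi^2_{\tilde{r},\mathcal{T}_h}S_1\sigma=\omega$ rests on the purely algebraic pointwise identity $\text{div}\,\nu=-S_1\,\text{curl}\,\nu$ for $\nu\in H^1(\Omega;\mathbb{R}^2)$, which is mesh-independent and hence survives unchanged on curvilinear meshes. Combined with Theorem~\ref{H_div_transform_minus_curved} and idempotency it gives $-\Pi^2_{\tilde{r},\mathcal{T}_h}S_1\,\text{curl}\,\nu=\Pi^2_{\tilde{r},\mathcal{T}_h}\,\text{div}\,\Pi^{1,-}_{\tilde{r}+1,\mathcal{T}_h}\nu$; taking $\nu=\tilde{W}_h\tau$, using $\Pi^{1,-}_{\tilde{r}+1,\mathcal{T}_h}\tilde{W}_h\tau=\Pi^{1,-}_{\tilde{r}+1,\mathcal{T}_h}\tau$ from Theorem~\ref{bounded_W_curved}, and applying Theorem~\ref{H_div_transform_minus_curved} once more reduces $-\Pi^2_{\tilde{r},\mathcal{T}_h}S_1\,\text{curl}\,\tilde{W}_h\tau$ to $\Pi^2_{\tilde{r},\mathcal{T}_h}\text{div}\,\tau=\omega+\Pi^2_{\tilde{r},\mathcal{T}_h}S_1\tilde{\Pi}^1_{\tilde{r}+1,\mathcal{T}_h}\eta$, whence the $\tilde{\Pi}^1$-term cancels against $-\Pi^2_{\tilde{r},\mathcal{T}_h}S_1\tilde{\Pi}^1_{\tilde{r}+1,\mathcal{T}_h}\eta$. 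The stability bound then chains the $L^2$-boundedness of $S_1$, Theorem~\ref{H_div_whole_curved}, Theorem~\ref{bounded_W_curved}, and the two Lemma~\ref{PDE_lemma} estimates exactly as before. I do not expect a genuine obstacle at this stage: the delicate geometric analysis has already been absorbed into Theorems~\ref{H_div_whole_curved}, \ref{H_div_transform_minus_curved} and~\ref{bounded_W_curved}. The only point requiring care is the bookkeeping — confirming that one $\delta$ and one uniform constant $c$ suffice for the whole argument, which follows by taking the minimum of the thresholds and collecting the finitely many constants supplied by those results.
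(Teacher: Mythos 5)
Your proposal is correct and follows exactly the route the paper takes: the paper's own proof of Lemma~\ref{asymptotic_S2} consists of the single remark that it is the same as the proof of Lemma~\ref{affine_S2}, with the affine operator results replaced by Theorems~\ref{H_div_whole_curved}, \ref{H_div_transform_minus_curved} and~\ref{bounded_W_curved}, valid for $h$ below the minimum of their finitely many thresholds. Your explicit tracking of which curvilinear theorem substitutes for which affine one, and of the idempotency of the weighted projection $\Pi^{2}_{\tilde{r},\mathcal{T}_h}$, is precisely the bookkeeping the paper leaves implicit.
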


\begin{proof}
The proof is the same as that of Lemma~\ref{affine_S2}.
\end{proof}

\begin{theorem}
\label{thm:main_theorem_curved}
There exist $\delta>0$ and $c>0$ such that, 
 for solution $(\sigma,u,p)$ of elasticity system~(\ref{weak_symm_formula_continuous}),
 and corresponding solution $(\sigma_{h},u_{h},p_{h})$ of discrete system~(\ref{weak_symmetry_differential_form}), we have
\begin{align*}
& \Vert\sigma-\sigma_{h}\Vert _{H(\text{div},\Omega)}+\Vert u-u_{h}
\Vert_{L^{2}(\Omega)}+\Vert p-p_{h}\Vert_{L^{2}(\Omega)}\\ \leq & c\inf [\Vert
\sigma-\tau\Vert _{H(\text{div},\Omega)}+\Vert u-v\Vert_{L^{2}(\Omega)}+\Vert
p-q\Vert_{L^{2}(\Omega)}],
\end{align*}
where the infimum is taken over all $\tau\in\mathcal{P}_{\tilde{r}+1}\Lambda^{1}(\mathcal{T}_{h},\mathbb{R}^{2}), 
v\in\mathcal{P}_{\tilde{r}}\Lambda^{2}(\mathcal{T}_{h},\mathbb{R}^{2}),$ and
$q\in\mathcal{P}_{\tilde{r}}\Lambda^{2}(\mathcal{T}_{h})$, for $h < \delta$.
For affine meshes, the inequality 
holds for any $h>0$.
\end{theorem}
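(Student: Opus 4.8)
The plan is to follow the proof of Theorem~\ref{thm:main_theorem_affine} line for line, replacing each affine ingredient by its curvilinear counterpart and tracking the threshold $\delta$ below which those counterparts are available. The discrete problem~(\ref{weak_symmetry_differential_form_discrete}) is a conforming Galerkin discretization of a saddle-point problem, so by the standard Babu\v{s}ka--Brezzi theory the asserted quasi-optimal (C\'ea-type) estimate follows immediately once the two discrete stability conditions~(\ref{S1_condition}) and~(\ref{S2_condition}) are established with constants $c_{1},c_{2}$ independent of $h$. Thus the whole task reduces to verifying (S1) and (S2) for $h<\delta$, and, for affine meshes, for all $h>0$.

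Condition~(\ref{S1_condition}) I would dispatch first, as it is the easy half. Take $\tau\in\mathcal{P}_{\tilde{r}+1}\Lambda^{1}(\mathcal{T}_{h};\mathbb{R}^{2})$ subject to $\langle\text{div}\,\tau,v\rangle=0$ for all $v\in\mathcal{P}_{\tilde{r}}\Lambda^{2}(\mathcal{T}_{h};\mathbb{R}^{2})$. By Lemma~\ref{embedding_curved_triangulation}, $\text{div}\,\tau\in\mathcal{P}_{\tilde{r}}\Lambda^{2}(\mathcal{T}_{h};\mathbb{R}^{2})$, so taking $v=\text{div}\,\tau$ forces $\text{div}\,\tau=0$. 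Hence $\Vert\tau\Vert_{H(\text{div},\Omega)}^{2}=\Vert\tau\Vert_{L^{2}(\Omega)}^{2}$, and since $A$ has piecewise constant, uniformly positive definite coefficients it is coercive on $L^{2}$, giving $\Vert\tau\Vert_{H(\text{div},\Omega)}^{2}\le c_{1}(A\tau,\tau)$ with $c_{1}$ independent of $h$. This step invokes no curvature-dependent estimate and therefore holds for every $h$.

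Condition~(\ref{S2_condition}) is the substance of the argument and is exactly where the asymptotic restriction enters. Given a nonzero $(v,q)\in\mathcal{P}_{\tilde{r}}\Lambda^{2}(\mathcal{T}_{h};\mathbb{R}^{2})\times\mathcal{P}_{\tilde{r}}\Lambda^{2}(\mathcal{T}_{h})$, I would apply Lemma~\ref{asymptotic_S2} with $\mu=v$ and $\omega=q$, which for $h<\delta$ yields $\sigma\in\mathcal{P}_{\tilde{r}+1}\Lambda^{1}(\mathcal{T}_{h};\mathbb{R}^{2})$ with $\text{div}\,\sigma=v$, $-\Pi_{\tilde{r},\mathcal{T}_{h}}^{2}S_{1}\sigma=q$, and $\Vert\sigma\Vert_{H(\text{div},\Omega)}\le c(\Vert v\Vert_{L^{2}(\Omega)}+\Vert q\Vert_{L^{2}(\Omega)})$. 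Then $\langle\text{div}\,\sigma,v\rangle=\Vert v\Vert_{L^{2}(\Omega)}^{2}$ exactly, while for the coupling term I would split $\langle S_{1}\sigma,q\rangle=\langle\Pi_{\tilde{r},\mathcal{T}_{h}}^{2}S_{1}\sigma,q\rangle+\langle(I-\Pi_{\tilde{r},\mathcal{T}_{h}}^{2})S_{1}\sigma,q\rangle=-\Vert q\Vert_{L^{2}(\Omega)}^{2}+R$. In the affine case $\Pi^{2}$ is the genuine $L^{2}$-orthogonal projection and $R=0$, reproducing the affine proof verbatim; in the curvilinear case $\Pi^{2}$ is only a weighted projection, so the remainder $R$ must be controlled. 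Here I would invoke Theorem~\ref{nonstandard_L2_norm}: writing $P$ for the true $L^{2}$-projection, $(I-P)S_{1}\sigma\perp q$, so $R=\langle(P-\Pi_{\tilde{r},\mathcal{T}_{h}}^{2})S_{1}\sigma,q\rangle$ and $|R|\le\varepsilon\Vert S_{1}\sigma\Vert_{L^{2}(\Omega)}\Vert q\Vert_{L^{2}(\Omega)}\le\varepsilon c(\Vert v\Vert_{L^{2}(\Omega)}+\Vert q\Vert_{L^{2}(\Omega)})\Vert q\Vert_{L^{2}(\Omega)}$. Choosing $\delta$ small enough that $\varepsilon$ is a fixed small fraction, the remainder is absorbed and
\[
\langle\text{div}\,\sigma,v\rangle-\langle S_{1}\sigma,q\rangle\ge\tfrac12\bigl(\Vert v\Vert_{L^{2}(\Omega)}^{2}+\Vert q\Vert_{L^{2}(\Omega)}^{2}\bigr)\ge c_{2}\Vert\sigma\Vert_{H(\text{div},\Omega)}\bigl(\Vert v\Vert_{L^{2}(\Omega)}+\Vert q\Vert_{L^{2}(\Omega)}\bigr),
\]
which is~(\ref{S2_condition}).

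With (S1) and (S2) in hand, the Babu\v{s}ka--Brezzi error estimate yields the stated bound for $h<\delta$. For affine meshes every ingredient (Lemma~\ref{asymptotic_S2}, Theorem~\ref{nonstandard_L2_norm}, and the operator bounds) holds for all $h$ with $R=0$, so $\delta$ may be taken as $+\infty$. The main obstacle lives entirely in the $h$-uniformity of the inf-sup constant $c_{2}$: it rests on Lemma~\ref{asymptotic_S2}, which in turn depends on the curvilinear interpolation and averaging bounds of Theorems~\ref{H_div_whole_curved} and~\ref{bounded_W_curved} and on the near-identity property of the weighted projection in Theorem~\ref{nonstandard_L2_norm}. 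All of these degenerate unless the mesh is fine enough to make $c_{h}$ small (the regularity hypothesis $c_{h}\to0$), and it is precisely this degeneration that confines the curvilinear result to the asymptotic regime while leaving the affine result valid for all $h$.
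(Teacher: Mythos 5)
Your proposal is correct and follows essentially the same route as the paper: reduce to the Brezzi conditions, obtain (S1) from the divergence inclusion and the coercivity of $A$, and obtain (S2) from Lemma~\ref{asymptotic_S2} together with absorbing the discrepancy $\langle(\Pi_{\tilde{r},\mathcal{T}_{h}}^{2}-P_{\tilde{r},\mathcal{T}_{h}})S_{1}\sigma,q\rangle$ via Theorem~\ref{nonstandard_L2_norm} for $h$ small, with the affine case holding for all $h$ because $\Pi_{\tilde{r},\mathcal{T}_{h}}^{2}$ is then the genuine $L^{2}$-projection. The only difference is that you spell out a few steps the paper leaves implicit (taking $v=\text{div}\,\tau$ in (S1), and the orthogonality $(I-P)S_{1}\sigma\perp q$), which changes nothing of substance.
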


\begin{proof}
We need to show that conditions~(\ref{S1_condition}) and~(\ref{S2_condition}) are satisfied 
asymptotically in $h$.
Condition~(\ref{S1_condition}) follows from the fact that, by construction, 
$\text{div}\mathcal{P}_{\tilde{r}+1}\Lambda^{1}(\mathcal{T}_{h},\mathbb{R}^{2})\subset\mathcal{P}_{\tilde{r}}\Lambda^{2}(\mathcal{T}_{h};\mathbb{R}^{2})$, and the
fact that $A$ is coercive.

We turn now to condition~(\ref{S2_condition}).
According to Lemma~\ref{asymptotic_S2}, there exist $\delta>0$ and $c>0$ such that,
for $h<\delta$ and $(\omega,\mu)\in 
\mathcal{P}_{\tilde{r}}\Lambda^{2}(\mathcal{T}_{h})\times 
\mathcal{P}_{\tilde{r}}\Lambda^{2}(\mathcal{T}_{h};\mathbb{R}^{2})$, 
there exists 
$\sigma\in\mathcal{P}_{\tilde{r}+1}\Lambda^{1}(\mathcal{T}_{h};\mathbb{R}^{2})$ 
such that $\text{div}\sigma =\mu$, $-\Pi_{\tilde{r},\mathcal{T}_{h}}^{2}S_{1}\sigma
=\omega$, and 
$\Vert\sigma\Vert_{H(\text{div},\Omega)}\leq c(\Vert\mu\Vert_{L^{2}(\Omega)}
+\Vert\omega\Vert_{L^{2}(\Omega)})$.
We have then
\begin{align*}
 \langle\text{div}\sigma,\mu\rangle-\langle S_{1}\sigma,\omega\rangle  = &
\langle\text{div}\sigma,\mu\rangle-\langle \Pi_{\tilde{r},\mathcal{T}_{h}}^{2}S_{1}\sigma,\omega\rangle
+\langle(\Pi_{\tilde{r},\mathcal{T}_{h}}^{2}-P_{\tilde{r},\mathcal{T}_{h}})S_{1}\sigma,\omega\rangle\\
\geq & c\Vert\sigma\Vert_{H(\text{div},\Omega)}(\Vert\mu\Vert_{L^{2}(\Omega)}+\Vert\omega\Vert_{L^{2}(\Omega)})
+\langle(\Pi_{\tilde{r},\mathcal{T}_{h}}^{2}-P_{\tilde{r},\mathcal{T}_{h}})S_{1}\sigma,\omega\rangle.
\end{align*}
According to Theorem~\ref{nonstandard_L2_norm}, for sufficiently small $h$,
\begin{equation*}
\vert\langle(\Pi_{\tilde{r},\mathcal{T}_{h}}^{2}-P_{\tilde{r},\mathcal{T}_{h}})S_{1}\sigma,\omega\rangle\vert
\leq \dfrac{c}{2}\Vert\sigma\Vert_{L^{2}(\Omega)}\Vert\omega\Vert_{L^{2}(\Omega)} 
\end{equation*}
So, asymptotically in $h$, we have
\begin{equation*}
\langle\text{div}\sigma,\mu\rangle-\langle S_{1}\sigma,\omega\rangle\geq\dfrac{c}{2}
\Vert\sigma\Vert_{H(\text{div},\Omega)}(\Vert\mu\Vert_{L^{2}(\Omega)}+\Vert\omega\Vert_{L^{2}(\Omega)}).
\end{equation*}
For affine meshes, $\Pi_{\tilde{r},\mathcal{T}_{h}}^{2}$ reduces to the standard
$L^{2}$-projection. The inequality above holds then for any $h>0$. This finishes the proof.
\end{proof}

\section{Numerical experiments}
We continue numerical experiments initiated in~\cite{QD:2009:MMEW}
where we investigated rates of convergence for uniform 
$h$-refinements in presence of 
non-uniform polynomial order $p$, and tried out $p$-adaptivity
(with no underlying theory at presence). The experiments confirmed the optimal
$h$-convergence rates and indicated the $p$-convergence as well. 

Following~\cite{QD:2009:MMEW}, we consider the L-shape domain, and 
use a manufactured solution corresponding to the exact solution
of the homogeneous equilibrium equations (zero volume forces), and
the corresponding unbounded L-shape domain extending to infinity. The manufactured solution 
is designed in such a way that both $p$ and all stress components
have the same singularity at the origin characterized by term
$r^{-0.39596}$ where $r$ is the distance to the origin. 

Experiments presented below focus on $h$-adaptivity for meshes with uniform order $p$,
and the ultimate goal of this research - the $hp$-adaptivity. We investigate both
affine and curvilinear meshes. We use the standard ``greedy algorithm''
for $h$-refinements, and the two-grid $hp$-algorithm for $hp$-refinements, 
see \cite{Demkowicz:2006:HPAFE} for details. The $hp$-algorithm is based
on the standard Projection Based (PB) interpolation.
All convergence plots are displayed on log-log scale, error vs. number of 
degrees-of-freedom (d.o.f.). The error is always measured in terms of percent
of the total norm of the solution.


\subsection*{Uniform $h$-refinements on affine meshes} 

We begin with a verification of stability on uniform meshes. Fig.~\ref{L_domain}
presents the L-shape domain with an initial mesh of six elements.
\begin{figure}[!ht]
\begin{center}
\includegraphics[scale=.6]{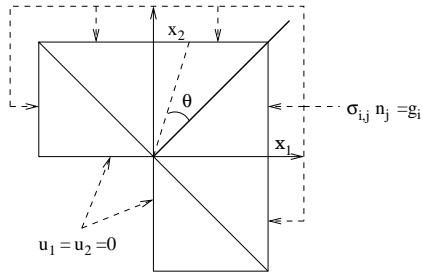}
\end{center}
\caption{The L shape domain with initial mesh.}
\label{L_domain}
\end{figure}
Fig.~\ref{uh_app0} displays the actual approximation error compared with
the best approximation error for a sequence of uniformly refined meshes
of zero order\footnote{We always refer to the order of approximation for
the displacement.}. As expected, the two curves are parallel to each other.
\begin{figure}[!ht]
\begin{center}
\includegraphics[scale=.4]{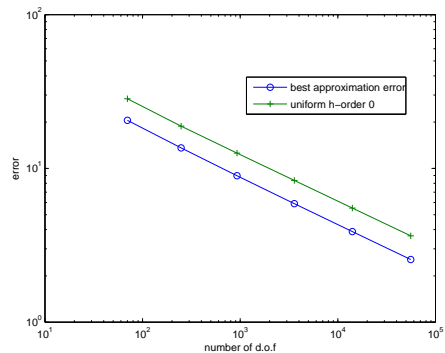}
\end{center}
\caption{Uniform $h$-refinements for $p=0$. FE error vs. best approximation error.}
\label{uh_app0}
\end{figure}
We repeat now the same experiment starting with an initial mesh of elements
with order varying from zero to four, shown in Fig.~\ref{init_mesh}.
\begin{figure}[!ht]
\begin{center}
\includegraphics[scale=.2,angle=90]{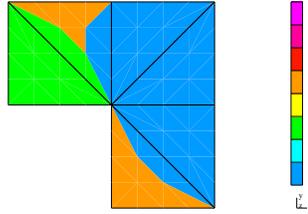}
\end{center}
\caption{The L shape domain with initial mesh.}
\label{init_mesh}
\end{figure}
The corresponding FE error is compared again with the best approximation error
in Fig.~\ref{comp4}. The two lines are again parallel to each other with the
slope determined by the lowest order elements in the mesh (same as in the first
example).
\begin{figure}[!ht]
\begin{center}
\includegraphics[scale=.4]{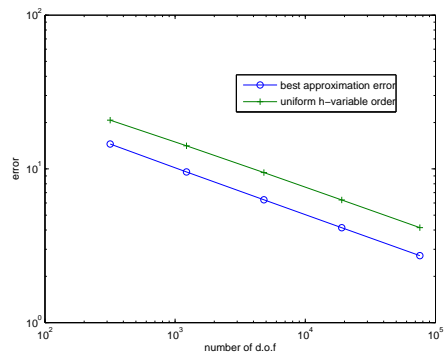}
\end{center}
\caption{Uniform $h$-refinements for a mesh of variable order. 
FE error vs. best approximation error.}
\label{comp4}
\end{figure}

\subsection*{Adaptive $h$- and $hp$-refinements on affine meshes}

We continue now the numerical verification of stability for non-uniform
meshes resulting from $h$- and $hp$-refinements. Unfortunately,
we face a slight discrepancy between the presented theory and the numerical experiments
as the code is using 1-irregular meshes which we have not accounted
for in our theoretical analysis.

Fig.~{\ref{rates}} presents convergence history for adaptive $h$-refinements 
for meshes of order $p=0,1,2$ and adaptive $hp$-refinements 
starting with the mesh of zero order. The approximation error
is compared with the best approximation error computed on the
same meshes (generated by the adaptive algorithm). Results for
$h$-refined meshes of order $p=1,2$, and the $hp$-refinements
confirm the stability result. The result for the lowest order
elements, though, reflects some loss of stability. The only
possible explanation that we have at the moment, is the use
of meshes with hanging nodes. 

\begin{figure}[!htp]
\begin{center}
\subfigure[Adaptive $h$-refinements, $p=0$.]
{\label{rate0}\includegraphics[scale=0.375]{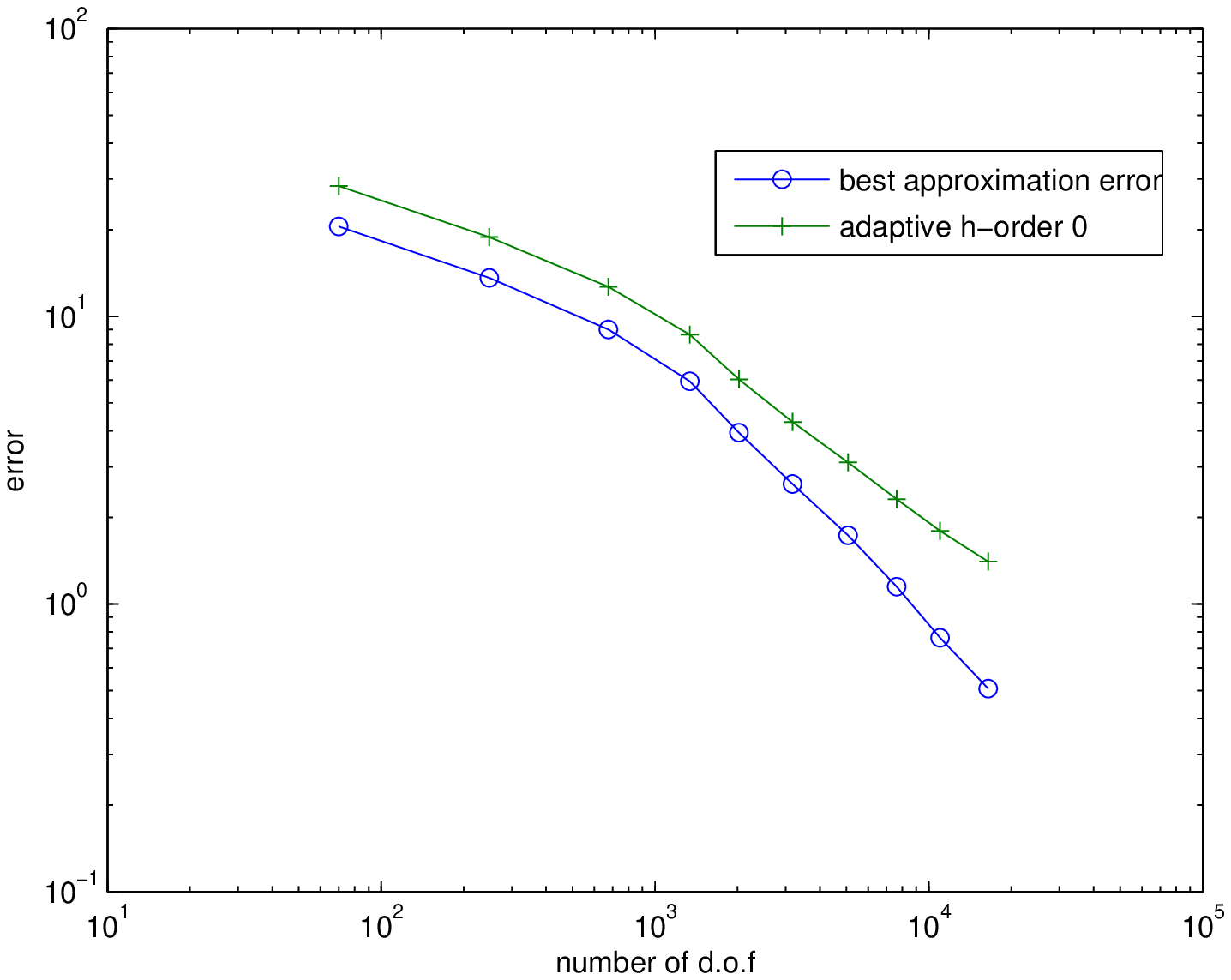}}
\subfigure[Adaptive $h$-refinements, $p=1$.]
{\label{rate1}\includegraphics[scale=0.375]{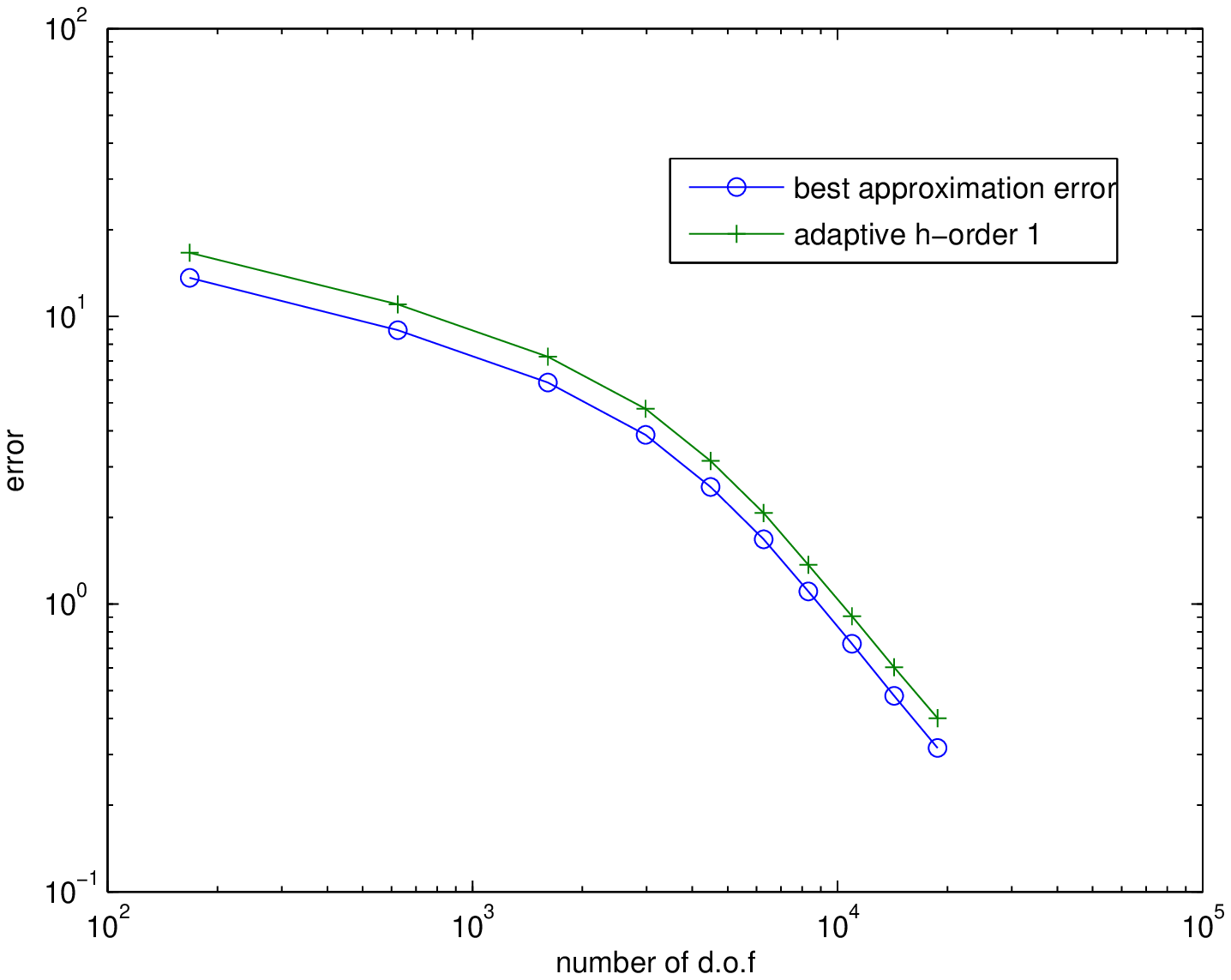}}
\subfigure[Adaptive $h$-refinements, $p=2$.]
{\label{rate2}\includegraphics[scale=0.375]{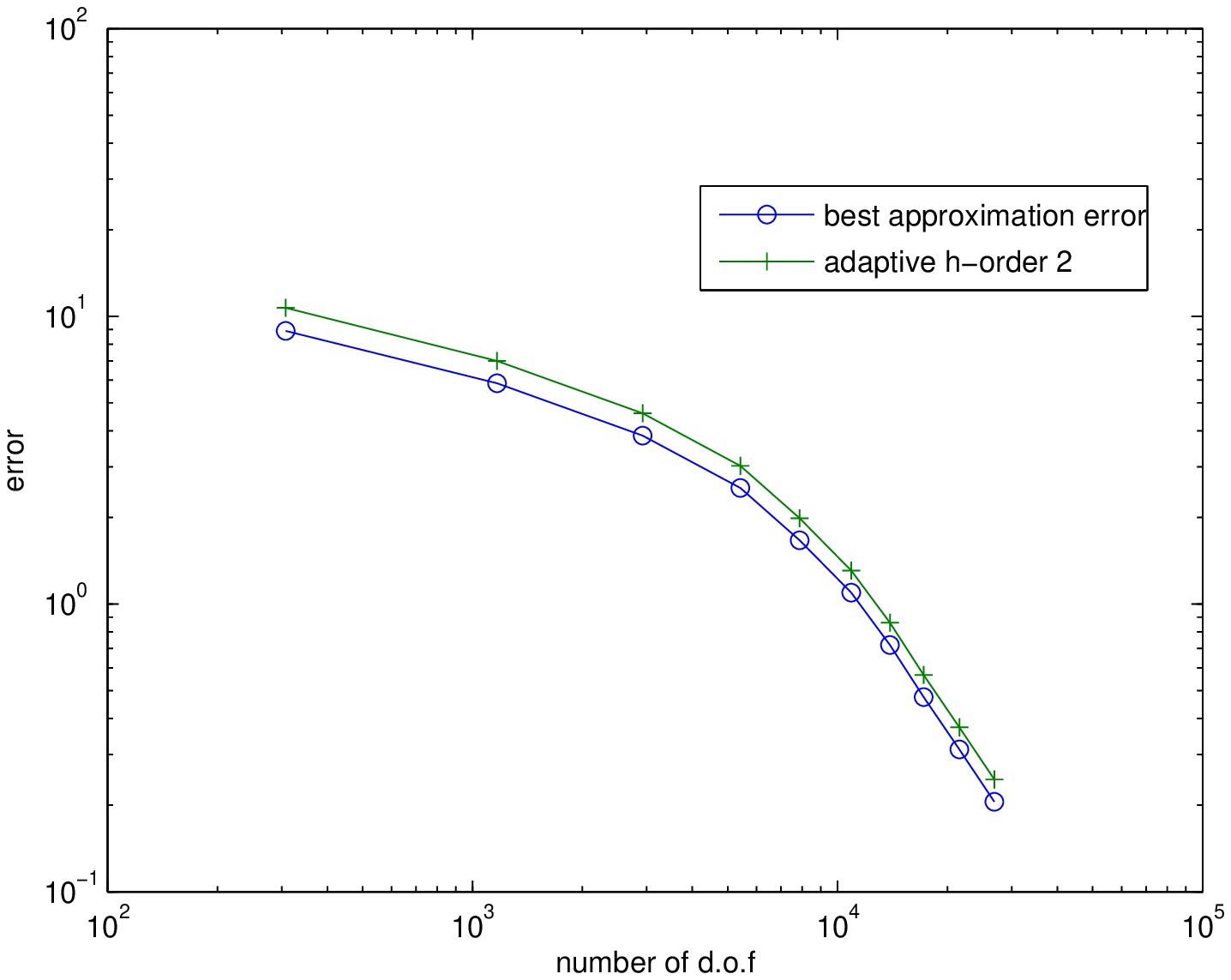}}
\subfigure[Adaptive $hp$-refinements.]
{\label{rateahp}\includegraphics[scale=0.375]{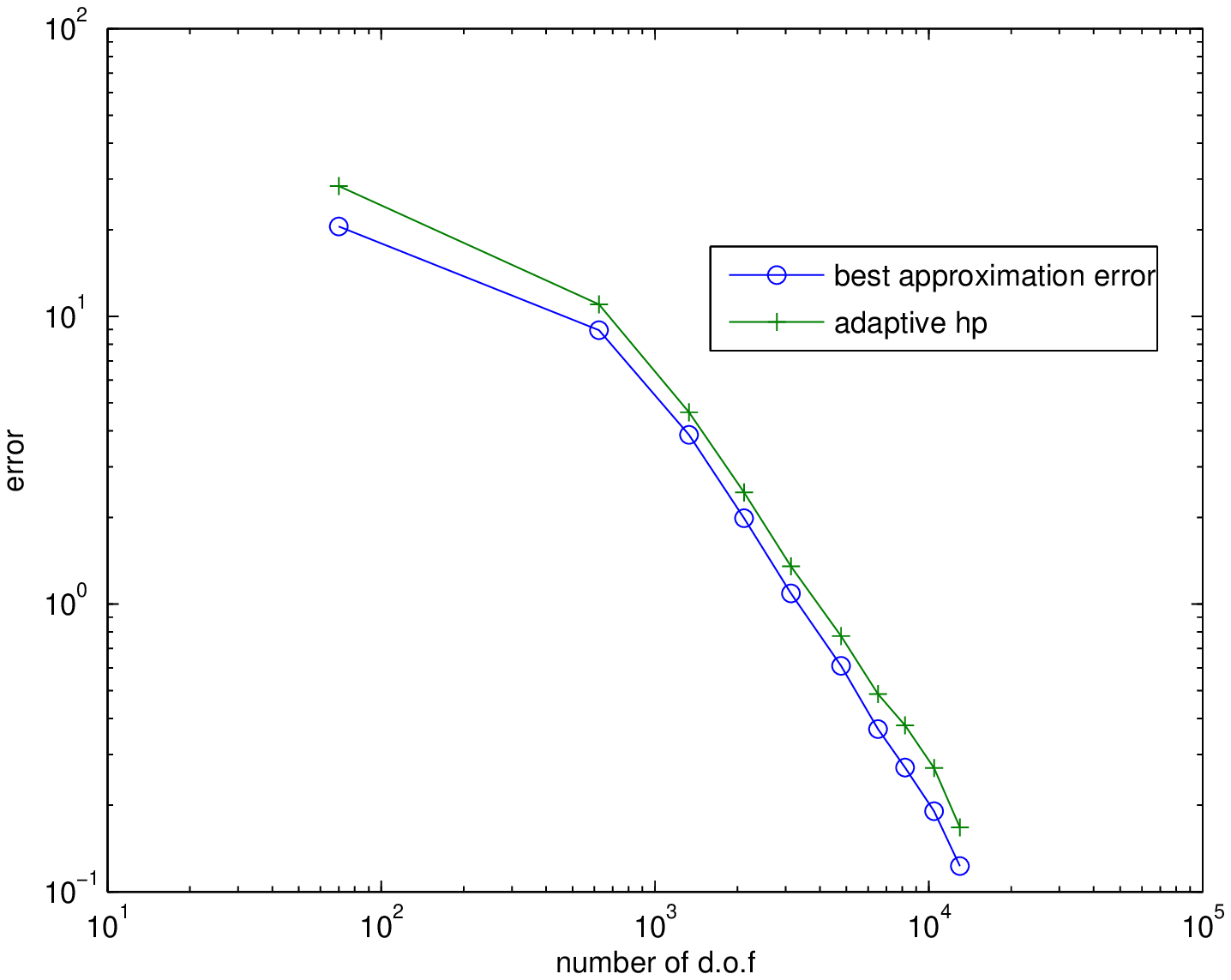}}
\end{center}
\caption{Comparison of FE error for adaptively refined affine meshes
with the best approximation error.}
\label{rates}
\end{figure}
Finally, in Fig.~\ref{comp2}, we compare the convergence history for
all tested refinements.
\begin{figure}[!ht]
\begin{center}
\includegraphics[scale=0.6]{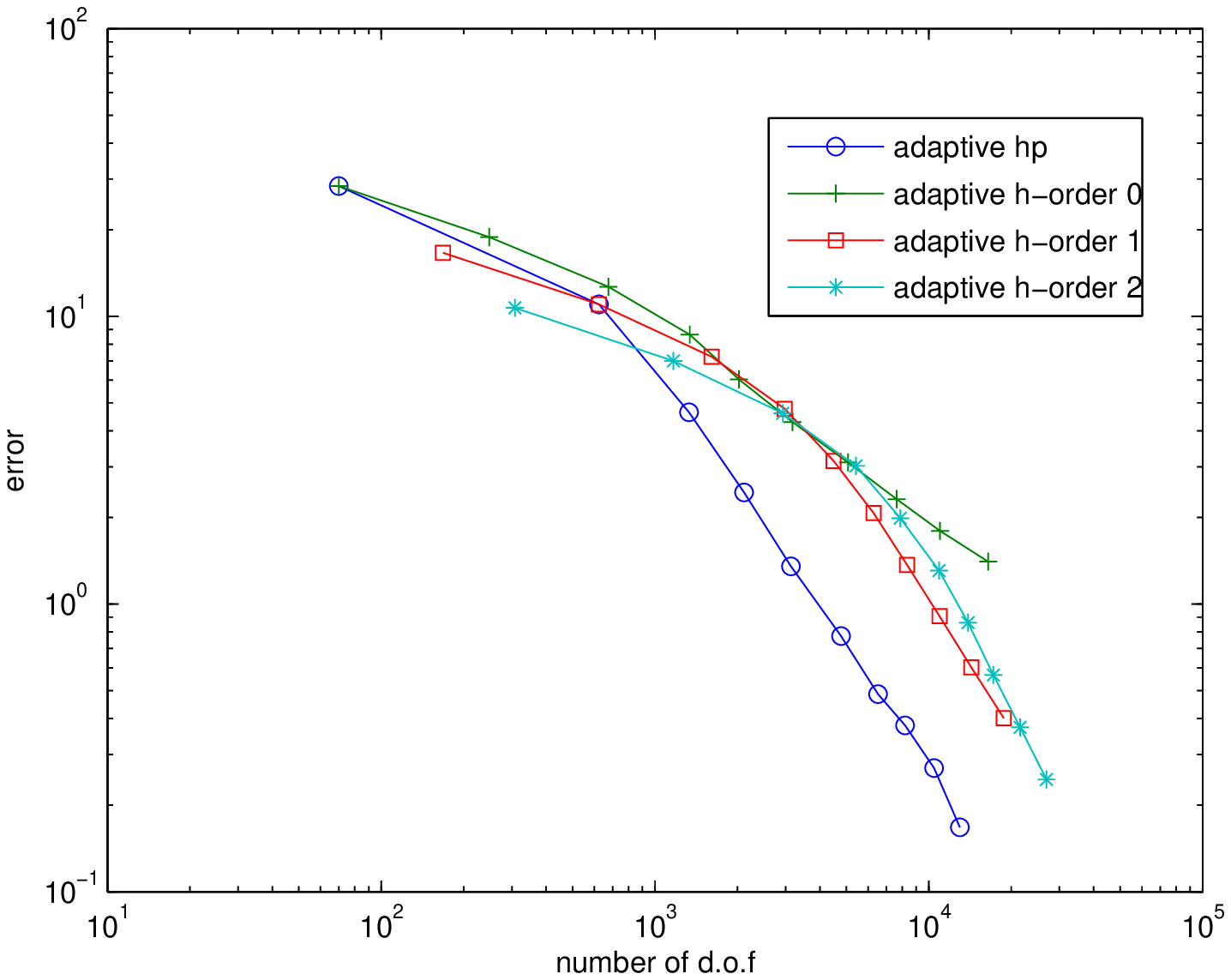}
\end{center}
\caption{Convergence history for adaptive $h$- and
$hp$-refinements on affine meshes.}
\label{comp2}
\end{figure}
The $hp$-adaptivity produces the best results although, in the presented
range, the rate seems to be still only algebraic.

\subsection*{Adaptive $h$- and $hp$-refinements on curvilinear meshes}
We use the same manufactured solution on the circular L-shape domain shown in  
Fig.~\ref{L_domain_curve}. 
\begin{figure}[ht]
\begin{center}
\includegraphics[scale=0.9]{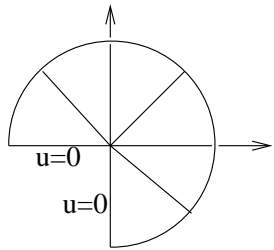}
\end{center}
\caption{A circular L-shape domain with an initial mesh of six elements.}
\label{L_domain_curve}
\end{figure}
The six triangles in the initial mesh are parametrized using the
transfinite interpolation technique, see \cite{Demkowicz:2006:HPAFE}, p. 201, for details.
The FE error is compared again with the best approximation error
for a sequence of $h$-adaptive meshes and $p=0,1,2$, in Fig.~\ref{crates}.
The results indicate again a slight loss of stability for the elements
of lowest order.
\begin{figure}[htp]
\begin{center}
\subfigure[Adaptive $h$-refinements, $p=0$.]
{\label{crate0}\includegraphics[scale=0.375]{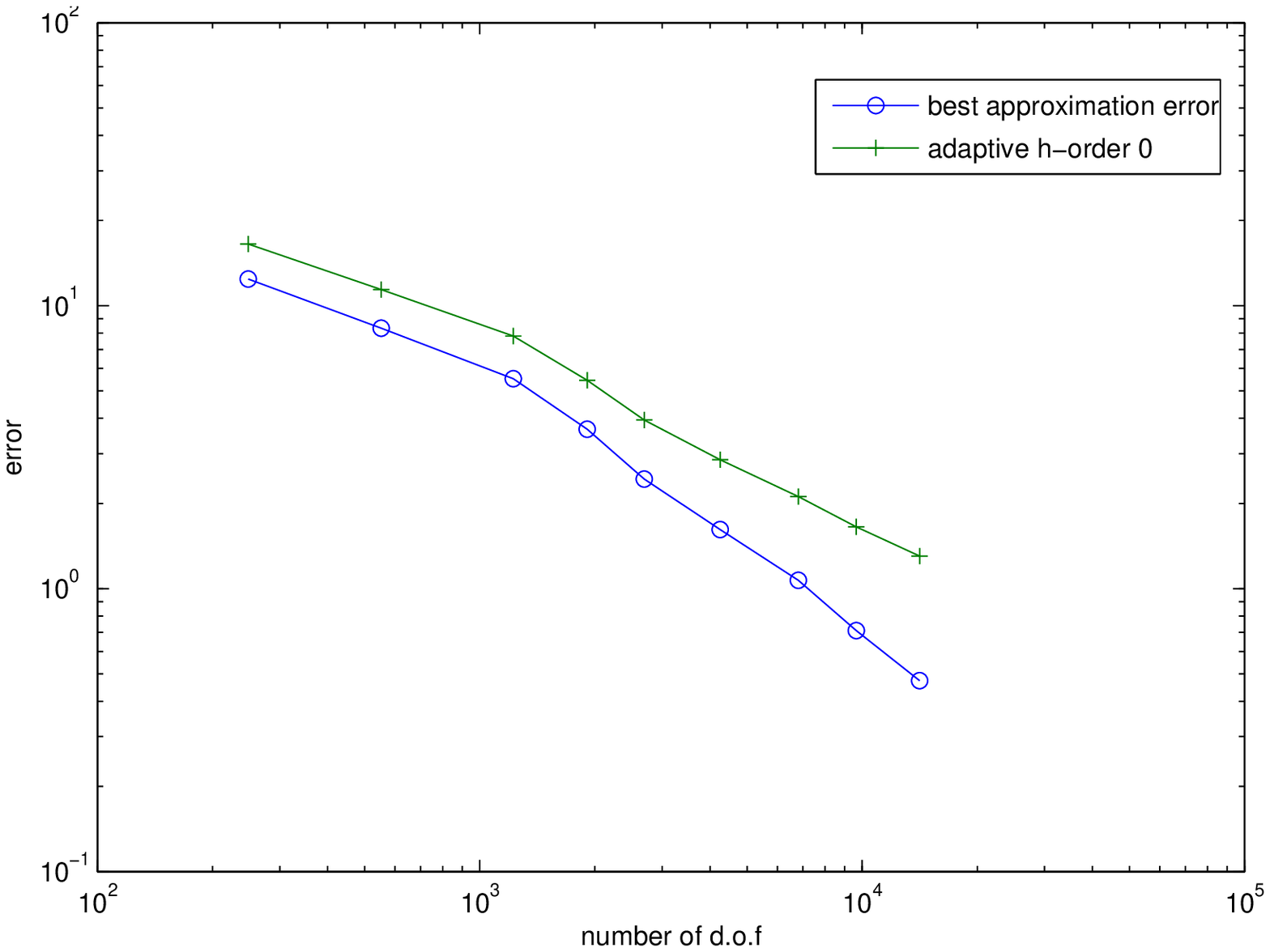}}
\subfigure[Adaptive $h$-refinements, $p=1$.]
{\label{crate1}\includegraphics[scale=0.375]{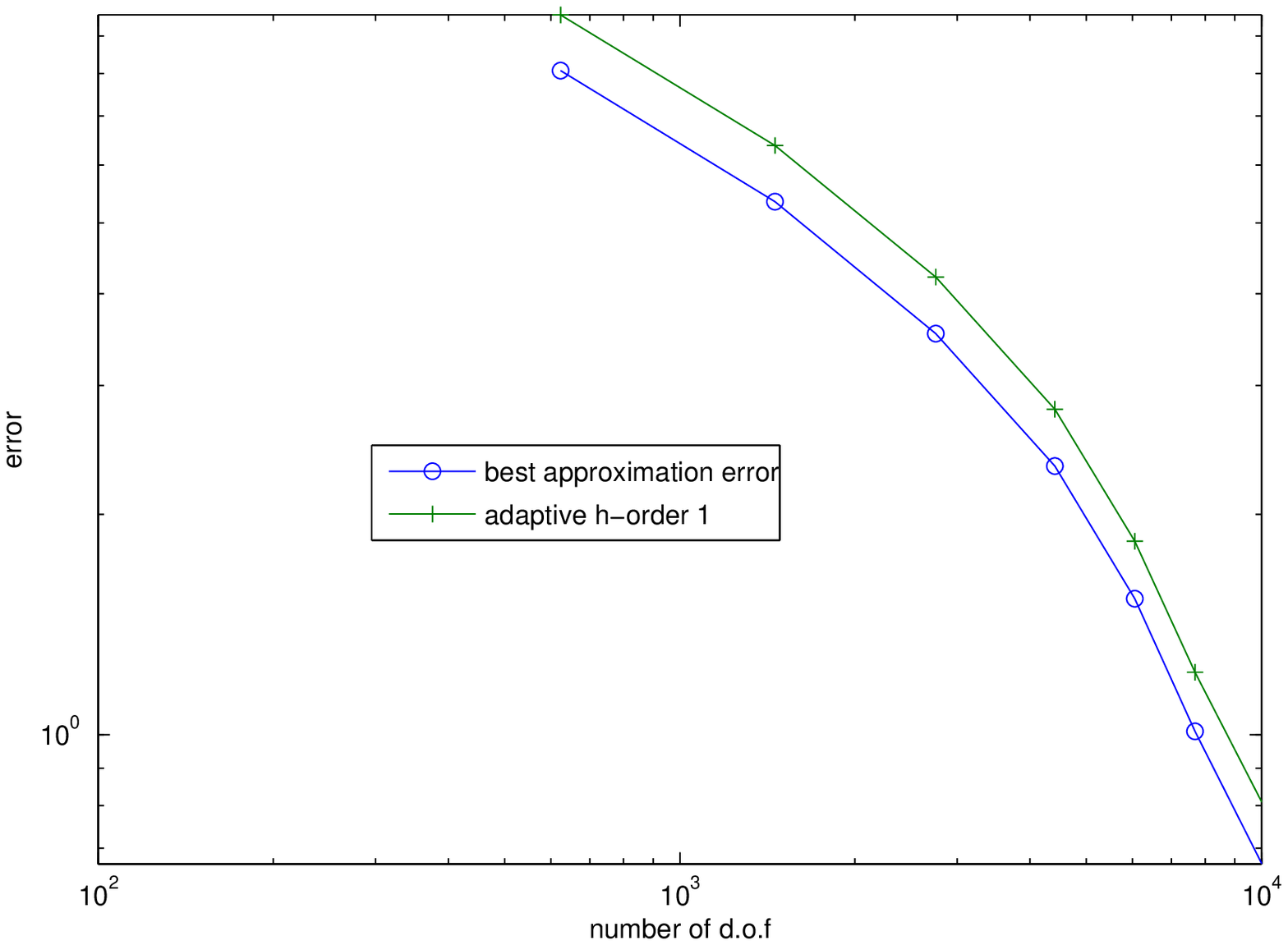}}
\subfigure[Adaptive $h$-refinements, $p=2$.]
{\label{crate2}\includegraphics[scale=0.375]{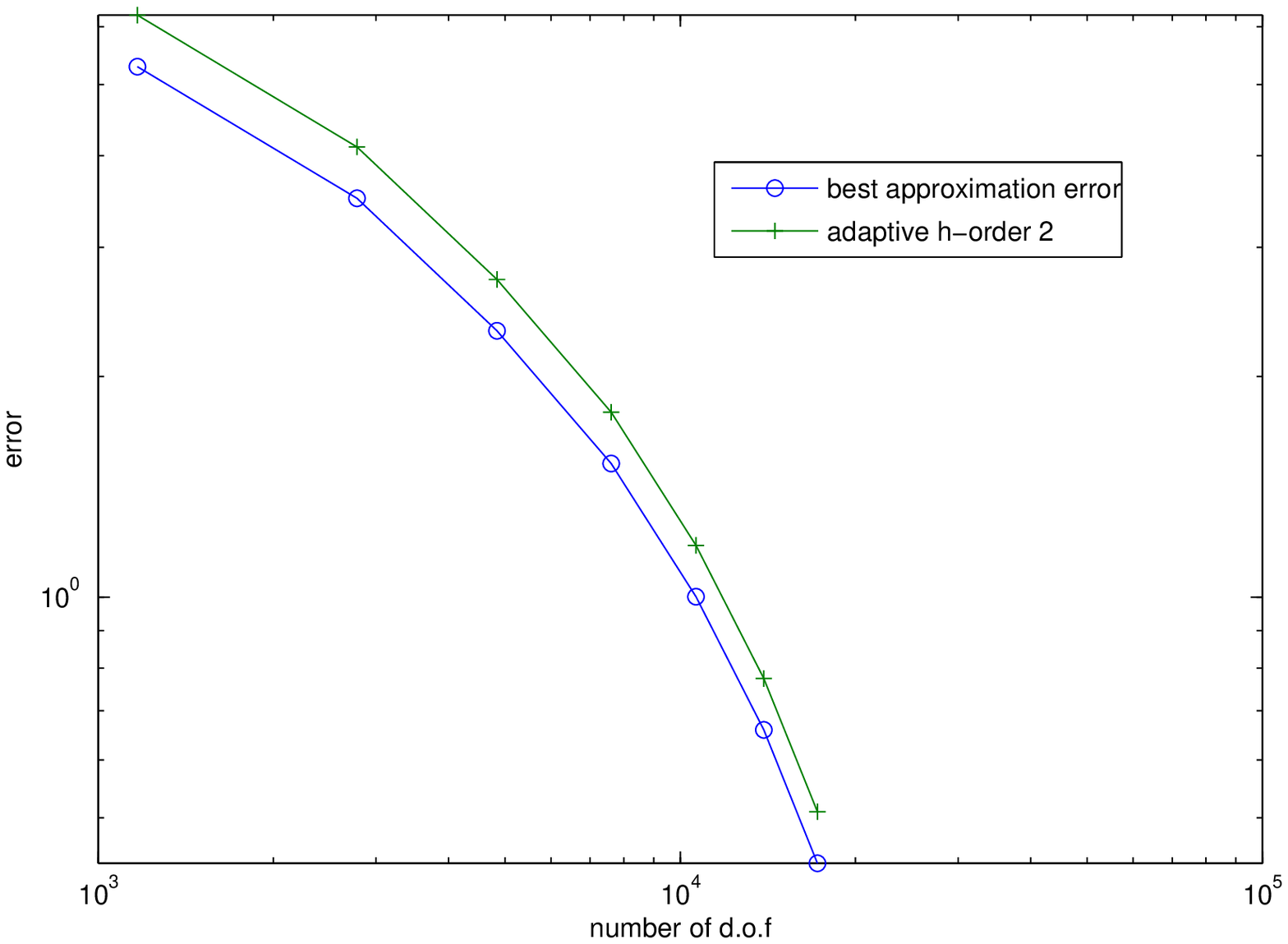}}
\subfigure[Adaptive $hp$-refinements.]
{\label{crateahp}\includegraphics[scale=0.375]{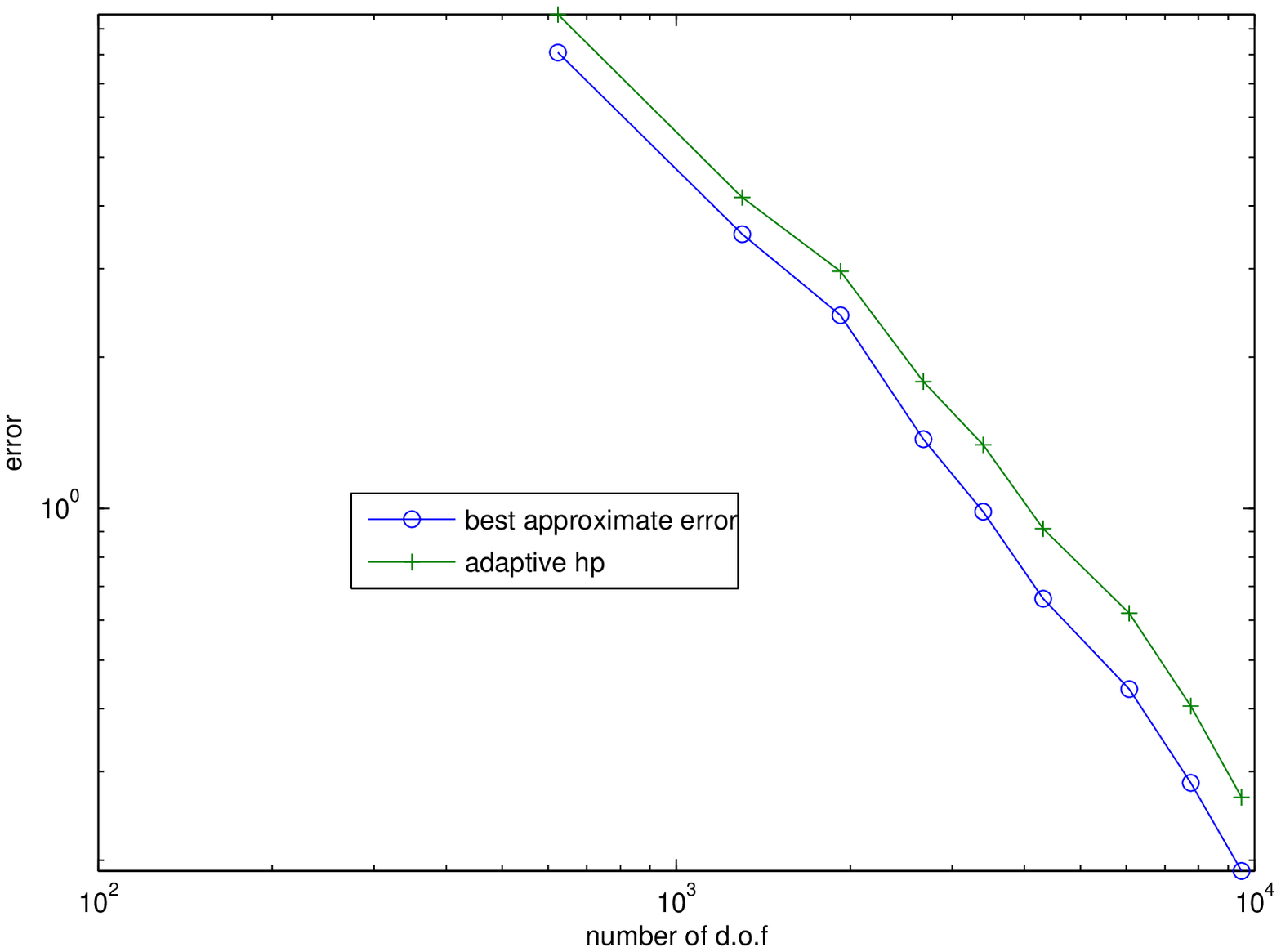}}
\end{center}
\caption{Comparison of FE error for adaptively refined curvilinear meshes
with the best approximation error.}
\label{crates}
\end{figure}
Finally, in Fig.~\ref{comp3}, we compare the convergence history for
all tested refinements.
The $hp$-adaptivity delivers again the best
results but the exponential convergence is questionable. This may indicate
that the use of standard Projection-Based interpolation operator 
in the reference domain is not optimal. 
\begin{figure}[ht]
\begin{center}
\includegraphics[scale=0.6]{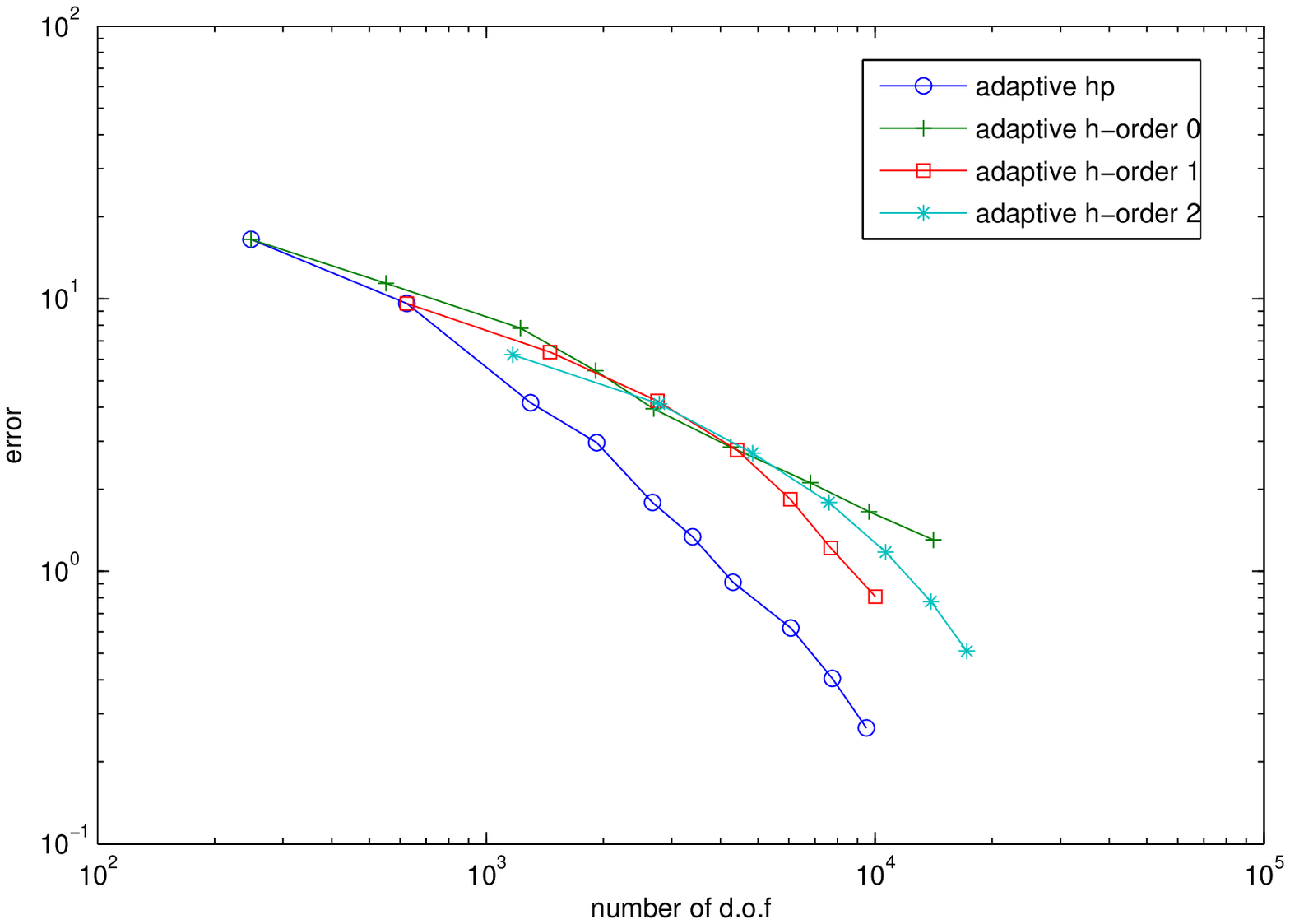}
\end{center}
\caption{Convergence history for adaptive $h$- and
$hp$-refinements on curvilinear meshes.}
\label{comp3}
\end{figure}

\section{Conclusions}

We have presented a complete $h$-stability analysis for a generalization
of Arnold-Falk-Winther elements to curvilinear meshes of variable order in two space dimensions. The stability analysis for both generalizations: variable order elements, 
and curvilinear elements proved to be rather non-trivial. 
The case of variable elements has been tackled with a novel logical construction
showing the existence of necessary interpolation operators rather than constructing
them explicitly. The presented construction departs from operators used
by Arnold, Falk and Winther and modifies the Projection-Based (PB) interpolation operators
as well.

The analysis of curvilinear
meshes for elasticity
differs considerably from that for problems involving only grad-curl-div operators. Piola 
maps transform gradients, curls and divergence in the physical domain into the
corresponding gradients, curls and divergence in the reference domain. Consequently,
problems involving the grad, curl or div operators only (e.g. Maxwell equations
or the mixed formulation for a scalar elliptic problem)
can be reformulated in the parametric domain at the expense of introducing material
anisotropies reflecting the geometric parametrizations. This is not the case for elasticity where
the strain tensor (symmetric part of the displacement gradient) in the physical
domain does not transform into the symmetric part of the displacement gradient in the
reference domain\footnote{The same problem is encountered in the case of complex
stretchings introduced by Perfectly Matched Layers, see e.g. \cite{Demkowicz:2007:HPAFE2}.}.
Consequently, the analysis for affine meshes cannot be simply reproduced for curvilinear
ones, and new interpolation operators have to be carefully drafted. 
We have managed to prove only an asymptotic stability for the curvilinear meshes.

Presented numerical results on $h$-adaptivity go beyond our analysis as we use 1-irregular
meshes with hanging nodes supported by an existing $hp$ software. 


Finally, the paper presents only a two-dimensional result. We continue working on the
3D case using different ideas and hope to present new results soon.

\appendix
\section{Mesh generation\label{app_mesh_generation}}

We assume that the domain $\Omega$ is a (curvilinear) polygon, 
and that it can be meshed
with a regular family $(\mathcal{T}_{h})_{h}$ of $C^{0}$-compatible meshes (i.e. 
$\overline{\Omega}=\bigcup_{T\in\mathcal{T}_{h}} T$, for all $h$) that satisfy
the regularity assumptions discussed in the previous section.

We will outline now shortly how one can generate such meshes in practice.
Suppose the domain $\Omega$ has been meshed with a $C^0$-compatible initial mesh
$\mathcal{T}_{\text{int}}$, 
$\overline{\Omega}=\bigcup_{i=1}^{m}T_{i}$, 
where $\{T_{i}\}_{i=1}^{m}$ are curved triangles of class $C^{1,1}$.
We denote by $\{G_{1},\cdots,G_{m}\}$ the mappings from $\hat{T}$ to 
$\{T_{1},\cdots,T_{m}\}$.  
Then $G_{i}\in C^{1,1}(\hat{T})$ and $G_{i}^{-1}\in C^{1}(T_{i})$ for any $1\leq i\leq m$.  
For examples of techniques to generate an initial mesh satisfying the assumptions above,
see \cite{Demkowicz:2006:HPAFE}.

\begin{lemma}
\label{decreasing_nonlinear_mapping}
Let $T = G_{T}(\hat{T})$ be a closed triangle in $\mathbb{R}^2$
with $G_{T}\in C^{1,1}(\hat{T})$. 
Let $\check{\sigma}>0$ be a positive constant. 
For any $\check{h}>0$, we denote by $\check{T}_{\check{h}}$ 
any triangle contained in $\hat{T}$ such that the diameter of $\check{T}_{\check{h}}$ is $\check{h}$, 
and $\check{h}/\check{\rho}\leq\check{\sigma}$ where $\check{\rho}$ is the diameter of the sphere inscribed in 
$\check{T}_{\check{h}}$. 
Let $\hat{T}\ni\hat{\mathbf{x}}\rightarrow H\hat{\mathbf{x}} =
\check{B}\hat{\mathbf{x}}+\check{b}$ 
be an affine mapping from $\hat{T}$ onto $\check{T}_{\check{h}}$.
Let $\hat{\mathbf{p}}$ be the centroid of $\hat{T}$. 
We put $B=D(G_{T}\circ H) (\hat{\mathbf{p}})$, $b=(G_{T}\circ H)(\hat{\mathbf{p}})$, 
and $\Psi(\hat{\mathbf{x}})=(G_{T}\circ H)(\hat{\mathbf{x}})-\check{B}(\hat{\mathbf{x}}
-\hat{\mathbf{p}})-b$. 

Then, we have
\begin{equation*}
(\sup_{\hat{\mathbf{x}}\in\hat{T}}\Vert D\Psi
(\hat{\mathbf{x}})\Vert) \Vert B^{-1}\Vert \leq C\check{h},
\end{equation*}
where $C$ is a constant independent of $\check{h}$, 
and $\check{T}_{\check{h}}$.
\end{lemma}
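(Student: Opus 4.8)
The plan is to differentiate $\Psi$ by the chain rule, isolate the contribution of the nonlinear part of $G_{T}$, and then control it using the $C^{1,1}$ regularity of $G_{T}$ together with the standard affine scaling estimates for the map $H$. Set $F := G_{T}\circ H$. By the chain rule $DF(\hat{\mathbf{x}}) = DG_{T}(H(\hat{\mathbf{x}}))\,\check{B}$, so in particular $B = DF(\hat{\mathbf{p}}) = DG_{T}(H(\hat{\mathbf{p}}))\,\check{B}$. Since $\Psi$ is $F$ minus its affine Taylor approximation at the centroid $\hat{\mathbf{p}}$, its Jacobian is the difference of these two Jacobians,
\[
D\Psi(\hat{\mathbf{x}}) = DF(\hat{\mathbf{x}}) - B = \bigl(DG_{T}(H(\hat{\mathbf{x}})) - DG_{T}(H(\hat{\mathbf{p}}))\bigr)\,\check{B}.
\]
This is the decisive identity: the linear part of $G_{T}$ cancels, and only the \emph{increment} of $DG_{T}$ survives.

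Next I would extract the factor carrying the local size. Because $G_{T}\in C^{1,1}(\hat{T})$, the map $DG_{T}$ is Lipschitz on $\hat{T}$, say with constant $L$; and since $H(\hat{\mathbf{x}}),H(\hat{\mathbf{p}})\in\check{T}_{\check{h}}\subset\hat{T}$, we may estimate $\Vert DG_{T}(H(\hat{\mathbf{x}})) - DG_{T}(H(\hat{\mathbf{p}}))\Vert \le L\Vert H(\hat{\mathbf{x}})-H(\hat{\mathbf{p}})\Vert = L\Vert\check{B}(\hat{\mathbf{x}}-\hat{\mathbf{p}})\Vert \le L\,\hat{h}\,\Vert\check{B}\Vert$, where $\hat{h}:=\mathrm{diam}(\hat{T})$. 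Together with the identity above this gives $\sup_{\hat{\mathbf{x}}\in\hat{T}}\Vert D\Psi(\hat{\mathbf{x}})\Vert \le L\,\hat{h}\,\Vert\check{B}\Vert^{2}$. For the other factor, from $B = DG_{T}(H(\hat{\mathbf{p}}))\,\check{B}$ I obtain $B^{-1} = \check{B}^{-1}\,[DG_{T}(H(\hat{\mathbf{p}}))]^{-1}$, whence $\Vert B^{-1}\Vert \le M\,\Vert\check{B}^{-1}\Vert$ with $M := \sup_{\hat{\mathbf{y}}\in\hat{T}}\Vert[DG_{T}(\hat{\mathbf{y}})]^{-1}\Vert$, a finite number since $G_{T}$ is a $C^{1}$-diffeomorphism with $\det(DG_{T})>0$ on the compact set $\hat{T}$.

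Finally I would assemble the estimates, which is where the scaling has to be balanced carefully. Multiplying the two bounds gives
\[
\Bigl(\sup_{\hat{\mathbf{x}}\in\hat{T}}\Vert D\Psi(\hat{\mathbf{x}})\Vert\Bigr)\Vert B^{-1}\Vert \le L\,\hat{h}\,M\,\bigl(\Vert\check{B}\Vert\,\Vert\check{B}^{-1}\Vert\bigr)\,\Vert\check{B}\Vert .
\]
By Theorem $3.1.3$ of \cite{Ciarlet:2002:FEMelliptic}, applied to the affine homeomorphism $H$ from $\hat{T}$ onto $\check{T}_{\check{h}}$, one has $\Vert\check{B}\Vert \le \check{h}/\hat{\rho}$ and $\Vert\check{B}^{-1}\Vert \le \hat{h}/\check{\rho} \le \hat{h}\check{\sigma}/\check{h}$, the last step using the hypothesis $\check{h}/\check{\rho}\le\check{\sigma}$; here $\hat{\rho}$ denotes the inscribed diameter of $\hat{T}$. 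Consequently the condition number $\Vert\check{B}\Vert\,\Vert\check{B}^{-1}\Vert \le \hat{h}\check{\sigma}/\hat{\rho}$ stays bounded, while the remaining factor $\Vert\check{B}\Vert \le \check{h}/\hat{\rho}$ contributes the single power of $\check{h}$. Collecting constants yields the claim with $C = L\,M\,\hat{h}^{2}\check{\sigma}/\hat{\rho}^{2}$, which depends only on the $C^{1,1}$-data of $G_{T}$ (through $L$ and $M$), on $\hat{T}$, and on $\check{\sigma}$, but neither on $\check{h}$ nor on the particular subtriangle $\check{T}_{\check{h}}$.

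The step I expect to be the main obstacle is exactly this balancing. The factor $\Vert\check{B}^{-1}\Vert$ blows up like $\check{h}^{-1}$, so a careless bound would only produce an $O(1)$ estimate. Two observations resolve it: the $C^{1,1}$ regularity makes the increment of $DG_{T}$ genuinely proportional to $\Vert\check{B}\Vert$, and hence to $\check{h}$, so that $\Vert D\Psi\Vert$ carries the \emph{full} power $\Vert\check{B}\Vert^{2}$ (under mere $C^{1}$ the increment would be controlled only by a modulus of continuity, giving decay without the explicit rate $C\check{h}$); and pairing one copy of $\Vert\check{B}\Vert$ with $\Vert\check{B}^{-1}\Vert$ into the shape-regularity-bounded condition number leaves precisely one surviving factor $\Vert\check{B}\Vert\sim\check{h}$.
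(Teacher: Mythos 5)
Your proof is correct and follows essentially the same route as the paper's: the cancellation $D\Psi(\hat{\mathbf{x}})=(DG_{T}(H(\hat{\mathbf{x}}))-DG_{T}(H(\hat{\mathbf{p}})))\check{B}$, the Lipschitz bound from $G_{T}\in C^{1,1}$, the factorization $B^{-1}=\check{B}^{-1}[DG_{T}(H(\hat{\mathbf{p}}))]^{-1}$, and the shape-regularity control of $\Vert\check{B}\Vert\,\Vert\check{B}^{-1}\Vert$. The only (immaterial) difference is that the paper bounds $\Vert H(\hat{\mathbf{x}})-H(\hat{\mathbf{p}})\Vert$ directly by $\check{h}=\mathrm{diam}(\check{T}_{\check{h}})$, whereas you route through $\hat{h}\Vert\check{B}\Vert$ and then invoke $\Vert\check{B}\Vert\le\check{h}/\hat{\rho}$; and you correctly read the affine part of $\Psi$ as the one with matrix $B$ (as the paper's own proof does), not the $\check{B}$ appearing in the statement, which is evidently a typo.
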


\begin{proof}
Obviously, $B=DG_{T}(H(\hat{\mathbf{p}}))\check{B}$. So 
$B^{-1}=\check{B}^{-1}(DG_{T}(H(\hat{\mathbf{p}})))^{-1}$.
We have 
\begin{equation*}
D\Psi(\hat{\mathbf{x}})=
(DG_{T}(H(\hat{\mathbf{x}}))-DG_{T}(H(\hat{\mathbf{p}})))\check{B}.
\end{equation*}
Since $G_{T}$ is a $C^{1}$-diffeomorphism from $\hat{T}$ onto $T$, we have
$\sup_{\hat{\mathbf{x}}\in\hat{T}}\Vert (DG_{T}(\hat{\mathbf{x}}))^{-1}\Vert<\infty$.
With $G_{T}\in C^{1,1}(\hat{T})$, we also have 
\begin{equation*}
\Vert DG_{T}(H(\hat{\mathbf{x}}))-DG_{T}(H(\hat{\mathbf{p}}))\Vert\leq
M \Vert H(\hat{\mathbf{x}})-H(\hat{\mathbf{p}})\Vert \quad \text{for any } 
\hat{\mathbf{x}}\in\hat{T}
\end{equation*} 
where $M$ is the Lipschitz constant for all first order derivatives of $G_{T}$ on $\hat{T}$.

With $\check{h}/\check{\rho}\leq\check{\sigma}$, we obtain that $\Vert \check{B} \Vert\cdot
\Vert \check{B}^{-1}\Vert\leq\dfrac{c}{\check{\sigma}}$, for some $c>0$.
With $\check{h}$ denoting the diameter of $\check{T}_{\check{h}}$, we have
$\Vert H(\hat{\mathbf{x}})-H(\hat{\mathbf{p}})\Vert\leq \check{h}$ for 
any $\hat{\mathbf{x}}\in\check{T}_{\check{h}}$. The definition of $\Psi$ implies then
\begin{equation*}
(\sup_{\hat{\mathbf{x}}\in\hat{T}}\Vert D\Psi
(\hat{\mathbf{x}})\Vert) \Vert B^{-1}\Vert \leq \dfrac{c}{\check{\sigma}}
M(\sup_{\hat{\mathbf{y}}\in\hat{T}}\Vert (DG_{T}(\hat{\mathbf{y}}))^{-1}\Vert)\check{h}.
\end{equation*}
Setting $C= \dfrac{c}{\check{\sigma}}M
(\sup_{\hat{\mathbf{y}}\in\hat{T}}\Vert (DG_{T}(\hat{\mathbf{y}}))^{-1}\Vert)$
finishes the proof.
\end{proof}

Let $\hat{T}_i$ be now multiple copies of the reference triangle corresponding
to the initial mesh, $T_i = G_i(\hat{T}_i),\: i=1,\ldots,m$. 
\begin{definition}
\label{regular_reference}
(Regular triangular meshes in the reference space)
A family of triangulations $(\hat{\mathcal{T}}_{i,\check{h}})_{\check{h}}$ 
of reference triangles $\hat{T}_i$ is said to be regular provided two conditions
are satisfied:
\begin{itemize}
  \item[(i)] Partitions of edges of $\hat{T}_i$ mapped into the same edge
in the physical space are identical.
  \item[(ii)] 
$\sup_{\check{h}}\sup_{\check{T}\in\hat{\mathcal{T}}_{i,\check{h}}}\check{h}/
\check{\rho}<\infty$, where $\check{h}$ and $\check{\rho}$ are the outer and inner diameters 
of $\check{T}$.
\end{itemize}
\end{definition}

Obviously, uniform refinements of reference triangles\footnote{With the same
number of divisions for each reference triangle.} are regular. A number of
adaptive refinement algorithms produces regular meshes as well. To this class
belong e.g. Rivara's algorithm (bisection by the longest edge),
Arnold's algorithm (bisection by the newest edge), the Delaunay triangulation 
(see \cite{Carey:1997:Grids}).

Using Lemma~\ref{decreasing_nonlinear_mapping} and the fact that 
$G_{i}$ is $C^{1}$-diffeomorphism  from $\hat{T}$ to $T_{i}, \: 1\leq i\leq m$, 
we easily conclude that any regular refinements in the reference space
produce curvilinear meshes that satisfy our mesh regularity assumptions.

\section{Properties of Sobolev spaces on curved and reference triangles}

\begin{lemma}
\label{mapping1}
Let $T$ be a curved triangle. For any $\omega\in H^{1}(T;\mathbb{R}^{2})$, 
we define $\hat{\omega}(\hat{\mathbf{x}})$ on $\hat{T}$ by 
$$
\omega(\mathbf{x})=\dfrac{DG_{T}(\hat{\mathbf{x}})}
{\det(DG_{T}(\hat{\mathbf{x}}))}\hat{\omega}(\hat{\mathbf{x}})
\quad \hat{\mathbf{x}}\in\hat{T}.
$$ 
Then $\hat{\omega}(\hat{\mathbf{x}})\in H(\hat{T})$. 
Divergence transforms by the classical Piola's rule:
$$
\text{div}\omega(\mathbf{x})=
(\det(DG_{T}(\hat{\mathbf{x}})))^{-1}\text{div}_{\hat{\mathbf{x}}}\hat{\omega}(\hat{\mathbf{x}})
$$
for $\hat{\mathbf{x}}\in\hat{T}$ almost everywhere.
\end{lemma}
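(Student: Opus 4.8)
The plan is to establish both assertions — membership $\hat{\omega}\in H(\hat{T})$ and the Piola divergence identity — through a single weak-form change-of-variables argument that never differentiates the Jacobian $DG_{T}$, so that only the $C^{1}$-diffeomorphism hypothesis on $G_{T}$ is needed. First I would record the elementary pointwise facts: since $G_{T}\in C^{1}(\hat{T})$ is a diffeomorphism onto $T$ with $\det(DG_{T})>0$, the fields $DG_{T}$, $(DG_{T})^{-1}$ and $\det(DG_{T})$ are continuous, hence bounded and bounded below, on the compact $\hat{T}$. Inverting the defining relation gives $\hat{\omega}=\det(DG_{T})\,(DG_{T})^{-1}(\omega\circ G_{T})$; combined with the boundedness of these factors and the change of variables $d\mathbf{x}=\det(DG_{T})\,d\hat{\mathbf{x}}$, this yields at once $\hat{\omega}\in L^{2}(\hat{T};\mathbb{R}^{2})$ as well as $(\text{div}\,\omega)\circ G_{T}\in L^{2}(\hat{T})$.

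The heart of the proof is the divergence identity, obtained by testing against an arbitrary $\hat{\varphi}\in C_{c}^{\infty}(\hat{T})$ and transporting the integral to the physical element. Setting $\varphi:=\hat{\varphi}\circ G_{T}^{-1}$, which lies in $C_{c}^{1}(T)$ because $G_{T}^{-1}\in C^{1}(T)$ and $G_{T}$ maps the interior of $\hat{T}$ onto the interior of $T$, the chain rule $\nabla_{\hat{\mathbf{x}}}\hat{\varphi}=(DG_{T})^{\top}\big((\nabla_{\mathbf{x}}\varphi)\circ G_{T}\big)$ together with the defining relation written as $DG_{T}\hat{\omega}=\det(DG_{T})\,(\omega\circ G_{T})$ collapses the integrand $\hat{\omega}\cdot\nabla_{\hat{\mathbf{x}}}\hat{\varphi}$ to $\det(DG_{T})\,(\omega\cdot\nabla_{\mathbf{x}}\varphi)\circ G_{T}$, so that $-\int_{\hat{T}}\hat{\omega}\cdot\nabla_{\hat{\mathbf{x}}}\hat{\varphi}\,d\hat{\mathbf{x}}=-\int_{T}\omega\cdot\nabla_{\mathbf{x}}\varphi\,d\mathbf{x}$ after the change of variables. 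Because $\omega\in H^{1}(T;\mathbb{R}^{2})\subset H(\text{div},T)$ and $\varphi\in C_{c}^{1}(T)$ (the integration-by-parts characterization of the weak divergence extends from $C_{c}^{\infty}$ to $C_{c}^{1}$ by a routine mollification), the last integral equals $\int_{T}(\text{div}\,\omega)\varphi\,d\mathbf{x}$, which after changing variables back becomes $\int_{\hat{T}}\det(DG_{T})\,(\text{div}\,\omega)\circ G_{T}\,\hat{\varphi}\,d\hat{\mathbf{x}}$. Since the factor $\det(DG_{T})\,(\text{div}\,\omega)\circ G_{T}$ is an $L^{2}(\hat{T})$ function and $\hat{\varphi}$ was arbitrary, this identifies the distributional divergence of $\hat{\omega}$, proving $\hat{\omega}\in H(\text{div},\hat{T})$ and the stated formula $\text{div}\,\omega=(\det(DG_{T}))^{-1}\text{div}_{\hat{\mathbf{x}}}\hat{\omega}$ almost everywhere.

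It remains to supply the boundary-trace property built into the definition of $H(\hat{T})$, namely $\hat{\omega}|_{\partial\hat{T}}\in L^{2}(\partial\hat{T};\mathbb{R}^{2})$. For this I would use that $\omega\circ G_{T}\in H^{1}(\hat{T};\mathbb{R}^{2})$ — composition of an $H^{1}$ field with a $C^{1}$ diffeomorphism, confirmed by the chain rule $\nabla_{\hat{\mathbf{x}}}(\omega\circ G_{T})=\big((\nabla_{\mathbf{x}}\omega)\circ G_{T}\big)DG_{T}\in L^{2}$ — so that $\omega\circ G_{T}$ admits a trace in $L^{2}(\partial\hat{T})$. Multiplying this trace by the continuous, hence bounded, matrix factor $\det(DG_{T})\,(DG_{T})^{-1}$ restricted to $\partial\hat{T}$ keeps it in $L^{2}(\partial\hat{T})$, giving $\hat{\omega}\in H(\hat{T})$.

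I expect the main obstacle to be regularity bookkeeping rather than any deep difficulty. The classical pointwise Piola identity (divergence-freeness of the rows of $(\det DG_{T})^{-1}DG_{T}$) would require $G_{T}\in C^{2}$, and the whole purpose of the weak argument above is to avoid it; thus the delicate points are justifying that $\varphi=\hat{\varphi}\circ G_{T}^{-1}$ is an admissible $C_{c}^{1}$ test function and that the weak-divergence identity holds for such test functions. A secondary point to treat carefully is that $\hat{\omega}$ need not lie in $H^{1}(\hat{T})$ — the factor $\det(DG_{T})\,(DG_{T})^{-1}$ is only continuous — which is precisely why membership is asserted in the weaker space $H(\hat{T})$ and why the $L^{2}$ boundary trace must be extracted from the $H^{1}$-regular field $\omega\circ G_{T}$ rather than from $\hat{\omega}$ directly.
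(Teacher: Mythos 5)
Your proposal is correct, and its overall plan coincides with the paper's: invert the Piola relation to get $\hat{\omega}=\det(DG_{T})(DG_{T})^{-1}(\omega\circ G_{T})$, deduce the divergence transformation, and verify the $L^{2}$ edge traces by a change of variables on $\partial\hat{T}$. The difference is in how much is actually proved. The paper's proof simply observes that the given map is ``the standard pull back mapping from $H(\text{div},T)$ to $H(\text{div}_{\hat{\mathbf{x}}},\hat{T})$'' and declares the divergence rule immediate; you instead prove that rule by testing against $\hat{\varphi}\in C_{c}^{\infty}(\hat{T})$, transporting the integral to $T$ via the algebraic identity $(DG_{T}^{-1}u)\cdot(DG_{T}^{\top}v)=u\cdot v$, integrating by parts against $\varphi=\hat{\varphi}\circ G_{T}^{-1}\in C_{c}^{1}(T)$, and changing variables back. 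This buys something concrete: the classical pointwise derivation of the Piola identity differentiates the rows of $(\det DG_{T})^{-1}DG_{T}$ and so needs $G_{T}\in C^{2}$, whereas your weak argument works under the stated $C^{1}$-diffeomorphism hypothesis --- a regularity issue the paper does not address. Your treatment of the boundary trace (extract the $L^{2}$ trace from $\omega\circ G_{T}\in H^{1}(\hat{T})$ and multiply by the bounded continuous factor $\det(DG_{T})(DG_{T})^{-1}$) is the same in substance as the paper's explicit computation of $\Vert\hat{\omega}\Vert_{L^{2}(\hat{e})}^{2}$, and both correctly land in the space $H(\hat{T})$ rather than $H^{1}(\hat{T})$, as you note.
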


\begin{proof}
Notice that $\hat{\omega}(\hat{\mathbf{x}})=\det(DG_{T}(\hat{\mathbf{x}}))(DG_{T}(\hat{\mathbf{x}}))^{-1}
\omega(\mathbf{x})$ for any $\hat{\mathbf{x}}\in\hat{T}$. It is straightforward to see that $\det(DG_{T}(\hat{\mathbf{x}}))(DG_{T}(\hat{\mathbf{x}}))^{-1}$ is a matrix whose entries contain first order partial derivatives of $G_{T}(\hat{x})$. 
Notice that 
$$\hat{\omega}(\hat{\mathbf{x}})=\det(DG_{T}(\hat{\mathbf{x}}))(DG_{T}(\hat{\mathbf{x}}))^{-1}
\omega(\mathbf{x})$$ 
is the standard pull back mapping from $H(\text{div},T)$ to $H(\text{div}_{\hat{\mathbf{x}}},\hat{T})$. 
So we immediately have $\text{div}\omega(\mathbf{x})=\dfrac{1}{\det(DG_{T}(\hat{\mathbf{x}}))}\text{div}_{\hat{\mathbf{x}}}\hat{\omega}(\hat{\mathbf{x}})$ 
 for $\hat{\mathbf{x}}\in\hat{T}$ almost everywhere.
Since $G_{T}$ is a $C^{1}$-diffeomorphism from $\hat{T}$ to $T$, we can conclude that 
$\hat{\omega}(\hat{\mathbf{x}})\in H(\text{div},\hat{T})$.

Let $e\in\triangle_{1}(T)$. We denote by $\zeta(s):[0,1]\rightarrow\hat{e}$, 
the local affine parameterization 
of $\hat{e}$. We have then
\begin{align*}
\Vert\hat{\omega}\Vert_{L^{2}(\hat{e})}^{2} = &
\int_{[0,1]}(\hat{\omega}(\zeta(s)))^{\top}\hat{\omega}(\zeta(s))\Vert\dot{\zeta}(s)\Vert ds \\
= & \int_{[0,1]}(\det(DG_{T}(\zeta(s))))^{2}\omega(G_{T}(\zeta(s)))^{\top}
[DG_{T}(\zeta(s))^{-\top}DG_{T}(\zeta(s))^{-1}] \\
& \qquad \omega(G_{T}(\zeta(s)))\Vert (DG_{T}(\zeta(s)))^{-1}[DG_{T}(\zeta(s))\dot{\zeta}(s)]\Vert ds. 
\end{align*}
Since $G_{T}$ is a $C^{1}$-diffeomorphism from $\hat{T}$ to $T$, we can conclude that 
$\hat{\omega}|_{\partial\hat{T}}\in L^{2}(\partial\hat{T};\mathbb{R}^{2})$. So $\hat{\omega}\in
H(\hat{T})$.
\end{proof}

\begin{lemma}
\label{H_div_transform_inequality1}
There exist $\delta>0$ and $C>0$ such that, for any $h<\delta$ and $T\in\mathcal{T}_{h}$,
\begin{equation*}
\Vert\omega\Vert_{L^{2}(T)}\leq C\Vert\hat{\omega}\Vert_{L^{2}(\hat{T})},\forall \omega\in L^{2}(T;\mathbb{R}^{2}),
\end{equation*}
where $\dfrac{DG_{T}(\hat{\mathbf{x}})\hat{\omega}(\hat{\mathbf{x}})}{\det(DG_{T}(\hat{\mathbf{x}}))}=
\omega(\mathbf{x})$ for any $\hat{\mathbf{x}}\in\hat{T}$.
If $T$ is a triangle for any $T\in\mathcal{T}_{h}$ and $h>0$, then the above inequality holds for any $h>0$.
\end{lemma}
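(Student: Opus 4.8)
The plan is to pass the left-hand norm to the reference triangle by the change of variables $\mathbf{x}=G_T(\hat{\mathbf{x}})$, substitute the Piola relation, and thereby reduce everything to a uniform pointwise bound on the scalar ratio $\Vert DG_T(\hat{\mathbf{x}})\Vert^2/\det(DG_T(\hat{\mathbf{x}}))$. Concretely, since $\det(DG_T)>0$ we have $d\mathbf{x}=\det(DG_T(\hat{\mathbf{x}}))\,d\hat{\mathbf{x}}$, and inserting $\omega(G_T(\hat{\mathbf{x}}))=\det(DG_T(\hat{\mathbf{x}}))^{-1}DG_T(\hat{\mathbf{x}})\hat{\omega}(\hat{\mathbf{x}})$ gives
\[
\Vert\omega\Vert_{L^{2}(T)}^{2}
=\int_{\hat{T}}\frac{\vert DG_T(\hat{\mathbf{x}})\hat{\omega}(\hat{\mathbf{x}})\vert^{2}}{\det(DG_T(\hat{\mathbf{x}}))}\,d\hat{\mathbf{x}}
\leq\Bigl(\sup_{\hat{\mathbf{x}}\in\hat{T}}\frac{\Vert DG_T(\hat{\mathbf{x}})\Vert^{2}}{\det(DG_T(\hat{\mathbf{x}}))}\Bigr)\Vert\hat{\omega}\Vert_{L^{2}(\hat{T})}^{2}.
\]
So it suffices to show that the displayed supremum, taken also over $T\in\mathcal{T}_{h}$, is bounded by a constant once $h$ is small enough.

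\textbf{Key steps.} I would bound the ratio by splitting off the affine part, writing
\[
\frac{\Vert DG_T(\hat{\mathbf{x}})\Vert^{2}}{\det(DG_T(\hat{\mathbf{x}}))}
=\frac{\Vert DG_T(\hat{\mathbf{x}})\Vert^{2}}{\det(B_T)}\cdot\frac{\det(B_T)}{\det(DG_T(\hat{\mathbf{x}}))}.
\]
For the first factor I would use $DG_T=B_T+D\Phi_T$ together with the regularity bound $\Vert D\Phi_T(\hat{\mathbf{x}})\Vert\,\Vert B_T^{-1}\Vert\leq c_h$; since $\Vert B_T^{-1}\Vert^{-1}$ is the smallest singular value of $B_T$, this yields $\Vert D\Phi_T(\hat{\mathbf{x}})\Vert\leq c_h\Vert B_T\Vert$ and hence $\Vert DG_T(\hat{\mathbf{x}})\Vert\leq(1+c_h)\Vert B_T\Vert$. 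In two dimensions $\Vert B_T\Vert^{2}/\det(B_T)=\Vert B_T\Vert\,\Vert B_T^{-1}\Vert$, which by Lemma~\ref{regular_equivalent} and the shape-regularity bound $\tilde h_T/\tilde\rho_T\leq\sigma$ is controlled by $\sigma/c_1$. Thus the first factor is at most $(1+c_h)^{2}\sigma/c_1$. For the second factor, $\det(B_T)/\det(DG_T(\hat{\mathbf{x}}))=\det\bigl(B_T(DG_T(\hat{\mathbf{x}}))^{-1}\bigr)$, which tends to $1$ uniformly in $T$ and $\hat{\mathbf{x}}$ by Lemma~\ref{geometry_property3}; hence it is $\leq 2$ for $h$ below some $\delta$. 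Combining, for $h<\delta$ the ratio is bounded by $2(1+c_h)^{2}\sigma/c_1\leq C^{2}$ (using $c_h\to0$), which proves the inequality.

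\textbf{Main obstacle and affine case.} The delicate point is that $\Vert B_T\Vert$ and $\det(B_T)$ may individually degenerate as $h\to0$, so the argument must be phrased entirely in terms of the scale-invariant quantity $\Vert B_T\Vert\,\Vert B_T^{-1}\Vert$ (equivalently $\tilde h_T/\tilde\rho_T$) and in terms of the relative perturbation of $DG_T$ away from $B_T$, which is exactly what the regularity hypothesis $\lim_{h\to0}c_h=0$ and Lemmas~\ref{geometry_property2}--\ref{geometry_property3} supply. For the affine case one has $\Phi_T\equiv0$, so $DG_T=B_T$ is constant and both factors above reduce to $\Vert B_T\Vert\,\Vert B_T^{-1}\Vert\leq\sigma/c_1$ and $1$ respectively, with no smallness requirement on $h$; hence the inequality holds for every $h>0$, as claimed.
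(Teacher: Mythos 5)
Your proof is correct and follows essentially the same route as the paper's: change variables to $\hat{T}$, factor out the affine part $B_T$, and control the three ingredients via $c_h\to 0$, Lemma~\ref{geometry_property3} for the determinant ratio, and shape regularity giving $\Vert B_T\Vert\,\Vert B_T^{-1}\Vert=\sigma_1/\sigma_2\leq\sigma/c_1$. The only cosmetic difference is that you bound the scalar ratio $\Vert DG_T\Vert^2/\det(B_T)$ directly via $\Vert DG_T\Vert\leq(1+c_h)\Vert B_T\Vert$, whereas the paper keeps the matrix form and argues $B_T^{-\top}DG_T^{\top}DG_TB_T^{-1}\to I$; these are the same estimate.
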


\begin{proof}
\begin{align*}
\Vert\omega\Vert_{L^{2}(T)} = &\int_{T}\dfrac{1}{\det(DG_{T}(\hat{\mathbf{x}}))^{2}}[DG_{T}(\hat{\mathbf{x}})
\hat{\omega}(\hat{\mathbf{x}})]^{\top}DG_{T}(\hat{\mathbf{x}})\hat{\omega}(\hat{\mathbf{x}})d\mathbf{x}\\
= & \int_{\hat{T}}\det((DG_{T})^{-1})\hat{\omega}^{\top}[DG_{T}^{\top}
DG_{T}]\hat{\omega}d\hat{\mathbf{x}} \\
= & \int_{\hat{T}}\det(DG_{T}^{-1}B_{T})\det(B_{T}^{-1})\hat{\omega}^{\top}B_{T}^{\top}
[B_{T}^{-\top}DG_{T}^{\top}DG_{T}B_{T}^{-1}]B_{T}\hat{\omega}d\hat{\mathbf{x}}.
\end{align*}

Since $c_{h}\rightarrow 0$ as $h\rightarrow 0$, $\lim_{h\rightarrow 0}\sup_{T\in\mathcal{T}_{h}}
\sup_{\hat{\mathbf{x}}\in\hat{T}}\Vert B_{T}^{-\top}DG_{T}^{\top}DG_{T}B_{T}^{-1}-I\Vert=0$. 
By Lemma~\ref{geometry_property3}, we have $\lim_{h\rightarrow 0}\sup_{T\in\mathcal{T}_{h}}
\sup_{\hat{\mathbf{x}}\in\hat{T}}\vert\det(DG_{T}^{-1}B_{T})-1\vert = 0$.

Since $(\mathcal{T}_{h})_{h}$ is regular, there is a constant $\sigma>0$ 
such that $\sigma_{1}/\sigma_{2}\leq\sigma$ 
for any $T\in\mathcal{T}_{h}$, where $\sigma_1$ and $\sigma_2$ 
denote the biggest and smallest singular value of 
the corresponding matrix $B_{T}$. 
Then $\Vert B_{T}^{\top}\Vert = \Vert B_{T}\Vert = \sigma_1$ 
and $\det(B_{T})=\sigma_1\cdot\sigma_2$. 
So $\Vert B_{T}^{\top}\Vert \cdot \Vert B_{T}\Vert \det (B_{T}^{-1})=\sigma_1/\sigma_2\leq\sigma$ for 
any $h>0$ and any $T\in\mathcal{T}_{h}$.

We can conclude thus that there exist $\delta>0$ and $C>0$ such that,
for any $h<\delta$ and $T\in\mathcal{T}_{h}$,
\begin{equation*}
\Vert\omega\Vert_{L^{2}(T)}\leq C\Vert\hat{\omega}\Vert_{L^{2}(\hat{T})}
\quad \forall \omega\in L^{2}(T;\mathbb{R}^{2})
\end{equation*}
If, for all $h$, $T\in\mathcal{T}_{h}$ are (regular) triangles, 
the asymptotic argument is not necessary, and the above inequality holds for any $h>0$.
\end{proof}

\begin{lemma}
\label{H_div_transform_inequality2}
There exist $\delta>0$ and $C>0$ such that, for any $h<\delta$ and $T\in\mathcal{T}_{h}$,
\begin{equation*}
\Vert\hat{\omega}\Vert_{H(\text{div}_{\hat{\mathbf{x}}},\hat{T})}^{2}
+\Vert\hat{\omega}\Vert_{L^{2}(\partial\hat{T})}^{2}
\leq C\Vert\omega\Vert_{H^{1}(T)}^{2}
\quad\forall \omega\in H^{1}(T;\mathbb{R}^{2}),
\end{equation*}
where $\dfrac{DG_{T}(\hat{\mathbf{x}})\hat{\omega}(\hat{\mathbf{x}})}{\det(DG_{T}(\hat{\mathbf{x}}))}=
\omega(\mathbf{x})$ for any $\hat{\mathbf{x}}\in\hat{T}$.
If $T$ is a triangle for any $T\in\mathcal{T}_{h}$ and $h>0$, then the above inequality holds for any $h>0$.
\end{lemma}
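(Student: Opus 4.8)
The plan is to split the reference-space norm on the left into its three natural constituents, $\Vert\hat{\omega}\Vert_{L^{2}(\hat{T})}$, $\Vert\text{div}_{\hat{\mathbf{x}}}\hat{\omega}\Vert_{L^{2}(\hat{T})}$ and $\Vert\hat{\omega}\Vert_{L^{2}(\partial\hat{T})}$, and to bound each by $\Vert\omega\Vert_{H^{1}(T)}$ separately, with all constants made uniform in $h$ and $T$ through the regularity of $(\mathcal{T}_{h})_{h}$ together with the asymptotic geometry results of Lemmas~\ref{geometry_property2} and~\ref{geometry_property3}. Throughout I would pass between $DG_{T}(\hat{\mathbf{x}})$ and its affine part $B_{T}$, the discrepancy being controlled uniformly by $c_{h}\to 0$, and I would repeatedly use $\det(DG_{T})\approx\tilde{h}_{T}^{2}$ (from regularity, Lemma~\ref{regular_equivalent} and Lemma~\ref{geometry_property3}).

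First I would treat the two interior terms, which together control $\Vert\hat{\omega}\Vert_{H(\text{div}_{\hat{\mathbf{x}}},\hat{T})}$. The bound $\Vert\hat{\omega}\Vert_{L^{2}(\hat{T})}\leq C\Vert\omega\Vert_{L^{2}(T)}$ is the converse of the computation in Lemma~\ref{H_div_transform_inequality1}: writing $\Vert\omega\Vert_{L^{2}(T)}^{2}=\int_{\hat{T}}\det(DG_{T})^{-1}\hat{\omega}^{\top}DG_{T}^{\top}DG_{T}\hat{\omega}\,d\hat{\mathbf{x}}$, it suffices to bound the symmetric matrix $\det(DG_{T})^{-1}DG_{T}^{\top}DG_{T}$ \emph{from below}. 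Replacing $DG_{T}$ by $B_{T}$ up to the vanishing error of Lemma~\ref{geometry_property2}, this matrix becomes $\det(B_{T})^{-1}B_{T}^{\top}B_{T}$, whose eigenvalues are $\sigma_{1}/\sigma_{2}$ and $\sigma_{2}/\sigma_{1}$; regularity gives $\sigma_{2}/\sigma_{1}\geq 1/\sigma$, hence the lower bound. For the divergence term I would invoke the Piola identity of Lemma~\ref{mapping1}, $\text{div}_{\hat{\mathbf{x}}}\hat{\omega}=\det(DG_{T})\,\text{div}\,\omega$, change variables back to $T$ to get $\Vert\text{div}_{\hat{\mathbf{x}}}\hat{\omega}\Vert_{L^{2}(\hat{T})}^{2}=\int_{T}\det(DG_{T})(\text{div}\,\omega)^{2}\,d\mathbf{x}$, and use the uniform upper bound $\det(DG_{T})\leq C\tilde{h}_{T}^{2}\leq C$ (valid since $\tilde{h}_{T}\leq h$ is bounded) to conclude $\leq C\Vert\omega\Vert_{H^{1}(T)}^{2}$.

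The boundary term is the crux, and the one genuinely requiring the full $H^{1}(T)$ norm on the right, since a trace cannot be controlled by interior $L^{2}$ data alone. Here I would apply the fixed trace inequality on the reference triangle, $\Vert\hat{\omega}\Vert_{L^{2}(\partial\hat{T})}\leq C\Vert\hat{\omega}\Vert_{H^{1}(\hat{T})}$, reducing matters to $\Vert\hat{\omega}\Vert_{H^{1}(\hat{T})}\leq C\Vert\omega\Vert_{H^{1}(T)}$. Writing $\hat{\omega}(\hat{\mathbf{x}})=M(\hat{\mathbf{x}})\,\omega(G_{T}(\hat{\mathbf{x}}))$ with $M=\det(DG_{T})(DG_{T})^{-1}$, the product rule gives $\nabla_{\hat{\mathbf{x}}}\hat{\omega}=(\nabla_{\hat{\mathbf{x}}}M)(\omega\circ G_{T})+M\,(\nabla_{\mathbf{x}}\omega\circ G_{T})\,DG_{T}$. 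The decisive observation is that in two dimensions $M$ is the adjugate of $DG_{T}$, so its entries are first derivatives of $G_{T}$; under the $C^{1,1}$ hypothesis these are Lipschitz, which both guarantees $\hat{\omega}\in H^{1}(\hat{T})$ and makes $\nabla_{\hat{\mathbf{x}}}M$, a collection of second derivatives of $G_{T}$, bounded almost everywhere. Scaling bookkeeping then gives $\Vert M\Vert\approx\tilde{h}_{T}$, $\Vert DG_{T}\Vert\approx\tilde{h}_{T}$, and, via the factorisation $G_{T}=G_{i}\circ H$ of the mesh-generation construction (cf. Lemma~\ref{decreasing_nonlinear_mapping}), $\Vert\nabla_{\hat{\mathbf{x}}}M\Vert\leq C\tilde{h}_{T}^{2}$; after changing variables to $T$ both terms collapse to $C\Vert\omega\Vert_{H^{1}(T)}^{2}$, the surviving positive powers of $\tilde{h}_{T}$ being absorbed by the boundedness of $h$.

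The main obstacle is precisely this boundary estimate: it is the only place where the $C^{1,1}$ regularity of the element maps is indispensable, both to give meaning to $\nabla_{\hat{\mathbf{x}}}\hat{\omega}$ and to bound the curvature term $\nabla_{\hat{\mathbf{x}}}M$ uniformly, and the scaling must be tracked carefully so that the final constant depends only on $\sigma$ and on the Lipschitz constants of the initial element maps, not on $h$. Finally, for genuinely affine meshes $DG_{T}\equiv B_{T}$, so $D\Phi_{T}=0$, the second-derivative term vanishes identically, and every appeal to Lemmas~\ref{geometry_property2} and~\ref{geometry_property3} becomes an exact identity; the three estimates then hold for all $h>0$ with no smallness requirement, which yields the last assertion of the lemma.
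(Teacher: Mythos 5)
Your decomposition into the three terms $\Vert\hat{\omega}\Vert_{L^{2}(\hat{T})}$, $\Vert\text{div}_{\hat{\mathbf{x}}}\hat{\omega}\Vert_{L^{2}(\hat{T})}$ and $\Vert\hat{\omega}\Vert_{L^{2}(\partial\hat{T})}$ is exactly the paper's, and your treatment of the two interior terms coincides with it: the $L^{2}$ bound is the reverse-direction version of Lemma~\ref{H_div_transform_inequality1} (bounding $\det(DG_{T})\,DG_{T}^{-\top}DG_{T}^{-1}$ from above is the same as bounding its inverse from below, controlled by $\sigma_{1}/\sigma_{2}\leq\sigma$ and Lemmas~\ref{geometry_property2},~\ref{geometry_property3}), and the divergence bound via the Piola identity of Lemma~\ref{mapping1} and $\det(B_{T})\leq\tilde{h}_{T}^{2}$ is verbatim what the paper does.

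For the boundary term you take a genuinely different route, and this is where you should be careful. The paper never forms $\nabla_{\hat{\mathbf{x}}}\hat{\omega}$: it parametrizes each edge, changes variables back to the physical edge $e$, shows $\Vert\hat{\omega}\Vert_{L^{2}(\hat{e})}^{2}\leq C\,\tilde{h}_{T}\Vert\omega\Vert_{L^{2}(e)}^{2}$ using only $\Vert B_{T}^{-\top}\Vert\Vert B_{T}^{-1}\Vert(\det B_{T})^{2}=\sigma_{1}^{2}\leq\tilde{h}_{T}^{2}$ and $\Vert\dot{\zeta}\Vert\leq C\tilde{h}_{T}^{-1}\Vert DG_{T}\dot{\zeta}\Vert$, and then applies the trace theorem to $\omega\in H^{1}(T)$ on the \emph{physical} element; the explicit factor $\tilde{h}_{T}$ exactly compensates the $\tilde{h}_{T}^{-1}$ in the scaled trace constant. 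This uses only first-derivative (Jacobian) information, i.e.\ only the quantities controlled by $c_{h}$ and the shape-regularity constant $\sigma$. Your route --- trace on $\hat{T}$ followed by $\Vert\hat{\omega}\Vert_{H^{1}(\hat{T})}\leq C\Vert\omega\Vert_{H^{1}(T)}$ --- forces you to differentiate the adjugate matrix $M=\det(DG_{T})(DG_{T})^{-1}$, hence to control second derivatives of $G_{T}$ with the scaling $\Vert D^{2}G_{T}\Vert\leq C\tilde{h}_{T}^{2}$ uniformly over the family. That bound is \emph{not} a consequence of the hypotheses under which the lemma is stated (shape regularity, $c_{h}\to 0$, and $G_{T}\in C^{1,1}(\hat{T})$ elementwise give no uniform, correctly scaled Lipschitz constant for $DG_{T}$ across $h$); it holds for meshes produced by the factorization $G_{T}=G_{i}\circ H$ of Appendix~\ref{app_mesh_generation} (Lemma~\ref{decreasing_nonlinear_mapping}), but that construction is offered as a sufficient way to generate admissible meshes, not as part of the standing assumptions. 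So your argument proves the lemma for the Appendix-A meshes but imports an extra hypothesis relative to the statement; if you want to keep your scheme you should either add that hypothesis explicitly or, better, replace the reference-element trace step by the paper's edge-by-edge change of variables and physical-element trace, which needs nothing beyond $c_{h}$ and $\sigma$. Your final remark about the affine case is correct under either route.
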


\begin{proof}
\begin{align*}
\Vert\hat{\omega}\Vert_{L^{2}(\hat{T})}^{2} & = 
\int_{\hat{T}}(\det(DG_{T}(\hat{\mathbf{x}})))^{2}\omega(\mathbf{x})^{\top}
[DG_{T}(\hat{\mathbf{x}})^{-T}DG_{T}(\hat{\mathbf{x}})^{-1}]\omega(\mathbf{x})d\hat{\mathbf{x}}\\
& = \int_{T}\det(DG_{T}(\hat{\mathbf{x}}))\omega(\mathbf{x})^{\top}
[DG_{T}(\hat{\mathbf{x}})^{-T}DG_{T}(\hat{\mathbf{x}})^{-1}]\omega(\mathbf{x})d\mathbf{x}\\
& = \int_{T}\det(DG_{T}(\hat{\mathbf{x}})B_{T}^{-1})\det(B_{T})\omega(\mathbf{x})^{\top}B_{T}^{-\top} \\
& \qquad \quad [B_{T}^{\top}DG_{T}(\hat{\mathbf{x}})^{-T}DG_{T}(\hat{\mathbf{x}})^{-1}B_{T}]B_{T}^{-1}\omega(\mathbf{x})d\mathbf{x}.
\end{align*}
According to Lemma~\ref{geometry_property2}, $\lim_{h\rightarrow 0}\sup_{T\in\mathcal{T}_{h}}
\sup_{\hat{\mathbf{x}}\in\hat{T}}\Vert B_{T}^{\top}DG_{T}(\hat{\mathbf{x}})^{-T}
DG_{T}(\hat{\mathbf{x}})^{-1}B_{T}-I\Vert=0$. 
By Lemma~\ref{geometry_property3}, we have $\lim_{h\rightarrow 0}\sup_{T\in\mathcal{T}_{h}}
\sup_{\hat{\mathbf{x}}\in\hat{T}}\vert\det(DG_{T}(\hat{\mathbf{x}})B_{T}^{-1})-1\vert=0$.

Since $(\mathcal{T}_{h})_{h}$ is regular, there exists a constant $\sigma>0$ 
such that  $\sigma_{1}/\sigma_{2}\leq\sigma$ 
for any $T\in\mathcal{T}_{h}$, where $\sigma_1$ and $\sigma_2$ denote
 the biggest and smallest singular value of 
the matrix $B_{T}$. 
Then $\Vert B_{T}^{-\top}\Vert=\Vert B_{T}^{-1}\Vert = \sigma_{2}^{-1}$ and 
$\det(B_{T})=\sigma_1\cdot\sigma_{2}$. So $\Vert B_{T}^{-\top}\Vert\cdot\Vert B_{T}^{-1}\Vert\det(B_{T})
=\sigma_1/\sigma_2\leq \sigma$.
Consequently, there exist $\delta_{1}>0$ and $C_{1}>0$ such that,
 for any $h<\delta_{1}$ and $T\in\mathcal{T}_{h}$,
\begin{align*}
& \det(DG_{T}(\hat{\mathbf{x}})B_{T}^{-1})\det(B_{T})\omega(\mathbf{x})^{\top}B_{T}^{-\top}
[B_{T}^{\top}DG_{T}(\hat{\mathbf{x}})^{-T}DG_{T}(\hat{\mathbf{x}})^{-1}B_{T}]B_{T}^{-1}\omega(\mathbf{x})\\ 
& \leq C_{1}^{2} \omega(\mathbf{x})^{\top}\omega(\mathbf{x})
\end{align*}
for all $\omega\in H^{1}(T;\mathbb{R}^{2}), \mathbf{x}\in T$.
We can conclude that, for any $h<\delta_{1}$ and $T\in\mathcal{T}_{h}$,
\begin{equation*}
\Vert\hat{\omega}\Vert_{L^{2}(\hat{T})}\leq C_{1}\Vert\omega\Vert_{L^{2}(T)}
\quad \forall \omega\in H^{1}(T;\mathbb{R}^{2})
\end{equation*}
According to Lemma~\ref{mapping1}, we have
\begin{align*}
 \Vert \text{div}_{\hat{\mathbf{x}}}\hat{\omega}\Vert_{L^{2}(\hat{T})}^{2} = &
\int_{\hat{T}}(\det(DG_{T}(\hat{\mathbf{x}})))^{2}(\text{div}\omega(\mathbf{x}))^{2}d\hat{\mathbf{x}}
= \int_{T}\det(DG_{T}(\hat{\mathbf{x}}))(\text{div}\omega(\mathbf{x}))^{2}d\mathbf{x} \\
= & \int_{T}\det(DG_{T}(\hat{\mathbf{x}})B_{T}^{-1})\det(B_{T})(\text{div}\omega(\mathbf{x}))^{2}d\mathbf{x}.
\end{align*}
By Lemma~\ref{geometry_property3}, $\lim_{h\rightarrow 0}\sup_{T\in\mathcal{T}_{h}}
\sup_{\hat{\mathbf{x}}\in\hat{T}}\vert\det(DG_{T}(\hat{\mathbf{x}})B_{T}^{-1})-1\vert=0$.
Obviously, $\det(B_{T})\leq \tilde{h}_{T}^{2}$.
There must exist then $\delta_{2}>0$ and $C_{2}>0$ such that,
 for any $h<\delta_{2}$ and $T\in\mathcal{T}_{h}$,
\begin{equation*}
\det(DG_{T}(\hat{\mathbf{x}})B_{T}^{-1})\det(B_{T})(\text{div}\omega(\mathbf{x}))^{2} \leq 
C_{2}^{2}\tilde{h}_{T}^{2}(\text{div}\omega(\mathbf{x}))^{2}
\quad \forall \omega\in H^{1}(T;\mathbb{R}^{2}), \mathbf{x}\in T.
\end{equation*}
We conclude that, for any $h<\delta_{2}$ and $T\in\mathcal{T}_{h}$,
\begin{equation*}
\Vert \text{div}_{\hat{\mathbf{x}}}\hat{\omega}\Vert_{L^{2}(\hat{T})}\leq C_{2}\tilde{h}_{T}\Vert\text{div}\omega\Vert_{L^{2}(T)}
\quad
\forall \omega\in H^{1}(T;\mathbb{R}^{2}).
\end{equation*}
We take now an arbitrary $e\in\triangle_{1}(T)$. We denote by 
$\zeta(s):[0,1]\rightarrow\hat{e}$ the local affine parameterization 
of $\hat{e}$. We have then
\begin{align*}
\Vert\hat{\omega}\Vert_{L^{2}(\hat{e})}^{2} = &
\int_{[0,1]}(\hat{\omega}(\zeta(s)))^{\top}\hat{\omega}(\zeta(s))\Vert\dot{\zeta}(s)\Vert ds \\
= & \int_{[0,1]}(\det(DG_{T}(\zeta(s))))^{2}\omega(G_{T}(\zeta(s)))^{\top}
[DG_{T}(\zeta(s))^{-\top}DG_{T}(\zeta(s))^{-1}]\\
& \qquad \quad \omega(G_{T}(\zeta(s)))\Vert\dot{\zeta}(s)\Vert ds \\
= & \int_{[0,1]}(\det(DG_{T}(\zeta(s)))B_{T}^{-1})^{2}(\det(B_{T}))^{2}\omega(G_{T}(\zeta(s)))^{\top}B_{T}^{-\top} \\
& \qquad \quad [B_{T}^{\top}DG_{T}(\zeta(s))^{-\top}DG_{T}(\zeta(s))^{-1}B_{T}]B_{T}^{-1}\omega(G_{T}(\zeta(s)))
\Vert\dot{\zeta}(s)\Vert ds.
\end{align*}
According to Lemma~\ref{geometry_property2}, we have that 
$$
\lim_{h\rightarrow 0}\sup_{T\in\mathcal{T}_{h}}
\sup_{e\in T}\sup_{s\in [0,1]}\Vert B_{T}^{\top}DG_{T}(\zeta(s))^{-\top}DG_{T}(\zeta(s))^{-1}B_{T}-I\Vert=0.
$$ 
By Lemma~\ref{geometry_property3}, 
$\lim_{h\rightarrow 0}\sup_{T\in\mathcal{T}_{h}}\sup_{e\in T}\sup_{s\in [0,1]}
\vert(\det(DG_{T}(\zeta(s)))B_{T}^{-1})^{2}-1\vert=0$.

Consider again the singular values of $B_{T}$, $\sigma_1 \geq \sigma_2$. 
Then $\Vert B_{T}^{-\top}\Vert=\Vert B_{T}^{-1}\Vert = \sigma_{2}^{-1}$, and 
$\det(B_{T})=\sigma_1\cdot\sigma_{2}$. 
So $\Vert B_{T}^{-\top}\Vert\cdot\Vert B_{T}^{-1}\Vert(\det(B_{T}))^{2}
=(\sigma_1)^{2}\leq \tilde{h}_{T}^{2}$. 
Consequently, there exist $\delta_{3}>0$ and $C_{3}>0$ such that,
 for any $h<\delta_{3}$ and $T\in\mathcal{T}_{h}$,
\begin{align*}
& 
(\det(DG_{T}(\zeta(s)))B_{T}^{-1})^{2}(\det(B_{T}))^{2}\omega(G_{T}(\zeta(s)))^{\top}B_{T}^{-\top} \\
& \quad
[B_{T}^{\top}DG_{T}(\zeta(s))^{-\top}DG_{T}(\zeta(s))^{-1}B_{T}]B_{T}^{-1}\omega(G_{T}(\zeta(s))) \\
\leq & C_{3} \tilde{h}_{T}^{2}\omega(\mathbf{x})^{\top}\omega(\mathbf{x})\quad  
\forall \omega\in H^{1}(T;\mathbb{R}^{2}), e\in\triangle_{1}(T), s\in [0,1]
\end{align*}
and
\begin{align*}
\Vert\dot{\zeta}(s)\Vert = & \Vert B_{T}^{-1}[B_{T}DG_{T}(\zeta(s))^{-1}]\cdot [DG_{T}(\zeta(s))\dot{\zeta}(s)] \Vert \\
\leq & C_{3} \tilde{h}_{T}^{-1}\Vert DG_{T}(\zeta(s))\dot{\zeta}(s) \Vert
\quad \forall e\in\triangle_{1}(T), s\in [0,1]. 
\end{align*}
We can conclude that, for any $h<\delta_{3}$ and $T\in\mathcal{T}_{h}$,
\begin{align*}
\Vert\hat{\omega}\Vert_{L^{2}(\hat{e})}^{2}& \leq C_{3}^{2}\tilde{h}_{T}
\int_{[0,1]}\omega(G_{T}(\zeta(s)))^{\top}\omega(G_{T}(\zeta(s)))\Vert DG_{T}(\zeta(s))\dot{\zeta}(s)\Vert ds\\
& =C_{3}^{2}\tilde{h}_{T}\Vert\omega\Vert_{L^{2}(e)}^{2}.
\end{align*}
Obviously, $\sup_{h}\sup_{T\in\mathcal{T}_{h}}\tilde{h}_{T}<\infty$.
Since $\omega\in H^{1}(T;\mathbb{R}^{2})$, we can use the Trace Theorem to conclude that
there exist $\delta>0$ and $C>0$ such that, for any $h<\delta$ and $T\in\mathcal{T}_{h}$,
\begin{equation*}
\Vert\hat{\omega}\Vert_{H(\text{div}_{\hat{\mathbf{x}}},\hat{T})}^{2}
+\Vert\hat{\omega}\Vert_{L^{2}(\partial\hat{T})}^{2}
\leq C\Vert\omega\Vert_{H^{1}(T)}^{2}\quad \forall \omega\in H^{1}(T;\mathbb{R}^{2}).
\end{equation*}
It is easy to see that, if $T$ is a (regular) triangle for any $T\in\mathcal{T}_{h}$ 
and $h>0$, then the 
inequality above holds for any $h>0$.
\end{proof}

\begin{lemma}
\label{H_curl_transform_inequality1}
There exist $\delta>0$ and $C>0$ such that, for any $h<\delta$ and $T\in\mathcal{T}_{h}$,
\begin{equation*}
\Vert\text{curl}\omega\Vert_{L^{2}(T)}^{2} \leq C\tilde{h}_{T}^{-2}
\Vert\text{curl}_{\hat{\mathbf{x}}}\hat{\omega}\Vert_{L^{2}(\hat{T})}^{2}
\quad
\forall \omega\in H^{1}(T;\mathbb{R}^{2})
\end{equation*}
where $\dfrac{B_{T}\hat{\omega}(\hat{\mathbf{x}})}{\det(B_{T})}=
\omega(\mathbf{x})$ for any $\hat{\mathbf{x}}\in\hat{T}$.
If $T$ is a triangle for any $T\in\mathcal{T}_{h}$ and $h>0$, then the 
above inequality holds for any $h>0$.
\end{lemma}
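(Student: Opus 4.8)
The plan is to reduce the whole statement to one explicit pointwise transformation rule for $\text{curl}$ under the constant Piola map $\omega=\det(B_T)^{-1}B_T\,\hat\omega\circ G_T^{-1}$, and then to absorb all the geometric distortion into scalar factors controlled by mesh regularity and by Lemmas~\ref{geometry_property2} and~\ref{geometry_property3}. The crucial algebraic observation is that in two dimensions the matrix-valued curl of a vector field can be written as $\text{curl}\,v=(\nabla v)\,R$, where $R=\left[\begin{smallmatrix}0&-1\\1&0\end{smallmatrix}\right]$ is the $90^{\circ}$ rotation and $\nabla v$ is the Jacobian of $v$; a direct check against the definition of $\text{curl}$ in Section~2 confirms this, and $R$ is orthogonal so that $\Vert R\Vert=\Vert R^{-1}\Vert=1$.

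First I would differentiate the relation $\omega(\mathbf{x})=\det(B_T)^{-1}B_T\hat\omega(\hat{\mathbf{x}})$ by the chain rule, obtaining the matrix identity $\nabla_{\mathbf{x}}\omega=\det(B_T)^{-1}B_T(\nabla_{\hat{\mathbf{x}}}\hat\omega)(DG_T)^{-1}$. Multiplying on the right by $R$ and using $\nabla_{\hat{\mathbf{x}}}\hat\omega=(\text{curl}_{\hat{\mathbf{x}}}\hat\omega)R^{-1}$ gives the pointwise identity
\begin{equation*}
\text{curl}\,\omega(\mathbf{x})=\frac{1}{\det(B_T)}\,B_T\,(\text{curl}_{\hat{\mathbf{x}}}\hat\omega)(\hat{\mathbf{x}})\,R^{-1}(DG_T(\hat{\mathbf{x}}))^{-1}R .
\end{equation*}
Taking the $2$-norm, using submultiplicativity and $\Vert R\Vert=\Vert R^{-1}\Vert=1$, and then changing variables to $\hat T$ (which introduces the factor $\lvert\det DG_T\rvert$) reduces the claim to the uniform scalar bound
\begin{equation*}
\frac{\Vert B_T\Vert^{2}\,\Vert(DG_T)^{-1}\Vert^{2}\,\lvert\det DG_T\rvert}{(\det B_T)^{2}}\le C\,\tilde h_T^{-2},
\end{equation*}
after which $\Vert\text{curl}\,\omega\Vert_{L^2(T)}^2$ is dominated by $C\tilde h_T^{-2}\Vert\text{curl}_{\hat{\mathbf{x}}}\hat\omega\Vert_{L^2(\hat T)}^2$.

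The main obstacle is precisely this uniform control of the geometric coefficient, and it is where the curvilinear nature of the mesh forces the asymptotic restriction $h<\delta$. I would write $\Vert(DG_T)^{-1}\Vert\le\Vert(DG_T)^{-1}B_T\Vert\,\Vert B_T^{-1}\Vert$ and $\lvert\det DG_T\rvert=\lvert\det(DG_T B_T^{-1})\rvert\,\lvert\det B_T\rvert$, and then invoke Lemma~\ref{geometry_property2} (so that $\Vert(DG_T)^{-1}B_T\Vert\le2$) and Lemma~\ref{geometry_property3} (so that $\lvert\det(DG_T B_T^{-1})\rvert\le2$) once $h$ is small enough. This collapses the coefficient to a constant multiple of $\Vert B_T\Vert^2\Vert B_T^{-1}\Vert^2/\lvert\det B_T\rvert$. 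Writing $\Vert B_T\Vert=\sigma_1$, $\Vert B_T^{-1}\Vert=\sigma_2^{-1}$, $\det B_T=\sigma_1\sigma_2$ for the singular values $\sigma_1\ge\sigma_2$ of $B_T$, this equals a constant times $\sigma_1\sigma_2^{-3}=(\sigma_1/\sigma_2)^3\sigma_1^{-2}$; combining the regularity bound $\sigma_1/\sigma_2\le c_\sigma$ (via Lemma~\ref{regular_equivalent}) with the elementary comparison $\tilde h_T\le\hat h\,\sigma_1$ (where $\hat h$ is the fixed diameter of $\hat T$, so $\sigma_1^{-2}\le\hat h^{2}\tilde h_T^{-2}$) yields the desired $\le C\tilde h_T^{-2}$. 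Finally, for a genuinely affine $T$ one has $DG_T\equiv B_T$ and $\Phi_T\equiv0$, so $(DG_T)^{-1}=B_T^{-1}$ and $\det DG_T=\det B_T$ hold exactly; the two limit lemmas become equalities, no smallness of $h$ is needed, and the estimate holds for every $h>0$.
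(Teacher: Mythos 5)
Your proof is correct and follows essentially the same route as the paper's: both derive the pointwise transformation identity for $\text{curl}$ under the Piola map (your $R$-conjugated form $R^{-1}(DG_T)^{-1}R=\det(DG_T)^{-1}(DG_T)^{\top}$ is the paper's adjugate form), change variables to $\hat T$, absorb the curvilinear distortion via Lemmas~\ref{geometry_property2} and~\ref{geometry_property3} for $h$ small, and reduce the remaining coefficient to $\sigma_1/\sigma_2^{3}\le C\tilde h_T^{-2}$ using shape regularity. The only cosmetic difference is that you close the singular-value estimate through $\sigma_1\ge\tilde h_T/\hat h$ while the paper goes through $\sigma_2\gtrsim\tilde\rho_T$; both are valid.
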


\begin{proof}
We have,
\begin{align*}
\text{curl}\omega(\mathbf{x}) = & \dfrac{B_{T}}{\det(B_{T})}(\text{curl}\hat{\omega})(\hat{\mathbf{x}}) 
= \dfrac{1}{\det(B_{T}DG_{T}(\hat{\mathbf{x}}))}B_{T}\text{curl}_{\hat{\mathbf{x}}}\hat{\omega}(\hat{\mathbf{x}})
(DG_{T}(\hat{\mathbf{x}}))^{\top}\\
= & \dfrac{(\det(B_{T}^{-1}))^{2}}{\det(B_{T}^{-1}DG_{T}(\hat{\mathbf{x}}))}B_{T}\text{curl}_{\hat{\mathbf{x}}}
\hat{\omega}(\hat{\mathbf{x}})B_{T}^{\top}(DG_{T}(\hat{\mathbf{x}})B_{T}^{-1})^{\top}.
\end{align*}
and
\begin{align*}
\Vert\text{curl}\omega\Vert_{L^{2}(T)}^{2} = &
\int_{T}\dfrac{(\det(B_{T}^{-1}))^{4}}{(\det(B_{T}^{-1}DG_{T}(\hat{\mathbf{x}})))^{2}}(DG_{T}(\hat{\mathbf{x}})
B_{T}^{-1})B_{T}(\text{curl}_{\hat{\mathbf{x}}}\hat{\omega}(\hat{\mathbf{x}}))^{\top} \\
&\qquad [B_{T}^{\top}B_{T}]\text{curl}_{\hat{\mathbf{x}}}
\hat{\omega}(\hat{\mathbf{x}})B_{T}^{\top}(DG_{T}(\hat{\mathbf{x}})B_{T}^{-1})^{\top}d\mathbf{x}\\
= & \int_{\hat{T}}\dfrac{(\det(B_{T}^{-1}))^{3}}{(\det(B_{T}^{-1}DG_{T}(\hat{\mathbf{x}})))^{2}}(DG_{T}(\hat{\mathbf{x}})
B_{T}^{-1})B_{T}(\text{curl}_{\hat{\mathbf{x}}}\hat{\omega}(\hat{\mathbf{x}}))^{\top} \\
&\qquad [B_{T}^{\top}B_{T}]\text{curl}_{\hat{\mathbf{x}}}
\hat{\omega}(\hat{\mathbf{x}})B_{T}^{\top}(DG_{T}(\hat{\mathbf{x}})B_{T}^{-1})^{\top}d\hat{\mathbf{x}}.
\end{align*}
Since $c_{h}\rightarrow 0$ as $h\rightarrow 0$, $\lim_{h\rightarrow 0}\sup_{T\in\mathcal{T}_{h}}
\sup_{\hat{\mathbf{x}}\in\hat{T}}\Vert DG_{T}(\hat{\mathbf{x}})B_{T}^{-1}-I \Vert =0$.

\noindent By Lemma~\ref{geometry_property3}, 
$\lim_{h\rightarrow 0}\sup_{T\in\mathcal{T}_{h}}\sup_{\hat{\mathbf{x}}\in\hat{T}}
\vert(\det(B_{T}^{-1}DG_{T}(\hat{\mathbf{x}})))^{2}-1\vert=0$.

Since $(\mathcal{T}_{h})_{h}$ is regular, there exists a constant $\sigma>0$ 
such that  $\sigma_{1}/\sigma_{2}\leq\sigma$ 
for any $T\in\mathcal{T}_{h}$, with $\sigma_1 \geq \sigma_2$ denoting the singular values of 
matrix $B_{T}$. Then $\Vert B_{T}^{\top}\Vert=\Vert B_{T}\Vert = \sigma_{1}$ and 
$\det(B_{T}^{-1})=\sigma_1^{-1}\cdot\sigma_{2}^{-1}$. So $\Vert B_{T}^{\top}\Vert^{2}\cdot\Vert B_{T}\Vert^{2}
(\det(B_{T}^{-1}))^{3}
=\sigma_1/\sigma_2^{3}\leq \sigma/\sigma_{2}^{2}\leq c^{\prime}\tilde{h}_{T}^{-2}$ for some constant $c^{\prime}>0$.

Consequently, there exist $\delta_{1}>0$ and $C>0$ such that, for any $h<\delta_{1}$ and $T\in\mathcal{T}_{h}$,
\begin{align*}
& \int_{\hat{T}}\dfrac{(\det(B_{T}^{-1}))^{3}}{(\det(B_{T}^{-1}DG_{T}(\hat{\mathbf{x}})))^{2}}
(DG_{T}(\hat{\mathbf{x}})B_{T}^{-1})B_{T}(\text{curl}_{\hat{\mathbf{x}}}\hat{\omega}(\hat{\mathbf{x}}))^{\top} \\
&\qquad [B_{T}^{\top}B_{T}]\text{curl}_{\hat{\mathbf{x}}}\hat{\omega}(\hat{\mathbf{x}})B_{T}^{\top}(DG_{T}(\hat{\mathbf{x}})B_{T}^{-1})^{\top}d\hat{\mathbf{x}} \\
\leq & C\tilde{h}_{T}^{-2}\int_{\hat{T}}\Vert (\text{curl}_{\hat{\mathbf{x}}}\hat{\omega}(\hat{\mathbf{x}}))^{\top}
(\text{curl}_{\hat{\mathbf{x}}}\hat{\omega}(\hat{\mathbf{x}}))\Vert d\hat{\mathbf{x}}.
\end{align*}
Again, it is easy to see that, if $T$ is a (regular) triangle 
for any $T\in\mathcal{T}_{h}$ and $h>0$, then the 
above inequality holds for any $h>0$. This finishes the proof.
\end{proof}

\begin{lemma}
\label{H_curl_transform_inequality2}
There exist $\delta>0$ and $C>0$ such that,
 for any $h<\delta$ and $T\in\mathcal{T}_{h}$,
\begin{equation*}
\Vert\hat{\omega}\Vert_{H(\text{curl}_{\hat{\mathbf{x}}},\hat{T})}^{2}\leq C
(\Vert\omega\Vert_{L^{2}(T)}^{2}+\tilde{h}_{T}^{2}\Vert\text{curl}\omega\Vert_{L^{2}(T)}^{2})
\quad
\forall \omega\in H^{1}(T;\mathbb{R}^{2})
\end{equation*}
where $\dfrac{B_{T}\hat{\omega}(\hat{\mathbf{x}})}{\det(B_{T})}=
\omega(\mathbf{x})$ for any $\hat{\mathbf{x}}\in\hat{T}$. If $T$ is a triangle for any $T\in\mathcal{T}_{h}$ 
and $h>0$, then the above inequality holds for any $h>0$.
\end{lemma}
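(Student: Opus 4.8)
The plan is to estimate the two contributions to $\Vert\hat{\omega}\Vert_{H(\text{curl}_{\hat{\mathbf{x}}},\hat{T})}^{2}=\Vert\hat{\omega}\Vert_{L^{2}(\hat{T})}^{2}+\Vert\text{curl}_{\hat{\mathbf{x}}}\hat{\omega}\Vert_{L^{2}(\hat{T})}^{2}$ separately, running the change-of-variables computation of Lemma~\ref{H_curl_transform_inequality1} in reverse. Throughout I write $\sigma_{1}\geq\sigma_{2}$ for the singular values of $B_{T}$, so that $\Vert B_{T}\Vert=\sigma_{1}$, $\Vert B_{T}^{-1}\Vert=\sigma_{2}^{-1}$, $\det(B_{T})=\sigma_{1}\sigma_{2}$, and regularity gives $\sigma_{1}/\sigma_{2}\leq\sigma$ together with $\sigma_{1}\leq c\,\tilde{h}_{T}$.

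First I would bound the $L^{2}$ term. Inverting the defining relation gives $\hat{\omega}(\hat{\mathbf{x}})=\det(B_{T})B_{T}^{-1}\omega(\mathbf{x})$, so after the substitution $d\hat{\mathbf{x}}=\det(DG_{T})^{-1}d\mathbf{x}$,
\begin{equation*}
\Vert\hat{\omega}\Vert_{L^{2}(\hat{T})}^{2}=\int_{T}\det(B_{T})\,\det(B_{T}DG_{T}^{-1})\,\omega^{\top}B_{T}^{-\top}B_{T}^{-1}\omega\,d\mathbf{x}.
\end{equation*}
By Lemma~\ref{geometry_property3} the factor $\det(B_{T}DG_{T}^{-1})$ is bounded (it tends to $1$), while $\det(B_{T})\,\omega^{\top}B_{T}^{-\top}B_{T}^{-1}\omega\leq\det(B_{T})\Vert B_{T}^{-1}\Vert^{2}|\omega|^{2}=(\sigma_{1}/\sigma_{2})|\omega|^{2}\leq\sigma|\omega|^{2}$. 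Hence $\Vert\hat{\omega}\Vert_{L^{2}(\hat{T})}^{2}\leq C\Vert\omega\Vert_{L^{2}(T)}^{2}$ once $h$ is small enough.

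Next I would treat the curl term, which is the heart of the argument. Solving the transformation identity for $\text{curl}$ derived in the proof of Lemma~\ref{H_curl_transform_inequality1} yields $\text{curl}_{\hat{\mathbf{x}}}\hat{\omega}=\det(B_{T}DG_{T})\,B_{T}^{-1}\,\text{curl}\,\omega\,(DG_{T})^{-\top}$. Squaring, changing variables back to $\mathbf{x}$, and collecting the scalar prefactor produces $|\det(B_{T}DG_{T})|^{2}\det(DG_{T})^{-1}=\det(B_{T})^{2}\det(DG_{T})=\det(B_{T})^{3}\det(DG_{T}B_{T}^{-1})$. Bounding the matrix factor by $\Vert B_{T}^{-1}\Vert^{2}\Vert DG_{T}^{-\top}\Vert^{2}$ and invoking Lemma~\ref{geometry_property2} (so that $\Vert DG_{T}^{-1}\Vert$ is comparable to $\sigma_{2}^{-1}$) together with Lemma~\ref{geometry_property3} (so that $\det(DG_{T}B_{T}^{-1})$ stays bounded), the combined scaling is $\det(B_{T})^{3}\sigma_{2}^{-4}=\sigma_{1}^{3}\sigma_{2}^{-1}=\sigma_{1}^{2}(\sigma_{1}/\sigma_{2})\leq\sigma\,\sigma_{1}^{2}\leq C\,\tilde{h}_{T}^{2}$. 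This is exactly the power of $\tilde{h}_{T}$ claimed, giving $\Vert\text{curl}_{\hat{\mathbf{x}}}\hat{\omega}\Vert_{L^{2}(\hat{T})}^{2}\leq C\,\tilde{h}_{T}^{2}\Vert\text{curl}\,\omega\Vert_{L^{2}(T)}^{2}$. Adding the two estimates yields the stated inequality.

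The main obstacle is purely the bookkeeping of the determinant and singular-value factors in the curl estimate: one must verify that the determinant powers and the operator-norm bounds on $B_{T}^{-1}$ and $DG_{T}^{-1}$ combine to produce precisely $\tilde{h}_{T}^{2}$ and nothing weaker, which is what makes the final $H(\text{curl})$ bound scale correctly. Finally, for the affine case $G_{T}=\tilde{G}_{T}$ one has $DG_{T}\equiv B_{T}$ and $\Phi_{T}\equiv 0$, so $\det(DG_{T}B_{T}^{-1})=1$ and $DG_{T}^{-1}B_{T}=I$ identically; all the asymptotic factors become exact equalities, no smallness of $h$ is needed, and the inequality holds for every $h>0$.
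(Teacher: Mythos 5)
Your proposal is correct and follows essentially the same route as the paper: split the $H(\text{curl})$ norm into its $L^{2}$ and curl parts, pull each back to the physical element by the change of variables, control the distortion factors $\det(B_{T}DG_{T}^{-1})$ and $DG_{T}^{-1}B_{T}$ via Lemmas~\ref{geometry_property2} and~\ref{geometry_property3}, and do the singular-value bookkeeping $\det(B_{T})^{3}\sigma_{2}^{-4}=\sigma_{1}^{2}(\sigma_{1}/\sigma_{2})\leq\sigma\sigma_{1}^{2}\lesssim\tilde{h}_{T}^{2}$. The only cosmetic difference is that you bound $\Vert DG_{T}^{-\top}\Vert$ directly by $\sigma_{2}^{-1}$ where the paper inserts the factor $(DG_{T}^{-1}B_{T})^{\top}$ explicitly; the resulting estimate and the affine-case remark are identical.
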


\begin{proof}
We have,
\begin{equation*}
\Vert\hat{\omega}\Vert_{H(\text{curl}_{\hat{\mathbf{x}}},\hat{T})}^{2} = 
\Vert\hat{\omega}\Vert_{L^{2}(\hat{T})}^{2}+\Vert\text{curl}_{\hat{\mathbf{x}}}
\hat{\omega}\Vert_{L^{2}(\hat{T})}^{2},
\end{equation*}
and
\begin{align*}
\Vert\hat{\omega}\Vert_{L^{2}(\hat{T})}^{2} = &
\int_{\hat{T}}(\det(B_{T}))^{2}\omega(\mathbf{x})^{\top}B_{T}^{-\top}B_{T}^{-1}\omega(\mathbf{x})d\hat{\mathbf{x}} \\
= & \int_{T}\det(B_{T})\det(B_{T}DG_{T}(\hat{\mathbf{x}})^{-1})\omega(\mathbf{x})^{\top}B_{T}^{-\top}B_{T}^{-1}\omega(\mathbf{x})d\mathbf{x}.
\end{align*}
By Lemma~\ref{geometry_property3}, 
$\lim_{h\rightarrow 0}\sup_{T\in\mathcal{T}_{h}}\sup_{\hat{\mathbf{x}}\in \hat{T}}
\vert \det(B_{T}DG_{T}(\hat{\mathbf{x}})^{-1})-1\vert=0$.

Since $(\mathcal{T}_{h})_{h}$ is regular, there exists a constant $\sigma>0$ 
such that $\sigma_{1}/\sigma_{2}\leq\sigma$ 
for any $T\in\mathcal{T}_{h}$, where $\sigma_1 \geq \sigma_2$ are the singular values of 
matrix $B_{T}$. Then $\Vert B_{T}^{-\top}\Vert=\Vert B_{T}^{-1}\Vert = \sigma_{2}^{-1}$ and 
$\det(B_{T})=\sigma_1\cdot\sigma_{2}$. 
So, $\Vert B_{T}^{-\top}\Vert\cdot\Vert B_{T}^{-1}\Vert\det(B_{T})
=\sigma_1/\sigma_2\leq \sigma$.
Consequently, there exist $\delta_{1}>0$ and $C_{1}>0$ such that,
 for any $h<\delta_{1}$ and $T\in\mathcal{T}_{h}$,
\begin{equation*}
\det(B_{T})\det(B_{T}DG_{T}(\hat{\mathbf{x}})^{-1})\omega(\mathbf{x})^{\top}B_{T}^{-\top}
B_{T}^{-1}\omega(\mathbf{x}) \leq C_{1}^{2}\omega(\mathbf{x})^{\top}\omega(\mathbf{x})
\end{equation*}
for any $\omega\in H^{1}(T;\mathbb{R}^{2}), \mathbf{x}\in T$.
We can conclude that, for any $h<\delta_{1}$ and $T\in\mathcal{T}_{h}$,
\begin{equation*}
\Vert\hat{\omega}\Vert_{L^{2}(\hat{T})}\leq C_{1}\Vert\omega\Vert_{L^{2}(T)},\forall \omega\in H^{1}(T;\mathbb{R}^{2}).
\end{equation*}
At the same time,
\begin{align*}
\text{curl}_{\hat{\mathbf{x}}}\hat{\omega}(\hat{\mathbf{x}})=
&\det(B_{T})B_{T}^{-1}\text{curl}_{\hat{\mathbf{x}}}
\omega(\mathbf{x})\\
=&\det(B_{T})B_{T}^{-1}\text{curl}_{\mathbf{x}}\omega(\mathbf{x})
(DG_{T}(\hat{\mathbf{x}}))^{-\top}\det(DG_{T}(\hat{\mathbf{x}})) \\
= & \det(B_{T})B_{T}^{-1}\text{curl}_{\mathbf{x}}\omega(\mathbf{x})
B_{T}^{-\top}(B_{T}^{\top}DG_{T}(\hat{\mathbf{x}})^{-\top})\det(DG_{T}(\hat{\mathbf{x}}))\\
= & \det(B_{T})B_{T}^{-1}\text{curl}_{\mathbf{x}}\omega(\mathbf{x})
B_{T}^{-\top}(DG_{T}(\hat{\mathbf{x}})^{-1}B_{T})^{\top}\det(DG_{T}(\hat{\mathbf{x}})).
\end{align*}
and
\begin{align*}
\Vert\text{curl}_{\hat{\mathbf{x}}}\hat{\omega}\Vert_{L^{2}(\hat{T})}^{2} = 
&
\int_{\hat{T}} \det(B_{T})^{2}\det(DG_{T}(\hat{\mathbf{x}}))^{2}(DG_{T}(\hat{\mathbf{x}})^{-1}B_{T})
B_{T}^{-1}(\text{curl}_{\mathbf{x}}\omega(\mathbf{x}))^{\top} \\
&\qquad B_{T}^{-\top}B_{T}^{-1}\text{curl}_{\mathbf{x}}\omega(\mathbf{x})
B_{T}^{-\top}(DG_{T}(\hat{\mathbf{x}})^{-1}B_{T})^{\top}d\hat{\mathbf{x}}\\
= & \int_{\hat{T}} \det(B_{T})^{2}\det(DG_{T}(\hat{\mathbf{x}}))(DG_{T}(\hat{\mathbf{x}})^{-1}B_{T})
B_{T}^{-1}(\text{curl}_{\mathbf{x}}\omega(\mathbf{x}))^{\top} \\
&\qquad B_{T}^{-\top}B_{T}^{-1}\text{curl}_{\mathbf{x}}\omega(\mathbf{x})
B_{T}^{-\top}(DG_{T}(\hat{\mathbf{x}})^{-1}B_{T})^{\top}d\mathbf{x} \\
= & \int_{\hat{T}} \det(B_{T})^{3}\det(B_{T}^{-1}DG_{T}(\hat{\mathbf{x}}))(DG_{T}
(\hat{\mathbf{x}})^{-1}B_{T})B_{T}^{-1}(\text{curl}_{\mathbf{x}}\omega(\mathbf{x}))^{\top} \\
&\qquad B_{T}^{-\top}B_{T}^{-1}\text{curl}_{\mathbf{x}}\omega(\mathbf{x})
B_{T}^{-\top}(DG_{T}(\hat{\mathbf{x}})^{-1}B_{T})^{\top}d\mathbf{x}.
\end{align*}
According to Lemma~\ref{geometry_property2}, $\lim_{h\rightarrow 0}\sup_{T\in\mathcal{T}_{h}}
\sup_{\hat{\mathbf{x}}\in\hat{T}}\Vert DG_{T}(\hat{\mathbf{x}})^{-1}B_{T}-I\Vert=0$. 

\noindent By Lemma~\ref{geometry_property3}, $\lim_{h\rightarrow 0}\sup_{T\in\mathcal{T}_{h}}
\sup_{\hat{\mathbf{x}}\in\hat{T}}\vert\det(B_{T}^{-1}DG_{T}(\hat{\mathbf{x}}))-1\vert=0$.

\noindent Since $(\mathcal{T}_{h})_{h}$ is regular, $\det(B_{T})^{3} \Vert B_{T}^{-\top}\Vert^{2}
\cdot\Vert B_{T}^{-1}\Vert^{2}\leq\sigma\tilde{h}_{T}$, for some constant $\sigma>0$.

There exist thus $\delta_{2}>0$ and $C_{2}>0$ such that,
for any $h<\delta_{2}$ and $T\in\mathcal{T}_{h}$,
\begin{align*}
& \det(B_{T})^{3}\det(B_{T}^{-1}DG_{T}(\hat{\mathbf{x}}))(DG_{T}
(\hat{\mathbf{x}})^{-1}B_{T})B_{T}^{-1}(\text{curl}_{\mathbf{x}}\omega(\mathbf{x}))^{\top} \\
&\qquad B_{T}^{-\top}B_{T}^{-1}\text{curl}_{\mathbf{x}}\omega(\mathbf{x})
B_{T}^{-\top}(DG_{T}(\hat{\mathbf{x}})^{-1}B_{T})^{\top}d\mathbf{x} \\
\leq & C_{2}^{2}\tilde{h}_{T}^{2}\Vert (\text{curl}_{\mathbf{x}}\omega(\mathbf{x}))^{\top}
\text{curl}_{\mathbf{x}}\omega(\mathbf{x})\Vert
\quad 
\forall \omega\in H^{1}(T;\mathbb{R}^{2}), \mathbf{x}\in T.
\end{align*}
We conclude that, for any $h<\delta_{2}$ and $T\in\mathcal{T}_{h}$,
\begin{equation*}
\Vert\text{curl}_{\hat{\mathbf{x}}}\hat{\omega}\Vert_{L^{2}(\hat{T})}\leq 
C_{2}\tilde{h}_{T}\Vert\text{curl}\omega\Vert_{L^{2}(T)},\forall \omega\in H^{1}(T;\mathbb{R}^{2}).
\end{equation*}
Again, it is easy to see that,
if  $T$ is a triangle for any $T\in\mathcal{T}_{h}$ and $h>0$, then the 
inequality above holds for any $h>0$. This ends the proof.
\end{proof}

\begin{lemma}
\label{mapping3}
Let $T$ be a curved triangle. Let $u(\mathbf{x})$ be defined on $T$ and 
$\hat{u}(\hat{\mathbf{x}})$ be defined on $\hat{T}$ and
$$
u = \hat{u} \circ G_{T}^{-1} \quad\mbox{or}\quad \hat{u} = u \circ G_T
$$
where $G_{T}$ is the element map.
 Then $u(\mathbf{x})\in L^{2}(T)$ if and only if $\hat{u}(\hat{x})\in L^{2}(\hat{T})$.
\end{lemma}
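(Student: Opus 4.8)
The plan is to reduce the claim to the change-of-variables formula for the Lebesgue integral and to exploit the fact that the Jacobian of the element map is bounded above and below by positive constants on the compact reference triangle. First I would record the substitution $\hat{\mathbf{x}} = G_T^{-1}(\mathbf{x})$, $\mathbf{x} = G_T(\hat{\mathbf{x}})$, under which the hypothesis reads $\hat{u}(\hat{\mathbf{x}}) = u(G_T(\hat{\mathbf{x}}))$. Since $G_T$ is a $C^{1}$-diffeomorphism from $\hat{T}$ onto $T$ (Definition~\ref{curved_triangle}), the change of variables applies and yields
\begin{equation*}
\int_{T} |u(\mathbf{x})|^{2} \, d\mathbf{x} = \int_{\hat{T}} |u(G_T(\hat{\mathbf{x}}))|^{2} \, \det(DG_T(\hat{\mathbf{x}})) \, d\hat{\mathbf{x}} = \int_{\hat{T}} |\hat{u}(\hat{\mathbf{x}})|^{2} \, \det(DG_T(\hat{\mathbf{x}})) \, d\hat{\mathbf{x}},
\end{equation*}
where I have already used the standing assumption $\det(DG_T(\hat{\mathbf{x}})) > 0$ to drop the absolute value around the Jacobian.

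The key observation is then that the scalar weight $\det(DG_T(\hat{\mathbf{x}}))$ is bounded away from $0$ and from $\infty$ uniformly on $\hat{T}$. Indeed, $DG_T$ is continuous on the compact set $\hat{T}$ (Definition~\ref{curved_triangle} assumes $G_T \in C^{1}(\hat{T})$), so $\hat{\mathbf{x}} \mapsto \det(DG_T(\hat{\mathbf{x}}))$ is continuous and strictly positive on a compact set and therefore attains a positive minimum $c$ and a finite maximum $C$ with $0 < c \leq C < \infty$. Substituting these two bounds into the identity above gives
\begin{equation*}
c \int_{\hat{T}} |\hat{u}(\hat{\mathbf{x}})|^{2} \, d\hat{\mathbf{x}} \leq \int_{T} |u(\mathbf{x})|^{2} \, d\mathbf{x} \leq C \int_{\hat{T}} |\hat{u}(\hat{\mathbf{x}})|^{2} \, d\hat{\mathbf{x}}.
\end{equation*}

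Finally, since $c$ and $C$ are both finite and positive, the finiteness of either integral is equivalent to the finiteness of the other, which is precisely the asserted equivalence $u \in L^{2}(T)$ if and only if $\hat{u} \in L^{2}(\hat{T})$. There is no genuine obstacle in this lemma; the only point deserving a word of care is the measurability of $\hat{u} = u \circ G_T$ (respectively $u = \hat{u} \circ G_T^{-1}$), which is automatic because composition with a $C^{1}$-diffeomorphism preserves Lebesgue measurability, and the uniform two-sided bound on $\det(DG_T)$, which follows immediately from the $C^{1}$-diffeomorphism hypothesis together with the compactness of $\hat{T}$.
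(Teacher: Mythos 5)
Your proof is correct and is just the fully written-out version of what the paper leaves implicit: the paper's proof consists of the single sentence that the claim is ``an immediate consequence of the fact that $G_{T}$ is a $C^{1}$-diffeomorphism,'' and your change-of-variables argument with the two-sided bound on $\det(DG_{T})$ (continuous and positive on the compact $\hat{T}$, hence bounded away from $0$ and $\infty$) is exactly the justification being alluded to. No further comment is needed.
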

\begin{proof}
This is an immediate consequence of the fact that $G_{T}$ is a $C^{1}$-diffeomorphism.
\end{proof}

\section{Proofs of results from Section 8\label{app_proofs}}

Proof of Lemma~\ref{L2_non_standard_projection_inequality}
\begin{proof}
We put $r=\tilde{r}(T)$. 
We assume $\{\hat{\xi}_{1}(\hat{\mathbf{x}}),\cdots,\hat{\xi}(\hat{\mathbf{x}})_{l_{r}}\}$ 
is a basis for $\mathcal{P}_{\tilde{r}}\Lambda^{2}(\hat{T})$. 
Then 
$$
\Pi_{\tilde{r},T}^{2}u(\mathbf{x}(\hat{\mathbf{x}}))=\dfrac{1}{\det(DG_{T}(\hat{\mathbf{x}}))}
\sum_{i=1}^{l_{r}}\alpha_{i}\hat{\xi}_{i}(\hat{\mathbf{x}})
$$ 
and 
$$
P_{\tilde{r},T}^{2}u(\mathbf{x}(\hat{\mathbf{x}}))=\dfrac{1}
{\det(DG_{T}(\hat{\mathbf{x}}))}\sum_{i=1}^{l_{r}}\beta_{i}\hat{\xi}_{i}(\hat{\mathbf{x}})
$$
Coefficients $(\alpha_{1},\cdots,\alpha_{l_{r}})^{\top}$ 
and $(\beta_{1},\cdots,\beta_{l_{r}})^{\top}$ 
are obtained by solving the following two linear systems,
$$
A_{1}(\alpha_{1},\cdots,\alpha_{l_{r}})^{\top}=\mathbf{b}_{1}
\quad \mbox{and} \quad  
A_{2}(\beta_{1},\cdots,\beta_{l_{r}})^{\top}=\mathbf{b}_{2}
$$
with 
\begin{align*}
 (A_{1})_{ij}=\int_{T}\dfrac{\hat{\xi}_{i}(\hat{\mathbf{x}}(\mathbf{x}))
\hat{\xi}_{j}(\hat{\mathbf{x}}(\mathbf{x}))d\mathbf{x}}{\det(DG_{T}(\hat{\mathbf{x}}(\mathbf{x})))}\quad
& 
(\mathbf{b}_{1})_{j}=\int_{T}u(\mathbf{x})\hat{\xi}_{j}(\hat{\mathbf{x}}(\mathbf{x}))d\mathbf{x}\\
 (A_{2})_{ij}=\int_{T}\dfrac{\hat{\xi}_{i}(\hat{\mathbf{x}}(\mathbf{x}))
\hat{\xi}_{j}(\hat{\mathbf{x}}(\mathbf{x}))d\mathbf{x}}{(\det(DG_{T}(\hat{\mathbf{x}}(\mathbf{x}))))^{2}}
 \quad &
(\mathbf{b}_{2})_{j}=\int_{T}\dfrac{\hat{\xi}_{j}(\hat{\mathbf{x}}(\mathbf{x}))d\mathbf{x}}{\det(DG_{T}(\hat{\mathbf{x}}(\mathbf{x})))}u(\mathbf{x})
\end{align*}
for $1\leq i,j\leq l_{r}$.
By pulling back to $\hat{T}$ we obtain, for any $1\leq i,j\leq l_{r}$,
\begin{align*}
 (A_{1})_{ij}=\int_{\hat{T}}\hat{\xi}_{i}(\hat{\mathbf{x}})
\hat{\xi}_{j}(\hat{\mathbf{x}})d\hat{\mathbf{x}} \quad &
(\mathbf{b}_{1})_{j}=\det(B_{T})\int_{\hat{T}}\det(B_{T}^{-1}
DG_{T}(\hat{\mathbf{x}}))u(\mathbf{x})\hat{\xi}_{j}(\hat{\mathbf{x}}(\mathbf{x}))d\hat{\mathbf{x}}\\
  (A_{2})_{ij}=\int_{\hat{T}}\dfrac{\hat{\xi}_{i}(\hat{\mathbf{x}})\hat{\xi}_{j}(\hat{\mathbf{x}})d\hat{\mathbf{x}}}{\det(DG_{T}(\hat{\mathbf{x}}))} \quad &
(\mathbf{b}_{2})_{j}=\int_{\hat{T}}u(\mathbf{x}(\mathbf{x}))\hat{\xi}_{j}(\hat{\mathbf{x}})d\hat{\mathbf{x}}
\end{align*}

Since $\det(B_{T})$ is a non-zero constant, $(\det(B_{T})A_{2})(\beta_{1},\cdots,\beta_{l_{r}})^{\top}=\det(B_{T})\mathbf{b}_{2}$.
So we can redefine $A_{2}$ and $\mathbf{b}_{2}$ in the following way.
\begin{equation*}
(A_{2})_{ij}=\int_{\hat{T}}\dfrac{\det(B_{T})}{\det(DG_{T}(\hat{\mathbf{x}}))}\hat{\xi}_{i}(\hat{\mathbf{x}})
\hat{\xi}_{j}(\hat{\mathbf{x}})d\hat{\mathbf{x}} \quad
(\mathbf{b}_{2})_{j}=\det(B_{T})\int_{\hat{T}}u(\mathbf{x}(\hat{\mathbf{x}}))\hat{\xi}_{j}(\hat{\mathbf{x}})d\hat{\mathbf{x}}
\end{equation*} 
According to Lemma~\ref{geometry_property3}, $\lim_{h\rightarrow 0}\sup_{T\in\mathcal{T}_{h}}\Vert A_{1}-A_{2}\Vert=0$. 
And, for any $\varepsilon>0$, 
\begin{align*}
 \lim_{h\rightarrow 0}\sup_{T\in\mathcal{T}_{h}}\Vert \mathbf{b}_{1}-\mathbf{b}_{2}\Vert^{2}
\leq & (\varepsilon\det(B_{T}))^{2}\int_{\hat{T}}u^{2}(\mathbf{x}(\hat{\mathbf{x}}))d\hat{\mathbf{x}}\\
= & \varepsilon^{2}\det(B_{T})\int_{T}\det(B_{T}DG_{T}(\hat{\mathbf{x}}(\mathbf{x}))^{-1})u^{2}
(\mathbf{x})d\mathbf{x}\\
\leq & 4\varepsilon^{2}\det(B_{T})\Vert u\Vert_{L^{2}(T)}^{2}.
\end{align*}
The last inequality holds when $h$ is small enough. 
This implies that 
$$
\lim_{h\rightarrow 0}\sup_{T\in\mathcal{T}_{h}}\Vert \mathbf{b}_{1}-\mathbf{b}_{2}\Vert
\leq 2\varepsilon\sqrt{\det(B_{T})}\Vert u\Vert_{L^{2}(T)}.
$$
So, for any $\varepsilon>0$, there exists $\delta>0$ such that, for 
any $h\leq\delta$ and $T\in\mathcal{T}_{h}$, we have
\begin{equation*}
\Vert (\alpha_{1}-\beta_{1},\cdots,\alpha_{l_{r}}-\beta_{l_{r}})\Vert\leq 3\varepsilon\sqrt{\det(B_{T})}\Vert u\Vert_{L^{2}(T)}.
\end{equation*}
We have then
\begin{align*}
 \Vert\Pi_{\tilde{r},T}^{2}u-P_{\tilde{r},T}u\Vert_{L^{2}(T)}^{2} = &
\int_{T}\dfrac{1}{(\det(DG_{T}(\hat{\mathbf{x}}(\mathbf{x}))))^{2}}\sum_{i=1}^{l_{r}}\hat{\xi}_{i}^{2}(\hat{\mathbf{x}}
(\mathbf{x}))(\alpha_{i}-\beta_{i})^{2}d\mathbf{x}\\
= & \int_{\hat{T}}\dfrac{1}{\det(DG_{T}(\hat{\mathbf{x}}))}\sum_{i=1}^{l_{r}}\hat{\xi}_{i}^{2}(\hat{\mathbf{x}})(\alpha_{i}-\beta_{i})^{2}d\hat{\mathbf{x}}\\
\leq & c\varepsilon^{2}\Vert u\Vert_{L^{2}(T)}^{2}\int_{\hat{T}}\det(B_{T}DG_{T}(\hat{\mathbf{x}})^{-1})\sum_{i=1}^{l_{r}}\hat{\xi}_{i}^{2}(\hat{\mathbf{x}})d\hat{\mathbf{x}}
\end{align*}
Here $c$ is a positive constant which depends on $l_{r}$ only.
By Lemma~\ref{geometry_property3}, there exists $M>0$ such that,
for any $h$ small enough and any $T\in\mathcal{T}_{h}$,
\begin{equation*}
\int_{\hat{T}}\det(B_{T}DG_{T}(\hat{\mathbf{x}})^{-1})
\sum_{i=1}^{l_{r}}\hat{\xi}_{i}^{2}(\hat{\mathbf{x}})
d\hat{\mathbf{x}}\leq M^{2}
\end{equation*}
We can conclude that,
 for any $\varepsilon>0$, there exists $\delta>0$ such that, for any $h<\delta$ 
and $T\in\mathcal{T}_{h}$, 
\begin{equation*}
\Vert\Pi_{\tilde{r},T}^{2}u-P_{\tilde{r},T}u\Vert_{L^{2}(T)} \leq\varepsilon\Vert u\Vert_{L^{2}(T)},\forall u\in L^{2}(T)
\end{equation*}
Here $P_{\tilde{r},T}$ is the standard $L^2$-projection onto $\mathcal{P}_{\tilde{r}}\Lambda^{2}(T)$.
\end{proof}

Proof of Lemma~\ref{H_div_transform_curved}
\begin{proof}
Obviously, $\hat{\omega}(\hat{\mathbf{x}})=\det(DG_{T}(\hat{\mathbf{x}}))
(DG_{T}(\hat{\mathbf{x}}))^{-1}
\omega(G_{T}(\hat{\mathbf{x}}))$.
Using Lemma~\ref{mapping1}, we can conclude that $\hat{\omega}(\hat{\mathbf{x}})\in H(\hat{T})$. 

By pulling back to $\hat{T}$ and using 
the definition of $\mathcal{P}_{\tilde{r}+1}\Lambda^{1}(T)$, we can see that 
(\ref{interior_aux_curved}) is the same as (\ref{interior_aux_reference}), and (\ref{interior_div_curved}) is the same as 
(\ref{interior_div_reference}).
Thus, we only need to show that (\ref{H_div_edge_curved}) is the same as (\ref{H_div_edge_reference}).
Since $\mathcal{T}_{h}$ is $C^{0}$-compatible, then $G_{T}(\mathbf{\zeta}(s)) = \mathbf{x}_{e}(s)$ for 
any $s\in [0,1]$ where $\zeta: [0,1]\rightarrow\hat{e}$ is an affine local parameterization of $\hat{e}$.

\noindent We have then
\begin{align*}
& \int_{[0,1]}[\omega(\mathbf{x}_{e}(s))
\cdot\mathbf{n}(\mathbf{x}_{e}(s))]\hat{\eta}(s)\Vert \dot{\mathbf{x}_{e}}(s)\Vert ds \\
= & \int_{[0,1]}[\omega(G_{T}(\mathbf{\zeta}(s)))\cdot\mathbf{n}(G_{T}(\mathbf{\zeta}(s)))]
\hat{\eta}(s)\Vert DG_{T}(\mathbf{\zeta}(s))\dot{\mathbf{\zeta}}(s)\Vert ds \\
= & \int_{[0,1]}\left[\dfrac{DG_{T}(\zeta(s))\hat{\omega}(\zeta(s))}{\det(DG_{T}(\zeta(s)))}\cdot
\dfrac{(DG_{T}(\zeta(s)))^{-\top}\hat{\mathbf{n}}(\zeta(s))}
{\Vert (DG_{T}(\zeta(s)))^{-\top}\hat{\mathbf{n}}(\zeta(s)) \Vert}\right]\hat{\eta}(s)
\Vert DG_{T}(\mathbf{\zeta}(s))\dot{\mathbf{\zeta}}(s)\Vert ds \\
= & \int_{[0,1]}\left[\dfrac{\hat{\omega}(\zeta(s))}{\det(DG_{T}(\zeta(s)))}\cdot
\dfrac{\hat{\mathbf{n}}(\zeta(s))}
{\Vert (DG_{T}(\zeta(s)))^{-\top}\hat{\mathbf{n}}(\zeta(s)) \Vert}\right]\hat{\eta}(s)
\Vert DG_{T}(\mathbf{\zeta}(s))\dot{\mathbf{\zeta}}(s)\Vert ds
\end{align*}
Notice that ${\mathbf{\zeta}}(s)=c\hat{t}$ where $\hat{t}$ is a unit tangent vector along $\hat{e}$, 
and $c$ is a nonzero constant, $\hat{\mathbf{n}}(\zeta(s))=(\hat{t}_{2},-\hat{t}_{1})^{\top}$, 
and $(DG_{T}(\zeta(s)))^{-\top}=\dfrac{A}{\det(DG_{T}(\zeta(s)))}$ with 
\begin{equation*}
A=
\left[ 
\begin{tabular}
[c]{ll}
$(DG_{T})_{22}(\zeta(s))$ & $-(DG_{T})_{21}(\zeta(s))$\\
$-(DG_{T})_{12}(\zeta(s))$ & $(DG_{T})_{11}(\zeta(s))$
\end{tabular}
\right]
\end{equation*}
Therefore, we have
\begin{align*}
& \int_{[0,1]}\left[\dfrac{\hat{\omega}(\zeta(s))}{\det(DG_{T}(\zeta(s)))}\cdot
\dfrac{\hat{\mathbf{n}}(\zeta(s))}
{\Vert (DG_{T}(\zeta(s)))^{-\top}\hat{\mathbf{n}}(\zeta(s)) \Vert}\right]\hat{\eta}(s)
\Vert DG_{T}(\mathbf{\zeta}(s))\dot{\mathbf{\zeta}}(s)\Vert ds \\
= &  c\int_{[0,1]}\left[\hat{\omega}(\zeta(s))\cdot\hat{\mathbf{n}}(\zeta(s))
\right]\hat{\eta}(s) ds 
\end{align*}
We conclude that (\ref{H_div_edge_curved}) is equivalent with (\ref{H_div_edge_reference}). 
This finishes the proof.
\end{proof}

Proof of Lemma~\ref{commuting_diagram1_curved}
\begin{proof}
By Lemma~\ref{mapping1}, we have 
$\hat{\omega}(\hat{\mathbf{x}})\in H(\hat{T})$, 
and 
$$
\text{div}\omega(\mathbf{x}(\hat{\mathbf{x}}))=
\dfrac{1}{\det(DG_{T}(\hat{\mathbf{x}}))}\text{div}_{\hat{\mathbf{x}}}\hat{\omega}(\hat{\mathbf{x}})
$$ 
for $\hat{\mathbf{x}}\in\hat{T}$ almost everywhere, provided we define 
$\hat{\omega}(\hat{\mathbf{x}})$ on $\hat{T}$ by 
$$
\omega(\mathbf{x}(\hat{\mathbf{x}}))=
\dfrac{DG_{T}(\hat{\mathbf{x}})}
{\det(DG_{T}(\hat{\mathbf{x}}))}\hat{\omega}(\hat{\mathbf{x}})
$$ 
for any $\hat{\mathbf{x}}\in\hat{T}$.

Using Definition~\ref{L2_non_orthogonal}, Definition~\ref{L2_non_orthogonal_reference}, 
Lemma~\ref{L2_transform}, 
Lemma~\ref{H_div_transform_curved}, and Lemma~$10$ in \cite{QD:2009:MMEW}, it is easy to see that 
\begin{align*}
& \Pi_{\tilde{r},T}^{2}\text{div}\omega(\mathbf{x}(\hat{\mathbf{x}}))=\dfrac{1}{\det(DG_{T}(\hat{\mathbf{x}}))}\Pi_{\tilde{r},\hat{T}}^{2}\text{div}_{\hat{\mathbf{x}}}\hat{\omega}(\hat{\mathbf{x}})=\dfrac{1}{\det(DG_{T}(\hat{\mathbf{x}}))}\text{div}_{\hat{\mathbf{x}}}\Pi_{\tilde{r}+1,\hat{T}}^{1}\hat{\omega}(\hat{\mathbf{x}})\\
& \text{div}\Pi_{\tilde{r}+1,T}^{1}\omega(\mathbf{x}(\hat{\mathbf{x}}))=\text{div}\left[\dfrac{DG_{T}(\hat{\mathbf{x}})}
{\det(DG_{T}(\hat{\mathbf{x}}))}\Pi_{\tilde{r}+1,\hat{T}}^{1}\hat{\omega}(\hat{\mathbf{x}})\right] =
\dfrac{1}{\det(DG_{T}(\hat{\mathbf{x}}))}\text{div}_{\hat{\mathbf{x}}}\Pi_{\tilde{r}+1,\hat{T}}^{1}\hat{\omega}(\hat{\mathbf{x}})
\end{align*}
We have thus $\Pi_{\tilde{r},T}^{2}\text{div}\omega = \text{div}\Pi_{\tilde{r}+1,T}^{1}\omega$.
\end{proof}

Proof of Lemma~\ref{H_div_inequality1_curved}
\begin{proof}
According to Lemma~\ref{H_div_transform_inequality1} and Lemma~\ref{H_div_transform_inequality2}, 
there exist $\delta>0$ and $C_{1}>0$ such that, for any $h<\delta$ and $T\in\mathcal{T}_{h}$,
\begin{equation*}
\Vert\Pi_{\tilde{r},T}^{1}\omega\Vert_{L^{2}(T)}\leq C_{1}\Vert\Pi_{\tilde{r},\hat{T}}^{1}\hat{\omega}\Vert_{L^{2}(\hat{T})}\quad
\forall \omega\in L^{2}(T;\mathbb{R}^{2})
\end{equation*}
\begin{equation*}
\Vert\hat{\omega}\Vert_{H(\text{div}_{\hat{\mathbf{x}}},T)}^{2}
+\Vert\hat{\omega}\Vert_{L^{2}(\partial\hat{T})}^{2}
\leq C_{1}\Vert\omega\Vert_{H^{1}(T)}\quad
\forall \omega\in H^{1}(T;\mathbb{R}^{2})
\end{equation*}
By Lemma~\ref{H_div_transform_curved}, $\hat{\omega}\in H^{1}(\hat{T};\mathbb{R}^{2})$ for 
any $\omega\in H^{1}(T;\mathbb{R}^{2})$.

The definition of operator $\Pi_{\tilde{r},\hat{T}}$ implies that there exists a constant $C_{2}>0$ such that
\begin{equation*}
\int_{\hat{T}}(\Pi_{\tilde{r},\hat{T}}^{1}\hat{\omega}(\hat{\mathbf{x}}))^{\top}\Pi_{\tilde{r},\hat{T}}^{1}\hat{\omega}(\hat{\mathbf{x}})d\hat{\mathbf{x}}\leq C_{2}(\Vert\hat{\omega}\Vert_{H(\text{div}_{\hat{\mathbf{x}}},\hat{T})}^{2}
+\Vert \hat{\omega}\Vert_{L^{2}(\partial(\hat{T}))}^{2}),\forall \hat{\omega}(\hat{\mathbf{x}})\in [H^{1}(\hat{T})]^{2}.
\end{equation*}
It is easy to see that for affine meshes
the above inequality holds for any $h>0$. This finishes the proof.
\end{proof}

Proof of Theorem~\ref{W_inequality_curved}
\begin{proof}
For any $h>0$ and any $T\in\mathcal{T}_{h}$,  we define a linear isomorphism $A_{T}$ from 
$H^{1}(\hat{T};\mathbb{R}^{2})$ to $H^{1}(T;\mathbb{R}^{2})$ by 
$(A_{T}\hat{\omega})(\mathbf{x}(\hat{\mathbf{x}}))=\dfrac{B_{T}\hat{\omega}(\hat{\mathbf{x}})}{\det(B_{T})}$.
It is easy to see that $A_{T}$ is a linear isomorphism from 
$\mathcal{P}_{\tilde{r}+2}\Lambda^{0}(\hat{T};\mathbb{R}^{2})$ to 
$\mathcal{P}_{\tilde{r}+2}\Lambda^{0}(T;\mathbb{R}^{2})$.

We define an operator $E_{T}:H^{1}(\hat{T};\mathbb{R}^{2})\longrightarrow \mathcal{P}_{\tilde{r}+2}
\Lambda^{0}(\hat{T};\mathbb{R}^{2})$ by $W_{T}(A_{T}\hat{\omega})=A_{T}(E_{T}\hat{\omega})$.
Obviously, $W_{T}$ is well-defined if and only if $E_{T}$ is well-defined.
We denote by $\{\hat{\xi}_{1},\cdots,\hat{\xi}_{l_{\tilde{r}}}\}$ 
a particular basis of $\mathcal{P}_{\tilde{r}+2}\Lambda^{0}
(\hat{T};\mathbb{R}^{2})$.

According to Lemma~\ref{H_curl_transform_inequality1} and Lemma~\ref{H_curl_transform_inequality2}, 
it is sufficient to show that there exist $\delta>0$ and $C_{1}>0$ such that,
for any $h<\delta$ and 
$T\in\mathcal{T}_{h}$, $E_{T}$ is well-defined,
and $\Vert (z_1,\cdots,z_{l_{\tilde{r}}})\Vert\leq 
C_{1}\Vert \hat{\omega}\Vert_{H^{1}(\hat{T})}$ for any $\hat{\omega}$.
Here $\sum_{k=1}^{l_{\tilde{r}}}z_{k}\hat{\xi}_{k} = E_{T}\hat{\omega}$.

According to the definition of $W_{T}$, $E_{T}$ can be defined by relations
\begin{equation}
\int_{T}\text{div}(A_{T}E_{T}\hat{\omega}-A_{T}\hat{\omega})(\mathbf{x})
\hat{\psi}(\hat{\mathbf{x}}(\mathbf{x}))d\mathbf{x}=0\quad
\forall \hat{\psi}
\in\mathcal{P}_{\tilde{r}(T)}(\hat{T})/\mathbb{R}
\label{E_interior_div}
\end{equation}
\begin{equation}
\int_{T}((A_{T}E_{T}\hat{\omega})(\mathbf{x})-(A_{T}\hat{\omega})(\mathbf{x}))^{\top}
DG_{T}(\hat{\mathbf{x}}(\mathbf{x}))^{-\top}
\hat{\mathbf{h}}_{i}(\hat{\mathbf{x}}(\mathbf{x}),t_{\tilde{r}(\hat{T})}) d\mathbf{x}=0\quad
1\leq i\leq k_{\tilde{r}}
\label{E_interior_aux}
\end{equation}
\begin{equation}
\int_{[0,1]}[(A_{T}E_{T}\hat{\omega}-A_{T}\hat{\omega})(\mathbf{x}_{e}(s))
\cdot\mathbf{n}(\mathbf{x}_{e}(s))]\hat{\eta}(s)\Vert \dot{\mathbf{x}_{e}}(s)\Vert ds = 0\quad
\forall \hat{\eta}\in\mathcal{P}_{\tilde{r}(e)}([0,1]), \forall e\in\triangle_{1}(T)
\label{E_edge_normal}
\end{equation}
\begin{equation}
\int_{[0,1]}[(A_{T}E_{T}\hat{\omega}-A_{T}\hat{\omega})(\mathbf{x}_{e}(s))
\cdot\mathbf{t}(\mathbf{x}_{e}(s))]\hat{\eta}(s)\Vert \dot{\mathbf{x}_{e}}(s)\Vert ds = 0\quad
\forall \hat{\eta}\in\mathcal{P}_{\tilde{r}(e)}([0,1]), \forall e\in\triangle_{1}(T)
\label{E_edge_tangent}
\end{equation}
\begin{equation}
E_{T}\hat{\omega} = 0 \text{ at all vertices of }\hat{T}
\label{E_vertex}
\end{equation}
Denote:
$$
\left[ \begin{array}{cc} b_{11} & b_{12}\\b_{21} & b_{22}\end{array} \right]=B_{T},
J=\det(B_{T}), 
\left[\begin{array}{c} \hat{u}_{1} \\ \hat{u}_{2} \end{array}\right]=E_{T}\hat{\omega},
\left[ \begin{array}{c} \hat{w}_{1}\\ \hat{w}_{2} \end{array} \right]=\hat{\omega}, 
\hat{u}_{i,j}=\dfrac{\partial \hat{u}_{i}}{\partial\hat{x}_{j}},
\hat{w}_{i,j}=\dfrac{\partial \hat{w}_{i}}{\partial\hat{x}_{j}}.
$$
By pulling back to $\hat{T}$,  $E_{T}$ can be defined by relations
\begin{align}
\label{W_interior_div_reference}
& \int_{\hat{T}}J^{-1}[(b_{11}(DG_{T})_{22}-b_{21}(DG_{T})_{12})\hat{u}_{1,1}
+(b_{12}(DG_{T})_{22}-b_{22}(DG_{T})_{12})\hat{u}_{2,1}\\ \nonumber
& +(b_{21}(DG_{T})_{11}-b_{11}(DG_{T})_{21})\hat{u}_{1,2}
+(b_{22}(DG_{T})_{11}-b_{12}(DG_{T})_{21})\hat{u}_{2,2}] 
\hat{\psi}(\hat{\mathbf{x}})d\hat{\mathbf{x}}\\ \nonumber
= & \int_{\hat{T}}J^{-1}[(b_{11}(DG_{T})_{22}-b_{21}(DG_{T})_{12})\hat{w}_{1,1}
+(b_{12}(DG_{T})_{22}-b_{22}(DG_{T})_{12})\hat{w}_{2,1}\\ \nonumber
& +(b_{21}(DG_{T})_{11}-b_{11}(DG_{T})_{21})\hat{w}_{1,2}
+(b_{22}(DG_{T})_{11}-b_{12}(DG_{T})_{21})\hat{w}_{2,2}] 
\hat{\psi}(\hat{\mathbf{x}})d\hat{\mathbf{x}},\\ \nonumber
& \forall \hat{\psi}
\in\mathcal{P}_{\tilde{r}(T)}(\hat{T})/\mathbb{R}\nonumber
\end{align}
\begin{align}
\label{W_interior_aux_reference}
& \int_{\hat{T}}(E_{T}\hat{\omega}(\hat{\mathbf{x}}))^{\top}
B_{T}^{\top}DG_{T}(\hat{\mathbf{x}})^{-\top}
\hat{\mathbf{h}}_{i}(\hat{\mathbf{x}},t) \det(B_{T}^{-1}DG_{T}(\hat{\mathbf{x}}))d\hat{\mathbf{x}} \\ \nonumber
= & \int_{\hat{T}}(\hat{\omega}(\hat{\mathbf{x}}))^{\top}
B_{T}^{\top}DG_{T}(\hat{\mathbf{x}})^{-\top}
\hat{\mathbf{h}}_{i}(\hat{\mathbf{x}},t) \det(B_{T}^{-1}DG_{T}(\hat{\mathbf{x}}))d\hat{\mathbf{x}} \quad
1\leq i\leq k_{\tilde{r}} \nonumber
\end{align}
\begin{align}
\label{W_edge_normal_reference}
& \int_{[0,1]}[B_{T}E_{T}\hat{\omega}(\zeta(s))\cdot(DG_{T}(\zeta(s))^{-\top}
\hat{\mathbf{n}}(\zeta(s)))]\hat{\eta}(s)\det(B_{T}^{-1}DG_{T}(\zeta(s)) ds \\ \nonumber
= & \int_{[0,1]}[B_{T}\hat{\omega}(\zeta(s))\cdot(DG_{T}(\zeta(s))^{-\top}\hat{\mathbf{n}}(\zeta(s)))]
\hat{\eta}(s)\det(B_{T}^{-1}DG_{T}(\zeta(s)) ds \\ \nonumber
& \forall \hat{\eta}\in\mathcal{P}_{\tilde{r}(e)}([0,1]), \forall e\in\triangle_{1}(T)
\end{align}
\begin{align}
\label{W_edge_tangent_reference}
& \int_{[0,1]}[B_{T}E_{T}\hat{\omega}(\zeta(s))
\cdot(DG_{T}(\zeta(s))\dot{\zeta}(s))]\hat{\eta}(s) \det(B_{T}^{-1}) ds \\ \nonumber
= & \int_{[0,1]}[B_{T}\hat{\omega}(\zeta(s))
\cdot(DG_{T}(\zeta(s))\dot{\zeta}(s))]\hat{\eta}(s) \det(B_{T}^{-1}) ds\quad
\forall \hat{\eta}\in\mathcal{P}_{\tilde{r}(e)}([0,1]), \forall e\in\triangle_{1}(T)
\end{align}
\begin{equation}
E_{T}\hat{\omega} = 0 \text{ at all vertices of }\hat{T}
\label{W_vertex_reference}
\end{equation}
It is easy to see that (\ref{W_interior_div_reference}) comes from (\ref{E_interior_div}), 
(\ref{W_interior_aux_reference}) comes from (\ref{E_interior_aux}), (\ref{W_edge_tangent_reference}) 
comes from (\ref{E_edge_tangent}), and (\ref{W_vertex_reference}) comes from (\ref{E_vertex}). 
And (\ref{W_edge_normal_reference}) can be got from (\ref{E_edge_normal}) by using the fact that 
$\Vert\dot{\mathbf{x}_{e}}(s)\Vert=\Vert DG_{T}(\zeta(s))\hat{\mathbf{t}}\Vert=
c\Vert DG_{T}(\zeta(s))^{-\top}\hat{\mathbf{n}}\Vert$ for some non-zero constant $c$, 
which comes from direct calculation.

Notice that vector $\dot{\zeta}(s)$ is constant tangent vector along each edge of $\hat{T}$.
Set 
$$
a=\hat{\mathbf{n}}^{\top}B_{T}^{\top}B_{T}\dot{\zeta}(s)\det(B_{T}^{-1}), \quad
b=\dfrac{\det(B_{T})\Vert\dot{\zeta}(s)\Vert}{\dot{\zeta}(s)^{\top}B_{T}^{\top}B_{T}\dot{\zeta}(s)}
$$
Obviously, $b\neq 0$.

Perform now the operation: $b\times [(\ref{W_edge_tangent_reference})-a\times (\ref{W_edge_normal_reference})]$.
We have,
\begin{align}
\label{W_edge_tangent_modified_reference}
\int_{[0,1]}[E_{T}\hat{\omega}(\zeta(s))\cdot F_{T}(s)]\hat{\eta}(s)ds 
& = \int_{[0,1]}[\hat{\omega}(\zeta(s))\cdot F_{T}(s)]\hat{\eta}(s)ds\\
& \forall \hat{\eta}\in\mathcal{P}_{\tilde{r}(e)}([0,1]), \forall e\in\triangle_{1}(T)\nonumber
\end{align}
where 
\begin{align*}
F_{T}(s) & = \det(B_{T}^{-1})[B_{T}^{\top}(DG_{T}(\zeta(s)))\dot{\zeta}(s) \\
& -\det(B_{T}^{-1}DG_{T}(\zeta(s)))(\hat{\mathbf{n}}^{\top}
B_{T}^{T}B_{T}\dot{\zeta}(s))B_{T}^{\top}(DG_{T}(\zeta(s)))^{-\top}\hat{\mathbf{n}})] 
 \dfrac{\det(B_{T})\Vert\dot{\zeta}(s)\Vert}{\dot{\zeta}(s)^{\top}B_{T}^{\top}B_{T}\dot{\zeta}(s)}
\end{align*}
Then the definition of operator $E_{T}$ can be rewritten by using conditions 
(\ref{W_interior_div_reference}),(\ref{W_interior_aux_reference}),
(\ref{W_edge_normal_reference}),
(\ref{W_edge_tangent_modified_reference}), and~(\ref{W_vertex_reference}).

Using the fact that $\hat{\mathbf{n}}\bot\dot{\zeta}(s)$, 
Lemmas~\ref{geometry_property1},\ref{geometry_property2},\ref{geometry_property3}, 
and the assumption that $(\mathcal{T}_{h})_{h}$ is regular, we obtain
\begin{equation*}
\lim_{h\rightarrow 0}\sup_{T\in\mathcal{T}_{h}}\sup_{e\in \triangle_{1}(T)}\sup_{s\in [0,1]}
\Vert F_{T}(s)\cdot\dfrac{\dot{\zeta}(s)}{\Vert\dot{\zeta}(s)\Vert}-1\Vert=
\lim_{h\rightarrow 0}\sup_{T\in\mathcal{T}_{h}}\sup_{e\in \triangle_{1}(T)}\sup_{s\in [0,1]}
\Vert F_{T}(s)\cdot\hat{\mathbf{n}}\Vert=0
\end{equation*} 
Consequently,
$$
\lim_{h\rightarrow 0}\sup_{T\in\mathcal{T}_{h}}\sup_{e\in \triangle_{1}(T)}\sup_{s\in [0,1]}
\Vert F_{T}(s) - \hat{\mathbf{t}}\Vert=0
$$
We denote now by $E(T,\tilde{r})$ the matrix corresponding to the left-hand side of 
conditions~(\ref{W_interior_div_reference}),(\ref{W_interior_aux_reference}),(\ref{W_edge_normal_reference}),
(\ref{W_edge_tangent_modified_reference}),(\ref{W_vertex_reference}), a particular basis of 
$\mathcal{P}_{\tilde{r}+2}\Lambda^{0}(\hat{T};\mathbb{R}^{2})$ (the solution space), some
basis of 
$\mathcal{P}_{\tilde{r}(T)}(\hat{T})/\mathbb{R}$, and some basis of $\mathcal{P}_{\tilde{r}(e)}([0,1])$ for each 
$e\in\triangle_{1}(T)$. 
We denote by $\{\hat{\xi}_{1},\cdots,\hat{\xi}_{l_{\tilde{r}}}\}$ a basis for 
$\mathcal{P}_{\tilde{r}+2}\Lambda^{0}(\hat{T};\mathbb{R}^{2})$. 
Finally, we denote by $C(\tilde{r})$ the matrix corresponding to the left-hand side 
of conditions (\ref{C_interior_div}),
(\ref{C_interior_aux}),(\ref{C_edge_normal}), and~(\ref{C_edge_tangent}), 
and the same bases as above.

Using the fact that 
$$
\lim_{h\rightarrow 0}\sup_{T\in\mathcal{T}_{h}}\sup_{e\in \triangle_{1}(T)}\sup_{s\in [0,1]}
\Vert F_{T}(s) - \hat{\mathbf{t}}\Vert=0
$$ 
and Lemmas~\ref{geometry_property1},\ref{geometry_property2},
\ref{geometry_property3}, we conclude that, for any $t\in [0,1]$, 
$$
\lim_{h\rightarrow 0}\sup_{T\in\mathcal{T}_{h}}\Vert E(T,\tilde{r}) - C(\tilde{r})\Vert=0$$
Then, for any given $t\in [0,1]$, and any given $\tilde{r}$ with 
non-singular $C(\tilde{r})$, the matrix $E(T,\tilde{r})$ 
is non-singular for any $T\in\mathcal{T}_{h}$ when $h>0$ small enough. 
Notice that the right-hand sides of conditions~(\ref{W_interior_div_reference}),
(\ref{W_interior_aux_reference}),(\ref{W_edge_normal_reference}),(\ref{W_edge_tangent_modified_reference}), and~(\ref{W_vertex_reference}) 
are continuous linear functionals of $\hat{\omega}\in H^{1}(\hat{T};\mathbb{R}^{2})$.
We can conclude thus that the operator $W_{T,t}$ is well-defined 
for any $T\in\mathcal{T}_{h}$ with small enough $h$. 
 
Since for any $t\in [0,1]$, $\lim_{h\rightarrow 0}\sup_{T\in\mathcal{T}_{h}}\Vert E(T,\tilde{r}) - C(\tilde{r})\Vert=0$,
and the matrix $C(\tilde{r})$ depends only on $\tilde{r}$,
we can conclude that there exists $C_{1}>0$ such that when $h>0$ small enough, then
$\Vert (z_1,\cdots,z_{l_{\tilde{r}}})\Vert \leq C_{1}\Vert \hat{\omega}\Vert_{H^{1}(\hat{T})}$ for 
any $T\in\mathcal{T}_{h}$. Here $\sum_{k=1}^{l_{\tilde{r}}}z_{k}\hat{\xi}_{k} = E_{T}\hat{\omega}$.

Finally, it is easy to see that the operator $W_{T}$ will be well-defined and the inequality in the statement of this theorem 
holds for any $h>0$ for affine meshes.
This finishes the proof.
\end{proof}

\bibliographystyle{siam}
\bibliography{refer1}

\begin{thebibliography}{10}

\bibitem{AT:1979:EFEE}
{\sc M.~Amara and J.~M. Thomas}, {\em Equilibrium finite elements for the
  linear elastic problem}, Numer. Math., 33 (1979), pp.~367--383.

\bibitem{AAW:2008:MMES}
{\sc D.~N. Arnold, G.~Awanou, and R.~Winther}, {\em Finite elements for
  symmetric tensors in three dimensions}, Mathematics of Computation, 77
  (2008), pp.~1229--1251.

\bibitem{ABD:1984:MFEPE}
{\sc D.~N. Arnold, F.~Brezzi, and J.~Douglas}, {\em Peers: a new mixed finite
  element for plane elasticity}, Japan J. Appl. Math., 1 (1984), pp.~347--367.

\bibitem{AF:1988:NMFEE}
{\sc D.~N. Arnold and R.~S. Falk}, {\em A new mixed formulation for
  elasticity}, Numer. Math., 53 (1988), pp.~13--30.

\bibitem{AFW:2006:ECH}
{\sc D.~N. Arnold, R.~S. Falk, and R.~Winther}, {\em Finite element exterior
  calculus, homological techniques, and applications}, Acta Numer.,  (2006),
  pp.~1--155.

\bibitem{AFW:2007:MMEW}
\leavevmode\vrule height 2pt depth -1.6pt width 23pt, {\em Mixed finite element
  methods for linear elasticity with weakly imposed symmetry}, Mathematics of
  Computation, 76 (2007), pp.~1699--1723.

\bibitem{CB:1989:OIC}
{\sc C.~Bernardi}, {\em Optimal finite-element interpolation on curved
  domains}, SIAM Journal on Numerical Analysis, 26 (1989), pp.~1212--1240.

\bibitem{Carey:1997:Grids}
{\sc G.~Carey}, {\em Computational Grids: Generations, Adaptation and Solution
  Strategies}, CRC Press, 1997.

\bibitem{Ciarlet:2002:FEMelliptic}
{\sc P.~Ciarlet}, {\em The Finite Element Methods for Elliptic Problems}, SIAM,
  2002.

\bibitem{CGG:2009:NEEMW}
{\sc B.~Cockburn, J.~Gopalakrishnan, and J.~Guzman}, {\em A new elasticity
  element made for enforcing weak stress symmetry}, Mathematics of Computation,
   (2009).

\bibitem{FdV:1975:SFA}
{\sc B.~M.~F. de~Veubeke}, {\em Stress function approach}, Proc. of the World
  Congress on Finite Element Methods in Structural Mechanics, 5 (1975),
  pp.~J.1--J.51.

\bibitem{Demkowicz:2006:HPAFE}
{\sc L.~Demkowicz}, {\em Computing with hp-adaptive finite elements Volume I
  One and Two Dimensional Elliptic and Maxwell Problems}, Chapman Hall CRC,
  2006.

\bibitem{Demkowicz:2007:HPAFE2}
{\sc L.~Demkowicz, J.~Kurtz, D.~Pardo, M.~Paszynski, W.~Rachowicz, and
  A.~Zdunek}, {\em Computing with hp-adaptive finite elements Volume II
  Three-Dimensional Elliptic and Maxwell Problems with Applications}, Chapman
  Hall CRC, 2007.

\bibitem{Falk:2008:FME}
{\sc R.~S. Falk}, {\em Finite element methods for linear elasticity}, in
  Lecture Notes in Mathematics, Springer-Verlag, 2008, pp.~160--194.

\bibitem{FarhloulFortin:1997:DHES}
{\sc M.~Farhloul and M.~Fortin}, {\em Dual hybrid methods for the elasticity
  and the {S}tokes problem: a unified approach}, Numer. Math., 76 (1997),
  pp.~419--440.

\bibitem{GR:1986:FENS}
{\sc V.~Girault and P.~A. Raviart}, {\em Finite element methods for
  Navier-Stokes equations: theory and algorithms}, Springer-Verlag, Berlin; New
  york, 1986.

\bibitem{Morley:1989:MFEE}
{\sc M.~E. Morley}, {\em A family of mixed finite elements for linear
  elasticity}, Numer. Math., 55 (1989), pp.~633--666.

\bibitem{ODENREDDY:1976:VMTM}
{\sc J.~T. Oden and J.~N. Reddy}, {\em Variational methods in theoretical
  mechanics}, Springer-Verlag, 1976.

\bibitem{QD:2009:MMEW}
{\sc W.~Qiu and L.~Demkowicz}, {\em Mixed $\boldsymbol{hp}$-finite element
  method for linear elasticity with weakly imposed symmetry}, Comput. Methods
  Appl. Mech. Engrg., 198 (2009), pp.~3682--3701.

\bibitem{SteinRolfes:1990:SOMFEPE}
{\sc E.~Stein and R.~Rolfes}, {\em Mechanical conditions for stability and
  optimal convergence of mixed finite elements for linear plane elasticity},
  Comput. Methods Appl. Mech. Engrg., 84 (1990), pp.~77--95.

\bibitem{Stenberg:1986:COMME}
{\sc R.~Stenberg}, {\em On the construction of optimal mixed finite element
  methods for the linear elasticity problem}, Numer. Math., 48 (1986),
  pp.~447--462.

\bibitem{Stenberg:1988:FMME}
\leavevmode\vrule height 2pt depth -1.6pt width 23pt, {\em A family of mixed
  finite elements for the elasticity problem}, Numer. Math., 53 (1988),
  pp.~513--538.

\bibitem{Stenberg:1988:TLOMME}
\leavevmode\vrule height 2pt depth -1.6pt width 23pt, {\em Two low-order mixed
  methods for the elasticity problem}, The Mathematics of Finite Elements and
  Applications, VI,  (1988), pp.~271--280.

\end{thebibliography}

\end{document}